\def\l@subsection{\@tocline{2}{0pt}{2.5pc}{5pc}{}}
\newcommand{\RTT}{\mathfrak{T}}
\newcommand{\TT}{\mathbb{T}}
\newcommand{\ZZ}{\mathbb{Z}}
\newcommand{\KK}{\mathbb{K}}
\newcommand{\RR}{\mathbb{R}}
\newcommand{\PP}{\mathbb{P}}
\newcommand{\QQ}{\mathbb{Q}}
\newcommand{\C}{\mathcal{C}}
\newcommand{\K}{\mathcal{K}}
\newcommand{\T}{\mathcal{T}}
\newcommand{\V}{\mathcal{V}}
\newcommand{\W}{\mathcal{W}}
\newcommand{\cY}{\mathcal{Y}}
\newcommand{\PS}{\mathfrak{P}}
\newcommand{\ba}{\mathbf{a}}
\newcommand{\bb}{\mathbf{b}}
\newcommand{\bc}{\mathbf{c}}
\newcommand{\bx}{\mathbf{x}}
\newcommand{\bt}{\mathbf{t}}
\newcommand{\bs}{\mathbf{s}}
\newcommand{\CTC}{\mathbb{L}_{\bullet}^{S/\KK}}
\newcommand{\Aff}{\mathbb{A}}
\newcommand{\mfm}{\mathfrak{m}}
\newcommand{\SG}{\mathfrak{S}}
\newcommand{\AOP}{\A_\circ^\univ}
\newcommand{\Hilb}{\mathcal{H}}
\newcommand{\U}{\mathcal{U}}
\newcommand{\A}{\mathcal{A}}
\newcommand{\F}{\mathcal{F}}
\newcommand{\tB}{\widetilde{B}}
\newcommand{\hy}{\hat{y}}
\newcommand{\tx}{\widetilde{\bf x}}
\newcommand{\bg}{{\mathbf g}}
\newcommand{\bz}{{\mathbf{z}}}
\newcommand{\uCG}{\underline{\mathcal{C} G}}
\newcommand{\univ}{\mathrm{univ}}
\newcommand{\exr}{\mathrm{ex}}
\newcommand{\prin}{\mathrm{prin}}
\newcommand{\tr}{\mathrm{tr}}
\newcommand{\cA}{\mathfrak{A}}
\DeclareMathOperator{\link}{lk}
\DeclareMathOperator{\spec}{Spec}
\DeclareMathOperator{\Gr}{Gr}
\DeclareMathOperator{\Hom}{Hom}
\DeclareMathOperator{\supp}{supp}
\DeclareMathOperator{\Mat}{Mat}
\DeclareMathOperator{\Ext}{Ext}
\DeclareMathOperator{\Der}{Der}
\DeclareMathOperator{\proj}{Proj}
\DeclareMathOperator{\Aut}{Aut}
\newtheorem*{thm*}{Theorem}
\newtheorem{thm}[equation]{Theorem}
\newtheorem{prop}[equation]{Proposition}
\newtheorem{lemma}[equation]{Lemma}
\newtheorem{cor}[equation]{Corollary}
\newtheorem{conj}[equation]{Conjecture}
\theoremstyle{definition}
\newtheorem{defn}[equation]{Definition}
\newtheorem{rem}[equation]{Remark}
\newtheorem{warning}[equation]{Warning}
\newenvironment{ex}
  {\pushQED{\qed}
  \oldex}
  {\popQED\endoldex}
\numberwithin{equation}{subsection}
\newcolumntype{g}{>{\columncolor{yellow}}c}
\title{Deformation Theory for Finite Cluster Complexes}
\author{Nathan Ilten}
\address{Department of Mathematics, Simon Fraser University,
8888 University Drive, Burnaby BC V5A1S6, Canada}
\email{\href{mailto:nilten@sfu.ca}{nilten@sfu.ca}}
\author{Alfredo N\'ajera Ch\'avez}
\address{Instituto de Matem\'aticas, Universidad Nacional Aut\'onoma de M\'exico,
Le\'on 2, altos, Oaxaca de Ju\'arez, Centro Hist\'orico, 68000 Oaxaca, Mexico}
\email{\href{mailto:najera@im.unam.mx}{najera@im.unam.mx}}
\author{Hipolito Treffinger}
\address{Departamento de Matemática, Facultad de Ciencias Exactas y Naturales,
Universidad de Buenos Aires, and IMAS-CONICET, Pabellon I – Ciudad
Universitaria, Buenos Aires, 1428, Argentina}
\email{\href{mailto:htreffinger@dm.uba.ar}{htreffinger@dm.uba.ar}}
\begin{document}

\maketitle

\begin{abstract}
	We study the deformation theory of the Stanley-Reisner rings associated to cluster complexes for skew-symmetrizable cluster algebras of geometric and finite cluster type. In particular, we show that in the skew-symmetric case, these cluster complexes are unobstructed, generalizing a result of Ilten and Christophersen in the $A_n$ case. 

	We also study the connection between cluster algebras with universal coefficients and cluster complexes.
We show that for a full rank positively graded cluster algebra $\A$ of geometric and finite cluster type, the cluster algebra $\A^\univ$ with universal coefficients may be recovered as the universal family over a partial closure of a torus orbit in a multigraded Hilbert scheme.
Likewise, we show that under suitable hypotheses, the cluster algebra $\A^\univ$ may be recovered as the coordinate ring for a certain torus-invariant semiuniversal deformation of the Stanley-Reisner ring of the cluster complex.

We apply these results to show that for any cluster algebra $\A$ of geometric and finite cluster type, $\A$ is Gorenstein, and $\A$ is unobstructed if it is skew-symmetric. Moreover, if $\A$ has  enough frozen variables then it has no non-trivial torus-invariant deformations. We also study the Gr\"obner theory of the ideal of relations among cluster and frozen variables of $\A$. As a byproduct we generalize previous results in this setting obtained by Bossinger, Mohammadi and Nájera Chávez for Grassmannians of planes and $\Gr(3,6)$.
\end{abstract}
\tableofcontents
\section{Introduction}
\subsection{Unobstructed Simplicial Complexes}
The \emph{dual $n$-associahedron} is the simplicial complex whose vertices are the diagonals in a regular $(n+1)$-gon, and whose faces are given by collections of pairwise non-crossing diagonals. This simplicial complex $\K$ is dual to the so-called Stasheff polytope.
Ilten and Christophersen prove in \cite{srdegen} that the dual associahedron $\K$ has the remarkable property of being \emph{unobstructed}, that is, the second cotangent cohomology $T^2(S_\K)$ is zero, where $S_\K$ is the Stanley-Reisner ring of $\K$. See \S \ref{sec:SR} and \S \ref{sec:cotangent} for definitions.

In general,  when a simplicial complex $\K$ is unobstructed, the projective scheme $\proj S_\K$ corresponds to a smooth point of the Hilbert scheme parametrizing projective schemes with the same Hilbert polynomial as $S_\K$. This can be a very useful tool for studying Hilbert schemes, see for example \cite{crelle}.
The desire to better understand \emph{why} the dual associahedron is unobstructed, and to find other natural examples of unobstructed simplicial complexes, is the starting point for this paper.

\subsection{Unobstructed Cluster Complexes}
In \cite{FZ_assoc}, Fomin and Zelevinsky associate to any root system $\Phi$ a simplicial complex $\Delta(\Phi)$. This generalizes the dual $n$-associahedron introduced above, as it corresponds exactly to the root system of type $A_{n-2}$. It is thus natural to ask if $\Delta(\Phi)$ is unobstructed for other choices of root systems $\Phi$.

In fact, the simplicial complexes $\Delta(\Phi)$ occur naturally in the theory of cluster algebras. Also introduced by Fomin and Zelevinksy, a \emph{cluster algebra} is a special kind of commutative algebra constructed via an iterative combinatorial process. See \S \ref{sec:cluster} for definitions, notation, and conventions for cluster algebras. In addition to its algebraic structure, every cluster algebra $\A$ is endowed with the combinatorial information of a collection of finite subsets called \emph{clusters}; these can be encoded in a simplicial complex $\K$ called the \emph{cluster complex} of $\A$, see  \S \ref{sec:complex}. Of special interest here is the case when $\A$ only has finitely many clusters, in which case it is said to be of finite cluster type. It turns out that, modulo choice of coefficients, cluster algebras of finite cluster type are classified exactly by root systems (or equivalently Dynkin diagrams) \cite{FZ_clustersII}, and that in these cases, the cluster complex $\K$ is exactly the simplicial complex $\Delta(\Phi)$ of the associated root system \cite{FZ_assoc}. See \S \ref{sec:cluster} and \S \ref{sec:complex} for details.

The perspective we take in this paper is thus that the dual $n$-associahedron is the \emph{cluster complex} for any cluster algebra of type $A_{n-2}$.
Our first result is the following:
\begin{thm}[Theorem \ref{thm:unobstructed} and Remark \ref{rem:obstructed}]
Let $\A$ be a cluster algebra of finite cluster type. Then its cluster complex $\K$ is unobstructed if and only if $\A$ is skew-symmetric.
\end{thm}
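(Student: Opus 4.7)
The plan is to handle the two directions of the biconditional separately. The forward direction (skew-symmetric $\Rightarrow$ unobstructed) I would deduce from the deformation-theoretic identification of $\A^{\univ}$ with a semiuniversal family promised in the abstract; the reverse direction I would establish by exhibiting explicit obstructions in low-rank cases and propagating them to larger rank via links.

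For the forward direction, I would decompose $T^2(S_\K)$ by its $\ZZ^n$-grading inherited from the ambient polynomial ring and handle each graded piece separately. The torus-equivariant part $T^2(S_\K)_{\mathbf{0}}$ is controlled by the paper's later identification: in the skew-symmetric finite type setting, $\A^{\univ}$ is the coordinate ring of a torus-equivariant semiuniversal deformation of $S_\K$ over a smooth affine base (polynomial in the universal coefficients). Smoothness of the base, together with surjectivity of the Kodaira-Spencer map onto $T^1(S_\K)_{\mathbf{0}}$, forces $T^2(S_\K)_{\mathbf{0}}=0$. For each remaining degree $\mathbf{a}\neq\mathbf{0}$, I would invoke the Altmann--Christophersen style description, which expresses $T^2(S_\K)_{\mathbf{a}}$ in terms of the reduced (co)homology of a punctured link/star of a face of $\K$ determined by $\mathbf{a}$. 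The crucial structural input is that in a finite-type cluster complex, the link of any face is itself (the join of a simplex with) the cluster complex of a lower-rank subalgebra, so an induction on the rank of $\A$ reduces matters to small base cases (e.g.\ $A_1, A_2, A_3$) checked by direct computation.

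For the reverse direction, for each skew-symmetrizable but not skew-symmetric finite type ($B_n, C_n, F_4, G_2$) it suffices to produce a single nonzero class in $T^2(S_\K)$. The key small cases are $B_2$ and $G_2$, where the cluster complexes are the hexagon and heptagon respectively, and the combinatorial formula for $T^2$ collapses to a finite computation. These obstructions then persist in higher-rank $B_n, C_n, F_4$, because the offending rank-$2$ complexes appear as links of codimension-$2$ faces in the higher-rank complex and $T^2$ of a link embeds appropriately into $T^2$ of the ambient Stanley-Reisner ring.

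The principal obstacle I expect is the non-equivariant vanishing in the forward direction. The degree-$\mathbf{0}$ vanishing comes essentially for free from the semiuniversal deformation statement, but controlling $T^2(S_\K)_{\mathbf{a}}$ for $\mathbf{a}\neq\mathbf{0}$ requires genuine topological analysis of punctured links in generalized associahedra of types $D_n$ and $E_n$. The inductive scheme is natural, yet verifying acyclicity of every relevant punctured link — and keeping track of how the link decompositions interact with the grading $\mathbf{a}$ — is where I anticipate the technical core of the argument lies.
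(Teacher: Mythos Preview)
Your reverse direction matches the paper's argument (Remark~\ref{rem:obstructed}): the rank-two non-simply-laced cluster complexes are boundaries of $m$-gons with $m\geq 6$, which have $T^2\neq 0$ by Remark~\ref{rem:1-spheres}, and these occur as links of faces in higher rank via Lemma~\ref{lemma:clinks}, so Theorem~\ref{thm:localization} propagates the obstruction upward. One correction: the $G_2$ cluster complex is an octagon, not a heptagon.

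Your forward direction has a genuine gap. The implication ``the semiuniversal base is smooth, hence $T^2_{\mathbf 0}=0$'' is false in general: $T^2$ is the space in which obstruction classes live, and a deformation problem can have smooth versal base (all obstruction classes happen to vanish) while $T^2$ is nonzero. Theorem~\ref{thm:versal} makes no use of skew-symmetry and produces a smooth base $\Aff^p$ also in non-simply-laced types; this is perfectly consistent with $T^2(\K)\neq 0$ there. There is also a mismatch of gradings: Theorem~\ref{thm:versal} concerns $H$-invariant deformations of $S_{\K*\PS(\W)}$ for a particular subtorus $H$ depending on the frozen variables, not a single graded piece of $T^2(S_\K)$ under the $\ZZ^\V$-grading. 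In that fine grading, by Theorem~\ref{thm:localization} every nonzero piece of $T^2(\K)$ has $b\neq\emptyset$, so the literal degree-$\mathbf 0$ piece is already zero for trivial reasons and carries no information.

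The paper's proof of the forward direction is independent of the later deformation-theoretic results. It inducts on rank via Proposition~\ref{prop:t2}: a flag sphere of dimension at least two is unobstructed provided (i) every proper link is unobstructed and (ii) for every non-edge $\{u,v\}$ the intersection $\link(u,\K)\cap\link(v,\K)$ is empty or contractible. Condition (i) is the inductive hypothesis via Lemma~\ref{lemma:clinks}. The real content is condition (ii), which is Lemma~\ref{lemma:Lb}, proved using $2$-Calabi--Yau categorification. Your inductive scheme captures (i), but Altmann--Christophersen localization reduces $T^2(\K)_{\ba-\bb}$ to a proper link only when $a\neq\emptyset$; the degrees $-\bb$ with $a=\emptyset$ are exactly where condition (ii) is needed, and your proposal supplies no mechanism for them.
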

\noindent Skew-symmetric cluster algebras of finite cluster type are exactly those corresponding to simply laced Dynkin diagrams.
 In particular, we recover that the dual associahedron is unobstructed, making use of ``cluster theoretic'' arguments.
 We believe the most natural proof is the one we present here, which makes use of categorification. However, it is also possible to prove the theorem using purely combinatorial arguments. The proof of \cite{srdegen} for the type $A_n$ case may be adapted to the $D_n$ case using the combinatorial model of \cite{FZ_clustersII}, leaving a finite number of exceptional cases to check.

In addition to the above unobstructedness result, we are also able to give a concrete description of the space of first order deformations $T^1(S_\K)$ for $\K$ the cluster complex of any skew-symmetrizable cluster algebra $\A$ of finite cluster type.
See Theorem \ref{thm:t1}. 

\subsection{Universal Coefficients}
The remaining results of the paper revolve around a fundamental geometric relationship that we will establish between a cluster algebra $\A$ of finite cluster type and the Stanley-Reisner ring $S_\K$ of its cluster complex $\K$.
We will always be working over an algebraically closed field $\KK$ of characteristic zero.
Recall that to any cluster algebra of finite cluster type $\A$, there is an associated cluster algebra $\A^\univ$ with \emph{universal coefficients}, see Definition \ref{def:univ frozen}. The algebra $\A^\univ$ is an algebra over a polynomial ring $\KK[t_1,\ldots,t_p]$, where $p$ is the number of cluster variables of $\A$. Geometrically, this corresponds to a family \[\spec (\A^\univ) \to \Aff^p.\]

This leads to our second result:
\begin{thm}[Theorem \ref{thm:hilb}\ref{part:h2}]
Let $\A$ be a cluster algebra of finite cluster type. The fiber over zero of $\spec \A^\univ\to \Aff^p$ is the scheme cut out by the Stanley-Reisner ideal of the cluster complex $\K$.
\end{thm}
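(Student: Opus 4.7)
The plan is to construct a surjection from the Stanley-Reisner ring $S_\K$ onto the fiber $\A^\univ/(t_1,\ldots,t_p)\A^\univ$, and then promote it to an isomorphism via Hilbert-function comparison, using flatness of $\A^\univ$ over $\KK[t_1,\ldots,t_p]$. The key input is the explicit form of the universal exchange relations in finite cluster type.

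For the surjection, I would first recall that $\A^\univ$ is generated as a $\KK[t_1,\ldots,t_p]$-algebra by its cluster and frozen variables, so the fiber over zero is a quotient of the polynomial ring $\KK[\{x_v\}]$ indexed by the vertices of $\K$. In finite cluster type the cluster complex $\K$ is a flag complex (Fomin-Zelevinsky), so its Stanley-Reisner ideal $I_\K$ is generated by the quadratic monomials $x_ix_j$ for pairs of incompatible cluster variables. For each such pair one has an exchange-type relation in $\A^\univ$ of the form
\[
x_i x_j \;=\; y^+_{ij}\,M^+_{ij} + y^-_{ij}\,M^-_{ij},
\]
with $M^\pm_{ij}$ monomials in other cluster and frozen variables and $y^\pm_{ij}$ monomials in $t_1,\ldots,t_p$. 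The crucial property of universal coefficients in finite cluster type is that both $y^+_{ij}$ and $y^-_{ij}$ lie in the maximal ideal $(t_1,\ldots,t_p)$; this follows from the Reading-Stella description of the universal $y$-variables via positive roots / $c$-vectors (equivalently, from Fomin-Zelevinsky's construction in ``Cluster Algebras IV''), together with the sign-coherence available in finite type. Consequently $x_ix_j$ vanishes in the fiber, producing a surjection $S_\K \twoheadrightarrow \A^\univ/(t_1,\ldots,t_p)\A^\univ$.

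To promote this surjection to an isomorphism, I would compare multigraded Hilbert functions with respect to the positive grading carried by $\A$. The cluster-monomial basis in finite cluster type extends to a free $\KK[t_1,\ldots,t_p]$-basis of $\A^\univ$, so $\A^\univ$ is flat (in fact free) over $\KK[t_1,\ldots,t_p]$ and every fiber has the same Hilbert function. Under the natural indexing, cluster monomials correspond bijectively and degree-preservingly to monomials supported on faces of $\K$, which form the standard $\KK$-basis of $S_\K$. Hence $S_\K$ and the fiber have the same multigraded Hilbert function, and the above surjection is forced to be an isomorphism.

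The main technical obstacle is the second step: identifying the universal exchange relations explicitly enough to verify $y^\pm_{ij}\in(t_1,\ldots,t_p)$ for every incompatible pair. This positivity rests on the sign-coherence / positive-root description of $c$-vectors specific to finite cluster type, and is precisely the reason the statement is restricted to that regime. The Hilbert-function bookkeeping in the last step is routine once the cluster-monomial basis over $\KK[t_1,\ldots,t_p]$ is in place; a minor secondary point is matching up the precise indexing of the $t_i$'s with the cluster variables so that ``fiber over zero'' is genuinely a well-defined closed point, which is built into the setup of Definition \ref{def:univ frozen}.
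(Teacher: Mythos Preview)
Your overall architecture --- surjection from $S_\K$ plus a Hilbert-function comparison via the cluster-monomial basis --- is sound, and the second half matches the paper's use of Theorem~\ref{thm:basis} closely. The gap is in the first half.

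You assert that for \emph{every} incompatible pair $\{x_i,x_j\}$ there is an exchange-type relation $x_ix_j = y^+_{ij}M^+_{ij} + y^-_{ij}M^-_{ij}$ in $\A^\univ$. But the exchange relations, and hence the description of the universal $y$-variables via $c$-vectors that you invoke, only exist for \emph{exchangeable} pairs. In rank $\geq 2$ there are incompatible pairs that are not exchangeable: already in the paper's $G_2$ example (\S\ref{ex:g2}) the pairs $\{z_i,z_{i+3}\}$ and $\{z_i,z_{i+4}\}$ are non-edges of $\K$ that are not exchange pairs, and the relation cutting out $z_iz_{i+4}$ (for $i$ even) in $I_{\A^\univ}$ is not even binomial --- it has three terms on the right-hand side. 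So your justification that $x_ix_j$ vanishes in the fiber only covers a proper subset of the generators of $I_\K$, and the surjection $S_\K\twoheadrightarrow \A^\univ/(\bt)$ is not yet established. The Hilbert-function argument alone cannot rescue this: knowing that face-supported monomials descend to a $\KK$-basis of the fiber does not by itself force the non-face monomials to map to \emph{zero} rather than to nontrivial linear combinations.

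The paper closes this gap differently. It first establishes (Theorem~\ref{thm:hilb}(1)) a $\TT$-action on $\Aff^p$ making $\spec\A^\univ\to\Aff^p$ equivariant; since $\bt=0$ is then a $\TT$-fixed point, the fiber ideal in $\KK[z_v\mid v\in\V\cup\W]$ is $\TT$-homogeneous, i.e.\ a \emph{monomial} ideal. Once the ideal is monomial, the cluster-monomial basis (Theorem~\ref{thm:basis}) identifies it immediately as $I_{\K*\PS(\W)}$. To repair your argument you would need to import precisely this torus/monomial-ideal step (or find an independent reason why $x_ix_j\in(\bt)$ for non-exchangeable incompatible pairs --- the obvious attempts via multiplying exchange relations run into the fact that cluster variables are zerodivisors in $S_\K$). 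A minor secondary point: you appeal to a positive grading on $\A$, which need not exist in general (e.g.\ trivial coefficients); the paper handles this by first adding frozen variables via Lemma~\ref{lemm:add_frozens} and then specializing back.
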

\noindent A key ingredient for this theorem is the folklore result that for cluster algebras of finite type (with non-invertible frozen variables), the extended cluster monomials form a basis (see Theorem \ref{thm:basis}). This statement is a consequence of the machinery of \cite{GHKK}.

Under some additional assumptions on $\A$, we are able to give new interpretations of the cluster algebra $\A^\univ$. The first shows that under appropriate hypotheses, $\A^\univ$ may be constructed from $\A$ by essentially acting on $\A$ by a torus:
\begin{thm}[Theorem \ref{thm:hilb}\ref{part:h4}]
Let $\A$ be a positively graded cluster algebra of finite cluster type and full rank. Then the family $\spec \A^\univ\to \Aff^p$ arises as the restriction of the universal family for a multigraded Hilbert scheme to a partial closure of the torus orbit of the point corresponding to the algebra $\A$.
\end{thm}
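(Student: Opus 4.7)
The plan is to classify both families by maps $\Aff^p \to \Hilb$ into a suitable multigraded Hilbert scheme and then identify them via universal properties. First I would set up $\Hilb$: the positive grading of full rank makes the polynomial ring $R$ on the $N$ extended cluster variables into an $L$-graded ring for the grading lattice $L$, and the ideal of relations of $\A$ is $L$-homogeneous, so $\A$ gives a closed point $[\A]$ of the multigraded Hilbert scheme $\Hilb = \Hilb_R^h$ parametrizing $L$-graded quotients of $R$ with Hilbert function $h = h_\A$. By Theorem \ref{thm:hilb}\ref{part:h2}, the Stanley-Reisner ring $S_\K$ has the same multigraded Hilbert function and defines a second closed point $[S_\K] \in \Hilb$. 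The natural torus $T$ acting on $\Hilb$---the quotient of the diagonal torus of $R$ by the subtorus acting through the $L$-grading---fixes the monomial point $[S_\K]$, and that same theorem places $[S_\K] \in \overline{T \cdot [\A]}$. The full rank hypothesis should ensure both that $T$ has rank equal to $p$ and that the stabilizer of $[\A]$ in $T$ is trivial, so that the orbit $T \cdot [\A]$ is isomorphic to $T$ itself.

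Next, I would exhibit the partial closure as a natural affine toric subvariety $Y \cong \Aff^p \subset \Hilb$ with $T \cdot [\A]$ as its open torus and $[S_\K]$ as its origin, the $p$ rays of the defining cone being labelled by the $p$ cluster variables of $\A$ via the corresponding one-parameter degenerations of $\A$ to $S_\K$; smoothness of the cone corresponds to linear independence of these cocharacters. By the universal property of $\Hilb$, the restriction of the universal family to $Y$ is a $T$-equivariant, $L$-graded, flat family over $\Aff^p$ whose generic fiber is $\A$ and whose central fiber is $S_\K$. On the other hand, $\spec \A^\univ \to \Aff^p$ is, by the universal property of $\A^\univ$, the universal such deformation with parameters indexed by cluster variables. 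Comparing the two classifying maps $\Aff^p \to \Hilb$ using this universal characterization then yields the desired identification of families.

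The main obstacle is constructing $Y \cong \Aff^p$ explicitly. One must verify that the torus $T$ has rank $p$, identify the $p$ cluster-variable-indexed cocharacters as a basis of its cocharacter lattice, and show that the corresponding limits toward $[S_\K]$ exist in $\Hilb$ and generate a smooth affine cone, so that $Y$ is a genuine toric subscheme isomorphic to $\Aff^p$. This requires a careful analysis of the $T$-weights on the tangent space of $\Hilb$ at $[S_\K]$ and their relationship with the $\bg$-vectors that govern the universal coefficients of $\A^\univ$; once this is in place, the final identification in the second paragraph follows from the uniqueness clause in the universal property of $\A^\univ$.
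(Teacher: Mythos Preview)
Your outline has the right architecture---classify both families by maps into the multigraded Hilbert scheme and then identify them---but the direction of the argument is inverted relative to what actually works, and the final identification step rests on a ``universal property of $\A^\univ$'' that does not exist in the form you need.

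The universal property of $\A^\univ$ (as established in \cite[\S 12]{FZ_clustersIV}) is a statement about coefficient specialization in tropical semifields: any cluster algebra with the same exchange matrices is obtained from $\A^\univ$ by a unique semifield map on coefficients. It is \emph{not} a universal property among flat $L$-graded families over $\Aff^p$ degenerating $\A$ to $S_\K$, and it does not directly produce or compare classifying maps into $\Hilb$. So your last paragraph's ``uniqueness clause'' does not apply. The paper instead runs the argument the other way: it uses the universal property of the \emph{Hilbert scheme} to obtain a map $\phi:\Aff^p\to\Hilb$ classifying $\spec\A^\univ\to\Aff^p$, and then proves that $\phi$ is an isomorphism onto the desired affine chart. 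The existence of $\phi$ requires knowing that $\A^\univ$ is flat over $\KK[\bt]$ with fibers of the correct Hilbert function; this is where the basis of extended cluster monomials (Theorem~\ref{thm:basis}) enters, and you do not mention it.

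Your plan to construct $Y\cong\Aff^p$ directly---by exhibiting $p$ cocharacters forming a smooth cone and analyzing $T$-weights on the tangent space at $[S_\K]$---is essentially circular: smoothness of that Gr\"obner cone is a \emph{consequence} of the theorem (Corollary~\ref{cor:grob}), not an independent input. The paper avoids this by proving that $\phi$ is injective with injective differential (explicit arguments using the exchange relations and the cluster-monomial basis), $\TT$-equivariant (a grading computation), and has $p$-dimensional image (the dimension count $\dim\TT.[\A]=p+q-\dim H=p$ uses the full-rank hypothesis). A short toric lemma (Lemma~\ref{lemma:iso}) then forces $\phi$ to be an isomorphism onto the affine chart containing $[S_\K]$.
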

In the special cases when $\A$ is the coordinate ring of the Grassmannian $\Gr(2,n)$  $(n\geq 4)$ or $\Gr(3,6)$, Bossinger, Mohammadi, and N\'ajera Ch\'avez \cite{BMN} show that $A^\univ$ may be obtained via a particular construction from Gr\"obner theory. Our theorem is a generalization of this, as explained in Remark \ref{rem:BMN}.

In our second interpretation of $\A^\univ$, we show that under appropriate hypotheses, $\A^\univ$ can be reconstructed from the cluster complex $\K$ (and a little extra data) via deformation theory. We say that a cluster algebra $\A$ has \emph{enough} frozen variables if property T1 is satisfied, see Definition \ref{defn:T1}.
\begin{thm}[Theorem \ref{thm:versal}]\label{thm:iversal}
Let $\A$ be a cluster algebra of finite cluster type with $p$ cluster and $q$ frozen variables, and assume it has enough frozen variables. 
Let $\TT$ be the torus of the ambient space for the cluster embedding $\spec \A\subseteq \Aff^{p+q}$, and $H\subseteq \TT$ the maximal subgroup fixing $\spec \A$.
Then there is a canonical $\TT$-equivariant $H$-invariant semiuniversal deformation $\cY\to \Aff^p$ of $(\spec S_{\K})\times \Aff^q$, and it may be canonically identified with the family $\spec \A^\univ\to \Aff^p$.
\end{thm}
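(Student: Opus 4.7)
The plan is to construct $\cY$ as the family $\spec \A^{\univ}\to \Aff^p$, verify the equivariance properties from how this family sits inside $\Aff^{p+q}\times\Aff^p$, and then check that it is semiuniversal among $\TT$-equivariant, $H$-invariant deformations by comparing tangent data via the Kodaira--Spencer map and lifting obstructions using properties T0* and T1.

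First, I would set $\cY=\spec \A^{\univ}$ and consider the closed embedding $\cY\subseteq \Aff^{p+q}\times\Aff^p$, where the second factor has coordinates $t_1,\dots,t_p$. The torus $\TT$ acts on $\Aff^{p+q}$ via the cluster embedding; extending this action to $\Aff^{p+q}\times\Aff^p$ by assigning appropriate weights to the $t_i$ (dictated by the homogeneity of the exchange relations carrying the universal coefficients) makes $\cY$ a $\TT$-invariant subscheme, so the projection $\cY\to\Aff^p$ is automatically $\TT$-equivariant. By definition $H\subseteq\TT$ fixes $\spec \A$ pointwise, and since the universal coefficients are constructed intrinsically from the cluster pattern of $\A$, the $H$-action is trivial on the $t_i$; hence the family is $H$-invariant. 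Theorem~\ref{thm:hilb}\ref{part:h2} identifies the fiber over $0$ (after accounting for the trivial $\Aff^q$ of frozen coordinates) with $(\spec S_{\K})\times \Aff^q$, so $\cY\to\Aff^p$ is genuinely a deformation of the right central fiber.

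The core of the argument is semiuniversality. I would compute the Kodaira--Spencer map
\[
\kappa\colon T_0\Aff^p \longrightarrow T^1\bigl((\spec S_{\K})\times\Aff^q\bigr)
\]
and show it identifies $T_0\Aff^p$ with the $H$-invariant part of $T^1$ in the appropriate $\TT$-weights. The space $T^1$ of the central fiber is explicitly described in Theorem~\ref{thm:t1}, and its weights correspond naturally to exchange relations; the entries of the universal coefficient matrix, by construction, realize exactly one first-order perturbation per cluster variable, which should give both injectivity (distinct $t_i$ move in distinct weight directions) and surjectivity onto the $H$-invariant relevant graded piece. This is where properties T0* and T1 enter: T1 guarantees that the $\TT$-weights carried by the $t_i$ span precisely the equivariantly relevant summands of $T^1$, while T0* rules out extra invariant deformations outside the image of $\kappa$. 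Together they yield that $\kappa$ is a $\TT$-equivariant, $H$-equivariant isomorphism onto the part of $T^1$ that the semiuniversal equivariant deformation must capture.

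For versality beyond first order, I would lift order-by-order: given a $\TT$-equivariant $H$-invariant deformation over an Artinian base $A$ with a compatible map $A\to\KK[t_1,\dots,t_p]/\mathfrak m^n$ inducing the given deformation modulo $\mathfrak m^{n-1}$, the obstruction to lifting lives in a certain weight piece of $T^2$. In the skew-symmetric case the unobstructedness of Theorem~\ref{thm:unobstructed} (the ``Theorem \ref{thm:unobstructed} and Remark \ref{rem:obstructed}'' result above) eliminates these obstructions outright, and in the general skew-symmetrizable case the relevant obstructions should vanish after restricting to the $H$-invariant weights picked out by T0* and T1, since any obstruction would produce an equivariant first-order deformation not realized by the $t_i$, contradicting the tangent-level isomorphism. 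This gives versality; combined with the isomorphism on tangent spaces we conclude semiuniversality. Uniqueness up to isomorphism of the semiuniversal object then gives the canonical identification with $\spec \A^{\univ}\to\Aff^p$.

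The main obstacle will be the last step: controlling obstructions in the non-skew-symmetric case without having global unobstructedness at our disposal. I expect that the $H$-invariance, combined with the combinatorial weight decomposition of $T^2$ parallel to that of $T^1$ and the precise statement of T0*, forces the obstruction classes into weight components that are simply not present, but making this rigorous will require a careful analysis of the $\TT$-weights of the obstruction space and a matching of these with the excluded weights coming from the failure of skew-symmetry.
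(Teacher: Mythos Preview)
Your proposal has the right overall shape but contains two genuine gaps.

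First, the obstruction analysis in your final two paragraphs is unnecessary and the worry about the non-skew-symmetric case is misplaced. The family $\spec\A^{\univ}\to\Aff^p$ is \emph{already constructed} as a cluster algebra and is flat over the smooth base $\Aff^p$ by Theorem~\ref{thm:basis}. Once you have a flat family over a smooth base whose characteristic map is an isomorphism onto $T^1(\K*\PS(\W))^H$, semiuniversality is automatic: given any $H$-invariant deformation over an Artinian local ring, you lift the classifying map order by order using smoothness of the base, and at each step the ambiguity lies in $T^1(\K*\PS(\W))^H$, which is absorbed by adjusting the map since the characteristic map is surjective. No $T^2$ vanishing is needed anywhere. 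The isomorphism of the characteristic map is exactly what property~T1 delivers, via Proposition~\ref{prop:t1p}; injectivity is Lemma~\ref{lemma:injective} and holds without T1.

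Second, you have the role of T0* wrong, and this hides the real difficulty. Property~T0* is not about ruling out extra classes in $T^1$; it concerns the \emph{derivations}, i.e.\ the kernel of $\Hom(J,S_{\K*\PS(\W)})^H\to T^1(\K*\PS(\W))^H$. Its purpose is to make the identification \emph{canonical}. Your final sentence (``uniqueness up to isomorphism of the semiuniversal object then gives the canonical identification'') is where the argument actually breaks: semiuniversal deformations are only unique up to \emph{non-canonical} isomorphism, so this step does not follow. The paper handles this by introducing the notion of an \emph{exchange-minimal} embedded deformation (Definition~\ref{defn:exmin}) and proving inductively that any two such must agree: at each order the difference is an element of $\Hom(J,S_{\K*\PS(\W)})^H$ with degree in the semigroup $\SG$, and property~T0* forces any such element coming from a derivation to act nontrivially on some exchangeable pair, which would violate minimality. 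This is the substantive use of T0*, and without it (or a substitute) you cannot conclude canonicity.
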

\noindent
The slightly more general statement of Theorem \ref{thm:versal} may be applied even when $\A$ does not have ``enough'' frozen variables. See \S\ref{ex:g2} for an example illustrating the reconstruction of $\A^{\univ}$ from $\K$.

\subsection{Applications}
We may apply the theorems of the previous subsection to obtain new information about some algebraic properties of a cluster algebra of finite cluster type.

\begin{cor}[Corollary \ref{cor:gorenstein}]
Let $\A$ be a cluster algebra of finite cluster type. Then $\A$ is Gorenstein.
\end{cor}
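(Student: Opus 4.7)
The plan is to deduce Gorensteinness of $\A$ from that of the Stanley-Reisner ring $S_\K$ by exploiting the flat family $\spec\A^\univ\to\Aff^p$ of Theorem~\ref{thm:hilb}\ref{part:h2}, whose fiber over $0$ is $\spec S_\K$ and which, by the universal property of $\A^\univ$, has $\spec\A$ as its fiber over some $\KK$-point $c=(c_1,\ldots,c_p)\in\Aff^p$. First I would verify that $S_\K$ is Gorenstein: the cluster complex of a finite cluster type algebra, viewed on its mutable vertices, is a simplicial sphere (the boundary of the generalized associahedron of Fomin--Zelevinsky), while the $q$ frozen vertices lie in every maximal face of $\K$, so $S_\K$ is a polynomial extension of the Stanley-Reisner ring of this sphere. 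Since simplicial spheres have Gorenstein Stanley-Reisner rings (Stanley, Reisner) and polynomial extensions preserve Gorenstein, $S_\K$ is Gorenstein. Flatness of $\spec\A^\univ\to\Aff^p$ follows from Theorem~\ref{thm:basis}, which exhibits the extended cluster monomials as a free $\KK[t_1,\ldots,t_p]$-basis of $\A^\univ$; in particular $(t_1-c_1,\ldots,t_p-c_p)$ is a regular sequence in $\A^\univ$ with quotient $\A$.

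The heart of the argument is to upgrade Gorensteinness from the special fiber $S_\K$ to all of $\A^\univ$; then $\A$, being a quotient of a Gorenstein ring by a regular sequence, is Gorenstein. At any prime $\mathfrak{M}\subset\A^\univ$ lying over $(t_1,\ldots,t_p)$, the flat local map $\KK[t]_{(t)}\to\A^\univ_{\mathfrak{M}}$ has Gorenstein base and Gorenstein closed fiber $(S_\K)_{\mathfrak{M}/(t)}$, so $\A^\univ_{\mathfrak{M}}$ is Gorenstein by the standard fiber criterion (Matsumura, Theorem~23.4). To extend this to arbitrary primes of $\A^\univ$, I would use the $\KK^*$-action lifting the uniform scaling $t_i\mapsto\lambda t_i$ on $\Aff^p$ via a natural $\NN$-grading of $\A^\univ$ in which the $t_i$ have positive weight. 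Under this action, every orbit closure in $\spec\A^\univ$ contains a point of $\spec S_\K$; since the non-Gorenstein locus is closed in $\spec\A^\univ$ and $\KK^*$-invariant, and disjoint from the Gorenstein special fiber, it must be empty.

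The delicate step is the last one, namely establishing the $\KK^*$-action: one needs an $\NN$-grading of $\A^\univ$ for which the $t_i$ have positive degree and which is compatible with the exchange relations among cluster variables. Such a grading arises naturally from the principal-coefficient construction underlying $\A^\univ$, though its existence for arbitrary finite cluster type requires careful bookkeeping. If this becomes too subtle in full generality, an alternative is to reduce to the positively graded full-rank setting of Theorem~\ref{thm:hilb}\ref{part:h4}, where the ambient multigraded Hilbert scheme supplies the required torus action, and then descend Gorensteinness back to $\A$ via an auxiliary polynomial extension of principal coefficients.
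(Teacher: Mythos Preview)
Your strategy is essentially the paper's: show $S_{\K*\PS(\W)}$ is Gorenstein (via $\K$ being a sphere), use the flat family $\spec\A^\univ\to\Aff^p$ and a torus action to propagate Gorensteinness from the special fiber to all of $\A^\univ$, then cut down to $\A$ by the regular sequence $t_i-c_i$. The fiber criterion (Matsumura~23.4) you invoke is equivalent to the paper's phrasing that $\spec S_{\K*\PS(\W)}$ is a complete intersection in $\spec\A^\univ$.

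The one place where the paper is more decisive is exactly the step you flag as delicate. Rather than looking for an ad hoc $\KK^*$-action with each $t_i$ of positive weight, the paper \emph{begins} with your fallback: add frozen variables so that $\A$ becomes positively graded of full $\ZZ$-rank (Lemma~\ref{lemm:add_frozens}), and observe that $\A$ is then a complete intersection in the enlarged algebra, so it suffices to treat that case. In that setting Theorem~\ref{thm:hilb}\ref{part:h4} supplies the full $\TT$-action on $\Aff^p$ with dense orbit, so every $\TT$-orbit closure in $\spec\A^\univ$ meets the fiber over $0$, and the closed $\TT$-invariant non-Gorenstein locus must be empty. Your proposed $\KK^*$ with ``uniform scaling $t_i\mapsto\lambda t_i$'' is not available without further argument: the $t_i$ have genuinely different $\TT$-weights (cf.\ Example~\ref{ex:r7}), and what one actually needs is a one-parameter subgroup pairing positively with each of those weights. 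Such a subgroup exists precisely because those weights generate the dual of the Gr\"obner cone $C$ of Corollary~\ref{cor:grob}, but establishing that already passes through the reduction to the positively graded case. So your fallback is not a fallback---it is the argument.
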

\noindent
This was previously known in the case that $\A$ has no frozen variables, see \cite[Corollary 1.21]{BFZ_clustersIII} and the discussion following Corollary \ref{cor:gorenstein}.
We note here that we are following the convention of \emph{not} inverting frozen variables, see Remark \ref{rem:invert}.

\begin{cor}[Corollaries \ref{cor:t2} and \ref{cor:t1}]
	Let $\A$ be a cluster algebra of finite cluster type. 
	\begin{enumerate}
		\item If $\A$ is skew-symmetric, then $T^2(\A)=0$. 
		\item If $\A$ has full rank and satisfies the T1 property, then $T^1(\A)^H=0$, with $H$ as in Theorem \ref{thm:iversal}.
	\end{enumerate}
\end{cor}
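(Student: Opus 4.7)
Our strategy combines the deformation-theoretic description of $\spec \A^{\univ} \to \Aff^p$ (Theorems \ref{thm:hilb} and \ref{thm:versal}) with upper semi-continuity and $\TT$-equivariance.

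For part (1), Theorem \ref{thm:unobstructed} gives $T^2(S_\K)=0$ in the skew-symmetric case. Theorem \ref{thm:hilb}\ref{part:h2} identifies the central fiber of the flat family $\spec \A^{\univ}\to \Aff^p$ with $\spec S_\K\times \Aff^q$, which also has $T^2=0$ since $T^2$ is stable under polynomial extensions. The cluster algebra $\A$ is realized as the fiber over a specific $t^*\in \Aff^p$ obtained by specializing the universal coefficients to those of $\A$. Upper semi-continuity of $T^2$ for flat families of affine $\KK$-schemes of finite type then implies that the locus $U\subseteq \Aff^p$ on which $T^2$ of the fiber vanishes is open and contains the origin. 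The $\KK^*$-scaling action on $\Aff^p$ lifts to a $\KK^*$-equivariance on the family, and under this action the origin is an attracting fixed point; hence the orbit closure of $t^*$ contains $0$, so $U$ contains $t^*$, giving $T^2(\A)=0$.

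For part (2), assuming full rank and T1 we apply an appropriate version of Theorem \ref{thm:versal}, identifying $\spec \A^{\univ}\to \Aff^p$ with a semiuniversal $\TT$-equivariant $H$-invariant deformation of the central fiber, and in particular yielding $T^1(S_\K\otimes \Aff^q)^H\cong T_0\Aff^p$. A $\TT$-equivariant flat-base-change argument propagates this description: $H$-invariant first-order deformations of the general fiber $\A=\A^{\univ}_{t^*}$ also arise from tangent vectors of $\Aff^p$ at $t^*$. Under the full rank hypothesis, the $\TT$-action on $\Aff^p$ admits a dense orbit through $t^*$, so such tangent vectors correspond to trivial deformations of $\A$ (they move $\A$ within its $\TT$-isomorphism class in the family). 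This yields $T^1(\A)^H=0$.

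The principal obstacle is technical: the semi-continuity we invoke concerns cotangent cohomology rather than sheaf cohomology, so it must be argued directly in the affine, finite-type setup rather than invoked from Grauert-type theorems. A secondary difficulty is that Theorem \ref{thm:versal} is formulated at the central fiber, so transferring the conclusion to the general fiber $\A$ in part (2) requires careful bookkeeping of $\TT$-equivariance together with the full rank assumption.
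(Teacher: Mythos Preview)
Your overall strategy for both parts matches the paper's closely, but there are two points worth sharpening.

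For part (1), the paper's proof is a streamlined version of yours: rather than working with the full family $\spec\A^\univ\to\Aff^p$ and invoking a ``$\KK^*$-scaling action on $\Aff^p$,'' it uses Corollary~\ref{cor:grob} to produce directly a $\KK^*$-equivariant one-parameter family over $\Aff^1$ with special fiber $\spec S_{\K*\PS(\W)}$ and general fiber $\spec\A$, then applies semi-continuity of $T^2$ together with Theorem~\ref{thm:unobstructed}. Your version is fine in spirit, but your claim that the diagonal $\KK^*$-scaling on $\Aff^p$ lifts equivariantly to the family is not automatic: the $\TT$-weights on the $t_i$ are not uniform (see Example~\ref{ex:r7}). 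What actually exists is a one-parameter subgroup of $\TT$ coming from the interior of the Gr\"obner cone $C$, and passing through this recovers exactly the paper's $\Aff^1$ family. So the two arguments converge once you pick the correct $\KK^*$.

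For part (2), the gap is in the step you call a ``$\TT$-equivariant flat-base-change argument.'' Propagating semiuniversality from the central fiber to the fiber over $t^*$ is not a formal consequence of flatness or equivariance; it is Artin's theorem on \emph{openness of versality} \cite{versalopen}, which the paper invokes explicitly. Once you have that, your conclusion is correct: by Theorem~\ref{thm:hilb}\ref{part:h4} the $\TT$-orbit through $t^*$ is dense in $\Aff^p$, so the restricted family is trivial, and a versal family that is trivial near $t^*$ forces $T^1(\A)^H=0$. (Note also that the paper's Corollary~\ref{cor:t1} adds the hypothesis that $\A$ be positively graded, needed to invoke Theorem~\ref{thm:hilb}\ref{part:h4}.)
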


We also obtain information about the Gr\"obner fan for the ideal $I_\A$ of relations among the cluster and frozen variables of $\A$ (see \S\ref{sec:clusterembedding}).
\begin{cor}[Corollary \ref{cor:grob}]
The Gr\"obner fan of $I_\A$ has a maximal dimensional cone $C$ corresponding to the Stanley-Reisner ideal $I_\K$. This cone may be described explicitly. If $\A$ has full rank (respectively full $\ZZ$-rank), the cone $C$ is simplicial (respectively smooth) modulo lineality space.
\end{cor}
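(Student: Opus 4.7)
My approach is to translate the flat family $\spec \A^\univ \to \Aff^p$ from Theorem \ref{thm:hilb}\ref{part:h2} into Gröbner-theoretic data. By that theorem, the fiber over the origin is $\spec S_\K$ while a general fiber is isomorphic to $\spec \A$, so we have a $\KK[t_1,\ldots,t_p]$-flat family of ideals in $\KK[x_1,\ldots,x_{p+q}]$ whose central fiber is $I_\K$ and whose generic fiber is $I_\A$. Restricting along any ray $\Aff^1 \hookrightarrow \Aff^p$ through the origin yields a $\KK[t]$-flat degeneration of $I_\A$ to $I_\K$, which by the standard correspondence between flat degenerations to monomial ideals and weight vectors produces a $w \in \RR^{p+q}$ with $\mathrm{in}_w(I_\A) = I_\K$. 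The cone $C$ of the Gröbner fan is then obtained as the closure of the set of all such $w$, and it automatically has maximal dimension because the family is $p$-dimensional and the lineality space absorbs the grading of $I_\A$.

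For the explicit description, I would exploit the natural $\ZZ^p$-grading on $\A^\univ$: each coordinate $t_k$ on $\Aff^p$ is homogeneous of a specific degree $w_k \in \ZZ^{p+q}$ determined by the $g$-vector data attached to the $k$-th universal coefficient. The one-parameter subgroup along the $t_k$-axis corresponds under the construction above to the weight $w_k$, and more general rays in $\Aff^p$ produce positive combinations. Thus, modulo the lineality space $L$ consisting of weights under which $I_\A$ is homogeneous (i.e., the grading lattice of $\A$), one expects
\[
C/L = \cone(w_1, \ldots, w_p) \subset \RR^{p+q}/L.
\]
Equivalently, $C$ is cut out by the inequalities $w(x_k) + w(x_{k'}) \geq w(M_j)$, one per exchange relation $x_k x_{k'} = M_1 + M_2$ in $I_\A$ and each $j \in \{1,2\}$; these are exactly the conditions that each non-face monomial beat the other terms in its binomial relation.

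For the simplicial and smoothness claims, the linear map $\RR^p \to \RR^{p+q}/L$ sending $\mathbf{e}_k \mapsto w_k$ is injective precisely when $\A$ has full rank, since full rank guarantees that the universal coefficient gradings are independent modulo the grading of $\A$; under this hypothesis $C/L$ is the image of $\RR_{\geq 0}^p$, hence a $p$-dimensional simplicial cone. In the full $\ZZ$-rank case, the same map is injective on the relevant integer lattices with saturated image, so the primitive ray generators $w_k$ form a $\ZZ$-basis of their span, yielding smoothness modulo lineality. The main obstacle I anticipate is verifying that the correspondence $t_k \leftrightarrow w_k$ recovers \emph{every} weight in the interior of the claimed cone --- i.e., that no initial ideal strictly between $I_\A$ and $I_\K$ arises for generic interior weights. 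This should follow from flatness of the universal family and from the fact that $I_\A$ is the generic fiber, but it is the point at which the argument requires most care.
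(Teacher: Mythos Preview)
Your overall strategy is sound and is close in spirit to the paper's, but the paper routes the argument through the multigraded Hilbert scheme rather than working directly with the flat family. Specifically, the paper uses Proposition~\ref{prop:grob}, which identifies the Gr\"obner fan of $I_\A$ with the preimage of the fan describing the normalization of $\overline{\TT.[\A]}\subset\Hilb_\A^H$, and then invokes Theorem~\ref{thm:hilb}\ref{part:h4} to see that the relevant chart of this orbit closure is $\Aff^p$. Your approach of reading off the cone directly from the $\TT$-weights of the $t_k$ in the family $\spec\A^\univ\to\Aff^p$ is essentially the dual picture and would work, but two points need attention.

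First, your passage ``restricting along any ray \ldots\ yields a $\KK[t]$-flat degeneration, which by the standard correspondence produces a $w$'' is not valid as stated: an arbitrary flat degeneration of $I_\A$ to a monomial ideal is not a priori a Gr\"obner degeneration. You must use the $\TT$-equivariance of the family (Theorem~\ref{thm:hilb}, part~1) to choose a one-parameter subgroup $\lambda\subset\TT$ whose action on $\Aff^p$ contracts to the origin; then $\lambda$ gives a genuine weight vector $w$ with $\mathrm{in}_w(I_\A)=I_\K$. The existence of such a $\lambda$ is exactly what Theorem~\ref{thm:hilb}\ref{part:h4} provides in the full-rank, positively graded case.

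Second, and more seriously, the first assertion of the corollary (existence of the cone $C$) is claimed for \emph{arbitrary} $\A$ of finite cluster type, with no positivity or rank hypothesis, and your argument does not cover this. The paper handles the general case by first adding frozen variables to reach a positively graded full-rank algebra $\A'$ (Lemma~\ref{lemm:add_frozens}), establishing everything there, and then projecting the Gr\"obner cone $C'$ for $I_{\A'}$ down to one for $I_\A$; the key observation is that the extra frozen variables do not appear in the initial terms. Your proposal omits this reduction step entirely. Finally, your smoothness argument under full $\ZZ$-rank is asserted rather than proved; the paper deduces it from the splitting of $0\to M_{\widetilde\TT}\to\ZZ^{p+q}\to M\to 0$, which holds because $M$ is torsion-free by Corollary~\ref{cor:fullZ}.
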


\subsection{Organization}
We begin in \S\ref{sec:prelim} by recalling preliminaries and fixing notation. In particular, we introduce simplicial complexes, relevant notions from deformation theory, cluster algebras, and cluster complexes. None of this material is new.

In \S\ref{sec:cotan} we prove our first new results, showing that cluster complexes $\K$ for skew symmetric cluster algebras of finite cluster type are unobstructed, and giving a description of $T^1(S_\K)$. Both these results rely heavily on Altmann and Christophersen's work on cotangent cohomology for Stanley-Reisner schemes \cite{ac1,ac2}. The other essential ingredient is Proposition \ref{prop:Lb}, which describes the intersection behaviour of certain links of the cluster complex $\K$. 

In \S\ref{sec:coeffs} we recall the notions of principal and universal coefficients for cluster algebras, as well as Grabowski's work on gradings of cluster algebras \cite{Gra}. 
We relate the degrees of cluster variables to $\bg$-vectors (Corollary \ref{cor:cv_degrees}), characterize positively graded cluster algebras in terms of their $\bg$-fan (Remark \ref{rem:g-fan}), and show that one can always add frozen variables to any cluster algebra of finite cluster type to obtain a positive grading (Lemma \ref{lemm:add_frozens}). We also use the results of \cite{GHKK} to prove the folklore result that any cluster algebra of finite type (with non-invertible frozen variables) has a basis given by extended cluster monomials (Theorem \ref{thm:basis}).

In \S\ref{sec:hilb} we dive into the connection between $\A^\univ$ and the multigraded Hilbert scheme parametrizing multigraded ideals with the same multigraded Hilbert function as the ideal of relations among the cluster and frozen variables of $\A$. After fixing notation in \S\ref{sec:clusterembedding}, we state and prove the main result in this direction in \S\ref{sec:hilbthm}. The main ingredients are previous results on universal cluster algebras \cite[\S 12]{FZ_clustersIV}, the use of torus actions (or equivalently multigradings), and the basis of extended cluster monomials.
In \S\ref{sec:grob}, we apply these results to study the Gr\"obner theory of $I_\A$.

Finally, in \S\ref{sec:def} we study the relationship between $\A^\univ$ and the semiuniversal deformation of the Stanley-Reisner scheme $(\spec S_\K)\times \Aff^q$. For this, we first need to analyze the differential for the characteristic map of the family $\spec \A^\univ\to\Aff^p$, and understand the behaviour of derivations on the Stanley-Reisner ideal $I_\K$.

Throughout the paper, we follow a running example of a cluster algebra of type $A_2$, see Examples \ref{ex:r1}, \ref{ex:r2}, \ref{ex:r3}, \ref{ex:r4}, \ref{ex:r5}, \ref{ex:r6}, \ref{ex:r7}, \ref{ex:r8}, \ref{ex:r9},  and \ref{ex:r11}. We also consider an example of type $G_2$ immediately following in \S\ref{ex:g2}. \emph{Macaulay2} code for both examples is found in Appendix \ref{app:code}.
\subsection{An Example: $G_2$}\label{ex:g2}
To conclude the introduction, we consider the example of the cluster algebra $\A$ of type $G_2$ with trivial coefficients. There are exactly eight cluster variables $v_1,\ldots,v_8$, and they are related by the exchange relations
\begin{align*}
	v_{i-1}v_{i+1}=v_i+1 \qquad i\ \textrm{even};\\
	v_{i-1}v_{i+1}=v_i^3+1 \qquad i\ \textrm{odd}.
\end{align*}
Here, indices are taken modulo $8$. The cluster complex $\K$ is the boundary of an octagon. Its vertices (in cyclical order) are exactly the $8$ cluster variables $v_1,\ldots,v_8$.
Letting $z_i\in\KK[z_1,\ldots,z_k]$  be the free variable corresponding to the vertex $v_i$ of the cluster complex $\K$, the Stanley-Reisner ideal $I_\K$ is generated by
\begin{align*}
	z_iz_{i+2},\qquad z_iz_{i+3} \qquad i=1,\ldots,8\\
	z_jz_{j+4} \qquad j=1,\ldots,4.
\end{align*}

In this example, $\A$ does not have ``enough'' frozen variables (as it does not have any at all). We may nonetheless proceed as follows.
The space $T^1(S_\K)$ of first order deformations is infinite dimensional, but by Lemma \ref{lemma:injective}, the image of the characteristic map of $\spec \A^{\univ}\to\Aff^8=\spec\KK[t_1,\ldots,t_8]$ in $T^1(S_\K)$ consists of exactly the first order deformations given by the perturbation of $I_\K$ generated by
\begin{align*}
	z_{i-1}z_{i+1}-t_{i}z_i \qquad i\ \textrm{even};\\
	z_{i-1}z_{i+1}-t_{i}z_i^3 \qquad i\ \textrm{odd};\\
	z_iz_{i+3},\qquad z_jz_{j+4}
\end{align*}
modulo $\langle t_1,\ldots,t_8\rangle^2$.

One may compute a lifting of this family over $\Aff^8$ (using e.g.~the \emph{Macaulay2} package {\tt{VersalDeformations}} \cite{versal}) to obtain
\begin{align*}
	\begin{array}{l l l l l}
	z_{i-1}z_{i+1} &-t_{i}z_i &-t_{i+2}t_{i+3}t_{i+4}^2t_{i+5}t_{i+6}\qquad  &i\ \textrm{even};\\
	z_{i-1}z_{i+1} &-t_{i}z_i^3 &- t_{i+2}t_{i+3}^3t_{i+4}^2t_{i+5}^3t_{i+6}\qquad &i\ \textrm{odd};\\
	z_iz_{i+3} &-t_{i+1}t_{i+2}z_{i+2}^2&-t_{i+4}t_{i+5}^2t_{i+6}t_{i+7}z_{i+4}\qquad  &i\ \textrm{odd};\\
	z_iz_{i-3} &-t_{i-1}t_{i-2}z_{i-2}^2&-t_{i-4}t_{i-5}^2t_{i-6}t_{i-7}z_{i-4}\qquad  &i\ \textrm{odd};\\
	z_iz_{i+4} &-t_{i-1}t_{i-2}t_{i-3}z_{i-2}&-t_{i+1}t_{i+2}t_{i+3}z_{i+2}\qquad  &i\ \textrm{odd};\\
	z_iz_{i+4} &-t_{i-1}t_{i-2}^3t_{i-3}z_{i-2}&-t_{i+1}t_{i+2}^3t_{i+3}z_{i+2} -3t_{i-2}t_{i+2}(t_1\cdots t_8) \qquad  &
	i\ \textrm{even}.\\
\end{array}
\end{align*}
See \S\ref{code:g2} for \emph{Macaulay2} code computing this family.
In this instance, the hypotheses of Theorem \ref{thm:versal}\ref{part:one} are fulfilled. Furthermore, one may check that the above family is an \emph{exchange minimal} family (see Definition \ref{defn:exmin}).
By Theorem \ref{thm:versal}\ref{part:one} we may thus conclude that the above family is the universal cluster algebra $\A^\univ$ for $\A$. 

Specializing all deformation parameters to $1$, we obtain a presentation of the cluster algebra $\A$:
\begin{align*}
	\begin{array}{l l l l l}
	z_{i-1}z_{i+1} &-z_i &-1\qquad  &i\ \textrm{even};\\
	z_{i-1}z_{i+1} &-z_i^3 &- 1\qquad &i\ \textrm{odd};\\
	z_iz_{i+3} &-z_{i+2}^2&-z_{i+4}\qquad  &i\ \textrm{odd};\\
	z_iz_{i-3} &-z_{i-2}^2&-z_{i-4}\qquad  &i\ \textrm{odd};\\
	z_iz_{i+4} &-z_{i-2}&-z_{i+2}\qquad  &i\ \textrm{odd};\\
	z_iz_{i+4} &-z_{i-2}&-z_{i+2}-3
	\qquad & i\ \textrm{even}.\\
\end{array}
\end{align*}
We recover the exchange relations, but also additional relations between non-exchangeable cluster variables. In fact, the polynomials we have written down form a Gr\"obner basis (with initial ideal $I_\K)$.

\subsection*{Acknowledgments}
We would like to express our gratitude to Lara Bossinger and Jake Levinson for inspiring discussions related to the subject of this paper.
N.~Ilten is supported by NSERC.
 A. N\'ajera Ch\'avez is supported by the Secretar\'ia de Ciencias, Humanidades, Tecnolog\'ia e Inovaci\'on (Secihti) grant CF-2023-G-106. 
H.~Treffinger was partially funded by the Deutsche Forschungsgemeinschaft (DFG, German Research Foundation) under Germany's Excellence Strategy Programme -- EXC-2047/1 -- 390685813.
H.~Treffinger is also supported by the European Union’s Horizon 2020 research and innovation programme under the Marie Sklodowska-Curie grant agreement No 893654.
Part of the work by H.~Treffinger was carried out at the Isaac Newton Institute in Cambridge, in a visit supported by a grant from the Simons Foundation.
Portions of this paper were written on the unceded traditional territories of the Squamish, Tsleil-Waututh, Musqueam and Kwikwetlem Peoples. 

We thank the anonymous referee for helpful questions and comments.

\section{Preliminaries}\label{sec:prelim}
Throughout, $\KK$ is an algebraically closed field of characteristic zero.  Unless otherwise stated, all tensor products are over $\KK$.
\subsection{Simplicial Complexes and Stanley-Reisner Rings}\label{sec:SR}
A \emph{simplicial complex} is a collection $\K$ of subsets of some fixed finite set, such that if $f\in \K$, then every subset of $f$ is also in $\K$. Elements of $\K$ are called \emph{faces} of $\K$. Faces of $\K$ consisting of a single element are called \emph{vertices}; faces with two elements are \emph{edges}. We denote by $\V(\K)$ the union of all vertices of $\K$ (and will write $\V$ if $\K$ is clear from context).
We note that $\K$ is a subset of $\PS(\V(\K))$, the power set of $\V(\K)$.

To any simplicial complex $\K$, one may construct its \emph{Stanley-Reisner ideal}
\[
	I_\K=\left\langle \prod_{v\in f} z_v\ |\ f\in \PS(\V)\setminus \K\right\rangle \subseteq \KK[z_v\ |\ v\in \V]
\]
and its \emph{Stanley-Reisner ring}
\[
S_\K=\KK[z_v\ |\ v\in \V]/I_\K.
\]
Since $I_\K$ is a monomial ideal, the ring $S_\K$ has a natural grading by $\ZZ^{\V}$. Here, $\ZZ^{\V}$ is the group of tuples of elements of $\ZZ$ indexed by elements of $\V$.
See \cite{stanley} for more details on Stanley-Reisner rings.

A simplicial complex $\K$ is a \emph{flag complex} if all minimal non-faces have two elements, or equivalently, the ideal $I_\K$ is generated by quadrics.
We call the minimal non-faces of a flag complex \emph{non-edges}.

Given a face $f$ of $\K$, the \emph{link} of $f$ in $\K$ is the simplicial complex 
\[
	\link(f,\K)=\{g\in \K\ |\ f\cap g=\emptyset\ \textrm{and}\ f\cup g\in\K\}. 
\]
Given two simplicial complexes $\K,\K'$ with distinct vertex sets, their \emph{join} is
\[
	\K*\K'=\{f\cup f'\ |\ f\in \K,f'\in\K'\}.
\]
On the side of Stanley-Reisner rings, this corresponds to taking the tensor product:
\[
	S_{\K*\K'}\cong S_\K\otimes S_{\K'}.
\]

To any simplicial complex $\K$ we may associate a topological space known as its \emph{geometric realization}:
\[
|\K|=	\left\{\alpha:\V\to \RR\ |\ \supp(\alpha)\in \K\ \textrm{and}\ \sum_{v\in \V} \alpha(v)=1\right\}.
\]
Here $\supp(\alpha)=\{v\in \V\ |\ \alpha(v)\neq 0\}$.
We say that $\K$ is an \emph{$n$-sphere} if $|\K|$ is piecewise-linear homeomorphic to the boundary of $|\PS(\{0,\ldots,n+1\})|$.

\subsection{Cotangent Cohomology for Flag Complexes}\label{sec:cotangent}
For any commutative $\KK$-algebra $S$,
there is a complex $\CTC$ of free $S$ modules called the \emph{cotangent complex}, see e.g.~\cite[pp 34]{cotangent2}. The $i$th \emph{cotangent cohomology} of $S$ is 
\[
T^i(S)=H^i(\Hom_S(\CTC,S)).
\]
We will be particularly interested in $T^i$ for $i\leq 2$, which may be understood in more concrete terms. For example, the module $T^0(S)$ is just the module $\Der_\KK(S,S)$ of $\KK$-linear derivations of $S$ to itself.

The module $T^1(S)$ may also be understood quite concretely. Consider a presentation 
\[
0 \to I \to R \to S \to 0
\]
of $S$ with $R$ a polynomial ring (with possibly infinitely many variables). We may identify the module $T^1(S)$ with the quotient of $\Hom_R(I,S)$ by the image of $\Der_\KK(R,S)$ under the natural map \cite[\S 1.3]{deftheory}.
Elements of $T^1(S)$ are in bijection with isomorphism classes of \emph{first order deformations} of $S$: flat $\KK[\epsilon]/\epsilon^2$-algebras $\widetilde S$ whose reduction modulo $\epsilon$ is $S$.

When $S$ is graded by an abelian group $M$, the $S$-modules $T^i(S)$ inherit a natural $M$-grading. In particular,
taking a homogeneous presentation $R\to S$, we may represent a degree $u$ element of  $T^1(S)$ by an $R$-homomorphism $I\to S$ sending elements of degree $v$ to degree $v+u$.

The module $T^2(S)$ also has a concrete description, for which we refer the reader to \cite[\S 1.3]{deftheory}. Non-zero elements of $T^2(S)$ give \emph{obstructions} to lifting deformations of $S$ to higher order. In particular, if $T^2(S)=0$, any deformation of $S$ can be extended to arbitrary order.
See  \cite{deftheory} for a general introduction to deformation theory.

For $\K$ a simplicial complex, we will write $T^i(\K):=T^i(S_\K)$. 
\begin{defn}
	A simplicial complex is \emph{unobstructed} if $T^2(\K)=0$.
\end{defn}

The $S_\K$-modules $T^i(\K)$ inherit the natural $\ZZ^{\V}$-grading of $S_\K$.
Altmann and Christophersen \cite{ac1} give a combinatorial description for the graded pieces  $T^i(\K)_\bc$ for $\bc\in\ZZ^\V$. We will not give a full account of this description here, but instead provide only the details necessary for our purposes.

We will always write $\bc\in\ZZ^\V$ uniquely as $\bc=\ba-\bb$ with $\ba,\bb\in\ZZ_{\geq 0}^\V$ having disjoint supports $a,b\subseteq \V$. 
\begin{thm}[{\cite[Theorem 9, Proposition 11]{ac1}}]\label{thm:localization}
	For any simplicial complex $\K$ and $i=1,2$, $T^i(\K)_\bc=0$ unless $a\in \K$, $\bb\in\{0,1\}^\V$, $b\neq \emptyset$, and $b$ is contained in the set of vertices of $\link(a,\K)$. Furthermore, if $a\neq \emptyset$, there is a natural isomorphism
	\[
		T^i(\K)_\bc\cong T^i(\link(a,\K))_{-\bb}.
	\]
\end{thm}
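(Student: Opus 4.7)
The plan is to analyze $T^i(\K)$ via the natural $\ZZ^\V$-grading on the cotangent data. Fix the polynomial ring $R = \KK[z_v \mid v \in \V]$ and the presentation $0 \to I_\K \to R \to S_\K \to 0$. Using the concrete descriptions recalled in \S\ref{sec:cotangent}, $T^1(\K)$ is the quotient of $\Hom_R(I_\K, S_\K)$ by the image of $\Der_\KK(R, S_\K)$, and $T^2(\K)$ admits an analogous presentation involving the syzygies among the squarefree monomial generators $z^f$ of $I_\K$, where $f$ runs over the minimal non-faces. Since $I_\K$ is monomial, this entire setup is $\ZZ^\V$-graded, and we argue one multidegree $\bc = \ba - \bb$ at a time.

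First I would derive the support restrictions. A degree-$\bc$ homomorphism $\phi : I_\K \to S_\K$ sends each generator $z^f$ to an element of $S_\K$ of multidegree $\chi_f + \ba - \bb$, where $\chi_f \in \{0,1\}^\V$ is the characteristic vector of $f$. Such an element vanishes automatically unless $\chi_f + \ba - \bb$ has nonnegative coordinates and its support $(f \cup a) \setminus b$ is a face of $\K$. Nonnegativity on $\supp(\bb)$ combined with $\ba_v = 0$ for $v \in \supp(\bb)$ forces $\bb \leq \chi_f$ there, so $\bb \in \{0,1\}^\V$; the face condition forces $a \in \K$, since otherwise $(f \cup a) \setminus b \supseteq a \setminus b = a$ is never a face. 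For $\bb = 0$, observe that $f \cup \supp(\ba) \supseteq f$ is never a face (as $f$ is a non-face and $\K$ is downward closed), so $z^{\chi_f + \ba}$ is zero in $S_\K$; hence $\Hom_R(I_\K, S_\K)_\ba = 0$ and $T^1(\K)_\ba = 0$, and a parallel analysis of the Koszul-type syzygies yields $T^2(\K)_\ba = 0$, establishing $b \neq \emptyset$.

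For $a \neq \emptyset$, the isomorphism $T^i(\K)_{\ba - \bb} \cong T^i(\link(a,\K))_{-\bb}$ is implemented by ``multiplication by $z^a$''. Under the constraints just derived, only those generators $z^f$ with $f$ contained in $a \cup \V(\link(a, \K))$ can have nonzero image; these are naturally in bijection with the minimal non-faces $f'$ of $\link(a,\K)$ (via a correspondence $f \leftrightarrow f'$ that removes the $a$-part and records the complementary vertices). A degree-$(-\bb)$ map $\psi : I_{\link(a,\K)} \to S_{\link(a,\K)}$ then lifts to a degree-$\bc$ map $\phi : I_\K \to S_\K$ by $\phi(z^f) = z^a \cdot \psi(z^{f'})$ on the corresponding generators and $\phi(z^f) = 0$ elsewhere, consistent with the support analysis above. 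The inverse extracts the $z^a$-factor. The same $z^a$-multiplication transports the syzygy data computing $T^2$, and descends through the derivation-quotient. As a corollary, if $v \in b$ is not a vertex of $\link(a,\K)$, then $T^i(\link(a,\K))_{-\bb} = 0$ since the variable $z_v$ is absent from $S_{\link(a,\K)}$, yielding $b \subseteq \V(\link(a,\K))$.

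The main obstacle is the $T^2$ case, where one must control the second syzygies of $I_\K$ and verify that trivial relations on the link side correspond bijectively to trivial relations on the full complex. For $T^1$ the analysis reduces cleanly to tracking individual monomial generators and derivations, but $T^2$ requires a more delicate combinatorial bookkeeping of how pairs and triples of generators interact through Koszul-type relations, and of how this interaction behaves under the $z^a$-multiplication correspondence. This is precisely where the detailed analysis of \cite{ac1} becomes essential, and I would follow their combinatorial framework rather than redeveloping it from scratch.
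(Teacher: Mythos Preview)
The paper does not prove this theorem; it is simply cited from \cite{ac1}. So there is no ``paper's own proof'' to compare against --- your final paragraph, deferring the technical core to \cite{ac1}, is exactly what the paper does.

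That said, your sketch has a concrete gap. The claimed bijection between minimal non-faces $f$ of $\K$ contained in $a \cup \V(\link(a,\K))$ and minimal non-faces of $\link(a,\K)$ via $f \mapsto f \setminus a$ is false. Take $\K$ on $\{1,2,3,4\}$ with facets $\{1,2,3\}$, $\{1,2,4\}$, $\{3,4\}$, and $a = \{1\}$: the minimal non-faces of $\K$ are $\{1,3,4\}$ and $\{2,3,4\}$, both contained in $a \cup \V(\link(a,\K)) = \{1,2,3,4\}$, whereas $\link(a,\K)$ has the single minimal non-face $\{3,4\}$. The graded $\Hom$-modules are nonetheless isomorphic, because the syzygy $z_2 \cdot (z_1 z_3 z_4) = z_1 \cdot (z_2 z_3 z_4)$ forces the value on the second generator once the first is fixed; but this already shows that the isomorphism cannot be read off generator-by-generator as you propose, and the relations must be tracked simultaneously. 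The approach actually taken in \cite{ac1} sidesteps this bookkeeping by identifying $T^i(\K)_\bc$ with the reduced cohomology of an explicit auxiliary simplicial complex built from $b$ and the local structure of $\K$ near $a$; the localization isomorphism then amounts to the observation that this auxiliary complex is unchanged when $\K$ is replaced by $\link(a,\K)$. Your instinct to defer to \cite{ac1} is correct, but the ``multiplication by $z^a$'' heuristic would need substantial repair before it became a proof even for $T^1$.
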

In this paper, we will be particularly interested in cotangent cohomology for flag simplicial complexes that are spheres.
\begin{prop}\label{prop:t1}
	Let $\K$ be a flag complex that is a sphere. Assume that
	for any non-edge $\{u,v\}$, $\link(u,\K)\cap \link(v,\K)$ is either empty or contractible.
	Then for any $\bb\in\{0,1\}^\V$, 
	\[
		T^1(\K)_{-\bb}\cong\begin{cases}
			\KK & \textrm{if }\K\ \textrm{is a $0$-sphere and}\ b=\V\\
			0 & \textrm{otherwise}
		\end{cases}
	\]

\end{prop}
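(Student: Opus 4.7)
The plan is to apply Theorem~\ref{thm:localization} and then perform a case analysis on $|b|$, where $b = \supp(\bb)$, computing $T^1(\K)_{-\bb}$ as the space of $R$-module homomorphisms $\phi\colon I_\K \to S_\K$ modulo derivations, where $R = \KK[z_v\mid v\in\V]$. Since $\K$ is flag, $I_\K$ is generated by the quadratic monomials $z_u z_w$ for non-edges $\{u,w\}$, and any $\phi$ of degree $-\bb$ must send $z_u z_w$ to an element of $S_\K$ of degree $e_u + e_w - \bb$, which is non-negative in every coordinate only if $b \subseteq \{u,w\}$. In particular, if $|b| \geq 3$, or if $|b| = 2$ and $b \in \K$, then no generator can be sent to a non-zero element, so $\phi \equiv 0$ and $T^1(\K)_{-\bb} = 0$.

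Suppose next that $|b| = 2$ and $b = \{u,w\}$ is a non-edge. Then $\phi(z_u z_w) = c \in \KK$ is a constant and $\phi$ vanishes on all other generators. The Koszul syzygy between $z_u z_w$ and another non-edge $z_u z_{w'}$ forces $c z_{w'} = 0$ in $S_\K$, and hence $c = 0$ whenever $u$ has some non-neighbor distinct from $w$; the symmetric argument applies to $w$. If both $u$ and $w$ have each other as their unique non-neighbor, then $\K$ decomposes as the suspension $\{u,w\} * \link(u,\K)$ with $\link(u,\K) = \link(w,\K) = \link(u,\K) \cap \link(w,\K)$; the hypothesis forces this intersection to be empty or contractible, and since the suspension of a contractible simplicial complex is contractible while $\K$ is a sphere, the intersection must in fact be empty. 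Hence $\V = \{u,w\}$, $\K$ is the $0$-sphere, and $b = \V$; in this exceptional case no derivation of degree $-\bb$ exists and $T^1(\K)_{-\bb} \cong \KK$, while otherwise $T^1(\K)_{-\bb} = 0$.

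Finally, for $|b| = 1$ with $b = \{v\}$, $\phi$ is determined by a collection of constants $c_w \in \KK$ for each non-neighbor $w$ of $v$ via $\phi(z_v z_w) = c_w z_w$, and Koszul syzygies enforce $c_{w_1} = c_{w_2}$ whenever $\{w_1,w_2\}$ is an edge of $\K$. Quotienting by the image of the derivations of degree $-e_v$, which contribute exactly the constant tuples, one identifies
\[
T^1(\K)_{-e_v} \cong \widetilde H^0(\K|_{N^c(v)}; \KK),
\]
where $N^c(v)$ denotes the set of non-neighbors of $v$ and $\K|_{N^c(v)}$ is the corresponding full subcomplex. The vanishing of this reduced cohomology amounts to the connectivity (or emptiness) of $\K|_{N^c(v)}$, and this is the main obstacle. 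The plan is to argue as follows: if $u_1,u_2 \in N^c(v)$ lie in distinct connected components, then $\{u_1,u_2\}$ is a non-edge and, since any common neighbor of $u_1, u_2$ in $N^c(v)$ would merge the components, the hypothesis forces all common neighbors of $u_1, u_2$ into $N(v)$. A nerve-theoretic or Mayer--Vietoris argument applied to the cover of $\link(v,\K)$ by the empty or contractible subcomplexes $\link(u,\K) \cap \link(v,\K)$ for $u \in N^c(v)$ should then yield the required topological contradiction from $\link(v,\K)$ being an $(n-1)$-sphere.
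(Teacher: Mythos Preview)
Your treatment of the cases $|b|\geq 2$ is correct and more explicit than the paper's proof, which quotes \cite[Lemma 4.3]{ac2} for the bound $\dim T^1(\K)_{-\bb}\leq 1$ and \cite[Theorem 4.6]{ac2} for the characterization of non-vanishing when $b$ is a non-edge; your direct Koszul-syzygy computation is a pleasant alternative.

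The gap is the case $|b|=1$. Your identification $T^1(\K)_{-e_v}\cong \widetilde H^0(\K|_{N^c(v)};\KK)$ is correct, but the sketch you give for its vanishing does not work. The subcomplexes $\link(u,\K)\cap\link(v,\K)$ for $u\in N^c(v)$ need not cover $\link(v,\K)$: a vertex $x\in N(v)$ with $N(x)\subseteq N(v)\cup\{v\}$ lies in none of them. Even granting a cover, the hypothesis controls only pairwise intersections of links, not the higher intersections a nerve argument requires, and the nerve of this cover is not $\K|_{N^c(v)}$ in any case (its edges record common neighbors in $N(v)$, not adjacency). The paper handles $|b|=1$ by citing \cite[Lemma 4.3]{ac2}, which gives $T^1(\K)_{-e_v}=0$ for any simplicial sphere, independent of the link hypothesis. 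Your approach can be salvaged by a different nerve argument that also avoids the link hypothesis: since $\K$ is an $n$-sphere, the antistar $\K|_{\V\setminus\{v\}}$ is an $n$-disk; its interior is covered by the open stars of the vertices $u\in N^c(v)$, finite intersections of these are open stars of faces of $\K|_{N^c(v)}$ (hence contractible or empty), so this is a good cover with nerve $\K|_{N^c(v)}$, and the nerve lemma shows $\K|_{N^c(v)}$ is contractible.
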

\begin{proof}
	We begin by noting that $T^1(\K)$ is a quotient of $\Hom(I_\K,S_\K)$. Since $\K$ is a flag complex, $I_\K$ is generated by quadrics.
	It follows that if $T^1(\K)_{-\bb}\neq 0$, the support $b$ satisfies $|b|\leq 2$ with $b\notin \K$ if $|b|=2$.
	
	Since $\K$ is a sphere, \cite[Lemma 4.3]{ac2} implies that $T^1(\K)_{-\bb}$ vanishes if $b$ is a vertex, so we may reduce to the case that 
	$b=\{u,v\}$ is a non-edge. Furthermore, by loc.~cit. $\dim T^1(\K)_{-\bb}\leq 1$, so it only remains to characterize when $T^1(\K)_{-\bb}$ vanishes.

	By \cite[Theorem 4.6]{ac2}, $T^1(\K)_{-\bb}\neq 0$ if and only if $\K$ is the join of $\partial b:=\PS(b)\setminus \{b\}$ with a sphere $L$. 
The complex $L$ is necessarily $\link(u,\K)\cap \link(v,\K)$. If this complex is contractible, then $L$ is not a sphere, and $T^1(\K)_{-\bb}\neq 0$. If 
$\link(u,\K)\cap \link(v,\K)$ is not contractible, then by assumption it is empty. Thus, $T^1(\K)_{-\bb}\neq 0$ if and only if $\K=\partial b$. The statement of the proposition follows.
\end{proof}

\begin{prop}\label{prop:t2}
Let $\K$ be a flag complex that is a sphere of dimension at least two. Then $\K$ is unobstructed if the following conditions are met:
\begin{enumerate}
	\item For every non-empty face $f$ of $\K$, $\link(f,\K)$ is unobstructed; and \label{item:links}
	\item For any non-edge $\{u,v\}$, $\link(u,\K)\cap \link(v,\K)$ is either empty or contractible. \label{item:Lb}
\end{enumerate}
\end{prop}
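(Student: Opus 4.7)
The plan is to parallel the strategy of Proposition \ref{prop:t1}, reducing the vanishing of $T^2(\K)$ to its squarefree graded pieces via Theorem \ref{thm:localization} and then analyzing these pieces through the combinatorial machinery of Altmann and Christophersen \cite{ac1,ac2}. First, Theorem \ref{thm:localization} already restricts the support of $T^2(\K)$: for $\bc = \ba - \bb$ we may assume $a \in \K$, $\bb \in \{0,1\}^\V$, $b \neq \emptyset$. If $a \neq \emptyset$, the isomorphism $T^2(\K)_\bc \cong T^2(\link(a,\K))_{-\bb}$ together with hypothesis \ref{item:links} immediately gives vanishing. The remaining burden is therefore to establish $T^2(\K)_{-\bb} = 0$ for every nonempty squarefree $\bb$ in the case $a = \emptyset$.

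I would next perform a case analysis on $|b|$, mirroring the reasoning of Proposition \ref{prop:t1}. When $b$ is a single vertex, the sphere property of $\K$ combined with the $T^2$-analogue of \cite[Lemma 4.3]{ac2} should force vanishing. When $|b| = 2$ and $b$ is an edge, so that $b \in \K$, the link $\link(b,\K)$ is a sphere of positive dimension and parallel arguments apply. When $b = \{u,v\}$ is a non-edge, I would invoke the appropriate $T^2$-analogue of \cite[Theorem 4.6]{ac2}, which characterizes nonvanishing of $T^2(\K)_{-\bb}$ in terms of the topology of $\link(u,\K) \cap \link(v,\K)$. Hypothesis \ref{item:Lb}, together with the assumption $\dim \K \geq 2$ (which rules out any join decomposition of $\K$ forcing it to be a $0$-sphere), then closes this case.

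For $|b| \geq 3$ I would exploit the flag property: every non-face of $\K$ of size at least three contains a non-edge, so the non-face structure relevant to $T^2(\K)_{-\bb}$ is already controlled by non-edges inside $b$. A Mayer--Vietoris-type argument, or an inductive application of the combinatorial (co)chain complex that Altmann and Christophersen associate to squarefree degrees, should reduce this computation either to the previously handled cases $|b| \leq 2$, or to nontrivial link cohomology controlled by hypothesis \ref{item:links}.

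The hardest step will be this last reduction. The $T^2$ formulas in \cite{ac2} are substantially more intricate than those for $T^1$: they involve not only intersections of links but also higher syzygies among the quadratic generators of $I_\K$. Rigorously carrying out the reduction to the non-edge case for $|b| \geq 3$ will demand careful bookkeeping of how the flag and sphere hypotheses interact inside the combinatorial complex computing the squarefree pieces, and it is here that I expect the bulk of the technical work to lie.
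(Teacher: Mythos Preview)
The paper does not prove this proposition; it simply cites \cite[Proposition 4.3]{srdegen}. Your outline is plausibly a reconstruction of the argument in that reference, and the first reduction---using Theorem \ref{thm:localization} together with hypothesis \ref{item:links} to dispose of all degrees with $a\neq\emptyset$---is correct and is indeed the right opening move.

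However, your treatment of the remaining case $a=\emptyset$ is a sketch with real gaps rather than a proof. You invoke unnamed ``$T^2$-analogues'' of \cite[Lemma 4.3]{ac2} and \cite[Theorem 4.6]{ac2} without verifying that such analogues exist in the form you need; the $T^2$ theory in \cite{ac1,ac2} is genuinely more intricate than the $T^1$ theory, and the characterization of nonvanishing is not a straightforward transcription of the $T^1$ statements. More seriously, your handling of $|b|\geq 3$ is only a hope (``a Mayer--Vietoris-type argument \ldots\ should reduce''), and you yourself flag this as where the bulk of the work lies. To turn this into an actual proof you would need either to locate the precise results in \cite{ac1,ac2} that control $T^2(\K)_{-\bb}$ for flag spheres with $|b|\geq 2$, or to consult \cite{srdegen} directly---which is exactly what the paper does.
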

\begin{proof}
	This follows from \cite[Proposition 4.3]{srdegen}.
\end{proof}

\begin{rem}
\label{rem:1-spheres}
	Proposition \ref{prop:t2} is \emph{not} true in general if $\K$ is a $1$-sphere. In fact, if $\K$ is the boundary of an $m$-gon, then $\K$ is unobstructed if and only if $m\leq 5$ \cite[Example 17]{ac1}.
\end{rem}

We will make use of the following lemma:
\begin{lemma}[{\cite[Proposition 1.3]{srdegen}}]\label{lemma:tensor}
Let $S$ and $S'$ be commutative $\KK$-algebras. Then for all $i\geq 0$,
\[
T^i(S\otimes S')\cong T^i(S)\otimes S'\oplus T^i(S')\otimes S.
\]
\end{lemma}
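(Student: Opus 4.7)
The plan is to leverage the transitivity and flat base change properties of the cotangent complex. I would apply the transitivity distinguished triangle for $\KK \to S' \to S \otimes S'$, together with flat base change along $\KK \to S'$ (valid since $S$ is flat over the field $\KK$) to identify the relative cotangent complex $\mathbb{L}^{(S\otimes S')/S'}$ with $\mathbb{L}^{S/\KK} \otimes_\KK S'$. This gives a distinguished triangle
\[
\mathbb{L}^{S'/\KK} \otimes_{S'} (S\otimes S') \to \mathbb{L}^{(S\otimes S')/\KK} \to \mathbb{L}^{S/\KK} \otimes_\KK S'.
\]
Applying $\Hom(-, S\otimes S')$ and using tensor-hom adjunction together with flatness of $S, S'$ over $\KK$, the outer terms become $T^i(S') \otimes S$ and $T^i(S) \otimes S'$ respectively, yielding a long exact sequence
\[
\cdots \to T^i(S') \otimes S \to T^i(S\otimes S') \to T^i(S) \otimes S' \to T^{i+1}(S') \otimes S \to \cdots.
\]

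Next I would construct splittings using the natural symmetry between $S$ and $S'$. Interchanging their roles in the previous argument yields a comparison morphism $T^i(S) \otimes S' \to T^i(S\otimes S')$ arising from the canonical inclusion $S \hookrightarrow S \otimes S'$, and symmetrically one has $T^i(S') \otimes S \to T^i(S\otimes S')$. A diagram chase at the level of cotangent complexes --- using that the two transitivity triangles associated to $\KK \to S \to S\otimes S'$ and $\KK \to S' \to S\otimes S'$ are compatible via the pushout structure on $S \otimes S'$ --- shows that these comparison maps are sections of the corresponding edge maps in the long exact sequence. Therefore the sequence splits into short split exact sequences, producing the claimed decomposition $T^i(S \otimes S') \cong T^i(S) \otimes S' \oplus T^i(S') \otimes S$.

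The main technical obstacle lies in verifying rigorously that the symmetric comparison maps are genuine sections rather than merely compatible natural transformations; this amounts to tracking a composition of morphisms of cotangent complexes and confirming it equals the identity, which is straightforward but easy to mishandle. A more hands-on alternative for $i \leq 2$ would be to choose polynomial presentations $R \to S$ and $R' \to S'$ and directly decompose the derivation and $\Hom$ modules appearing in the explicit descriptions of $T^0$, $T^1$, and $T^2$, using that the ideal of $S\otimes S'$ inside $R\otimes R'$ is generated by the images of those of $S$ and $S'$; however, the cotangent-complex approach is cleaner and works uniformly in all degrees.
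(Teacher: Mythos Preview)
The paper does not supply its own proof of this lemma; it is quoted verbatim as \cite[Proposition~1.3]{srdegen}. Your outline via the transitivity triangle and flat base change for the cotangent complex is the standard route and is correct. The splitting is exactly as you describe: the map $\mathbb{L}^{S/\KK}\otimes_\KK S' \to \mathbb{L}^{(S\otimes S')/\KK}$ coming from the transitivity triangle for $\KK\to S\to S\otimes S'$ composes with the projection $\mathbb{L}^{(S\otimes S')/\KK}\to \mathbb{L}^{(S\otimes S')/S'}\cong \mathbb{L}^{S/\KK}\otimes_\KK S'$ to give the flat base change isomorphism, so it is a section and the triangle splits.

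One small caveat worth making explicit: to pass from $H^i\!\bigl(\Hom_{S\otimes S'}(\mathbb{L}^{S/\KK}\otimes_\KK S',\,S\otimes S')\bigr)$ to $T^i(S)\otimes_\KK S'$ you need $\Hom_S(-,S)\otimes_\KK S' \cong \Hom_S(-,S\otimes_\KK S')$ on the terms of the cotangent complex, which uses that those terms are finitely generated free $S$-modules. This holds when $S$ is of finite type over $\KK$, which covers all the applications in the paper (Stanley-Reisner rings of finite complexes), but the lemma as stated for arbitrary $\KK$-algebras would need this hypothesis or an additional argument.
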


\subsection{Semiuniversal Deformations}
We again refer the reader to \cite{deftheory} for details on deformation theory. For invariant and equivariant deformations, see \cite{rim}.

Suppose that $S$ is a quotient of a polynomial ring $\KK[z_1,\ldots,z_n]$ by an ideal $I$. 
Let $Y=\spec S\subseteq \Aff^n$.
Let $\TT$ be the maximal subtorus of $(\KK^*)^n$ which sends $Y$ to itself, and $H$ any subgroup of $\TT$.
Recall that an \emph{$H$-invariant deformation} of $Y$ over a base $Z$ is an $H$-invariant cartesian diagram
\[
\begin{tikzcd}
	Y \arrow[r] \arrow[d] & \cY \arrow[d,"\pi"] \\
	\spec \KK \arrow [r]& Z
\end{tikzcd}
\]
where $\pi$ is flat, and $H$ acts trivially on $Z$. 
Such a deformation is \emph{embedded} if $\cY\subseteq \Aff^n\times Z$, $\pi$ is just the projection, and the map $Y\to \cY\to \Aff^n\times Z$ is the natural inclusion induced by $\spec \KK\to Z$.

A \emph{morphism} of $H$-invariant deformations from $\pi':\cY'\to Z$ to $\pi:\cY\to Z'$ is a pair $(\phi,\widetilde \phi)$ of $H$-invariant morphisms making 
\[
	\begin{tikzcd}
		\cY' \arrow[ddd,"\pi'"] \arrow [rr,"\widetilde{\phi}"] && \cY \arrow[ddd,"\pi"] \\
		&Y\arrow[ur]\arrow[ul]\arrow[d]\\
	 &\spec \KK\arrow[dl]\arrow[dr]&\\
	 Z' \arrow [rr,"\phi"] && Z
\end{tikzcd}
\]
into a commutative diagram whose outside square is Cartesian. An isomorphism is a morphism such that $\phi,\widetilde{\phi}$ are both isomorphisms.
For embedded deformations $\pi',\pi$, a \emph{morphism of embedded deformations} is a morphism $(\phi,\widetilde{\phi})$ such that $\widetilde{\phi}$ is compatible with the inclusions $\cY'\subseteq \Aff^n\times Z'$, $\cY\subseteq \Aff^n\times Z$, and the map $\Aff^n\times Z'\to \Aff^n\times Z$ induced by $\phi$. The map $\widetilde{\phi}$ is uniquely determined by $\phi$, so we will often just refer to $\phi:Z'\to Z$ as a morphism of embedded deformations.

For any $H$-invariant deformation $\pi:\cY\to Z$ of $Y$ with $P\in Y$ the image of $\spec \KK$, there is a natural map of $\KK$-vector spaces
\[
	T_{P}Z\to T^1(S)^H
\]
sending any tangent vector $\eta\in T_PZ$ to the isomorphism class of the first order deformation obtained by pulling back $\pi$ along the morphism $\spec \KK[\epsilon]/\epsilon^2 \to Z$ corresponding to $\eta$. We call this map the \emph{characteristic map} for the deformation $\pi$.

\begin{defn}
An $H$-invariant deformation $\pi:\cY\to Z$  of $Y$ is \emph{semiuniversal} if 
	for any other $H$-invariant deformation $\pi':\cY'\to Z'$ of $Y$ with $Z'$ the spectrum of a local Artinian ring, there is a morphism $(\phi,\widetilde \phi)$ from $\pi'$ to $\pi$, and the differential of $\phi$ at the distinguished point is uniquely determined.\footnote{Technically, we have defined what it means to be \emph{formally} semiuniversal. However, since this is the only notion of semiuniversality we are considering, we omit the word ``formally''.}
\end{defn}

Under the condition that the $H$-invariant part $T^1(S)^H$ of $T^1(S)$ is finite dimensional, there is a semiuniversal $H$-invariant deformation $\pi:\cY\to Z$ of $Y=\spec S$. 
We will call $\cY$ and $Z$ respectively the \emph{semiuniversal family} and the \emph{semiuniversal base space}. There is a canonical identification of $T^1(S)^H$ with the tangent space of $Z$ at $P$.
By \cite{rim}, a semiuniversal deformation can be constructed so that it is also $\TT$-equivariant.
Furthermore, in this setting it can always be constructed as an embedded deformation.
Such an embedded $\TT$-equivariant deformation can be constructed algorithmically by lifting first-order perturbations of the elements of $I$ to higher and higher order \cite{stevens,versal} in a homogeneous fashion.

\begin{warning}
In general, there are two important warnings for the reader new to deformation theory:
\begin{enumerate}
	\item $\cY$ and $Z$ are in general formal schemes;
	\item Although any two semiuniversal deformations are isomorphic after passing to a formal completion, the isomorphism is not canonical.
		Likewise, the embedding of the semiuniversal family is not in general canonical.
\end{enumerate}
We emphasize these warnings now because one of the main points of our Theorem \ref{thm:versal} is that we will be able to ignore them in our special setting by making use of $\TT$-equivariance and our exchange-minimal deformations \ref{defn:exmin}. The semiuniversal deformation we will construct in Theorem \ref{thm:versal} comes with a canonical embedding, and the base and semiuniversal family are algebraic schemes.
\end{warning}

\begin{rem}[Standard Action]\label{rem:standardaction}
Assume that $T^2(S)^H=0$. Then a semiuniversal base space $Z$ for $Y$ is (the formal completion of) $\Aff^p$, where $p$ is the dimension of $T^1(S)^H$. Indeed, because of the vanishing of $T^2$, there are no obstructions to lifting deformations to higher and higher order. In general, this identification is not canonical. 
	
In the $\TT$-equivariant setting above, the $\TT$-action on $Y$ induces a $\TT$ action on this affine space $\Aff^p$. For a coordinate $t$ of $\Aff^p$ corresponding to an element of $T^1(S)^H$ of degree $\ba$, $\TT$ acts on $t$ with weight $-\ba$. In particular, the action of $\TT$  on $\Aff^p$ has a dense orbit if and only if $T^1(S)^H$ has a basis of semiinvariant functions whose degrees are linearly independent.
Indeed, the dimension of a general orbit of $\Aff^p$ under a torus action is equal to the dimension of the convex cone generated by the weights for this action; in this instance the generators are the opposites of the degrees of $T^1(S)^H$. Since $T^1(S)^H$ is $p$-dimensional, the claim follows.

In the case that $\Aff^p$ has a dense $\TT$-orbit, we will say that $\TT$ induces a \emph{standard action} on $Z$. 
\end{rem}

\subsection{Cluster Algebras}\label{sec:cluster}

A \emph{cluster algebra} is an integral domain equipped with a particular combinatorial structure. 
In the literature cluster algebras are usually defined as algebras over $\QQ$, however, in this work we consider cluster algebras defined over $\KK$. Since we are assuming that the characteristic of $\KK$ is zero, a cluster algebra over $\KK$ can be obtained from a cluster algebra over $\QQ$ by taking the tensor product (over $\QQ$) with $\KK$.
The reader is referred to \cite{FZ_clustersI,FZ_clustersII,FZ_clustersIV,FWZ_chapter1,FWZ_chapter4,FWZ_chapter6} for more details on the general theory. 

For positive integers $n$ and $m$ let $ \Mat_{m\times n} (\ZZ)$ denote the set of $m\times n$ matrices with integer entires.  
A square matrix $B=(b_{ij})\in \Mat_{n\times n} (\ZZ) $ is \emph{skew-symmetrizable} if there exists $d_1,\ldots,d_n\in \ZZ_{>0}$ such that $d_ib_{ij}=-d_jb_{ji}$ for all $i,j$. If we can take $d_1, \ldots , d_n$ to be $1$ we say that $B$ is \emph{skew-symmetric}.

From now on, we fix positive integers $n $ and $m$ such that $n\leq m $. We further fix a field $\F$ isomorphic to the field of rational functions over $ \KK$ in $m$ variables.

\begin{defn}
    A \emph{labeled seed of geometric type} in $\F$ is a pair $(\tx, \tB)$, where
\begin{itemize}
\item $\tx =(x_1,\dots, x_m)$ is an $m$-tuple of algebraically independent elements of $\F$ such that $\F = \KK(x_1, \dots , x_m)$;
\item $\widetilde{B} \in \Mat_{m\times n}(\ZZ)$ is such that its top $n\times n$ submatrix $B$ is skew-symmetrizable.
\end{itemize}
We adopt the following nomenclature: $n$ is the \emph{rank} of $(\tx,\tB)$\footnote{Observe that this notion of rank has nothing to do with the rank of $B$ or $\tB$.}, $\tB$ is an \emph{extended exchange matrix}, $B$ is an \emph{exchange matrix}, $\tx$ is an \emph{extended cluster}, the $n$-tuple ${\bf x}:=(x_1, \dots, x_n)$ is a \emph{cluster} and $x_{n+1}, \dots , x_m$ are the \emph{frozen variables}.
\end{defn}

In this paper we will only consider labeled seeds of geometric type. Hence, we will simply refer to these as seeds. 

\begin{rem}
To any exchange matrix $B$ we can associate a weighted directed graph $\Gamma(B)$ as follows.
We take the graph $\Gamma(B)$ to be the graph whose vertices are $ \{1, \dots , n\}$, with an edge from $i$ to $j$ if and only if ${b_{ij}}>0$. The \emph{weight} of such an edge is $c_{ij}=|b_{ij}|\cdot |b_{ji}|$.
In case $ \tB$ is skew-symmetric $\Gamma(\tB)$ can be thought of (and will be referred to) as a quiver (the weights correspond to the square of the number of arrows between the vertices).
\end{rem}

\begin{defn}
	Consider any exchange matrix $B$. The graph $\Gamma(B)$ decomposes into connected components $\Gamma_1,\ldots,\Gamma_k$. We let $B_i$ be the submatrix of $B$ whose row and column indices are among the vertices of $\Gamma_i$. We call $B_1,\ldots,B_k$ the \emph{connected components} of $B$ and say that $B$ is indecomposable if $\Gamma(B)$ is connected.
\end{defn}

For $k\in \{1, \dots , n \}$, the \emph{mutation in direction $k$} of a seed $(\tx,\tB)$ is the new seed $\mu_k(\tx,\tB)=(\tx',\tB')$, defined as follows: 
\begin{itemize}
	\item The new extended cluster is $\tx'=(x_1,\ldots,x_{k-1},x_k',x_{k+1},\ldots,x_m)$ where
    \begin{equation}
    \label{eq:exchange_rel}
    x_k x'_k= \prod_{i: b_{ik}>0}x^{b_{ik}}_i + \prod_{i:b_{ik}<0}x^{-b_{ik}}_i.
    \end{equation}
   An expression of the form \eqref{eq:exchange_rel} is called an \emph{exchange relation} and the monomial $x_k x_k'$ is the associated \emph{exchange monomial}.
    \item The new extended exchange matrix $\tB'=(b'_{ij})$ is defined by the following rule
    \begin{equation}
    \label{eq:matrix mut}
        b'_{ij}:= \left\{\begin{matrix} -b_{ij} &\text{ if } i=k\ \text{or}\ j=k,\\
        b_{ij} +\text{sgn}(b_{ik})\text{max}(b_{ik}b_{kj},0) &\text{ else;}\end{matrix}\right.
          \end{equation}
    where, for $x\in \RR$ we let
$
\text{sgn}(x):= \left\{\begin{matrix}-1 &\text{ if } x<0,\\
0 &\text{ if } x=0,\\
1 &\text{ if } x>0.\end{matrix}\right.
$
\end{itemize}
One can show that $(\tx',\tB')$ as constructed above is again a seed.

Two seeds are \emph{mutation equivalent} if they can be linked to each other by a finite number of mutations. This is an equivalence relation in the set of seeds in $ \F$ since $\mu_k$ is an involution, that is, we have
\[\mu_k(\mu_k(\tx,\tB))=(\tx,\tB).\] Moreover, if $ (\tx', \tB')$ is a seed mutation equivalent to $ (\tx,\tB)$ then  $\tx'$ is of the form $(x_1', \dots, x_n', x_{n+1}, \dots , x_m)$ for some $x_i'\in \F$.

In order to parametrize the cluster variables and the (extended) exchange matrices associated to $ \A(\tx,\tB)$ we label the seeds mutation equivalent to $(\tx,\tB)$ as follows. We consider the $n$-regular tree $\RTT^n$ whose edges are labeled by the mutable directions $1,\dots , n$, so that the $n$ edges incident to each vertex carry different labels.
Since $\mu $ is an involution we can assign (in different ways) a seed $(\tx_v,\tB_v)$ to every vertex $v$ of $\RTT^n$ in such a way that if $v$ and $v'$ are vertices of $ \RTT^n$ joined by an edge labeled with $k$ then $(\tx_{v'},\tB_{v'})=\mu_k(\tx_{v},\tB_{v})$.
Any such labeling is completely determined by choosing an initial vertex $v_0$ of $\RTT^n$ and setting $(\tx_{v_0},\tB_{v_0})=(\tx,\tB)$. We fix once and for all the choice of $v_0$. Moreover, we let
\[
\tx_v:=(x_{1;v}, \dots , x_{m;v}).
\]
Observe that $x_{i;v}=x_i$ for all $n< i \leq m$ and every vertex $v$ of $\RTT^n$.

\begin{defn}\label{def:cluster algebra}
Let $(\tx,\tB) $ be a seed in $\F$.  The \emph{cluster algebra} $\A(\tx,\tB)$ (of geometric type, over $\KK$) is the $\KK$-subalgebra of $\F$ generated by the elements of \emph{all} the extended clusters $ \tx_v$ in the seeds mutation equivalent to $(\tx,\tB)$. 
The elements $x_{i:v}$ for $i\leq n$ of the clusters constructed in this way are the \emph{cluster variables} of $\A(\tx,\tB)$. We call $(\tx,\tB)$ the \emph{initial seed} and the elements of the cluster ${\bf x}$ the \emph{initial cluster variables}. 
If $m>n$ we say that the cluster algebra $\A(\tx,\tB)$ has \emph{coefficients} and $x_{n+1},\ldots , x_{m} $ will be called the \emph{frozen variables}.
\end{defn}

\begin{ex}[Running Example]\label{ex:r1}
We consider the cluster algebra $\A$ associated to the seed 
	\begin{align*}	
		\tB=\left(\begin{array}{c c}
			0&1\\
			-1&0\\
			1&1\\
			-1&-1\\
			1&-1
		\end{array}\right)
\qquad
\tx=(x_{13},x_{14},s_1,s_2,s_3).
	\end{align*}
	We have called the elements of the initial cluster $x_{13},x_{14}$ instead of $x_1,x_2$ because of the connection to Pl\"ucker coordinates we will describe below.
	The frozen variables are $s_1,s_2,s_3$. The cluster variables of $\A$ are
\begin{align*}
	x_{13},\qquad x_{14}, \qquad
	x_{24}:=\frac{s_2 x_{14}+s_1s_3}{x_{13}}\\
	x_{35}:=\frac{s_1 x_{13}+s_2s_3}{x_{14}}\qquad
	x_{25}:=\frac{x_{13}s_1^2+x_{14}s_2^2+s_1s_2s_3}{x_{13}x_{14}}
\end{align*}
For example, $x_{24}$ is the cluster variable replacing $x_{13}$ in the initial seed after mutating at the position corresponding to $x_{13}$, that is, the first position.
We have again chosen names for the other cluster variables that are suggestive of Pl\"ucker coordinates. 

It is straightforward to check that the cluster variables satisfy exactly the relations
\begin{align*}
	x_{13}x_{24}&=s_2x_{14}+s_1s_3\\
	x_{13}x_{25}&=s_2^2+s_1x_{35}\\
	x_{14}x_{25}&=s_2x_{24}+s_1^2\\
	x_{14}x_{35}&=s_2s_3+s_1x_{13}\\
	x_{24}x_{35}&=x_{25}s_3+s_1s_2.
\end{align*}
These are exactly the Pl\"ucker relations for the Grassmannian $\Gr(2,5)$ after setting $x_{12}$ and $x_{45}$ to be $s_1$, $x_{23}$ and $x_{15}$ to be $s_2$, and $x_{34}=s_3$.

The weighted directed graph $\Gamma(B)$ associated to the exchange matrix $B$ is pictured in Figure \ref{fig:ex1}.
We will follow this example throughout the entire paper.
\end{ex}
\begin{figure}
\begin{tikzpicture}[
            > = stealth, 
            shorten > = 1pt, 
            auto,
            node distance = 3cm, 
            semithick 
        ]

        \tikzstyle{every state}=[
            draw = black,
            thick,
            fill = white,
            minimum size = 4mm
        ]

	\node[state] (x13) {$x_{13}$};
	\node[state] (x14) [right of=x13] {$x_{14}$};

        \path[->] (x13) edge node {1} (x14);

    \end{tikzpicture}

\caption{The weighted graph $\Gamma(B)$ for Example \ref{ex:r1}}\label{fig:ex1}
\end{figure}
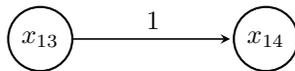

\begin{rem}
	In the literature, cluster algebras as introduced in Definition \ref{def:cluster algebra} are called skew-symmetrizable cluster algebras of geometric type over $\KK $. We call a cluster algebra \emph{skew-symmetric} if one (or equivalently all) of its seeds $(\tx,\tB)$ has skew-symmetric exchange matrix. 
\end{rem}
\begin{rem}\label{rem:invert}
	In the literature, cluster algebras with frozen variables $x_{n+1},\ldots,x_{m}$ are often defined as algebras over the ring
	$\KK[x_{n+1}^{\pm 1},\ldots,x_m^{\pm 1}]$, that is, the cluster algebra also contains the inverse of each of the frozen variables. This is the convention taken for instance in the seminal papers \cite{BFZ_clustersIII,FZ_clustersIV}. However, in more recent works such as \cite{ADS,GHKK} it was noticed that working with non-invertible frozen variables can lead to interesting constructions. This is the convention followed in \cite[Definition 3.1.6]{FWZ_chapter1} and is necessary when trying to endow homogeneous coordinate rings of projective varieties with the structure of a cluster algebra, as the only units of a positively graded $\KK$-algebra are necessarily constants.

	As stated in Definition \ref{def:cluster algebra}, we follow the convention of \cite{FWZ_chapter1} that a cluster algebra does not include the inverses of the frozen variables.
	This can make the situation more complicated than in \cite{BFZ_clustersIII}. For instance, in the convention of loc.~cit., by \cite[Corollary 1.2.1]{BFZ_clustersIII}, any cluster algebra of finite cluster type (see Definition \ref{defn:ft}) is a complete intersection ring. However, in the convention we adopt, this is not the case: Example \ref{ex:r1} is not a complete intersection. Indeed, this cluster algebra $\A$ is a $5$-dimensional quotient of the polynomial ring $\KK[x_{ij},s_k]$ in $8$ variables, but the kernel of the surjection $\KK[x_{ij},s_k]\to \A$ is minimally generated by $5$ elements and is thus not generated by a regular sequence.
\end{rem}

\begin{rem}\label{rem:ice_quiver}
Let $\tB$ be an extended exchange matrix (of size $m\times n$). It is straightforward to show that $\tB$ can be extended (non-uniquely) to an $m\times m$ skew-symmetrizable matrix $\tB'$. There is a canonical inclusion of $\A(\tB)$ into $\A(\tB')$. Thus, any cluster algebra with frozen variables may be thought of as a subalgebra of a cluster algebra with no frozen variables, with the subalgebra generated by a subset of the cluster variables.
\end{rem}

Up to a canonical isomorphism, $\A(\tx,\tB)$ is independent of $\tx$. Therefore, we will frequently write $\A(\tB)$ instead of $\A(\tx, \tB)$. The set of cluster variables associated to $\A(\tx,\tB)$ will be denoted by $ \V(\tx,\tB) $.
We will write $\V(\tB)$ or simply $\V$ to denote this set if the initial seed is clear from the context.

\begin{thm}[The strong Laurent phenomenon {\cite[Theorem 3.3.6]{FWZ_chapter1}}]
\label{thm:SLaurent}
Let $(\tx',\tB') $ be a seed mutation equivalent to $(\tx,\tB)$ and let $\bx'=(x_1', \dots , x_n' )$ be the corresponding cluster. Then the cluster algebra $\A(\tx, \tB)$ is a $\KK [x_{n+1},\dots, x_m]$-subalgebra of $\KK [x_{n+1},\dots, x_m][x_1'^{\pm 1},\dots x_{n}'^{\pm 1}]$.
\end{thm}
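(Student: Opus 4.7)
The plan is to show that every cluster variable of $\A(\tx,\tB)$ lies in $R[\bx_{v'}^{\pm 1}]$, where $R := \KK[x_{n+1}, \ldots, x_m]$ and $v'$ denotes the vertex of $\RTT^n$ with $(\tx_{v'}, \tB_{v'}) = (\tx', \tB')$. Since $\A(\tx,\tB)$ is generated over $\KK$ by the cluster and frozen variables, and since $R \subseteq R[\bx_{v'}^{\pm 1}]$ trivially, this suffices.

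I would proceed by strong induction on the tree distance $d(v, v')$, proving the slightly stronger statement that for \emph{every} pair of vertices $(v, v')$ of $\RTT^n$ and every $i \in \{1, \ldots, n\}$, the cluster variable $x_{i;v}$ lies in $R[\bx_{v'}^{\pm 1}]$. The base case $d = 0$ is trivial, and $d = 1$ is immediate from the exchange relation \eqref{eq:exchange_rel}, which expresses $x_{i;v}$ either as an element of $\bx_{v'}$ itself, or as a binomial of $R[\bx_{v'}]$ divided by some $x_k'$.

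For the inductive step ($d \geq 2$), let $v_1$ be the unique neighbor of $v'$ on the geodesic from $v'$ to $v$, obtained by mutation in some direction $k$. Since $d(v_1, v) = d - 1$, the inductive hypothesis gives $x_{i;v} \in R[\bx_{v_1}^{\pm 1}]$. Now $\bx_{v_1}$ differs from $\bx_{v'}$ only in the $k$-th position, where $x_k'$ is replaced by $x_k'' = P_k/x_k'$ for some binomial $P_k \in R[\bx_{v'}]$ not involving $x_k'$ (the right-hand side of the relevant exchange relation; note $b_{kk} = 0$ by skew-symmetrizability). Substituting $x_k'' = P_k/x_k'$ into the Laurent expression for $x_{i;v}$ yields $x_{i;v} = Q/(x_k'^a P_k^b)$ for some $Q \in R[\bx_{v'}]$ and non-negative integers $a, b$. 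The main obstacle is to show $b = 0$, i.e., that the extraneous factor $P_k^b$ can always be canceled; \emph{a priori} we only obtain $x_{i;v} \in R[\bx_{v'}^{\pm 1}][P_k^{-1}]$.

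This cancellation is precisely the content of Fomin-Zelevinsky's caterpillar argument (see \cite[\S 3.3]{FWZ_chapter1}). A direct appeal to the inductive hypothesis at another neighbor $\tilde v_1 \neq v_1$ of $v'$ is unavailable, since in a tree every other neighbor satisfies $d(\tilde v_1, v) = d + 1 > d$. Instead, the caterpillar lemma proves Laurentness simultaneously along a carefully chosen network of mutation paths, leveraging the fact that $R[\bx_{v'}]$ is a unique factorization domain together with the explicit binomial structure of each $P_k$ (a sum of two coprime monomials in $\bx_{v'} \setminus \{x_k'\}$ and the frozens) to force $b = 0$. I expect this coprimality/primality bookkeeping—keeping track of which binomials $P_k$ and which monomials in the various clusters are coprime under matrix mutation \eqref{eq:matrix mut}—to be the most delicate part of the argument, though the overall skeleton of the proof is the standard induction described above.
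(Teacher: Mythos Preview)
The paper does not give its own proof of this statement: Theorem~\ref{thm:SLaurent} is quoted from \cite[Theorem 3.3.6]{FWZ_chapter1} as a background result and is followed immediately by Definition~2.4.9 with no intervening proof. So there is nothing in the paper to compare your proposal against.

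That said, your outline is a faithful sketch of the standard Fomin--Zelevinsky argument appearing in the cited reference. You correctly identify the reduction to showing each cluster variable is Laurent, the induction on tree distance, the obstruction (clearing the denominator $P_k^b$), and the resolution via the caterpillar lemma using coprimality of the two monomials in each exchange binomial and unique factorization in $R[\bx_{v'}]$. One minor caution: your phrasing ``strong induction on the tree distance $d(v,v')$'' slightly undersells what the caterpillar lemma actually does---as you yourself note, a naive induction on distance fails because every alternate neighbor of $v'$ is farther from $v$, so the argument is not literally an induction on $d$ but rather a simultaneous induction along a specially constructed family of paths (the ``caterpillar''). Your final paragraph acknowledges this, so the sketch is honest, but be careful not to present the opening induction framework as if it could be completed without that restructuring.
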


\begin{defn}\label{defn:ft}
A cluster algebra is of \emph{finite cluster type} if its set of cluster variables is a finite set. Otherwise it is of \emph{infinite cluster type}.
\end{defn}

\begin{rem}
\label{rem:coeff}
Observe that the strong Laurent phenomenon has the following geometric interpretation. If $\A(\tB)$ is of finite cluster type then we can consider $\spec (\A(\tB))$. By definition this an irreducible scheme over $\spec(\KK)$. 
By Theorem \ref{thm:SLaurent} we can consider $\spec (\A(\tB))$ as a scheme over $ \spec (\KK[{\bf t}])$, for any ${\bf t}\subseteq \{x_{n+1}, \dots x_{m}\}$.
\end{rem}

The \emph{Cartan counterpart} of an exchange matrix $B$ is the square matrix $A(B)=(a_{ij})$ of the same size  given by $a_{ij}=2$ if $i=j$ and $a_{ij}=-|b_{ij}|$ if $i\neq j $. 
A Cartan matrix $A(B)$ is  \emph{indecomposable} if $\Gamma(B)$ is connected; an indecomposable Cartan matrix is of \emph{finite type} if all the principal minors of $A(B) $ are positive. 
It is a classic fact in the theory of Lie algebras that indecomposable Cartan matrices of finite type are classified by Dynkin diagrams.

\begin{thm}
[Finite type classification \cite{FZ_clustersII}]
\label{thm:finite}
A cluster algebra $\A$ is of finite cluster type if and only if $\A$ has an extended exchange matrix $\tB$ such that for each connected component $B_i$ of $B$ the Cartan matrix $A(B_i)$ is  of finite type.
\end{thm}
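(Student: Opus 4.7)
The plan is to follow the original strategy of Fomin and Zelevinsky, first reducing to the indecomposable case. Both conditions are compatible with the connected component decomposition of $B$: the mutation class of $B$ splits as a product of mutation classes of the $B_i$, so cluster variables of $\A(\tB)$ come from cluster variables of the sub-cluster algebras generated by each connected component (together with frozen variables), and $A(B)$ has finite type in the Cartan sense if and only if each $A(B_i)$ does. So I may assume $B$ is indecomposable and drop the index $i$.

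For the sufficiency direction, the plan is to exploit the root-system combinatorics attached to a finite type Cartan matrix. First I would show that any mutation class containing an exchange matrix with finite-type Cartan counterpart contains a \emph{bipartite} seed, i.e.\ one where $\Gamma(B)$ splits into sources and sinks; this uses the fact that any Dynkin diagram admits a bipartite orientation and that mutations at sources/sinks realize such orientations. Using the bipartite seed, I would introduce the composed mutations $\mu_+$ and $\mu_-$ and study the piecewise-linear action they induce on denominator vectors. The key combinatorial identification is a bijection between cluster variables and almost positive roots of the root system attached to $A(B)$, sending a cluster variable to its denominator vector in the initial cluster. Since the root system is finite, so is the set of cluster variables.

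For the necessity direction, the plan is to analyze what constraints finite cluster type imposes on the mutation class. The first step is to show that if $\A(\tB)$ has finitely many cluster variables, then for any exchange matrix $B'$ in the mutation class and any pair of indices $i\neq j$, one has $|b'_{ij}b'_{ji}|\leq 3$: otherwise an infinite sequence of mutations alternating between $i$ and $j$ would produce infinitely many distinct cluster variables via the strong Laurent phenomenon. The second step is to use this bound on edge weights of $\Gamma(B')$ to classify possible mutation-finite diagrams, and finally produce in the mutation class a seed whose diagram is an orientation of a Dynkin diagram. Once such a seed is found, its Cartan counterpart has all positive principal minors, establishing the conclusion.

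The main obstacle, by far, lies in the necessity direction, specifically in the classification step that rules out mutation-finite but cluster-infinite diagrams. Affine type diagrams (e.g.\ $\widetilde{A}_n$, $\widetilde{D}_n$) are mutation-finite yet yield infinite cluster algebras, and there exist exceptional mutation-finite mutation classes such as those of $X_6, X_7$, $\widetilde{E}_n$, and matrices coming from triangulations of surfaces with marked points. One must distinguish finite cluster type among all mutation-finite classes, which requires a careful analysis of sign-coherent mutation sequences and a fine study of the denominator vectors $\mathbf{d}(x_{k;v})$---showing that if no mutation-equivalent exchange matrix has finite-type Cartan counterpart, then some infinite sequence of mutations produces infinitely many distinct denominator vectors, contradicting finiteness of $\V(\tB)$.
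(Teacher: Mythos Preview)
The paper does not give its own proof of this theorem: it is quoted as a black box from \cite{FZ_clustersII}. So there is nothing in the paper to compare against, and your task was really to sketch the original Fomin--Zelevinsky argument.

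Your sufficiency direction is a fair summary of their approach: pass to a bipartite seed, and use the bijection between cluster variables and almost positive roots built from the $\mu_\pm$ dynamics on denominator vectors.

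Your necessity direction, however, is conceptually tangled. The key intermediate notion in \cite{FZ_clustersII} is \emph{$2$-finiteness}: the condition that $|b'_{ij}b'_{ji}|\le 3$ for \emph{every} matrix $B'$ in the mutation class, not merely that the mutation class of matrices is finite. You correctly identify the first step (finite cluster type forces $|b'_{ij}b'_{ji}|\le 3$ everywhere in the mutation class), but then veer into the later Felikson--Shapiro--Tumarkin classification of mutation-finite matrices, bringing in surfaces, $X_6$, $X_7$, and the affine $\widetilde E_n$. This is both anachronistic and unnecessary: the $2$-finite condition is strictly stronger than mutation-finiteness, and Fomin--Zelevinsky show directly, by a combinatorial analysis of diagrams under the $2$-finite constraint, that any $2$-finite mutation class contains a representative whose Cartan counterpart is of finite Dynkin type. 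The exotic mutation-finite classes you list all fail $2$-finiteness somewhere in their mutation class, so they never enter the picture. Your ``main obstacle'' paragraph is thus fighting a phantom; the genuine work is the diagrammatic reduction of $2$-finite classes to Dynkin type, which your sketch does not address.
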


\subsection{The Cluster Complex}\label{sec:complex}

To any cluster algebra $ \A=\A(\tB)$ we can associate a simplicial complex $\K(\A)=\K(\tB)$ called the \emph{cluster complex}.
\begin{itemize}
\item The set vertices of $\K(\tB)$ is the set of cluster variables $\V$ associated to $ \A(\tB)$;
\item A subset of cluster variables $\{u_1, \dots , u_i\}$ is a face of $\K(\tB)$ if and only if there exists a cluster ${\bf u}$ containing $u_1, \dots , u_i$ as entries.
\end{itemize}
Cluster variables belonging to a common face of $ \K(\tB)$ are called \emph{compatible}. 

It follows directly from the above definition that this complex is a flag complex.
Likewise, it is straightforward to verify that if $\tB$ and $\tB'$ are extended exchange matrices that can be related by iterated mutation, then $\K(\tB)$ and $\K(\tB')$ are canonically isomorphic.
A much deeper fact is that the cluster complex only depends on the mutable part of an extended exchange matrix. In other words, we have that $\K(\tB) $ is canonically isomorphic to $ \K(B)$.
This is an instance of the synchronicity phenomenon in cluster patterns studied in \cite{Nak} and follows directly from \cite[Proposition 6.1]{denom}.

\begin{thm}
\label{thm:bijection}
\cite{denom}
Let $\V(B)=\{ x_{i;v} \mid 1\leq i \leq n, v \in (\RTT^n)_0\}$ and $\V(\tB)=\{ \tilde{x}_{i;v} \mid 1\leq i \leq n, v \in (\RTT^n)_0\}$ be the sets of cluster variables of $\A(B)$ and $\A(\tB)$, respectively. Then the assignment $x_{i;v} \mapsto \tilde{x}_{i;v}$ establishes a bijection from $\V(B)$ to $\V(\tB)$. In particular, two cluster variables $x_{i;v}$ and $x_{j;v'}$ are compatible if and only if $\tilde{x}_{i;v}$ and $\tilde{x}_{j;v'}$ are compatible.
\end{thm}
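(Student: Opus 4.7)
The plan is to show that the equivalence relation ``$(i,v)$ and $(k,w)$ index the same cluster variable'' depends only on the mutable part $B$ of the extended exchange matrix; once this is established, the assignment $x_{i;v}\mapsto \tilde{x}_{i;v}$ is automatically well-defined and bijective, and the compatibility statement then reduces to a purely combinatorial check on the labels.

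The main tool I would use is the Fomin--Zelevinsky separation formula. To each label $(i,v)$ one associates a $\bg$-vector $\bg_{i;v}\in\ZZ^n$ and an $F$-polynomial $F_{i;v}\in\ZZ[y_1,\ldots,y_n]$, both computed recursively from the mutation sequence applied to $B$ alone, with no reference to the coefficient data. The separation formula expresses every $\tilde{x}_{i;v}$ in $\A(\tB)$ as an explicit Laurent expression in the initial extended cluster whose shape is dictated entirely by $(\bg_{i;v},F_{i;v})$, with the frozen variables entering only through prescribed coefficient factors determined by $\tB$.

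The key step would be to prove that $\tilde{x}_{i;v}=\tilde{x}_{k;w}$ in $\A(\tB)$ if and only if $\bg_{i;v}=\bg_{k;w}$ and $F_{i;v}=F_{k;w}$. The reverse direction is immediate from the separation formula. For the forward direction, one uses that the entries of the initial extended cluster are algebraically independent, so the $\bg$-vector can be recovered as a distinguished exponent in the Laurent expansion and the $F$-polynomial extracted from the remaining coefficients. Applying this characterization both to $\A(B)$ (for instance by taking $\tB = B$, or by going through principal coefficients) and to $\A(\tB)$ shows that the equivalence relation on labels is identical in the two settings, which yields the bijection $x_{i;v}\mapsto\tilde{x}_{i;v}$.

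For the compatibility assertion, I would note that $\tilde{x}_{i;v}$ and $\tilde{x}_{j;v'}$ are compatible precisely when there exists a vertex $w\in(\RTT^n)_0$ and indices $a,b\in\{1,\ldots,n\}$ such that $\tilde{x}_{i;v}=\tilde{x}_{a;w}$ and $\tilde{x}_{j;v'}=\tilde{x}_{b;w}$. Since the equivalence relation on labels matches between $\A(B)$ and $\A(\tB)$ by the previous step, this criterion transfers verbatim and the compatibility statement follows. The main technical obstacle is the clean extraction of the $(\bg, F)$-data from a Laurent expansion in which general (non-principal) coefficients appear; this is precisely what is encoded in the cited denominator/separation-type result, so in practice I would invoke it as a black box and focus on the deduction of the bijection and the compatibility transfer.
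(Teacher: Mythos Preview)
The paper does not give its own proof of this statement: it is quoted as a result of Cao--Li, with the remark preceding the theorem that it ``follows directly from \cite[Proposition~6.1]{denom}'' and is an instance of the synchronicity phenomenon of \cite{Nak}. So there is no in-paper argument to compare against.

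Your outline is a reasonable sketch of how such a proof goes and is in the spirit of the cited result. The key point, as you correctly identify, is that the equivalence relation on labels $(i,v)$ is governed by the pair $(\bg_{i;v},F_{i;v})$, which depends only on $B$. One direction---equal $(\bg,F)$-data forces equal cluster variables in any coefficient system---is immediate from the separation formula. The converse---equal cluster variables in a \emph{given} coefficient system forces equal $(\bg,F)$-data---is the delicate step, and you are right to flag it as the place where the black box enters. One caution: your suggestion to ``take $\tB=B$'' and read off $(\bg,F)$ from the Laurent expansion will not work in general, since when $B$ fails to have full rank the substitutions $\hat{y}_j=\prod_i x_i^{b_{ij}}$ are not algebraically independent and $F$ cannot be recovered from $F(\hat{y})$. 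Passing through principal coefficients, as you also suggest, is the correct move and is essentially how the cited argument proceeds. Your compatibility deduction at the end is fine once the bijection on labels is in place.
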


In \cite{FZ_assoc} the authors study cluster complexes associated to cluster algebras of finite cluster type. In particular, they introduced a beautiful approach to understand the compatibility of cluster variables in terms of the associated root system. For cluster algebras of type $A$ the associated cluster complex is the simplicial complex dual to the boundary of the classical associahedron, also known as the Stasheff Polytope. 
More generally, we have the following:
\begin{thm}[{\cite[Corollary 1.11]{FZ_assoc}}]
Let $\A(\tB)$ be a cluster algebra of rank $n$ and finite cluster type. Then the cluster complex $\K(\tB)$ is an $(n-1)$-sphere.
\end{thm}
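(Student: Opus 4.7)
The plan is to reduce to the indecomposable case and then invoke Fomin--Zelevinsky's realization of the cluster complex as the boundary complex of the generalized associahedron. Note that by Theorem \ref{thm:bijection}, we may work with $\K(B)$ rather than $\K(\tB)$, since frozen variables do not affect the cluster complex.

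\textbf{Step 1: Reduction to the indecomposable case.} Decompose $B$ into its connected components $B_1, \ldots, B_k$, of ranks $n_1, \ldots, n_k$ summing to $n$. Because mutation in direction $i$ affects only those cluster variables indexed by the connected component containing $i$, any two cluster variables coming from different components appear together in every cluster obtained by mutating only in one of the two components, and so are compatible. Consequently the cluster complex factors as a join
\[
\K(B) \;\cong\; \K(B_1) \ast \cdots \ast \K(B_k).
\]
Since the join of an $(a-1)$-sphere and a $(b-1)$-sphere is an $(a+b-1)$-sphere, it suffices to prove the statement when $B$ is indecomposable.

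\textbf{Step 2: The indecomposable case via almost positive roots.} By Theorem \ref{thm:finite}, $A(B)$ is a Cartan matrix of finite Dynkin type $\Phi$. Following Fomin--Zelevinsky, after choosing a bipartite orientation one identifies the cluster variables of $\A(B)$ with the set $\Phi_{\geq -1}$ of almost positive roots (the positive roots together with the negatives of the simple roots), in such a way that the compatibility relation translates into a purely combinatorial compatibility relation on $\Phi_{\geq -1}$. This gives a root-theoretic model of $\K(B)$ that is independent of the particular seed chosen.

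\textbf{Step 3: Polytopality.} For each finite Dynkin type $\Phi$, one constructs an explicit simple convex polytope in $\RR^n$ --- the generalized associahedron --- whose polar dual is a simplicial polytope whose boundary complex is canonically isomorphic to $\K(B)$. Since the boundary of a convex simplicial polytope in $\RR^n$ is PL-homeomorphic to the boundary of the standard $n$-simplex, we conclude that $\K(B)$ is an $(n-1)$-sphere.

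\textbf{Main obstacle.} The principal difficulty is the construction in Step 3: namely, proving the polytopality of the cluster complex. Historically this was carried out type-by-type across the finite Dynkin classification. A more uniform perspective realizes the generalized associahedron as the polytope whose normal fan is the $\bg$-vector fan of the cluster algebra, but establishing that this fan actually is polytopal (and even that it is a complete simplicial fan) depends on substantial structural results about cluster algebras of finite type that go beyond the combinatorics introduced in the preliminaries.
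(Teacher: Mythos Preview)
The paper does not give its own proof of this theorem; it is stated in the preliminaries and simply attributed to \cite[Corollary 1.11]{FZ_assoc}. Your outline is a faithful sketch of how the result is established in that reference: the join decomposition reducing to the indecomposable case (which the present paper records separately as Lemma \ref{lemma:joins}), the identification of cluster variables with almost positive roots, and the realization of $\K(B)$ as the boundary complex of the generalized associahedron. Your assessment of the main obstacle is also accurate: Step 3 carries all of the real content, and in \cite{FZ_assoc} it is carried out by an explicit construction depending on the Dynkin type, so there is nothing further you could reasonably supply here without reproducing substantial portions of that paper.
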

\begin{ex}[Continuation of Running Example \ref{ex:r1}]\label{ex:r2}
	We consider the cluster algebra $\A$ from Example \ref{ex:r1}. This is a cluster algebra of type $A_2$. The cluster complex $\K$ for $\A$ is the boundary of a pentagon, with vertices labeled as pictured in Figure \ref{fig:ex2}. 
\end{ex}
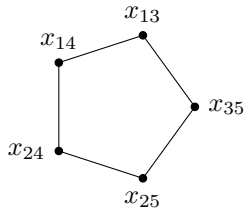
\begin{figure}
	\begin{tikzpicture}
   \newdimen\R
   \R=1cm
   \draw (0:\R) \foreach \x in {72,144,216,288,360 } {  -- (\x:\R) };
   \foreach \x/\l/\p in
   { 72/{$x_{13}$}/above,
   144/{$x_{14}$}/above,
   216/{$x_{24}$}/left,
   288/{$x_{25}$}/below,
   360/{$x_{35}$}/right
     }
     \node[inner sep=1pt,circle,draw,fill,label={\p:\l}] at (\x:\R) {};
\end{tikzpicture}
	\caption{The cluster complex $\K$ from Example \ref{ex:r1}}\label{fig:ex2}
\end{figure}

\section{Cotangent Cohomology for Cluster Complexes}\label{sec:cotan}
\subsection{Intersection of Links}
The following proposition will be  key for our results on the cotangent cohomology for cluster complexes:
\begin{prop}\label{prop:Lb}
Let $\A$ be a cluster algebra of finite cluster type such that some (or equivalently all) seeds $(\tx,\tB)$ satisfy that $\Gamma(B)$ is connected. For any two vertices $u,v$ of $\K(\A)$ that do not form an edge,
\[
\link(u,\K(\A))\cap \link(v,\K(\A))
\]
is either contractible or empty.
\end{prop}
In the $A_n$ case, this proposition is proven combinatorially in \cite[Lemma 5.2]{srdegen}. For the $B_n$, $C_n$, and $D_n$ cases, one may prove the proposition similarly using a combinatorial model for the type $B$, $C$, and $D$ associahedra \cite[Propositions 3.15 and 3.16]{FZ_assoc}. It remains to check the $E_6$, $E_7$, $E_8$, $F_4$ and $G_2$ cases. At the end of this subsection, we will instead provide a uniform proof of Proposition \ref{prop:Lb} using categorification. In particular, we will use standard concepts related to the categorification of cluster algebras such as 
\begin{itemize}
\item a \emph{cluster category}, see \cite[\S 1]{BMRRT};
\item a \emph{Frobenius 2-Calabi-Yau realization of a cluster algebra with coefficients} (also called a Frobenius categorification of a cluster algebra with coefficients), see \cite[Definition 5.1]{Fu_Kel};  
\item a \emph{2-Calabi-Yau action} on a Frobenius 2-Calabi-Yau realization of a cluster algebra, see \cite[Definition 2.46]{Demonet11}; 
\item a \emph{cluster-tilting object} and its associated quiver, see \cite[\S 2.4]{Fu_Kel} (the quiver we consider is the quiver of the additive subcategory of the cluster-tilting object); 
\item a \emph{cluster-tilting subcategory}, see \cite[\S II.1]{BIRS09};
\item a \emph{section} in a translation quiver, see \cite[Definition 2]{slices}.
\end{itemize}
The reader is referred to \cite{BMRRT,Kel_survay} for additional background and notation related to the categorification of cluster algebras. 
We denote by $[1]$ the suspension functor of a triangulated category; for every $a\in \ZZ$ we let $[a]:=[1]^a$.

We briefly recall the construction of the cluster category $\C_Q$ associated to a Dynkin quiver $Q$.
Let $\KK Q$ be the path algebra over $\KK$ associated to $Q$ and denote by $\mathcal{D}^b_Q$ the derived category associated to the category of finite-dimensional right $\KK Q$-modules. Since $\KK Q $ is hereditary, every indecomposable object of $\mathcal{D}^b_Q$ is isomorphic to a stalk complex whose only non zero entry is an indecomposble $\KK Q$-module.
By definition, $\C_Q$ is the orbit category associated to the autoequivalence $F:=[1]\circ \tau^{-1}$ of $\mathcal{D}^b_Q$, where $[1]$ and $\tau$ are respectively the suspension functor and Auslander-Reiten translation of $\mathcal{D}^b_Q$.
By definition $\C_Q$ and $\mathcal{D}^b_Q$ have the same objects and the suspension functor and Auslander-Reiten translation of $\C_Q$ are induced by those of $\mathcal{D}^b_Q$, see \cite{Kel_triang}.

We also briefly recall some properties of the Auslander-Reiten quiver of $\mathcal{D}^b_Q$, which we denote by $\Gamma (\mathcal{D}^b_Q) $.
Every vertex $[X]$ of $\Gamma (\mathcal{D}^b_Q) $ corresponds to the isomorphism class of an indecomposable object $X$ in $\mathcal{D}^b_Q$. The number of arrows from a vertex $[X]$ to a vertex $[Y]$ coincides with the dimension of the space of irreducible morphisms from $X$ to $Y$.
When $Q$ is an orientation of a simply laced Dynkin diagram then $\Gamma (\mathcal{D}^b_Q) $ has a simple combinatorial description. It is shown in \cite[Chapter I.5.6]{Hap} that $\Gamma (\mathcal{D}^b_Q) $ is isomorphic to the quiver $\ZZ Q $ defined as follows. The vertices of $\ZZ Q$ are pairs $(v,j)$ such that $v$ is a vertex of $Q$ and $j\in \ZZ$. For every arrow $\alpha$ in $Q$ from a vertex $v$ to vertex $v'$ and for every $j\in \ZZ$ there is an arrow from $(v,j)$ to $(v',j)$ and an arrow from $(v',j)$ to $(v,j+1)$, and all the arrows of $\ZZ Q$ arise in this way. 
From now on we systematically identify $ \Gamma (\mathcal{D}^b_Q)$ and $ \ZZ Q$.

Fix a section $\Sigma$ of $  \ZZ Q$, for instance any copy $Q\times {j}$ inside $\ZZ Q $. 
By definiton, we have that $\Sigma$ partitions the vertices of $\ZZ Q $ into three disjoint sets: the vertices that have a path of positive length towards $\Sigma$, which we call the predecessors of $\Sigma$, the vertices that lie in $\Sigma$, and the vertices that have a path of positive length from $\Sigma$, which we call the successors of $\Sigma$. 
Under the identification of $ \Gamma (\mathcal{D}^b_Q)$ with $ \ZZ Q$, the Auslander-Reiten translation of $\mathcal D^b_Q$ corresponds to the map $(v,j)\mapsto (v,j-1)$. In fact, if $[X]$ corresponds to a vertex of $\Sigma $ then $[\tau^{-1} X]$ corresponds to a successor of $\Sigma$ (\cite[Chapter I.5.6]{Hap}). Likewise, by the existence of an Auslander-Reiten triangle of the form $X\to Y\to \tau^{-1} X\to X[1]$, $[X[1]]$ also corresponds to a successor of $\Sigma$.

\begin{lemma}\label{lem:A1}
Let $Q$ be a bipartite orientation of a connected simply laced Dynkin diagram and let $\C_Q$ be the cluster category associated to $Q$. 
Let $W$ and $W'$ be (not-necessarily distinct) indecomposable summands of a basic cluster-tilting object $\underline{T}$ of $\C_Q$ whose quiver is isomorphic to $Q$ and such that both $W$ and $W'$ correspond either to a sink or to a source of the quiver of $\underline{T}$. 
If $W[2]$ is isomorphic to $W'$ then $Q$ is of type $A_1$. In particular, $\underline{T}$ is indecomposable and $\underline{T}=W=W'$.
\end{lemma}

\begin{proof}
	We only treat the case where both $W$ and $W'$ correspond to sinks of the quiver of $\underline{T}$; the source case is analogous.
	By the definition of $\C_Q$, there are indecomposable objects $\widetilde{W}$ and $\widetilde{W}'$ of $\mathcal{D}^b_Q$ such that $\widetilde{W}$ (resp. $\widetilde{W}'$) is isomorphic to $ W$ (resp. $W'$) in $ \C_Q$.
	After shifting by $\tau$, we can replace $\underline{T}$, $W$ and $\widetilde{W}$ by $\tau^k\underline{T}$, $\tau^k W$ and $\tau^k \widetilde{W}$, respectively for any $k\in\ZZ$. All bipartite cluster-tilting objects whose quiver is isomorphic to $Q$ are in the same $\tau$-orbit. This orbit includes the cluster-tilting object induced by $\KK Q$; its quiver is $Q$ and the vertices correspond exactly to the indecomposable projective $\KK Q$-modules.
	We can thus assume without loss of generality that $\widetilde{W}$ and $\widetilde{W}'$ are sinks of the bipartite section $Q\times 0\subset \ZZ Q$, and they are indecomposable projective $\KK Q$-modules.
 Since $W[2]\cong W'$, we must have that $\widetilde{W}[2]=F^a(\widetilde{W}')=([1]\circ \tau^{-1})^a(\widetilde{W}')$ for some power $a\in \ZZ$.

We claim that this is only possible if $a=1$. 
 From the identification of $\Gamma(\mathcal{D}^b_Q)$ with $\ZZ Q$ it is clear that $a>0$. Indeed, for $a< 0$ we have that $F^a(\widetilde{W}')$ is a predecessor of $\Sigma$ and $\widetilde{W}[2]$ is a successor of $\Sigma$, which contradicts the fact that $\widetilde{W}[2]=F^a(\widetilde{W}')$.
The case $a=0$ is discarded by a similar argument.
Next, if $a>1$ we would have that $\widetilde{W}= [a-2]\circ \tau^{-a} \widetilde{W'}$ is at the same time in $\Sigma$ and a successor of $\Sigma$, which is impossible.
Hence we must have that $a=1$. 

Since $\widetilde{W}\cong P[0]$ for an indecomposable projective $P$, then we have that  $\widetilde{W}'[-1]\cong I[-1]$ for some indecomposable injective representation $I$ of $Q$.
Therefore $\widetilde{W}' \cong P'[0]$, where $P'$ is an indecomposable projective-injective representation of $Q$. 
Now since $P'$ corresponds to a sink, then it is a simple representation of $Q$ and the only simple injective representations of $Q$ correspond to sources of $Q$. 
We can then conclude that $Q$ is a single vertex by the hypothesis that $Q$ is connected. The claims of the lemma follow.
\end{proof}

\begin{proof}[Proof of Proposition \ref{prop:Lb}]
    In view of \cite[Theorem 4.47]{Demonet11} and \cite[Theorem 2.1]{GLS11} there is a triple $(\C, G, \T)$ that categorifies a cluster algebra of geometric type $\A(\C, G, \T) $ whose cluster type is the same as $\A $ (see \cite[\S3.5 and \S4.3]{Demonet11}). 
    Here the category $\C$ is a Frobenius categorification of a skew-symmetric cluster algebra $\A(\C)$ of geometric and finite cluster type, $G$ is a finite group acting on $\C$, the action is $2$-Calabi-Yau and $\T$ is a $G$-stable cluster-tilting subcategory of $\C$. In the skew-symmetric case $G$ is the trivial group and $\A(\C)=\A(\C, G, \T)$. 
    Since the cluster complex is independent of the choice of coefficients it is enough to show the statement for $\A(\C, G, \T)$. 
    We now consider the equivariant category $\C G$ in the sense of \cite[\S2]{Demonet11}.
    The properties of $\C G$ that we will use are the following.
    \begin{itemize}
    \item The category $\C G$ is Hom-finite, Krull-Schmidt, Frobenius and stably 2-Calabi-Yau. In particular, $\Ext^1_{\C G}(X,Y)\cong  \Hom_\KK(\Ext^1_{\C G}(Y,X), \KK)$ for all objects $X$ and $Y$ of $\C G$;
    \item there is a bijection $v \mapsto T_v$ between the set of cluster and frozen variables associated to $\A(\C,G,\T)$ and the isomorphism classes of indecomposable objects of $\C G$. Under this bijection frozen variables correspond to the classes of the indecomposable projective-injective objects;
    \item two cluster variables $u$ and $v$ of $\A(\C,G,\T)$ are compatible if and only if $\Ext^1_{\C G}(T_u,T_v)= 0 $. Moreover, if $\Hom_{\uCG}(T_u,T_v)\neq 0$ then $u$ and $v $ are compatible, where $\uCG$ is the triangulated category obtained from $\C G$ by quotienting out the morphisms that factor through projective objects.
    \end{itemize}
The first item summarizes Lemma 2.8 (i), Corollary 2.44 and Proposition 2.47 (ii) of \cite{Demonet11}, while the second item follows from \cite[Theroem A]{Demonet11} and the fact that for any cluster category categorifying a skew-symmetric cluster algebra of finite cluster type every indecomposable object is rigid. The last item is well-known in the skew-symmetric case and can be extended to the skew-symmetrizable one using the fact that $ \Hom_{\uCG }(T_u,T_v)\subseteq \Hom_{\underline{\C}}(\mathfrak F(T_u),\mathfrak F (T_v))$ and $ \Ext^1_{\C G}(T_u,T_v)\subseteq \Ext^1_{\C}(\mathfrak F (T_u), \mathfrak F (T_v))$, where $\mathfrak F:\C G\to \C$ is the forgetful functor.

Consider non-compatible cluster variables $ u$ and $v$ of $\A(\C,G ,\T)$ and set $\K=\K(\A(\C,G,\T))$. Then 
\[
	\Ext_{\C G}^1(T_v,T_u)\neq0 \qquad\textrm{and}\qquad\Ext_{\C G}^1(T_u,T_v)\neq 0.
\]
Let $0\to T_v \to E \to T_u \to 0$ and $0\to T_u \to E' \to T_v \to 0$ be non-zero elements of  $\Ext_{\C G}^1(T_u,T_v)$ and  $\Ext_{\C G}^1(T_v,T_u)$, respectively. 

Assume first that both $ E$ and $E'$ are projective. Then $T_u\cong T_v[1]$ and $T_v\cong T_u[1]$ in $\uCG $. This implies that $T_u$ is isomorphic to $T_u[2]$.  We will use Lemma~\ref{lem:A1} to show that this can only happen if $ \Gamma(B)$ consists of a single vertex.
Indeed, we can identify $\uCG$ with a cluster category $\C Q$ associated to a connected Dynkin quiver $Q$.
Since $\C Q$ is independent of the mutation class of $Q$ we may assume further that $Q$ is bipartite.
Moreover, in this case we have that $\mathfrak F(T_u)$ is a sum of indecomposable objects which are summands of a bipartite cluster-tilting object $\underline{T}$ of $\uCG$ whose quiver is isomorphic to $Q$, and all the indecomposable summands of $\mathfrak F(T_u)$ are either sinks or sources of the quiver of $\underline{T}$. In particular, if $\mathfrak F(T_u)$ is invariant under $[2]$ then every indecomposable summand $W$ of $\mathfrak F(T_u)$ must satisfy that $W[2]$ is also an indecomposable summand, say $W'$, of $\mathfrak F (T_u)$.
By Lemma~\ref{lem:A1} we must have that $ \Gamma(B)$  consists of a single vertex and in this case it is easily verified that  $\link(u,\K(\A))\cap \link(v,\K(\A))=\emptyset$.

We may thus assume that at least one of $E$ and $E'$ has non-projective indecomposable summands. 
So without loss of generality let $E= P \oplus Z $, where $P$ is projective and $Z$ is non-zero and has no projective direct summands. 
Let $T_{z_1}, \dots , T_{z_s}$ be the indecomposable direct summands of $Z$ satisfying that $\Hom_{\uCG}(T_v,T_{z_i})$ and $\Hom_{\uCG}(T_{z_i},T_u)$ are non-zero. 
Observe that there is at least one such summand since $ T_v\to Z \to T_u \to T_v[1]$ is a non-zero element of $\Ext^1_{\uCG}(T_u,T_v)\cong \Ext^1_{\C G} (T_u,T_v)$ (recall that projectives vanish in $\uCG$).
Therefore,  $\Ext^1_{\C G}(T_v,T_{z_i})$ and $\Ext^1_{\C G}(T_{z_i},T_u)$ vanish. 
In particular, every $z_i$ is compatible with both $u$ and $v$, so $z_i \in \link(u,\K)\cap \link(v,\K)$. We now claim that
\[
	\link(u,\K)\cap \link(v,\K)= \PS(\{z_1,\dots , z_s\}) * \K',
\]
where $\K' $ is a sub-complex of $ \K$ and $\PS(\{z_1,\dots , z_s\})$ is the standard simplex with vertices $z_1,\dots , z_s$. 
For this, it is sufficient to show that every $z_i$ is compatible with every element $y\in \link(u,\K)\cap \link(v,\K)$. 
We apply the functor $\Hom_{\C G}(T_y,-)$ to  $0 \to T_v \to E \to T_u \to 0$ to obtain the exact sequence
\[
\Ext^1_{\C G}(T_y,T_v) \to
\Ext^1_{\C G}(T_y,E) \to
\Ext^1_{\C G}(T_y,T_u). 
\]
Since $y $ is compatible with both $v$ and $u$ we have that 
\[
\Ext^1_{\C G}(T_y,T_v) = 0 = \Ext^1_{\C G}(T_y,T_u).
\]
Therefore, $\Ext^1_{\C G}(T_y,E)=0$ which implies that $\Ext^1_{\C G}(T_y,T_{z_i})=0$ for all $i$. The result follows.

\end{proof}

\subsection{Unobstructedness}

We now come to our first main result:
\begin{thm}\label{thm:unobstructed}
Let $\K$ be the cluster complex of a skew-symmetric cluster algebra $\A$ of finite cluster type. Then $\K$ is unobstructed, that is, $T^2(\K)=0$.
\end{thm}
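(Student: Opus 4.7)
The plan is to proceed by induction on the rank $n$ of $\A$, reducing ultimately to Proposition \ref{prop:t2}. By Theorem \ref{thm:bijection}, the cluster complex $\K$ depends only on the mutable part $B$ of an extended exchange matrix, so we may work with $B$ directly. If $B$ decomposes into connected components $B_1,\ldots,B_k$, then $\K(B)$ is naturally the join $\K(B_1)*\cdots*\K(B_k)$, so $S_{\K(B)}\cong S_{\K(B_1)}\otimes\cdots\otimes S_{\K(B_k)}$; by Lemma \ref{lemma:tensor}, $T^2(\K)$ vanishes if and only if each $T^2(\K(B_i))$ does. This reduces the problem to the case of indecomposable skew-symmetric $B$ of finite type, i.e., Dynkin type $A$, $D$, or $E$.

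For the base cases, when $n=1$ the complex $\K$ is a $0$-sphere and $S_\K=\KK[z_1,z_2]/(z_1z_2)$ is a hypersurface, hence a complete intersection, so $T^2(\K)=0$. When $n=2$, the only indecomposable skew-symmetric finite type is $A_2$, whose cluster complex is the boundary of a pentagon; this is unobstructed by Remark \ref{rem:1-spheres} (since $5\leq 5$).

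For the inductive step $n\geq 3$, I would invoke Proposition \ref{prop:t2}, which requires $\K$ to satisfy two conditions. For condition \ref{item:links}, the essential observation is that for any face $f$ of $\K(B)$, the link $\link(f,\K(B))$ is itself the cluster complex of the skew-symmetric, finite-type cluster algebra obtained by deleting the mutable rows and columns indexed by $f$ (equivalently, by freezing the cluster variables in $f$); since this is again of smaller rank and still skew-symmetric, the inductive hypothesis furnishes $T^2(\link(f,\K))=0$.

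The main obstacle is condition \ref{item:Lb}: for every non-edge $\{u,v\}$ of $\K$, the intersection $\link(u,\K)\cap\link(v,\K)$ must be empty or contractible. This is precisely the content of Lemma \ref{lemma:Lb} foreshadowed in the introduction, and I expect the natural approach to go through the categorification of cluster complexes by the cluster category of Buan--Marsh--Reineke--Reiten--Todorov (available precisely in the skew-symmetric setting): cluster variables correspond to indecomposable rigid objects, faces to partial cluster-tilting objects, and compatibility to $\mathrm{Ext}^1$-vanishing. The intersection of the two links then corresponds to the cluster complex of the subcategory of objects $\mathrm{Ext}^1$-orthogonal to both $u$ and $v$, and one argues via mutation/approximation in this subcategory that its geometric realization is contractible when nonempty, e.g.\ by exhibiting a cone structure or a sequence of elementary collapses coming from the exchange graph. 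This categorical input is exactly where skew-symmetry is used; its failure in the skew-symmetrizable but non-skew-symmetric case is consistent with the obstructed examples of Remark \ref{rem:obstructed}.
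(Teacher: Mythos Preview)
Your proposal is correct and follows essentially the same route as the paper: induction on rank, reduction to the indecomposable case via Lemmas \ref{lemma:joins} and \ref{lemma:tensor}, base cases in rank $\leq 2$, and for rank $\geq 3$ an appeal to Proposition \ref{prop:t2} with condition \ref{item:links} handled by Lemma \ref{lemma:clinks} and condition \ref{item:Lb} by Lemma \ref{lemma:Lb}. Your sketch of Lemma \ref{lemma:Lb} via categorification is in the right spirit; the paper's proof makes this precise by taking a nonzero extension $0\to T_v\to E\to T_u\to 0$ and showing that the non-projective summands of $E$ furnish simplex factors of $\link(u,\K)\cap\link(v,\K)$, i.e.\ exactly the cone structure you anticipate (and in fact the argument, using Demonet's equivariant categorification, goes through in the skew-symmetrizable case as well).
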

\noindent
We will complete the proof of this theorem at the end of this section after building up the necessary lemmas and propositions. The general strategy for the proof will be to induct on the rank of the cluster algebra $\A$. If an exchange matrix of $B$ of $\A$ is disconnected, we will see that it suffices to separately consider the connected components of $B$, allowing us to reduce the rank. For the connected case, we will instead be able to employ the criterion of Proposition \ref{prop:t2}; the key Proposition \ref{prop:Lb} will guarantee that the necessary hypotheses are met. 
\begin{rem}
\label{rem:obstructed}
	The skew-symmetric cluster algebras of finite cluster type are exactly those with a seed $(\tx,\tB)$ such that the Cartan counterpart of each connected component of $B$ corresponds to a simply laced Dynkin diagram, that is, of type $ADE$. However, for cluster algebras having a seed whose exchange matrix has a connected component whose Cartan counterpart corresponds to a non-simply laced Dynkin diagram, one can see that $T^2(\K)\neq 0$. 
	In the rank 2 case this follows from Remark \ref{rem:1-spheres} and the fact that a cluster complex of type $B_2$ or $C_2$ is a hexagon and of type $G_2$ is an octagon (see Example \ref{ex:g2}).
	Cluster complexes of rank 2 arise as links of certain faces of cluster complexes of higher rank (see Lemma \ref{lemma:clinks} below).
	Combining this with Theorem \ref{thm:localization} shows that skew-symmetrizable but non-skew-symmetric cluster complexes of arbitrary rank are obstructed.
\end{rem}

Although we are proving the theorem for skew-symmetric cluster algebras, we will later use several of the following lemmas without this restriction.

\begin{lemma}\label{lemma:joins}
Let $B_1,\ldots,B_k$ be the connected components of a skew-symmetrizable matrix $B$.  Then
\[
\K(B)=\K(B_1)*\cdots*\K(B_k).
\]
\end{lemma}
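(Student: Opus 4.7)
The plan is to show that the mutation structure of $B$ factors as a product of the mutation structures of its connected components $B_1,\ldots,B_k$, and then translate this factorization into the join decomposition of the cluster complex. Everything ultimately follows from the observation that a mutation in a direction $k$ belonging to component $c$ does not interact with the other components, either at the level of the matrix or at the level of the exchange relation.

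First I would verify that if $k$ is a vertex in component $c$ of $\Gamma(B)$, then mutation in direction $k$ leaves the entries $b_{ij}$ with $\{i,j\}\not\subseteq c$ unchanged. Indeed, the correction term $\mathrm{sgn}(b_{ik})\max(b_{ik}b_{kj},0)$ in \eqref{eq:matrix mut} vanishes whenever either $i$ or $j$ lies outside $c$, since then one of $b_{ik}, b_{kj}$ is zero. Moreover, the changed entries involving $k$ itself remain supported on component $c$. Thus every seed $(\tx',\tB')$ mutation-equivalent to $(\tx,\tB)$ has an exchange matrix whose connected components are obtained from $B_1,\ldots,B_k$ by mutations internal to each component. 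In particular, the tree $\RTT^n$ of seeds of $\A(B)$ is canonically identified with the product $\RTT^{n_1}\times\cdots\times \RTT^{n_k}$ of the trees for the $\A(B_i)$, where $n_i$ is the rank of $B_i$.

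Next I would analyze the cluster variables themselves. By induction on the number of mutations, I claim that for every cluster $\bx'$ in $\A(B)$, the entries indexed by the component $c$ form a cluster of $\A(B_c)$ (where the cluster variables of $\A(B_c)$ are regarded as elements of the subring generated by the initial variables indexed by $c$). The base case is the initial cluster. For the inductive step, a mutation in direction $k\in c$ replaces $x_k$ by $x_k'$ via the exchange relation \eqref{eq:exchange_rel}; since $b_{ik}=0$ for $i\notin c$, only variables indexed by $c$ appear in this relation, so $x_k'$ lies in the subring associated to $B_c$ and the entries of $\bx'$ indexed by $c$ form a cluster of $\A(B_c)$ (and the entries indexed by the other components are unchanged). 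Consequently the set $\V(B)$ of cluster variables decomposes as the disjoint union $\V(B_1)\sqcup\cdots\sqcup\V(B_k)$, and every cluster of $\A(B)$ is a disjoint union of clusters of the $\A(B_i)$.

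Finally, this decomposition of clusters translates directly into the join description of $\K(B)$. A subset $F\subseteq \V(B)$ is a face of $\K(B)$ if and only if it is contained in some cluster of $\A(B)$, which by the previous paragraph is equivalent to saying that $F\cap \V(B_i)$ is a face of $\K(B_i)$ for each $i$. This is precisely the defining condition for $F$ to be a face of $\K(B_1)*\cdots*\K(B_k)$, proving the lemma. The step I expect to require the most care is the inductive claim that mutations preserve the partition of clusters by component; once that is in hand the rest is formal.
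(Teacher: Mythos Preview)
Your proof is correct and follows essentially the same approach as the paper's: mutations in one component neither alter the matrix entries nor the cluster variables of the other components, so clusters of $\A(B)$ are exactly unions of clusters of the $\A(B_i)$, giving the join decomposition. One minor quibble: the $n$-regular tree $\RTT^n$ is not literally the product of the trees $\RTT^{n_i}$ (a product of trees is not a tree), but you do not actually use this identification---what you need and correctly establish is that the \emph{seed pattern} factors, i.e., mutations in distinct components commute.
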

\begin{proof}
	This follows from the definition of the cluster complex. Any sequence of mutations at indices in $B_i$ does not change the $B_j$ for $j\neq i$. Likewise, none of the associated cluster variables are changed. Hence, any cluster variable created by iterated mutation in one component is compatible with any other created by iterated mutation in some other component.
\end{proof}

\begin{lemma}\label{lemma:clinks}
	Let $(\tx,\tB)$ be a seed and $f\in \K(B)$ a subset of the elements of $\bx$. Let $B'$ be the matrix obtained from $Q(B)$ by removing the rows and columns indexed by $\{i\ |\ x_i\in f\}$. Then 
	\[
\link(f,\K(B))\cong \K(B').
	\]
\end{lemma}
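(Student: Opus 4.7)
The plan is to reduce to the case where $f$ lies in the initial cluster, observe that mutations at indices outside of $J := \{i : x_i \in f\}$ commute with the operation of deleting the rows and columns of $\tB$ indexed by $J$, and then invoke Theorem \ref{thm:bijection} to pass between cluster complexes.

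First I would use the fact that every face of $\K(B)$ is contained in some cluster, together with Theorem \ref{thm:bijection}, to replace $(\tx,\tB)$ by a mutation-equivalent seed in which $f\subseteq \bx$, so that $f=\{x_j : j\in J\}$ for some $J\subseteq\{1,\dots,n\}$. Let $\tB''$ denote the $m\times (n-|J|)$ matrix obtained from $\tB$ by deleting the columns indexed by $J$; its top square part restricted to rows outside $J$ is exactly $B'$, and its remaining rows are regarded as frozen. By Theorem \ref{thm:bijection} we have $\K(\tB'')\cong \K(B')$, so it suffices to identify $\link(f,\K(B))$ with $\K(\tB'')$.

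Next I would check that mutation at any $k\notin J$ commutes with the column deletion operation. This is immediate from the mutation formula \eqref{eq:matrix mut}: each modified entry $b'_{ij}$ with $j\notin J$ and $k\notin J$ depends only on $b_{ij}$, $b_{ik}$, and $b_{kj}$, all of which lie in columns not indexed by $J$. Consequently, iterated mutation sequences of $\tB''$ correspond bijectively to iterated mutation sequences of $\tB$ that avoid the indices in $J$, and along this correspondence each cluster variable of $\A(\tB'')$ matches with a cluster variable of $\A(\tB)$ that is compatible with $f$. Passing to faces, a subset $g$ disjoint from $f$ is a face of $\link(f,\K(B))$ exactly when $f\cup g$ is contained in some cluster of $\A(\tB)$, which by the above correspondence occurs precisely when $g$ is contained in some cluster of $\A(\tB'')$, i.e. when $g$ is a face of $\K(\tB'')$. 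This gives the desired simplicial isomorphism.

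The main obstacle is the connectivity statement implicit in the above identification: I need that \emph{every} cluster of $\A(\tB)$ containing $f$ is reachable from $\bx$ by mutations only at indices outside $J$, so that the map from faces of $\K(\tB'')$ to faces of $\link(f,\K(B))$ is surjective. Equivalently, the subgraph of the exchange graph of $\A(\tB)$ spanned by clusters containing $f$ must be connected. In the finite cluster type setting this can be deduced by combining the compatibility of mutations established above with the connectedness of the exchange graph of $\A(B')$, which is itself of finite cluster type; more generally it is a standard but nontrivial fact about exchange graphs of cluster algebras that I would invoke here.
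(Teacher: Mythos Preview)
Your approach is essentially the same as the paper's: identify $\K(B')$ with a subcomplex of $\link(f,\K(B))$ via mutations avoiding $J$, and then argue that this subcomplex is the entire link. The paper dispatches the surjectivity step by citing \cite[Theorem 6.2]{denom}, which states precisely the connectivity fact you flag: any two clusters containing a fixed subset of cluster variables can be linked by mutations not involving those variables. Your fallback to ``a standard but nontrivial fact about exchange graphs'' is therefore the right move, and you have correctly located the only nontrivial ingredient.

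There is, however, a genuine gap in your attempted self-contained finite-type argument. Connectedness of the exchange graph of $\A(B')$ only tells you that the clusters of $\A(B')$ are all reachable from one another; combined with the mutation compatibility, this shows that the image of $\K(B')$ in $\link(f,\K(B))$ is connected. It does \emph{not} rule out the existence of clusters of $\A(\tB)$ that contain $f$ but are not reachable from $\bx$ by mutations avoiding $J$---that is exactly the statement you are trying to prove, so the argument is circular as written. If you want a self-contained finite-type argument, one option is to use that both $\link(f,\K(B))$ and $\K(B')$ are simplicial $(n-|J|-1)$-spheres, and a PL sphere cannot properly contain another PL sphere of the same dimension as a full subcomplex.

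A minor point: your first paragraph reduces to the case $f\subseteq\bx$, but the lemma already assumes this (``$f\in\K(B)$ a subset of the elements of $\bx$''), so that step is unnecessary.
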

\begin{proof}
	It is clear from the definition of the cluster complex that $\K(B')$ can be identified with a subcomplex $\K'$ of $\link(f,\K(B))$.
By \cite[Theorem 6.2]{denom}, any two clusters containing a given fixed subset of cluster variables can be linked to each other via mutations that do not involve any exchange of the fixed cluster variables. It follows that $\K'$ is the entire link.
\end{proof}

\begin{proof}[Proof of Theorem \ref{thm:unobstructed}]
We will proceed by induction on the rank of the cluster algebra $\A$. In the rank one case, $\K$ is a $0$-sphere. One checks directly that $T^2(\K)=0$.
In the rank two case, $\K$ is the boundary of a square (type $A_1\times A_1$) or of a pentagon (type $A_2$). Here, we are using that $\A$ is skew-symmetric.
In both cases, one again checks directly that 
$T^2(\K)=0$.

For the induction step, suppose first that $\A$ has a seed $(\tx,\tB)$ with $\Gamma(B)$ disconnected. By the induction hypothesis along with Lemmas \ref{lemma:tensor} and \ref{lemma:joins}, we may conclude that $\K$ is unobstructed.

Suppose instead that every (or equivalently, some) seed $(\tx,\tB)$ has $\Gamma(B)$ connected. It follows by Theorem \ref{thm:finite} that $\Gamma(B)$ is a finite Dynkin diagram for some exchange matrix $B$.  In this case, we will apply Proposition \ref{prop:t2} to show the induction step. To check hypothesis (\ref{item:links}) of the proposition, we note that Lemma \ref{lemma:clinks}
shows that any link of $\K$ is a cluster complex for a cluster algebra $\A'$ of smaller rank. Hypothesis (\ref{item:links}) is then fulfilled by the induction hypothesis.
To check hypothesis (\ref{item:Lb}) of Proposition \ref{prop:t2}, we simply apply Proposition \ref{prop:Lb}.

The theorem follows by induction.
\end{proof}

\subsection{First Order Deformations}
Let $\K$ be the cluster complex of a cluster algebra $\A$ of finite cluster type.
We will now give a description of $T^1(\K)$ using the language of seeds. 
Consider any seed $(\tx,\tB)$ of $\A$. Recall that the elements of the cluster $\bx$ all appear as vertices of some face of the complex $\K$.

\begin{defn}
	Let $(\tx,\tB)$ be a seed of $\A$, and $\Omega$ a subset of the elements of $\bx=(x_1,\ldots,x_n)$. A cluster variable $x_k$ in the cluster $\bx$ is \emph{$\Omega$-isolated} if
	\begin{enumerate}
		\item $x_k$ is not in $\Omega$; and
		\item after removing the vertices of $\Gamma(B)$ corresponding to elements of $\Omega$, the vertex $k$ of $\Gamma(B)$ has no neighbors.
	\end{enumerate}
\end{defn}

\begin{thm}\label{thm:t1}
	Every graded piece of $T^1(\K)$ is either $0$- or $1$-dimensional. The degrees $\bc=\ba-\bb$ with non-zero $T^1$ may be constructed exactly as follows: given a seed $(\tx,\tB)$, a subset $\Omega$ of the elements of $\bx$, and an $\Omega$-isolated cluster variable $x_k$, let $\ba$ be any non-negative vector with support $\Omega$, and let $\bb\in\{0,1\}^\V$ have support $\{x_k,x_k'\}$. Here $x_k'$ is defined as in \eqref{eq:exchange_rel}.
\end{thm}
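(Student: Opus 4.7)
The plan is to combine Altmann--Christophersen's localization result (Theorem \ref{thm:localization}) with the join-decomposition structure of cluster complexes from Lemmas \ref{lemma:clinks} and \ref{lemma:joins}, and reduce the final computation to Proposition \ref{prop:t1} applied component by component.

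First I would write $\bc = \ba - \bb$ with $a$ and $b$ the disjoint supports of $\ba$ and $\bb$. By Theorem \ref{thm:localization}, $T^1(\K)_\bc$ vanishes unless $a \in \K$, $\bb \in \{0,1\}^{\V}$, $b \neq \emptyset$, and $b$ is contained in the vertex set of $\link(a, \K)$; moreover for $a \neq \emptyset$ there is a natural isomorphism $T^1(\K)_\bc \cong T^1(\link(a, \K))_{-\bb}$. Thus the dimension of the graded piece depends only on $a$ and $\bb$, which is precisely why $\ba$ may be taken to be any non-negative vector with support $\Omega := a$. I would then choose a seed $(\tx, \tB)$ whose cluster $\bx$ contains $a$ (possible since $a$ is a face) and invoke Lemma \ref{lemma:clinks} to identify $\link(a, \K)$ with the cluster complex $\K(B')$, where $B'$ is obtained from $B$ by deleting rows and columns indexed by $\Omega$. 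The case $a = \emptyset$ slots into the same framework by taking $B' = B$ and working with $T^1(\K)_{-\bb}$ directly.

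Next I would decompose $\Gamma(B')$ into its connected components, writing $B' = B_1' \sqcup \cdots \sqcup B_\ell'$. By Lemma \ref{lemma:joins}, $\K(B') = \K(B_1') \ast \cdots \ast \K(B_\ell')$, so $S_{\K(B')}$ is the tensor product of the $S_{\K(B_i')}$. Iterating Lemma \ref{lemma:tensor} and using that each $S_{\K(B_i')}$ is concentrated in non-negative multidegrees, one sees that $T^1(\K(B'))_{-\bb}$ can only be nonzero when $b$ lies entirely in the vertex set of a single component $\K(B_i')$, in which case $T^1(\K(B'))_{-\bb} \cong T^1(\K(B_i'))_{-\bb}$.

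Finally, each connected cluster complex $\K(B_i')$ satisfies the hypothesis of Proposition \ref{prop:t1} by Lemma \ref{lemma:Lb}, so $T^1(\K(B_i'))_{-\bb}$ is one-dimensional exactly when $\K(B_i')$ is a $0$-sphere with $b$ equal to its entire vertex set, and is zero otherwise. A connected cluster complex is a $0$-sphere iff the cluster algebra has rank one, iff $B_i'$ is the $1 \times 1$ zero matrix, which happens precisely when the deleted vertex $k$ of $\Gamma(B)$ has all of its neighbors in $\Omega$, i.e., $x_k$ is $\Omega$-isolated. The two vertices of this $0$-sphere are $x_k$ and the mutated variable $x_k'$ from \eqref{eq:exchange_rel}, matching the description in the theorem. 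The step I expect to require the most care is the grading bookkeeping in the join/tensor decomposition, showing that $\bb$ cannot be split nontrivially across distinct components; once this is established, the classification follows by directly assembling the cited lemmas with Proposition \ref{prop:t1}.
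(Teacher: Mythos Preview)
Your proposal is correct and follows essentially the same approach as the paper's proof: both use Theorem \ref{thm:localization} to reduce to $T^1(\link(a,\K))_{-\bb}$, then Lemmas \ref{lemma:clinks} and \ref{lemma:joins} to decompose the link as a join of connected cluster complexes, Lemma \ref{lemma:tensor} to handle the resulting tensor product, and finally Lemma \ref{lemma:Lb} together with Proposition \ref{prop:t1} to conclude. Your treatment of the grading bookkeeping in the join decomposition is in fact slightly more explicit than the paper's, which simply asserts that at most one summand $T^1(\K(B_i'))_{-\bb}$ is nonzero since $b$ lies in the vertices of at most one component.
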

\begin{proof}
	By Theorem \ref{thm:localization}, $T^1(\K)_{\ba-\bb}\neq 0$ implies that $a\in \K$ and $b$ is contained in the set of vertices of $\link(a,\K)$. In such situations, 
	\[
		T^1(\K)_{\ba-\bb}=T^1(\link(a,\K))_{-\bb}.
	\]
	Since $a\in \K$, there is a seed $(\tx,\tB)$ with $\Omega:=\{a\}$ a subset of the elements of $\bx$.

By Lemma \ref{lemma:clinks}, we may identify
$\link(\Omega,\K)$ with $\K(B')$ where $B'$ is the matrix obtained from $B$ by removing the rows and columns corresponding to elements of $\Omega$. Decomposing $B'$ into connected components $B_1',\ldots,B_k'$, we further obtain
\[
\link(\Omega,\K)\cong \K(B_1')*\cdots*\K(B_k')
\]
by Lemma \ref{lemma:joins}.
By Lemma \ref{lemma:tensor}, we conclude that 
\[
	T^1(\K)_{\ba-\bb}\cong\bigoplus_i T^1(\K(B_i'))_{-\bb}.
\]
Note that at most one term $T^1(\K(B_i'))_{-\bb}$ is non-zero, as $b$ is contained in the vertices of at most one of the $\K(B_i')$.

Suppose that $b$ is contained in the vertices of $\K(B_i')$. Since $\Gamma(B_i')$ is connected and $\A(B_i')$ is of finite cluster type, we may apply Proposition \ref{prop:Lb}. Hence, for any non-edge $\{u,v\}$ of $\K'=\K(B_i')$, the intersections of $\link(u,\K')$ and $\link(v,\K')$ is either empty or contractible. Since $\K'$ is also a spherical flag complex, we may use Proposition \ref{prop:t1}
to determine 
$T^1(\K')_{-\bb}$.

To have $T^1(\K')_{-\bb}\cong \KK$, we must have that 
$\K'$ is a zero-sphere, and $b$ consists of the two vertices of this sphere.
The condition that $\K'$ is a zero-sphere is fulfilled exactly when $B_i'$ has rank one, that is consists of a single row and column $k$. This is the same thing as saying that the cluster variable $x_k$ is $\Omega$-isolated. The condition that $b$ consists of the two vertices of the sphere is the same as saying that $b=\{x_k,x_k'\}$.

In all other cases, 
$T^1(\K')_{-\bb}=0 $. The claims in the statement of the theorem follow.
\end{proof}

\begin{rem}\label{rem:t1el}
	Suppose that $T^1(\K)_{\ba-\bb}$ is non-zero. Then the generator for this graded piece is the image of the homomorphism $\psi\in\Hom(I_\K,S_\K)$ that sends $z^{\bb}$ to $z^{\ba}$, and every other minimal generator of $I_\K$ to zero. Indeed, the degree forces the condition that every other generator of $I_\K$ be sent to zero.
\end{rem}

\begin{ex}[Continuation of Running Example \ref{ex:r2}]\label{ex:r3}
	Let $\A$ be the cluster algebra from Example \ref{ex:r1}. Its cluster complex $\K$ is described in Example \ref{ex:r2}. By Theorem \ref{thm:unobstructed}, $\K$ is unobstructed. Furthermore, we may apply $T^1(\K)$ to compute the non-zero degrees of $\T^1(\K)$. Each cluster of $\A$ just consists of two variables, so for any cluster $\{x_1,x_2\}$, $x_1$ is $\Omega=\{x_2\}$-isolated (and vice versa). Making use of $\K$, we see that $T^1(\K)_{\ba-\bb}\neq 0$ if and only if $\ba,\bb$ have support as in Table \ref{table:ex3}, and $\bb\in\{0,1\}^\V$.
\end{ex}
\begin{table}
\begin{align*}
		\begin{array}{c c}
\supp(\ba) & \supp(\bb)\\
\hline
x_{14} & x_{13},x_{24}\\
x_{24} & x_{14},x_{25}\\
x_{25} & x_{24},x_{35}\\
x_{35} & x_{25},x_{13}\\
x_{13} & x_{35},x_{14}
	\end{array}
	\end{align*}
	\caption{Support of $T^1(\K)$ for Example \ref{ex:r3}}\label{table:ex3}
\end{table}

\section{Coefficients, Gradings, and Bases of Cluster Monomials}\label{sec:coeffs}
\subsection{Principal and Universal Coefficients}
Let $\tB\in \Mat_{m\times n}(\ZZ)$ be an extended exchange matrix. Its \emph{principal extension} is the extended exchange matrix $\tB^\prin$ obtained from $\tB$ by attaching a copy of the $n\times n$ identity matrix $I_n$ at its bottom. This can be represented as follows:
\[
\tB^\prin= \left(\begin{array}{c}
		\tB\\
		I_n
\end{array}\right)\in \Mat_{(m+n)\times n}(\ZZ).
\]
\begin{defn}\label{defn:prin}
The \emph{cluster algebra with principal coefficients} associated to $\tB$ is $\A(\tB^\prin)$. 
\end{defn}
In order to distinguish this case from others, we denote the new frozen variables of $ \A(\tB^{\prin})$ by $ y_i=x_{m+i}$ for $ 1\leq i \leq n$.
By the strong Laurent phenomenon \ref{thm:SLaurent} $\A(\tB^\prin)$ is a $\KK[y_1,\dots,y_n]$-subalgebra of $\KK[y_1,\dots,y_n][x_{n+1},\ldots,x_m][x_1^{\pm 1},\dots, x_n^{\pm 1}]$. 
Any cluster variable of $\A(\tB^\prin)$ can be written as $ x_{i;v}$ for some $1\leq i \leq n$ and a vertex $v$ of $\RTT^n$.

\begin{rem}\label{rem:tacit}
Cluster algebras with principal coefficients were initially defined \cite[Definition 3.1]{FZ_clustersIV} in the case of $m=n$, that is, no frozen variables. However, as mentioned in \ref{rem:ice_quiver}, we may view $\A(\tB)$ as a subalgebra of a rank $m$ cluster algebra $\A(\tB')$ with no frozen variables. Then
$\A(\tB^\prin)$ is a subalgebra of $\A((\tB')^\prin)$, and we may apply structural results on $\A((\tB')^\prin)$ (and the related concepts of $F$-polynomials and $\bg$-vectors) to deduce similar results for
$\A(\tB^\prin)$. We tacitly do this in the following.
\end{rem}

We now recall some of the fundamental results about cluster algebras with principal coefficients. It follows from \cite[Corollary 6.3, Proposition 5.1]{FZ_clustersIV} and \cite[Theorem 5.11]{GHKK} that a cluster variable (or a frozen variable) $x_{i;v}^\prin$ of $ \A(\tB^\prin)$ has the following shape
\begin{equation}
\label{eq:separation}
x_{i;v}^\prin= x_1^{g_1}\cdots x_m^{g_m} F_{i;v}(\hy_1,\dots , \hy_n),
\end{equation}
where,
\begin{itemize}
    \item $g_1,\dots , g_m\in \ZZ$ and $g_{j}\geq 0$ for all $n<j \leq m$;
    \item $F_{i;v}$ is a polynomial in $ \ZZ[y_1,\dots , y_n]$ with constant term equal to $1$;
    \item  for $1\leq j \leq n$ we take $\hy_j=y_j \prod_{i=1}^m x_i^{b_{ij}}$.
\end{itemize}
\begin{defn}
\label{def:g-vecs}
Let $x_{i;v}$ be a cluster or frozen variable of $\A(\tB)$. The $\bg$-\emph{vector} of $ x_{i;v}$ (with respect to the initial seed of $\A(\tB^\prin)$) is the vector 
\[
\bg (x_{i;v}):=(g_1,\dots, g_m)\in \ZZ^n\times \ZZ_{\geq 0}^{m-n},
\]
where $g_j$ is as in \eqref{eq:separation} for $x_{i;v}^\prin$.For each vertex $v$ of $\RTT^n$ the $\bg$-matrix $ G_v$ is the $m\times m$ square matrix whose $i$th column is $\bg(x_{i;v}) $. Polynomials $F_{i;v} $ as in \eqref{eq:separation} are called $F$-\emph{polynomials}.
\end{defn}
Since by Theorem \ref{thm:bijection} there is a canonical bijection $x_{i;v} \mapsto x^\prin_{i;v}$ between the sets of cluster variables of $\A(\tB)$ and $\A(\tB^\prin)$, $\bg (x_{i;v})$ only depends on the cluster or frozen variable $x_{i;v}$ and not directly on $i,v$.

\begin{rem}
	In the literature, $\bg(x_{i;v})$ as we have defined it is often called the (extended) $\bg$-\emph{vector} of $ x_{i;v}^\prin$ (as opposed to $x_{i;v}$). On the other hand, with the conventions adopted in Definition \ref{def:g-vecs}, the $\bg$-\emph{vector} of $ x_{i;v}^\prin$ is $\bg(x_{i;v}^\prin)$, which lives in $\ZZ^{m+n}$ (as opposed to $\ZZ^m$). However, notice that if $ x_{i;v}^\prin$ is mutable then the last $n$ entries of $\bg(x_{i;v}^\prin)$ are zero.
\end{rem}

\begin{rem}
We observe that the $F$-polynomials and the $\bg$-vector of a cluster variable depends on the choice of an initial seed and the vertex $ v_0$ of $\RTT^n$ labeling the initial seed of  $\A(\tB^\prin)$.
In this paper, whenever we talk about $F$-polynomials or {\bf g}-vectors we assume that the aforementioned choices have been taken once and for all. 
\end{rem}

\begin{rem}
\label{rem:gF}
	The fact that the last $m-n$ entries of a $\bg$-vector are non-negative follows from the sign-coherence of $\bg$-vectors proved in the skew-symmetric case in \cite{DWZ} and in full generality in \cite{GHKK}.  Indeed, any $\bg$-matrix of $\tB$ obtained by iterated mutation in the indices $1,\dots , n$ must contain the $\bg$-vectors of the frozen variables $x_{n+1}, \dots, x_{m}$. We have that $\bg(x_k)=(\delta_{k,j})_{j=1}^m$. In particular, given $n < j\leq m $ we have that the $j $th entry of  $  \bg(x_{j})$ is positive and by sign-coherence the $j$th entry of all the $\bg$-vectors of $\A(\tB^\prin)$ are non-negative.

	Corollary 6.3 of \cite{FZ_clustersIV} asserts that $F_{i;v}\in \ZZ[y_1, \dots , y_m]$ (working in $\A( (\tB')^\prin)$, see Remark \ref{rem:tacit}. However, using the mutation rule of $F$-polynomials appearing in \cite[Proposition 5.1]{FZ_clustersIV}  and the fact that to reach $v$ we never mutated in the directions $n+1,\dots ,m $, we can conclude that the variables $y_{n+1},\dots , y_m$ do not appear in $ F_{i;v}$.
\end{rem}

In many cases, the $F$-polynomial $F_{i;v}$ can also be used to compute $\bg(x_{i;v})$.

\begin{conj}
\cite[Conjecture 6.11]{FZ_clustersIV}
\label{conj:g_via_F}
Suppose that $ F_{i;v}$ is not identically equal to $1$.  Then the $\bg$-vector $\bg(x_{i;v})=(g_1,\dots , g_m)$ is given by
\[
    \prod_{i=1}^m u_i^{\tilde{g}_i}= \dfrac{F_{i;v}|_{\text{Trop}(u_1, \dots , u_m)}(u_1^{-1},\dots , u_n^{-1})}{F_{i;v}|_{\text{Trop}(u_1, \dots , u_m)}(\prod_{s=1}^m u_s^{b_{s1}},\dots ,\prod_{s=1}^m u_s^{b_{sn}})}.
\]
Here the specialization to $\text{Trop}(u_1, \dots , u_m)$ means we are working in the \emph{tropical semifield}, see e.g.~\cite[Definition 2.2]{FZ_clustersIV}.
\end{conj}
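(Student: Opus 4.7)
The strategy rests on the separation formula \cite[Corollary 6.3]{FZ_clustersIV}, which expresses every non-initial cluster or frozen variable of $\A(\tB^\prin)$ as
\[
x_{i;v}^\prin = F_{i;v}(\hy_1,\ldots,\hy_n)\prod_{j=1}^m x_j^{g_j},\qquad \hy_j = y_j\prod_{s=1}^m x_s^{b_{sj}},
\]
with $(g_1,\ldots,g_m) = \bg(x_{i;v})$. The plan is to proceed by induction on the graph-theoretic distance from the initial vertex $v_0$ to $v$ in $\RTT^n$, verifying at each step that the conjectured tropical formula is preserved under the mutation recursions governing $F$-polynomials and $\bg$-vectors.

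For the base case of a single mutation at index $k$, the exchange relation in $\A(\tB^\prin)$ is $x_k x_k' = y_k\prod_{b_{ik}>0}x_i^{b_{ik}} + \prod_{b_{ik}<0}x_i^{-b_{ik}}$, which immediately yields $F_{k;v_1} = 1+y_k$ and $\bg(x_{k;v_1}) = -e_k + \sum_{s:\,b_{sk}<0}|b_{sk}|\,e_s$. A short calculation in $\text{Trop}(u_1,\ldots,u_m)$ shows that the conjectured ratio evaluates to $u_k^{-1}\prod_{s:\,b_{sk}<0}u_s^{|b_{sk}|}$, matching $\prod_j u_j^{g_j}$. For the inductive step, one combines the mutation rule for $F$-polynomials \cite[Proposition 5.1]{FZ_clustersIV} with the mutation rule for $\bg$-vectors \cite[Proposition 6.6]{FZ_clustersIV} and checks that both sides of the conjectured identity transform identically under mutation. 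The hypothesis $F_{i;v}\not\equiv 1$ enters here only to exclude the initial variables, for which the formula would collapse to $1=1$ while $\bg(x_{i;v}) = e_i$.

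The principal obstacle is that the $\bg$-vector mutation rule splits into two cases according to the sign of a certain $\bc$-vector entry. For the induction to go through coherently, one needs sign-coherence of the $\bc$-vectors (equivalently, of the $\bg$-vectors) so that the two cases reassemble into a single statement and the tropical substitutions $y_j\mapsto u_j^{-1}$ and $y_j\mapsto \prod_s u_s^{b_{sj}}$ behave predictably. Sign-coherence was the deepest of the conjectures of \cite{FZ_clustersIV} and is now known in the skew-symmetric case by the representation-theoretic arguments of \cite{DWZ} and in general by the scattering-diagram construction of \cite{GHKK}. Granting sign-coherence, the inductive step reduces to direct tropical bookkeeping that matches the two substitutions in $F_{i;v}$ before and after mutation, and the conjecture follows.
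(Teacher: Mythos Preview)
The paper does not prove this statement. It is presented as a conjecture (quoted from \cite[Conjecture~6.11]{FZ_clustersIV}), and the only remark the paper makes is that it holds whenever $\A(\tB)$ is of finite cluster type, by \cite[Corollary~10.10]{FZ_clustersIV}. There is therefore no proof in the paper to compare your proposal against.

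That said, your outline follows the known route: Fomin and Zelevinsky already observed in \cite{FZ_clustersIV} that this conjecture is implied by sign-coherence of $\bc$-vectors, and sign-coherence is now a theorem by \cite{DWZ} in the skew-symmetric case and \cite{GHKK} in general. So the strategy is sound. What you have written, however, is a sketch rather than a proof: the inductive step is dismissed as ``direct tropical bookkeeping,'' but you do not actually carry out the verification that the two tropical substitutions in $F_{i;v}$ transform, under the recursion of \cite[Proposition~5.1]{FZ_clustersIV}, in a way compatible with the $\bg$-vector mutation of \cite[Proposition~6.6]{FZ_clustersIV}. This is where the sign of the relevant $\bc$-vector entry enters to select the correct branch, and it needs to be written out explicitly. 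Your base case is fine, and your identification of sign-coherence as the crux is correct, but the heart of the argument is missing.
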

\noindent Conjecture \ref{conj:g_via_F} is true whenever $\A(\tB)$ is of finite cluster type \cite[Corollary 10.10]{FZ_clustersIV}. 

Although $\A(\tB^\prin)$ depends on our choice of seed, we may also add coefficients in a way that is \emph{independent} of the seed choice.
For any matrix $B$, denote its transpose by $B^\tr$.
\begin{defn}[{\cite[\S 12]{FZ_clustersIV}, \cite{reading}}]\label{def:univ frozen}
Let $\tB$ be an extended exchange matrix of finite cluster type. The \emph {cluster algebra with universal coefficients} $\A^\univ(\tB)$ is the cluster algebra associated to $\tB^\univ$, where
\begin{displaymath}
\tB^\univ=
\left(
\begin{array}{c}
     \tB \\ 
     U_{B}
\end{array}
\right)
\end{displaymath}
and $U_{B}$ is the matrix whose rows are the $\bg$-vectors of the cluster variables of $\A(B^{\tr})$.
\end{defn}
Up to canonical isomorphism, $\A^\univ(\tB)$ is independent of the choice of seed, and we will simply write $\A^\univ$. This definition agrees with the original definition of a universal cluster algebra \cite[\S 12]{FZ_clustersIV}, see \cite{reading}. We see from the above that $\A^\univ$ is obtained from $\A$ by adding in $p$ frozen variables $\bt=t_1,\ldots,t_p$, where $p$ is the number of cluster variables of $\A$. 

\begin{ex}[Continuation of Running Example \ref{ex:r3}]\label{ex:r4}
	Let $\A$ be the cluster algebra from Example \ref{ex:r1}.
One computes its $\bg$-vectors to be
\begin{align*}
	&\bg(x_{25})=(-1,0,0,2,0)\qquad && \bg(x_{13})=(1,0,0,0,0)&\\
	&\bg(x_{24})=(-1,1,0,1,0)\qquad&&\bg(x_{35})=(0,-1,0,1,1) &\\
	&\bg(x_{14})=(0,1,0,0,0) \qquad &&\bg(s_{1})=(0,0,1,0,0) &\\
	&\bg(s_{2})=(0,0,0,1,0)\qquad &&\bg(s_{3})=(0,0,0,0,1). &
\end{align*}
The projection of these $\bg$-vectors to the first two coordinates is pictured in Figure \ref{fig:ex4}.

One computes that, up to permutation of the rows, the matrix $U_B$ used in the extended exchange matrix for the cluster algebra with universal coefficients is
\[
\left(	\begin{array}{c c}
0&-1\\
0&1\\
-1&1\\
1&0\\
-1&0
	\end{array}\right)
\]
Taking frozen variables $t_1,\ldots,t_5$, we obtain the following exchange relations for $\A^\univ$:
\begin{align*}
	x_{24}x_{35}&=t_1s_3x_{25}+t_2t_5s_1s_2\\
	x_{14}x_{35}&=t_2s_1x_{13}+t_1t_3s_2s_3\\
	x_{14}x_{25}&=t_3s_2x_{24}+t_2t_4s_1^2\\
	x_{13}x_{25}&=t_4s_1x_{35}+t_3t_5s_2^2\\
	x_{13}x_{24}&=t_5s_2x_{14}+t_1t_4s_1s_3.
\end{align*}
\end{ex}
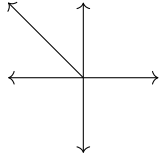
\begin{figure}
	\begin{tikzpicture}
		\draw[->] (0,0) -- (1,0);
		\draw[->] (0,0) -- (0,1);
		\draw[->] (0,0) -- (-1,0);
		\draw[->] (0,0) -- (0,-1);
		\draw[->] (0,0) -- (-1,1);
	\end{tikzpicture}

	\caption{Projection of $\bg$-vectors for Example \ref{ex:r4}}\label{fig:ex4}
\end{figure}

\subsection{Positively Graded Cluster Algebras}\label{sec:grading}

We will consider $\ZZ^d$-gradings on $\A(\tB)$ such that every cluster variable is homogeneous, where $d$ is some natural number. Observe that this is equivalent to the condition that all the exchange relations are homogeneous. 
A systematic study of these gradings was initiated in \cite{Gra} and we follow this approach. Although we allow frozen variables, our discussion essentially follows from that of \cite[\S 3]{Gra}.

\begin{defn}
A \emph{graded seed} is a triple $(\tx,\tB,D)$, where $(\tx,\tB)$ is a seed in $\F$ and $D\in \Mat_{m \times d}(\ZZ)$ such that $ \tB^\tr D=0$. 
\end{defn}

Assume that $(\tx,\tB, D)$ is a graded seed and let $D_i\in \ZZ^d$ be the $i$th row of $D$. For $1\leq i \leq m $ we can declare $\deg_D(x_i):=D_i$ to be the degree of the variable $x_i$ of $\tx$. 
Since $(\tx,\tB, D)$ is a graded seed the right hand side of the exchange relation \eqref{eq:exchange_rel} is homogeneous with respect to this grading. Therefore, we can use the exchange relation to determine the degree of the cluster variable $x'_k$ and obtain a graded seed $(\tx',\tB',D')$, where $(\tx',\tB')=\mu_k(\tx,\tB)$. We can iterate this process to obtain a well defined\footnote{It is a nontrivial fact that this is indeed well defined since we can reach the same cluster variable by different mutation sequences. The claim follows from \cite[Proposition 3.2]{Gra}.} degree $\deg_D(x)\in \ZZ^d $ for every cluster variable $x $ of $\A(\tx,\tB)$. The way the grading changes under mutation is encoded by the following class of matrices. 

\begin{defn}
\label{def:E_matrix}
Let $ \tB$ be an extended exchange matrix. Consider a sign $ \varepsilon \in \{+,-\}$ and $k \in \{ 1, \dots , n\}$. Let $ E_{k,\varepsilon}=(e_{ij})$ be the $m \times m$ matrix  given by 
\[
	e_{ij}:= \left\{\begin{matrix}\delta_{ij} &\text{ if } k\neq j\\
-1 &\text{ if } i=j=k,\\
\max(0,-\varepsilon b_{ik}) &\text{ if } i\neq j=k.\end{matrix}\right.
\]
\end{defn}
\noindent Observe that $e_{ij}$ depends on $\tB$ although we are not incorporating this dependence into the notation.

For a mutable index $k \in \{1, \dots, n \}$ the graded seed $\mu_k(\tx,\tB, D)$ is $(\tx',\tB',D')$, where $(\tx',\tB')=\mu_k(\tx,\tB)$ and 
\[
D'=(E_{k,+})^T D=(E_{k,-})^TD.
\]
Observe that it is indeed the case that $(E_{k,+})^T D=E_{k,-}D$ since the $k$th column of $\tB^T$ coincides with the $k$th column of $E_{k,+}-E_{k,-}$.

As before, given an initial assignment  $(\tx_{v_0},\tB_{v_0},D_{v_0})$ of a graded seed to a fixed vertex $v_0$ of $\RTT^n$, by applying iterated mutations we have a graded seed $(\tx_{v},\tB_{v},D_{v})$ for every vertex $v$ of $\RTT^n$. We assume that an initial assignment is fixed once and for all.
The following result follows from \cite[Theorem 5.11]{GHKK}, see also \cite{NZ} and \cite[\S 5.6]{Kel12}.

\begin{lemma}
\label{lem:G_and_E}
Suppose that $v$ is a vertex of $\RTT^n$ connected to $v_0$ by edges carrying the labels $k_0,\dots , k_{r}$, where the edge labeled by $k_i$ connects the vertices $v_i$ with $v_{i+1}$ and  $v_{r+1}=v$. 
Recall that for a vertex $v\in\RTT^n$, $G_v$ is the matrix whose columns are $\bg$-vectors for the extended cluster at $v$.
Then there exists a choice of signs $ \varepsilon_0,\dots ,\varepsilon_r $ such that
\[
G_{v} =E_{k_0,\varepsilon_0}\cdots E_{k_{r},\varepsilon_r}.
\]
\end{lemma}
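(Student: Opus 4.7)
I would prove this by induction on $r+1$, the number of edges in the path from $v_0$ to $v$, reducing the statement to the one-step mutation rule for $\bg$-vectors (equivalently, for $G$-matrices).

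For the base case $v=v_0$, the right-hand side is the empty product, which we interpret as the identity matrix $I_m$. This matches $G_{v_0}$: the initial cluster and frozen variables of $\A(\tB^{\prin})$ satisfy $F_{i;v_0}\equiv 1$ in \eqref{eq:separation}, so $\bg(x_i)=e_i$, and the columns of $G_{v_0}$ are the standard basis vectors.

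For the inductive step, the heart of the argument is the following one-step mutation formula for $G$-matrices: if $v'=\mu_k(v)$ then there exists a sign $\varepsilon\in\{+,-\}$ such that
\[
G_{v'}\;=\;G_v\cdot E_{k,\varepsilon}(\tB_v),
\]
where $E_{k,\varepsilon}(\tB_v)$ denotes the matrix of Definition \ref{def:E_matrix} built from the (current) extended exchange matrix $\tB_v$. Iterating this formula along the path $v_0\to v_1\to\cdots\to v_{r+1}=v$ and substituting $G_{v_0}=I_m$ yields
\[
G_v \;=\; E_{k_0,\varepsilon_0}(\tB_{v_0})\,E_{k_1,\varepsilon_1}(\tB_{v_1})\cdots E_{k_r,\varepsilon_r}(\tB_{v_r}),
\]
which is the claim.

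The main obstacle is establishing the one-step mutation formula, which depends essentially on the \emph{sign-coherence} of $\bg$-vectors: for any vertex $v$ and any mutation index $k$, the $k$-th entries of the columns of $G_v$ (viewed as rows of the $\bg$-vectors making up the cluster at $v$) share a common weak sign, and this sign is the $\varepsilon$ appearing in the formula. Sign-coherence was proved in the skew-symmetric case in \cite{DWZ} and in full generality in \cite{GHKK}. Granted sign-coherence, the explicit shape of $E_{k,\varepsilon}$ in Definition \ref{def:E_matrix} is precisely the mutation rule obtained in \cite{NZ} and \cite[\S 5.6]{Kel12}; this rule is packaged (together with sign-coherence itself) in \cite[Theorem 5.11]{GHKK}, so the cleanest proof is simply to quote that theorem and then run the induction outlined above.
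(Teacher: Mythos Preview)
Your proposal is correct and matches the paper's approach: the paper does not give a detailed proof but simply records that the lemma follows from \cite[Theorem 5.11]{GHKK} (see also \cite{NZ} and \cite[\S 5.6]{Kel12}), which is exactly the one-step mutation rule for $G$-matrices, contingent on sign-coherence, that you iterate inductively. Your explicit remark that each $E_{k_i,\varepsilon_i}$ is built from the current matrix $\tB_{v_i}$ is a helpful clarification the paper's notation suppresses.
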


\begin{cor}
\label{cor:cv_degrees}
Let $(\tx,\tB,D) $ be the initial graded seed associated to a vertex $v_0$ of $\RTT^n $ and $(\tx_v,\tB_v,D_v) $ the graded seed associated to a vertex $v$ of $\RTT^n$.
Then
\[
\deg(x_{i;v})=\bg_{i;v}\cdot D.
\]
\end{cor}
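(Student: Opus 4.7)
The plan is to combine Lemma \ref{lem:G_and_E} with the iterated mutation rule for the degree matrix. The key observation is that a single row of $D_v$ records $\deg(x_{i;v})$, while the $i$th column of $G_v$ is $\bg_{i;v}$; so the identity to prove is just the row-by-row form of the matrix identity $D_v = G_v^{\tr}\, D$.

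First, I would iterate the mutation rule $D' = (E_{k,\varepsilon})^{\tr} D$. Let $v$ be connected to $v_0$ by the mutation sequence $k_0,\ldots,k_r$ as in Lemma \ref{lem:G_and_E}. The observation in the discussion following Definition \ref{def:E_matrix} says that $(E_{k,+})^{\tr}$ and $(E_{k,-})^{\tr}$ act identically on the current degree matrix $D$ (since they differ by a rank-one matrix whose kernel contains $D$ by virtue of $\tB^{\tr}D=0$). However, a subtlety is that after each mutation the exchange matrix changes, and the equality between the two sign choices only applies to the degree matrix currently being mutated. This is fine: independently of the signs we choose at each step, we obtain the same iterated matrix $D_v$, and in particular we are free to choose the sequence of signs $\varepsilon_0,\ldots,\varepsilon_r$ provided by Lemma \ref{lem:G_and_E}. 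With that choice,
\[
D_v = (E_{k_r,\varepsilon_r})^{\tr}\cdots (E_{k_0,\varepsilon_0})^{\tr} D = \bigl(E_{k_0,\varepsilon_0}\cdots E_{k_r,\varepsilon_r}\bigr)^{\tr} D.
\]

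Second, I would invoke Lemma \ref{lem:G_and_E} directly to replace the product $E_{k_0,\varepsilon_0}\cdots E_{k_r,\varepsilon_r}$ with $G_v$, giving $D_v = G_v^{\tr} D$. Reading off the $i$th row of both sides yields $\deg(x_{i;v}) = \bg_{i;v}\cdot D$, as claimed.

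The only step that requires care is the justification that the mutation rule for $D$ is sign-independent at every stage, so that the cumulative product can be evaluated using the specific signs from Lemma \ref{lem:G_and_E} rather than some other choice. This follows because at each vertex $w$ on the path, the current degree matrix $D_w$ satisfies $\tB_w^{\tr} D_w = 0$ (a property preserved under mutation, which is essentially the content of defining graded seeds), and the difference $E_{k,+}^{\tr}-E_{k,-}^{\tr}$ kills any matrix annihilated on the left by $\tB_w^{\tr}$. Once that is in hand, the corollary reduces to matrix bookkeeping on top of Lemma \ref{lem:G_and_E}, and there is no real obstacle.
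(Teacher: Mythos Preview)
Your proof is correct and follows essentially the same approach as the paper: iterate the mutation rule for $D$ to obtain $D_v = G_v^{\tr} D$ via Lemma \ref{lem:G_and_E}, then read off the $i$th row. Your added justification of the sign-independence at each step is a helpful elaboration of a point the paper leaves implicit.
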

\begin{proof}
By definition, the vectors  $\deg(x_{1;v}),\dots,\deg(x_{m;v})\in \ZZ^d$ are the rows of $D_v$. 
By Lemma \ref{lem:G_and_E} we have that $G_v=E_{k_0,\varepsilon_0}\cdots E_{k_{r},\varepsilon_r}$ for a choice of signs $\varepsilon_0, \dots, \varepsilon_r$.
In particular, we have that 
\begin{align*}
D_v&= E^T_{k_{r},\varepsilon_r} \cdots E^T_{k_{0},\varepsilon_0}D \\
&=(E_{k_0,\varepsilon_0}\cdots E_{k_{r},\varepsilon_r})^T D \\
&= G_v^T D.   
\end{align*}
The result follows.
\end{proof}

\begin{defn}
A cluster algebra $\A(\tB)$ is \emph{positively graded} if it admits a $\ZZ$-grading such that every cluster variable is homogeneous of positive degree.
\end{defn}

\begin{rem}
\label{rem:g-fan}
By \cite[Theorem 2.13, Corollary 5.9]{GHKK} the cluster complex $ \K(\tB)$ can be realized as a simplicial fan $\Delta^+(\tB)$ in $\ZZ^m$ (every choice of initial seed of $\A=\A(\tB)$ gives rise to a realization).
We call $\Delta^+(\tB)$ the {\bf g}-fan since the maximal cones of $\Delta^+(\tB)$ are precisely the cones spanned by the columns of the $\bg$-matrices $G_v$, where $v$ ranges on all the vertices of $v\in \RTT^n$.
From this perspective, Corollary \ref{cor:cv_degrees} implies the following:
A cluster algebra $\A(\tB)$ is positively graded if and only if there exists a vector $ D\in \ZZ^m $ such that $\tB^TD=0$ and the {\bf g}-fan $\Delta^+(\tB)$ is contained in the open half space $ \{ v\in \RR^m \mid v\cdot D >0 \}$.
\end{rem}

Observe that if a cluster algebra of finite cluster type $\A(\tB)$ is positively graded, then $n<m$ \cite[\S4]{Gra}.

\begin{lemma}
    \label{lemm:extended_g-vecs}
    Let $w$ be a cluster variable of $ \A(\bx, B)$ and $\tilde{w}$ be the corresponding cluster variable of $\A (\tx,\tB)$ under the bijection of Theorem \ref{thm:bijection}. Denote by $\bg(w)=(g_1(w),\dots, g_n(w))$ the ${\bf g}$-vector of $w$ with respect to $(\bx, B)$ and by $\bg(\tilde{w})=(g_1(\tilde{w}),\dots, g_m(\tilde{w}))$ the ${\bf g}$-vector of $\tilde{w}$ with respect to $(\tx,\tB)$. 
    \begin{enumerate}
	    \item\label{item:i} Then $g_j(w)=g_j(\tilde{w})$ for all $1\leq j \leq n$.
	    \item\label{item:ii} Assume Conjecture \ref{conj:g_via_F} holds and $ \tilde{w}$ does not belong to the initial cluster. 
		    Then there exists a mutable index $k\leq n$ such that for any frozen index $n<j\leq m$, $-b_{j k}\leq g_{j}(\tilde{w})$.
    \end{enumerate} 
    \end{lemma}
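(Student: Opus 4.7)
My plan is to handle the two parts separately. Part \ref{item:i} is a ``comparison of separation formulas'' statement, while part \ref{item:ii} is a tropical computation combined with a structural fact about $F$-polynomials of non-initial cluster variables.

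For part \ref{item:i}, I would exploit the natural specialization $\A(\tB^{\prin})\twoheadrightarrow\A(B^{\prin})$ obtained by setting the non-principal frozen variables $x_{n+1},\dots,x_m$ to $1$. Under this specialization $\tilde w^{\prin}\mapsto w^{\prin}$, and $\hat y_j$ in $\A(\tB^{\prin})$ specializes to $\hat y_j$ in $\A(B^{\prin})$. Applying the specialization to the separation formula $\tilde w^{\prin}=x_1^{g_1}\cdots x_m^{g_m}F_{i;v}(\hat y)$ kills the frozen $x$-factors and forces
\[
x_1^{g_1(\tilde w)}\cdots x_n^{g_n(\tilde w)}F^{\tB}_{i;v}(\hat y)=x_1^{g_1(w)}\cdots x_n^{g_n(w)}F^{B}_{i;v}(\hat y).
\]
Uniqueness of the $(\bg,F)$-decomposition (enforced by $F(0)=1$) then yields $g_j(w)=g_j(\tilde w)$ for $1\leq j\leq n$, and as a byproduct that the two $F$-polynomials coincide. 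Alternatively, one may invoke Lemma \ref{lem:G_and_E}: the formulas in Definition \ref{def:E_matrix} show that each $E_{k,\varepsilon}$ has block-triangular form
\[
E_{k,\varepsilon}=\begin{pmatrix}E^B_{k,\varepsilon}&0\\ *&I_{m-n}\end{pmatrix},
\]
and a product of such matrices has top-left $n\times n$ block equal to the product of the top-left blocks, which is precisely the analogous $G_v$ for $B$.

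For part \ref{item:ii}, the plan is to unpack the tropical formula of Conjecture \ref{conj:g_via_F}. Writing $F_{i;v}=\sum_\alpha c_\alpha y^\alpha$ with $c_\alpha\in\ZZ_{\geq 0}$ and $c_0=1$, tropical evaluation of the numerator at $y_j=u_j^{-1}$ produces the $u$-exponents $-\max_{c_\alpha\neq 0}\alpha_j$ for $j\leq n$ and $0$ for $j>n$, while the denominator at $y_j=\prod_s u_s^{b_{sj}}$ produces the $u$-exponents $\min_{c_\alpha\neq 0}(\tB\alpha)_j$ for all $j$. Subtracting gives, for every frozen index $j>n$,
\[
g_j(\tilde w)=-\min_{c_\alpha\neq 0}(\tB\alpha)_j.
\]
Consequently any single $\alpha$ with $c_\alpha\neq 0$ provides a simultaneous lower bound $g_j(\tilde w)\geq-\sum_l b_{jl}\alpha_l$ for all frozen $j$ at once; specialising to $\alpha=e_k$ produces exactly $-b_{jk}\leq g_j(\tilde w)$. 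Thus the lemma reduces to exhibiting a mutable index $k$ such that the linear monomial $y_k$ appears in $F_{i;v}$.

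The main obstacle is this final reduction: showing that any non-constant $F$-polynomial in finite cluster type carries some linear term. My plan is to invoke the categorification already used in the proof of Lemma \ref{lemma:Lb}: in the equivariant cluster category, $\tilde w$ corresponds to a nonzero indecomposable object, and (by the standard formula of Caldero--Chapoton type) $F_{i;v}$ equals the generating function $\sum_{N\subseteq M}\prod y_i^{\dim N_i}$ over subrepresentations of the associated module $M$. Since $M\neq 0$, it admits a simple subrepresentation $S_k$, contributing the monomial $y_k$ with coefficient $1$, which is the desired index. (One could also argue inductively on the distance from $v_0$ in $\RTT^n$ by extracting the linear part from the $F$-polynomial mutation rule of \cite[Proposition 5.1]{FZ_clustersIV}, but the categorical argument is cleaner and uniform across finite types.)
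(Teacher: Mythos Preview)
Your proof of part \ref{item:i} is correct and essentially equivalent to the paper's: the paper simply cites the $\bg$-vector mutation formula \cite[Proposition 6.6]{FZ_clustersIV}, which is exactly what your block-triangular $E_{k,\varepsilon}$ argument unpacks. Your specialization argument is a pleasant alternative that also yields $F^{\tB}_{i;v}=F^{B}_{i;v}$ as a byproduct.

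For part \ref{item:ii}, both you and the paper invoke the tropical formula of Conjecture \ref{conj:g_via_F}, but the arguments diverge at the key step. The paper fixes any $k$ such that $y_k$ occurs in $F_{i;v}$ and then, for each frozen $j$, splits on the sign of $b_{jk}$: if $b_{jk}\geq 0$ the bound is immediate from $g_j(\tilde w)\geq 0$, while if $b_{jk}<0$ the paper asserts that ``a power of $\prod_s u_s^{b_{sk}}$ appears'' in the substituted $F$-polynomial, forcing the tropical denominator to have $u_j$-exponent at most $b_{jk}$. Read literally, that step needs a \emph{pure} monomial $y_k^r$ in $F_{i;v}$, not merely that $y_k$ divides some mixed monomial, and the paper does not isolate this point. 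Your route is tighter here: you derive the explicit identity $g_j(\tilde w)=-\min_{c_\alpha\neq 0}(\tB\alpha)_j$ for frozen $j$, observe that any single $\alpha$ in the support gives a simultaneous lower bound for all frozen $j$, and then reduce to exhibiting the linear term $\alpha=e_k$. What you gain is a transparent uniform bound; what the paper's case split buys (when it works) is that one need not insist on a \emph{linear} term, only a pure power.

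Your justification of the linear term via categorification is correct in outcome but imprecise in two places. First, the Caldero--Chapoton formula reads $F_M=\sum_e\chi(\mathrm{Gr}_e(M))\,y^e$, not a naive sum over submodules, so the coefficient of $y_k$ is $\chi(\mathrm{Gr}_{e_k}(M))$ rather than literally $1$; in Dynkin type this Euler characteristic is a positive integer, so your conclusion survives. Second, in the skew-symmetrizable case you should say why an equivariant simple contributes a monomial $y_k$ rather than, say, $y_k^{d_k}$ for a symmetrizer $d_k$; this does hold (the folded $F$-polynomials have the expected linear terms, as one checks from the recursion in \cite[Proposition 5.1]{FZ_clustersIV} or from Demonet's construction), but it deserves a sentence. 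Finally, note that your categorical argument, like the paper's intended application, is effectively confined to finite cluster type, whereas the lemma is stated only under Conjecture \ref{conj:g_via_F}; this is harmless for how the lemma is used.
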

    \begin{proof}
	    Item \ref{item:i} follows from the mutation formula for $\bg$-vectors contained in  \cite[Proposition 6.6]{FZ_clustersIV}.
    
    We proceed to show item \ref{item:ii}. Assume that $w=x_{i;v}$ and $\tilde{w}=\tilde{x}_{i;v}$ for some mutable index $i$ and some vertex $v$ of $\RTT^n$. Since we are assuming that $x_{i;v}$ is non-initial the $F$-polynomial $ F_{i;v}$ is not identically equal to $1$. This follows from the combination of \cite[Theorem 1.12, Theorem 1.3]{GHKK} and \cite[Theorem 6.16]{Nak_scat}. Since we are assuming that Conjecture \ref{conj:g_via_F} holds then we can compute $\bg(\tilde{x}_{i;v}) $ as the exponent of the monomial
    \begin{equation}
    \label{eq:g-vectors}
    \prod_{i=1}^m u_i^{g_i(\tilde{w})}= \dfrac{F_{i;v}|_{\text{Trop}(u_1, \dots , u_m)}(u_1^{-1},\dots , u_n^{-1})}{F_{i;v}|_{\text{Trop}(u_1, \dots , u_m)}(\prod_{s=1}^m u_s^{b_{s1}},\dots ,\prod_{s=1}^m u_s^{b_{sn}})}.
   \end{equation}
   
   Recall that $F_{i;v}\in \ZZ[y_1, \dots , y_n] $. Since $F_{i;v}$ is not identically equal to $1$, the variable $y_k$ appears in $F_{i;v} $ for some $1 \leq k \leq n$. Consider any $n<j\leq m$. If $b_{jk}\geq 0$, we are done, since we know that $g_{j}(\tilde{w})\geq 0$ for all $j>n$.

   Suppose instead that $b_{j k}<0$. By our assumption that $y_k$ appears in $F_{i;v}$, a power of the monomial $\prod_{s=1}^m u_s^{b_{sk}}= u_j^{b_{jk}} \prod_{s\neq j} u_s^{b_{sk}}$ appears in $F_{i;v}$. 
   Since $b_{jk}<0$, the exponent of $u_j$ in
   \[
{F_{i;v}|_{\text{Trop}(u_1, \dots , u_m)}(\prod_{s=1}^m u_s^{b_{s1}},\dots ,\prod_{s=1}^m u_s^{b_{sn}})}
\]
is less than or equal to $b_{jk}$.

   On the other hand, as noticed in the second paragraph of Remark~\ref{rem:gF}, $u_j$ does not appear in $F_{i;v} $. Hence $u_j$ does not divide $F_{i;v}|_{\text{Trop}(u_1, \dots , u_m)}(u_1^{-1},\dots , u_m^{-1})$. From these facts we conclude that the exponent of $u_j$ in  $\prod_{i=1}^m u_i^{g_i(\tilde{w})}$ is
   at least $-b_{jk}$, and thus $-b_{jk}\leq g_j(\tilde{w})$.
    \end{proof}
    
    \begin{rem}
	    Conjecture 6.11 of \cite{FZ_clustersIV} holds for any cluster algebra of finite cluster type, see \cite[Corollary 10.10]{FZ_clustersIV}. Hence we can apply item (\ref{item:ii}) of Lemma \ref{lemm:extended_g-vecs} to any cluster algebra of finite cluster type.
    \end{rem}
    
\begin{lemma}
    \label{lemm:add_frozens}
Let $\A(\tB)$ be a cluster algebra of finite cluster type. Then we can add rows to $\tB$ to obtain a larger exchange matrix $\tB^+$ such that $\A(\tB^+)$ admits a positive grading. Moreover, we can do this in a way such that all the frozen variables and the initial cluster variables have degree $1$.
\end{lemma}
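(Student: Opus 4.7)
The plan is to adjoin $2n$ carefully chosen frozen rows to $\tB$, producing $\tB^+$, so that the all-ones vector $D = (1,\ldots,1)$ defines the desired positive grading on $\A(\tB^+)$. The guiding principle is Remark \ref{rem:g-fan}: for $D$ to be a grading vector I need each column of $\tB^+$ to sum to zero so that $(\tB^+)^T D = 0$, and for the grading to be positive I need the $\bg$-fan of $\A(\tB^+)$ to lie strictly in the open half-space $\{v\cdot D > 0\}$. Initial cluster variables and frozen variables will automatically get degree $1$ from $D$ by Corollary \ref{cor:cv_degrees}, since their $\bg$-vectors are standard basis vectors.

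Concretely, I would let $c_k := \sum_{i=1}^m b_{ik}$ be the column sums of $\tB$, and use that $\A$ has only finitely many cluster variables to define the non-negative integer $M := \max\bigl\{0,\ -\sum_{j=1}^n g_j(w)\bigr\}$ taken over all non-initial cluster variables $w$ of $\A(B)$. I would then pick an integer $N > M$ and, for each $k \in \{1,\ldots,n\}$, adjoin two frozen rows $A_k$ and $B_k$ to $\tB$: the row $A_k$ has entry $-N$ in column $k$ and zeros elsewhere, while $B_k$ has entry $N - c_k$ in column $k$ and zeros elsewhere. Each column $k$ of the resulting matrix $\tB^+$ sums to $c_k + (-N) + (N-c_k) = 0$, so $(\tB^+)^T D = 0$ and $\A(\tB^+)$ is $\ZZ$-graded by $D$; moreover the top $n \times n$ block is still $B$, so $\tB^+$ is a bona fide extended exchange matrix.

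The key remaining step is to verify that every non-initial cluster variable $w$ of $\A(\tB^+)$ has strictly positive degree under $D$. Writing its $\bg$-vector as $\bg^+(w) = (g_1^+,\ldots,g_{m+2n}^+)$, part \ref{item:i} of Lemma \ref{lemm:extended_g-vecs} gives $g_j^+(w) = g_j(w)$ for $j \leq n$, so the mutable components contribute $S(w) := \sum_{j=1}^n g_j(w) \geq -M$ to $\deg(w) = \sum_j g_j^+(w)$. Since $\A$ is of finite cluster type, Conjecture \ref{conj:g_via_F} holds and part \ref{item:ii} of Lemma \ref{lemm:extended_g-vecs} produces an index $k = k(w) \leq n$ such that $g_j^+(w) \geq -b^+_{jk}$ for every frozen index $j > n$. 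Applying this at the row $A_k$ (whose entry in column $k$ is $-N$) gives $g_{A_k}^+(w) \geq N$; combined with the sign-coherence bound $g_j^+(w) \geq 0$ for all frozen $j$, this yields $\deg(w) \geq S(w) + N \geq -M + N > 0$.

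The main obstacle is that the mutable sum $S(w)$ can a priori be very negative, so one cannot simply grade $\A(\tB)$ by $(1,\ldots,1)$ without adding coefficients. This is precisely what part \ref{item:ii} of Lemma \ref{lemm:extended_g-vecs} is designed to overcome: it forces the frozen part of $\bg^+(w)$ to contain a component as large as $N$ from a single cleverly chosen frozen row, and the construction above is tailored so that this lower bound can be made to dominate $-S(w)$.
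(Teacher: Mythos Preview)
Your proof is correct and follows essentially the same approach as the paper: both arguments adjoin, for each mutable index $k$, a frozen row with a large negative entry in column $k$, then fix the column sums so that the all-ones vector $D$ becomes a grading vector, and finally use Lemma~\ref{lemm:extended_g-vecs}\ref{item:ii} to show that the large frozen contribution to $\bg^+(w)\cdot D$ dominates the (possibly negative) mutable part. The only cosmetic differences are that you add $2n$ rows (fixing each column sum with its own companion row $B_k$) whereas the paper adds $n+1$ rows (a single correction row at the end), and that you control the mutable part via $\sum_{j\leq n} g_j(w)$ in $\A(B)$ while the paper controls $\sum_{i\leq m} g_i(w)$ in $\A(\tB)$; both bounds work equally well.
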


\begin{proof}
Consider the $\bg$-vectors associated to $\A(\tB)$. All these are vectors in $\ZZ^m$. Let $ {\bg}(w)=(g_1(w),\dots , g_m(w))$ denote the $\bg $-vector of a cluster variable $w$ with respect to a fixed choice of initial seed $(\tx,\tB)$. Let $c=\min_{w\in {\V}}\{ \sum_{i=1}^m g_i(w) \}-1$. 
We proceed to construct $\tB'$ by adding $n$ rows (corresponding to new frozen indices) to $\tB$. For every mutable index $k\in \{ 1, \dots , n\}$ we add one new frozen index $k'=m+k$ and a new row $ (b_{k' 1},\dots, b_{k'n})$ given by 
\[
b_{k'j}=\left\{\begin{matrix}0 &\text{ if } j\neq k \\
-|c| &\text{ if } j=k.\end{matrix}\right.
\]
Observe that the triple $(\tx',\tB',(1,\dots ,1)) $ might not be a graded seed since the product  $(1,\dots , 1)\cdot\tB' $ might not be zero.
However, we can always consider a graded seed $(\tx^+,\tB^+,(1,\dots ,1)) $ obtained from $(\tx',\tB',(1,\dots ,1)) $ by adding yet another row to $\tB'$, corresponding to a new frozen index $m+n+1$.
Denote by ${\bf 1}:=(1,\dots ,1)\in \ZZ^{m+n+1}$ the grading vector of this graded seed.
By item (\ref{item:i}) of Lemma \ref{lemm:extended_g-vecs}, we know that for every cluster variable $w^+$ of $ \A(\tB^+)$, its $\bg$-vector (computed with respect to $(\tx^+,\tB^+)$) is of the form $(g_1(w^+),\dots, g_{m+n+1}(w^+))$, where the first $m$ entries coincide with the $\bg $-vector of a cluster variable of $\A(\tB)$, and the last entries are non-negative. Moreover, if $w^+$ is non-initial then by item (\ref{item:ii}) of Lemma \ref{lemm:extended_g-vecs} we know that  $g_{k'}(w^+)\geq -b_{k' k}= |c|$, where $k$ is a mutable index and $k'=m+k$ as above.
Now recall that $g_j(w^+)\geq 0$ for all frozen indices $j$. Putting these facts together we obtain the following inequalities for non-initial $w^+$:
\[ 
\bg(w^+)\cdot {\bf 1}\geq g_{k'}(w^+)+\sum_{i=1}^m g_i(w^+) \geq |c| + \sum_{i=1}^m g_i(w^+)\geq 1.
\]
The result now follows from Corollary \ref{cor:cv_degrees}.
\end{proof}

\begin{ex}[Continuation of Running Example \ref{ex:r4}]\label{ex:r5}
	We continue our discussion of the cluster algebra $\A$ from Example \ref{ex:r1}. Let $D$ be the matrix whose transpose is $(1,1,1,1,1)$. Then clearly $\tB^\tr\cdot D=0$, and we thus obtain a $\ZZ$-grading on $\A$. Multiplying $D$ by the $\bg$-vectors of the cluster variables of $\A$ (see Example \ref{ex:r4}), we obtain that all cluster variables have degree $1\in\ZZ$. In particular, the cluster algebra $\A$ has a positive grading.
\end{ex}

\begin{rem}\label{rem:torsion}
	The above discussion of gradings on cluster algebras via graded seeds can be extended to include gradings by any abelian group, not just $\ZZ^d$, see \cite[\S 2.1]{GP18}. When the grading group is finitely generated, this corresponds to considering actions on $\spec \A$ of general diagonalizable groups as opposed to just algebraic tori. For this, it is helpful to slightly rephrase things. We may interpret the grading matrices $D$ from above as encoding a homomorphism
\[D^\tr:\ZZ^m \to \ZZ^d.\]
More generally, we may consider homomorphisms
\[
\gamma:\ZZ^m \to M
\]
where $M$ is any abelian group. 
The condition that $\gamma$ defines a grading on $\A$ now becomes that $\gamma$ vanishes on the $\ZZ$-linear rowspan of $\tB^\tr$, or equivalently, that $\gamma \circ \tB(\ZZ^n)=0$.

The map $\gamma$ tells us the degrees of any cluster or frozen variable in the initial extended cluster $\tx$. After mutating at some mutable index $k\in \{1,\ldots,n\}$, the mutated version of $\gamma$ is given by
\[
	\gamma'=\gamma \circ E_{k,+}=\gamma \circ E_{k,-}.
\]
\end{rem}
\begin{ex}
Consider a cluster algebra $\A$ of type $B_2$, with exchange matrix 
\[
\tB=B=\left(\begin{array}{c c}
		0&1\\
		-2&0
	\end{array}\right).
\]
A possible grading for $\A$ is given by the homomorphism $\gamma:\ZZ^2\to \ZZ/2\ZZ$ with $\gamma(e_1)=\overline 1$ and $\gamma(e_2)=\overline{0}$.

The cluster algebra $\A$ has six cluster variables $x_1,\ldots,x_6$ which are related via the exchange relations
\begin{align*}
	x_{i-1}x_{i+1}=x_i^2+1\qquad i\ \textrm{even}\\
	x_{i-1}x_{i+1}=x_i+1\qquad i\ \textrm{odd}.
\end{align*}
Here indices are taken modulo $6$.
The variables $x_1,x_2$ are in the cluster with exchange matrix $\tB$. The grading constructed via $\gamma$ assigns degree $\overline 1\in \ZZ/2\ZZ$ to all $x_i$ with $i$ odd, and $0$ to all $x_i$ to $i$ even. It is clear that the exchange relations are homogeneous with respect to this grading.
\end{ex}

\subsection{Cluster Monomials}

Let $\A$ be a cluster algebra. A \emph{cluster monomial} is any monomial in the cluster variables of $\A$ such that all cluster variables appearing as a factor belong to a common cluster. An \emph{extended cluster monomial} is the product of a cluster monomial with any monomial in the frozen variables. A \emph{Laurent-extended cluster monomial} 
is the product of a cluster monomial with any Laurent monomial in the frozen variables.

We can extend the definition of the {\bf g}-vector of a cluster variable for  Laurent-extended cluster monomials as follows.
Let $\V$ and $\W$ be the cluster and frozen variables of $\A$. 
For a Laurent-extended cluster monomial $\prod_{x\in \V\cup\W} x^{\alpha_x}$ we set
\[
	\bg \left(\prod_{x\in \V\cup\W} x^{\alpha_x}\right):= \sum_x \alpha_x \bg(x) \in \ZZ^m.
\]

In this subsection, we prove the following folklore result:
\begin{thm}\label{thm:basis}
Let $\A$ be a cluster algebra of finite cluster type. 
The cluster monomials of $\A$ form a basis over the polynomial ring in the frozen variables. Equivalently, the extended cluster monomials form a basis over $\KK$. 
\end{thm}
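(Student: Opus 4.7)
My plan is to establish linear independence and spanning of the extended cluster monomials separately; the equivalent statement over $\KK[x_{n+1},\ldots,x_m]$ follows immediately by grouping each extended cluster monomial into its cluster factor and its frozen factor.

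For linear independence, I would parametrize extended cluster monomials by their $\bg$-vectors. Each extended cluster monomial has a well-defined $\bg$-vector in $\ZZ^m$ as in the displayed formula preceding the theorem, and distinct extended cluster monomials produce distinct $\bg$-vectors. This relies on the sign-coherence of $\bg$-vectors (proved in the skew-symmetric case in \cite{DWZ} and in general in \cite[Theorem 5.11]{GHKK}), which ensures that within any single extended cluster the columns of $G_v$ form a $\ZZ$-basis of $\ZZ^m$ and that two maximal cones of the $\bg$-fan $\Delta^+(\tB)$ meet only along a proper face. Given a hypothetical nontrivial $\KK$-linear relation, one picks an extremal $\bg$-vector appearing in it, expands each summand using the separation formula \eqref{eq:separation} in a cluster whose $\bg$-cone contains that extremal vector, and extracts the leading Laurent term in that cluster to reach a contradiction.

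For spanning, I would invoke the theta basis machinery of \cite{GHKK}. By Theorem \ref{thm:finite} together with \cite[Theorem 2.13, Corollary 5.9]{GHKK}, the $\bg$-fan $\Delta^+(\tB)$ is a complete simplicial fan in $\RR^m$ whenever $\A(\tB)$ has finite cluster type. Under this completeness hypothesis each theta function of \cite{GHKK} coincides with the unique extended cluster monomial sharing its $\bg$-vector, because the theta function attached to an interior lattice point of a maximal cone of the $\bg$-fan is manifestly the corresponding cluster monomial. Since the theta functions span the cluster algebra (which in finite cluster type coincides with the upper cluster algebra), the extended cluster monomials also span.

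The point I expect to be most delicate is handling the non-invertibility of the frozen variables, since the GHKK theta basis is most cleanly stated either when the frozen variables are invertible or when one works in a formal completion, whereas $\A$ here has non-invertible frozens (cf.~Remark \ref{rem:invert}). The remedy is the positivity of $\bg$-vectors in the frozen directions recorded after Definition \ref{def:g-vecs}: it guarantees that every cluster monomial lies in the cone dual to the frozen half-space, so that the corresponding theta functions are genuine polynomials (rather than Laurent polynomials) in the frozen variables. Hence the portion of the theta basis indexed by $\bg$-vectors with non-negative frozen coordinates already furnishes a $\KK$-basis of $\A(\tB)$ itself, and this portion is precisely the set of extended cluster monomials.
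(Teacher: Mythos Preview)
Your linear independence argument is fine, and indeed the paper simply cites \cite[Theorem 11.2]{FZ_clustersIV} for this direction. The issue is in your spanning argument, precisely at the point you flagged as delicate.

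You correctly observe that in finite cluster type the $\bg$-fan is complete, so every theta function is a Laurent-extended cluster monomial, and these span the cluster algebra with \emph{inverted} frozen variables. You then note that extended cluster monomials have non-negative frozen $\bg$-coordinates and hence lie in $\A(\tB)$. But this only shows one containment: the $\KK$-span of extended cluster monomials sits inside $\A(\tB)$. What you have not shown is the reverse containment. Concretely, take $f\in\A(\tB)$ and expand it in the theta basis of the localized algebra: $f=\sum c_i\mu_i$ with each $\mu_i$ a Laurent-extended cluster monomial. Why must every $\mu_i$ have non-negative frozen exponents? There is no $\ZZ^{m-n}$-grading by frozen degree on $\A$ (exchange relations are not homogeneous in the frozens alone), and the $\bg$-vector is only a leading-term datum, so you cannot simply read off the frozen exponents of the $\mu_i$ from those of $f$. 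In the language of \cite{GHKK}, what you need is that the cone $\Xi$ of \cite[Corollary 9.17]{GHKK} for the partial compactification $\overline{\cA}(\tB)$ is exactly the half-space of non-negative frozen coordinates, and this requires each frozen variable of $\tB$ to admit an optimized seed --- a condition you neither verify nor cite for arbitrary frozen data.

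The paper circumvents this by passing through universal coefficients. In Lemma \ref{lemma:unibasis} the relevant GHKK hypotheses (enough global monomials, optimized seeds) are checked explicitly for $\overline{\cA}(B^\univ)_\prin$; the optimized seed condition holds there because the frozen rows of $B^\univ$ are $\bg$-vectors of $\A(B^\tr)$, and every such $\bg$-vector becomes a standard basis vector in some seed. Once the basis statement is known for this particular choice of coefficients, the universal property of $\A(B^\univ)$ furnishes a coefficient specialization $\rho$ carrying exchange relations to exchange relations, and the spanning for arbitrary $\A(\tB)$ is deduced by pushing forward along $\rho$. If you want to make your direct approach work, you would need to supply an independent proof that every frozen variable of an arbitrary $\tB$ of finite cluster type has an optimized seed.
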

It was stated without proof that for cluster algebras of types $A,B,C$ and $D$,  the Laurent-extended cluster monomials form a $\KK$-basis after tensoring with the ring of Laurent polynomials in the frozen variables \cite[Theorem 4.27]{CDM}. Theorem \ref{thm:basis} was proven for cluster algebras of finite cluster type with trivial coefficients in \cite[Corollary 3]{CK}.
Theorem \ref{thm:basis} implies that any cluster algebra of finite cluster type may be obtained from the universal one via base change, see \cite[Remark 12.11]{FZ_clustersIV}

The main ingredient is the following lemma, which follows from \cite{GHKK}:
\begin{lemma}\label{lemma:unibasis}
Let $\A(B)$ be a cluster algebra of finite cluster type with trivial coefficients. 
Let $\A'$ be the algebra obtained from $\A(B^\univ)$ by adding principal coefficients $s_1,\ldots,s_{n}$, and adjoining $s_1^{-1},\ldots,s_{n}^{-1}$.
Then the cluster monomials of $\A'$ form a  $\KK[\bt][s_1^{\pm 1},\ldots,s_{n}^{\pm 1}]$-basis of $ \A'$.
\end{lemma}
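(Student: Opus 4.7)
The plan is to deduce this from the theta-basis machinery of \cite{GHKK}. First, I would observe that the extended exchange matrix defining $\A'$ stacks $B$ on top of $U_B$ on top of the $n\times n$ identity block corresponding to the principal coefficients $s_1,\ldots,s_n$. The presence of this identity block ensures that the full-rank and injectivity hypotheses required to invoke the strongest results of \cite{GHKK} are satisfied. The universal frozens $\bt$ play only the role of additional inert coefficients, since mutation is carried out only in the mutable directions $1,\ldots,n$ and they never interact with the exchange combinatorics beyond contributing monomials to exchange relations.

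Next, I would apply the theta-function results of \cite{GHKK}. Under these hypotheses, theta functions are defined and $\KK$-linearly independent over the coefficient ring, forming a basis of the middle cluster algebra $\mathrm{mid}(\A')$. In finite cluster type, the ${\bf g}$-fan is the entire mutation fan (this follows from sign-coherence of $\bg$-vectors combined with finiteness of seeds), which forces every theta function to coincide with a cluster monomial; compare \cite[Theorem 0.3 and Corollary 0.20]{GHKK}. Consequently the cluster monomials of $\A'$ form a basis over $\KK[\bt][s_1^{\pm 1},\ldots,s_n^{\pm 1}]$ of the middle (equivalently upper) cluster algebra of $\A'$.

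Finally, I would argue that in finite cluster type the cluster algebra $\A'$ coincides with its upper cluster algebra. This is classical: the finiteness of the mutation class, together with the strong Laurent phenomenon (Theorem \ref{thm:SLaurent}), identifies $\A'$ with the intersection of finitely many Laurent polynomial rings, which is precisely the upper cluster algebra. Inverting the $s_i$ does not disturb any of this, being a flat localization. Combining with the previous step then yields the desired basis statement for $\A'$.

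The main obstacle I anticipate is verifying that the GHKK theta-function machinery applies cleanly in the presence of \emph{both} universal and principal frozens. The cleanest way to handle this is to note that the finite-type portion of \cite{GHKK} proceeds by identifying the mutation fan with the ${\bf g}$-fan, a combinatorial object depending only on the mutable part $B$ of the exchange matrix; adding further frozen rows (whether principal, universal, or both) does not disturb the combinatorial core of the argument, while it does affect how coefficients appear in expansions of cluster variables. Keeping careful track of these coefficient-ring bookkeeping issues is the only real technical point of the proof.
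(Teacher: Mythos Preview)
Your approach via \cite{GHKK} is the same as the paper's in spirit, but there are two genuine gaps.

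First, your justification that $\A'$ equals its upper cluster algebra is incorrect as written: the strong Laurent phenomenon only gives the inclusion $\A'\subseteq\A''$ (upper cluster algebra), not the reverse. Finiteness of the mutation class does not by itself force equality. The paper sidesteps this entirely by a sandwich argument: once one knows the cluster monomials span $\A''$ and that they all lie in $\A'\subseteq\A''$, they automatically span $\A'$; no equality $\A'=\A''$ is needed.

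Second, and more substantively, you have not addressed why the basis holds over $\KK[\bt]$ rather than $\KK[\bt^{\pm 1}]$. The generic GHKK theta basis lives in the setting where all frozen variables are inverted. To obtain a basis with the universal coefficients $\bt$ \emph{not} inverted, the paper works with the partial compactification $\overline{\cA}(B^\univ)_\prin$ obtained by allowing the $t_i$ to vanish, and invokes \cite[Corollary 9.18]{GHKK}. This requires verifying that each frozen variable $t_i$ admits an \emph{optimized seed}, i.e.\ a seed in which the corresponding row of the extended exchange matrix has only non-negative entries. The paper checks this by observing that, as one mutates, these rows range over the $\bg$-vectors of a fixed cluster variable of $\A(B^\tr)$ with respect to all possible seeds, and one of these is always a standard basis vector. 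One must also check that the cone $\Xi$ of \cite[Corollary 9.17]{GHKK} is contained in the convex hull of $\Theta$, and that the resulting description of $\Xi$ forces non-negative exponents on the $t_i$. Without these verifications your argument yields at best a basis over $\KK[\bt^{\pm 1}][\bs^{\pm 1}]$, which is too weak for the intended application to Theorem~\ref{thm:basis}.
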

\begin{proof}
    To prove this statement we rely on the results of \cite[\S8 and \S9]{GHKK}.
    The reader is referred to \cite{GHK,GHKK} for a thorough treatment of the objects we consider in this proof.
    Throughout we use the following notation. If $\tB$ is an extended exchange matrix we let $\cA(\tB)$ be the cluster variety associated to (the fixed data corresponding to) $\tB$ in the sense of \cite[\S 2]{GHK}, and $\cA(\tB)_{\prin}$ be the corresponding cluster variety with principal coefficients. 
    We note that in loc.~cit. the authors add principal coefficients for not only cluster variables in an initial seed (as we do in Definition \ref{defn:prin}) but also the frozen variables. These principal coefficients corresponding to frozen variables are irrelevant for our discussion, as they do not appear in any exchange relations. We will thus stay consistent with Definition \ref{defn:prin} and only add principal coefficients for non-frozen variables.
       We also note that in \cite{GHKK}, the $\bg$-fan (as introduced in our Remark \ref{rem:g-fan}) also contains directions coming from the $\bg$-vectors of the multiplicative inverses of frozen variables. More precisely, each chamber of the $\bg$-fan is the cone generated by a maximal collection of compatible Laurent-extended cluster monomials.

    We let $ \overline{\cA}(\tB)$ be the partial compactification of $\overline{\cA}(\tB)$ given by setting the frozen variables to zero and denote by $\cA^{\vee}(\tB)$ the Langlands dual cluster variety.
    Using the above notation, $\A'$ is the \emph{ordinary} cluster algebra of $\overline{\cA}(B^\univ)_\prin$.
We will show using \cite[Corollary 9.18]{GHKK} that the \emph{upper} cluster algebra $\A''$ of $\overline{\cA}(B^\univ)_\prin$ has a $\KK[\bt][s_1^{\pm 1},\ldots,s_n^{\pm 1}]$-basis given by the cluster monomials. Since the upper cluster algebra contains $\A'$, and all cluster monomials are contained in $\A'$, the claim of the lemma will follow.

First we show that the two assumptions of the corollary are satisfied: the variety $\cA^{\vee}(B^\univ)_{\prin}$  has enough global monomials, and every frozen variable of $\A(B^\univ)$ has an optimized seed.
    The first condition follows at once from Propositions 8.16 and 8.25 of \cite{GHKK}. Indeed, since $\A(B)$ has finite cluster type, the {\bf g}-fan for $\cA(B^\univ)_\prin$  is the product of a complete fan (the {\bf g}-fan for $\A(B)$) with $\RR^{p+n}$, where $p$ is the number of cluster variables of $\A(B)$. In particular, the {\bf g}-fan for $\cA(B^\univ)_\prin$ covers the whole ambient space that contains it. In particular it is large (that is, it is not contained in a half space) and hence the hypothesis of Propositions 8.25 of \cite{GHKK} is fulfilled. 

    The property that every frozen variable of $\A(B^\univ)$ has an optimized seed is equivalent to the property that for every frozen index $j$ there is a matrix in the mutation class of $B^\univ$ such that its $j$th row has non-negative entries (see \cite[Definition 9.1]{GHKK}).
    This existence is clear since the $j$th row of the matrices in this mutation class can be identified with the set of the different {\bf g}-vectors of a fixed cluster variable of $\A(B^\tr)$ with respect to all the different (labeled) seeds in $\A(B^\tr)$. See Definition \ref{def:univ frozen}. 
    In particular, this set contains a canonical basis vector of $\ZZ^n$.
 
In order to apply \cite[Corollary 9.18]{GHKK}, we must also show that the cone $\Xi$ of \cite[Corollary 9.17]{GHKK} is contained in the convex hull of the set $\Theta$ of loc.~cit.  But the set $\Theta$ contains all integral points of the corresponding {\bf g}-fan, so as noted above, this will contain all of $\RR^{n+p}\times \RR^{n}$. In fact, since this {\bf g}-fan covers the entire ambient space, $\Theta$ consists of exactly the integral points of the {\bf g}-fan.
 
  We now apply \cite[Corollary 9.18]{GHKK} to $\overline{\cA}(B^\univ)_\prin$. Since $\Theta$ consists of the integral points of the {\bf g}-fan, we can conclude that $\A''$ has a $\KK$-basis given by the Laurent-extended cluster monomials whose {\bf g}-vectors lie in a rational polyhedral cone $\Xi \subset \RR^n\times \RR^p\times \RR^{n}$. Here,  the factor $\RR^p$ corresponds to the universal coefficients, and the latter $\RR^{n} $ comes from the principal coefficients.
    Moreover, from the description of $\Xi$ in \cite[Corollary 9.17]{GHKK} and our choice of partial compactification, it follows that $\Xi \subset \RR^{n}\times \RR_{\geq 0}^p\times \RR^{n}$, so the basis of $\A''$ only contains products of cluster monomials with monomials in $\KK[\bt][s_1^{\pm 1},\ldots,s_{n}^{\pm 1}]$. Since Laurent-extended cluster monomials are linearly independent (\cite[Theorem 6.8.10]{FWZ_chapter6} in the finite cluster type case), and $\A''$ contains every product of a cluster monomial with $\KK[\bt][s_1^{\pm 1},\ldots,s_{n}^{\pm 1}]$, we obtain that $\A''$ has the desired basis.
\end{proof}

We may now complete the proof of the theorem:

\begin{proof}[Proof of Theorem \ref{thm:basis}]
	The linear independence of the cluster monomials is \cite[Theorem 11.2]{FZ_clustersIV}. Thus, it remains to show that they span the cluster algebra. To that end, let $\A'$ be as in Lemma \ref{lemma:unibasis} for $\A(B)$, where $B$ is an exchange matrix for $\A$. By Lemma \ref{lemma:unibasis}, the cluster monomials of $\A'$ form a basis over $\KK[\bt][s_1^{\pm 1},\ldots,s_{n}^{\pm 1}]=\KK[\bt][\bs^{\pm 1}]$.

Let $\V$ be the set of cluster variables for $\A$ and  $\W$ the set of frozen variables.
Let $I_{\A}^\exr$ and $I_{\A'}^\exr$ denote the ideals of respectively $\KK[z_v\ |\ v\in \V\cup \W]$ and $\KK[z_v\ |\ v\in \V ][\bt][\bs^{\pm 1}]$ generated by the expressions obtained from the exchange relations \eqref{eq:exchange_rel} by substituting $z_v$ for the cluster or frozen variable $v$. Here we are using the bijection between cluster variables of $\A$ and $\A'$ of Theorem \ref{thm:bijection}.

By e.g.~\cite[Theorem 6.8.10]{FWZ_chapter6}, we may obtain $\A$ (or $\A'$) by taking the quotient of $\KK[z_v\ |\ v\in \V\cup \W]$ (or $\KK[z_v\ |\ v\in \V ][\bt][\bs^{\pm 1}]$) by the saturation of $I_{\A}^\exr$ (respectively $I_{\A'}^\exr$) with respect to the product $\prod_{v\in\V} z_v$. Furthermore, by the universal property of $\A(B^\univ)$ (cf.~\cite[Definition 12.3]{FZ_clustersIV}) there is a ring homomorphism $\KK[\bt]\to \KK[z_w\ |\ w\in\W]$ such that for the induced map
\[\rho:\KK[z_v\ |\ v\in \V][\bt][\bs^{\pm 1}]\to\KK[z_v\ |\ v\in \V\cup \W]\]
sending $s_i$ to $1$
we have
\[
I_{\A}^\exr=\rho(I_{\A'}^\exr).
\]

By an abuse of notation, we will call a monomial of $\KK[z_v\ |\ v\in \V]$ a cluster monomial if the cluster variables corresponding to its factors form a cluster monomial in $\A$ (or equivalently $\A'$).

To prove the claim, we must show that any monomial $\bz^\alpha\in \KK[z_v\ |\ v\in \V]$ can be written as a $\KK[z_w\ |\ w\in\W]$-linear combination of cluster monomials, modulo the saturation $I_\A$ of $I_{\A}^\exr$.
By Lemma \ref{lemma:unibasis}, we can write $\bz^\alpha$ as a $\KK[\bt][\bs^{\pm 1}]$-linear combination of cluster monomials, modulo the saturation $I_\A'$ of $I_{\A'}^\exr$. That is,
\[
	\left(\prod_{v\in\V} z_v^k\right)(\bz^\alpha-\sum_if_i\bz^{\beta_i})\in I_{\A'}^\exr
\]
for some $k\geq 0$, where $\bz^{\beta_i}$ are cluster monomials and $f_i\in\KK[\bt][\bs^{\pm 1}]$. Applying $\rho$, we obtain
\[
	\left(\prod_{v\in\V} z_v^k\right)(\bz^\alpha-\sum_i\rho(f_i)\bz^{\beta_i})\in I_{\A}^\exr
\]
and we see that $\bz^\alpha$ is indeed a $\KK[z_w\ |\ w\in\W]$-linear combination of cluster monomials, modulo $I_\A$.

\end{proof}

\section{Universal Coefficients and Hilbert Schemes}\label{sec:hilb}
\subsection{The Cluster Embedding}\label{sec:clusterembedding}
Let $\A$ be a cluster algebra of finite cluster type, with $\V$ the set of cluster variables and $\W$ the set of frozen variables. Since $\A$ is generated by the cluster and frozen variables, we obtain a presentation 
\[
	0\to I_\A\to \KK[z_v\ |\ v\in\V\cup\W]\to \A\to 0
\]
by sending the $z_v$ to the corresponding cluster and frozen variables.
The ideal $I_\A$ includes the exchange relations \eqref{eq:exchange_rel}, but in general has additional generators.

Geometrically, the above presentation gives rise to
the so-called \emph{cluster embedding}
\[
	\spec \A\subseteq \Aff^{p+q}
\]
where $p=\#\V$ and $q=m-n=\#\W$. Set $\TT=(\KK^*)^{p+q}$ and let $H\subseteq \TT$ be the maximal subgroup of $\TT$ whose elements send $\spec \A$ to itself in this embedding.
This inclusion of tori induces a surjection $\deg_H:\ZZ^{p+q}\to M$ to the character group $M$ of $H$. Since $H$ preserves $\spec \A$, the map $\deg_H$ gives an $M$-grading of the cluster algebra $\A$. In general, $H$ could be disconnected; equivalently, $M$ could contain torsion.

\begin{prop}\label{prop:M}
Let $(\tx,\tB)$ be a seed of $\A$. This induces an isomorphism of the cokernel of $\tB$ with $M$, the character group of $H$.
\end{prop}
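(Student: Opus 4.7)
The plan is to establish the isomorphism by constructing homomorphisms in both directions and verifying they are mutually inverse. The map $\phi:\mathrm{coker}(\tB)\to M$ will come from reading off the $M$-degrees of the initial extended cluster variables, while the inverse $\psi:M\to\mathrm{coker}(\tB)$ will come from realizing $\mathrm{coker}(\tB)$ as (the character group of) a subgroup of $H$ via the canonical grading attached to the seed.

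For the forward map, since $H$ acts on $\spec\A\subseteq\Aff^{p+q}$ through $\TT$, each coordinate $z_v$ is $M$-homogeneous. Restricting to the initial extended cluster gives a $\ZZ$-linear map $\ZZ^m\to M$, $e_i\mapsto\deg_H(x_i)$. Each exchange relation \eqref{eq:exchange_rel} at a mutable index $k$ lies in $I_\A$ and is $H$-homogeneous, which forces $\sum_i b_{ik}\deg_H(x_i)=0$ in $M$. Hence the map vanishes on $\tB(\ZZ^n)$ and descends to $\phi:\mathrm{coker}(\tB)\to M$.

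For the backward map, I take $\gamma:\ZZ^m\twoheadrightarrow\mathrm{coker}(\tB)$ to be the projection. Since $\gamma\circ\tB=0$, Remark \ref{rem:torsion} shows that $\gamma$ propagates through all mutations: we obtain a degree in $\mathrm{coker}(\tB)$ for every cluster and frozen variable, given by $\bg$-vectors against $(e_1,\dots,e_m)$ in light of Corollary \ref{cor:cv_degrees}. Using that the extended cluster monomials form a $\KK$-basis of $\A$ (Theorem \ref{thm:basis}), this assignment defines an honest $\mathrm{coker}(\tB)$-grading on $\A$ under which every coordinate $z_v$ is homogeneous. This grading corresponds to a morphism $\spec\KK[\mathrm{coker}(\tB)]\to\TT$ whose image preserves $\spec\A$; by maximality of $H$, it factors through $H$, which dualizes to $\psi:M\to\mathrm{coker}(\tB)$ sending $\deg_H(z_v)$ to $\gamma(\deg(z_v))$ for each $v$.

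Finally, I verify the compositions. The composition $\psi\circ\phi$ sends $e_i$ to $\deg_H(x_i)$ and then to $\gamma(e_i)=e_i$, so it is the identity on generators of $\mathrm{coker}(\tB)$. For $\phi\circ\psi$, I use that $\ZZ^{p+q}\twoheadrightarrow M$ is surjective so $M$ is generated by the degrees $\deg_H(z_v)$; by Corollary \ref{cor:cv_degrees} (in its abelian-group form from Remark \ref{rem:torsion}), both gradings satisfy $\deg(z_v)=\bg(v)\cdot(\deg(x_1),\dots,\deg(x_m))$, so tracing $\deg_H(z_v)$ through $\psi$ and then $\phi$ returns $\deg_H(z_v)$. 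The main subtlety I anticipate is justifying Step 2—extending $\gamma$ from the initial extended cluster to a bona fide grading on $\A$ rather than on the free algebra generated by all cluster and frozen variables. This is where the $\bg$-vector formalism together with Theorem \ref{thm:basis} will be essential, since the extended cluster monomial basis lets us define the grading and verify that every relation in $I_\A$ (not merely the exchange relations) is homogeneous.
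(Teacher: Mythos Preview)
Your proposal is correct and follows essentially the same approach as the paper: both arguments construct the canonical $\mathrm{coker}(\tB)$-grading on $\A$ via the mutation rule of \S\ref{sec:grading}/Remark \ref{rem:torsion}, then use maximality of $H$ to identify $M$ with $\mathrm{coker}(\tB)$. The paper compresses your two-map verification into a single universality sentence (``any other grading must factor through this one''), but the content is the same.

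One remark: your invocation of Theorem \ref{thm:basis} in Step 2 is unnecessary. Since $\A\subseteq\KK[x_{n+1},\ldots,x_m][x_1^{\pm 1},\ldots,x_n^{\pm 1}]$ by the strong Laurent phenomenon, and the latter is already $\mathrm{coker}(\tB)$-graded with every cluster variable homogeneous (by the mutation rule for $\gamma$), the subalgebra $\A$ inherits the grading automatically; hence the surjection $\KK[\bz]\to\A$ is graded and $I_\A$ is homogeneous. This is what the paper means by ``by construction, $\A$ is homogeneous with respect to this grading.'' Avoiding Theorem \ref{thm:basis} here keeps the argument elementary and makes clear that the proposition holds without any finiteness hypothesis on the cluster type.
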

\begin{proof}
Let $M'$ be the cokernel of $\tB$.
We thus obtain a map $\ZZ^m\to M'$.
This gives a grading of $\KK[\bz]$ as follows.
The degree of the variable $z_{x_i}$ corresponding to the initial cluster element $x_i$ is the image of the $i$th basis vector. Using the procedure of \S\ref{sec:grading} (in particular Remark \ref{rem:torsion}), this determines the degrees of all the variables $z_v$, and by construction, $\A$ is homogeneous with respect to this grading. Interpreting any other grading as coming from a map $\gamma$ as in Remark \ref{rem:torsion}, the universal property of the cokernel guarantees that any other grading must factor through the grading corresponding to the map $\ZZ^m\to M'$. By maximality of $H$ we conclude that $M'$ is indeed isomorphic to $M$, the character group of $H$.
\end{proof}
\begin{defn}[cf. {\cite[\S 4.3.7]{FWZ_chapter4}}]
	A cluster algebra $\A$ has \emph{full $\ZZ$-rank} if for some (or equivalently for every) seed $(\tx,\tB)$, the rows of $\tB$ span $\ZZ^m$.
	We say it has \emph{full rank} if for some (or equivalently for every) seed $(\tx,\tB)$, $\tB$ has full rank.
\end{defn}
\begin{cor}\label{cor:fullZ}
Suppose that $\A$ has full rank. Then $H$ is a torus if and only if $\A$ has full $\ZZ$-rank.
\end{cor}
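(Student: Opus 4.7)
The plan is to reduce the statement to a purely lattice-theoretic claim about the cokernel of $\tB$, using Proposition \ref{prop:M} as the only non-trivial input.

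First I would recall that for a diagonalizable group $H$ with character lattice $M$, being a torus is equivalent to $M$ being torsion-free. By Proposition \ref{prop:M}, $M$ is naturally identified with the cokernel of the map $\tB : \ZZ^n \to \ZZ^m$ (where I am thinking of $\tB$ as a map on column vectors). So the corollary becomes the statement that, under the full rank hypothesis, $\coker(\tB)$ is torsion-free if and only if $\A$ has full $\ZZ$-rank.

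Next I would use the full rank hypothesis to get an honest short exact sequence. Full rank of $\tB$ means that $\tB$ has rank $n$ as a matrix, hence is injective, so we have
\[
0 \longrightarrow \ZZ^n \xrightarrow{\tB} \ZZ^m \longrightarrow M \longrightarrow 0.
\]
Standard lattice theory now tells us that $M = \coker(\tB)$ is torsion-free precisely when the image $\tB(\ZZ^n)$ is a saturated sublattice of $\ZZ^m$, which is in turn equivalent to $\tB$ admitting a left inverse over $\ZZ$, i.e.\ to $\tB^{\tr} : \ZZ^m \to \ZZ^n$ being surjective. But surjectivity of $\tB^{\tr}$ is exactly the condition that the rows of $\tB$ span $\ZZ^n$ (as a $\ZZ$-module), which is the definition of full $\ZZ$-rank.

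There is essentially no obstacle here beyond bookkeeping: everything reduces to the interaction between the natural map $\ZZ^n \to \ZZ^m$ given by $\tB$ and its cokernel, and the characterization of torsion-free cokernels via saturated sublattices. The only place where care is needed is using the full rank hypothesis to guarantee injectivity of $\tB$ so that the short exact sequence above is valid; without this hypothesis one would have to separately account for a free summand contributed by $\ker(\tB)$, and the equivalence could fail.
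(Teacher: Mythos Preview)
Your proposal is correct and follows essentially the same approach as the paper's proof: identify $M$ with $\coker(\tB)$ via Proposition \ref{prop:M}, note that $H$ is a torus if and only if $M$ is torsion-free, and then use the full rank hypothesis to equate torsion-freeness of the cokernel with full $\ZZ$-rank. The paper's proof simply asserts this last equivalence as ``clear'', whereas you have spelled out the standard lattice-theoretic justification (saturation, surjectivity of $\tB^\tr$); this is the same argument with more detail.
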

\begin{proof}
	The property of $H$ being a torus is equivalent to $M$ being torsion-free. By Proposition \ref{prop:M}, this is equivalent to $\tB$ having torsion-free cokernel. Under the assumption that $\A$ has full rank, this is clearly equivalent to $\A$ having full $\ZZ$-rank.
\end{proof}

The presentation of $\A$ above lifts canonically to a presentation of $\A^\univ$:
\begin{equation}\label{eqn:pres}
		0\to  I_{\A^\univ}\to	\KK[z_v,\bt\ |\ v\in\V\cup\W]\to \A^\univ\to 0.
	\end{equation}
	It follows from the discussion of \S\ref{sec:grading} that the map $\deg_H:\ZZ^{p+q}\to M$ also makes $\A^\univ$ $M$-graded. In this grading, each coefficient $t_i$ has degree $0$, and the degree of a cluster or frozen variable of $\A^\univ$ is the same as the degree of the variable of $\A$ to which it specializes.

Geometrically, we have 
\[
\begin{tikzcd}
	\spec \A^\univ \arrow[r,hook] \arrow[rd]& \Aff^{p+q}\times\Aff^p \arrow[d]\\
& \Aff^p
\end{tikzcd}
\]
with the vertical arrow a projection, and $\spec \A^\univ$ a $H$-invariant subvariety of $\Aff^{p+q}\times \Aff^p$.

\begin{ex}[Continuation of Running Example \ref{ex:r5}]\label{ex:r6}
	Let $\A$ be the cluster algebra from Example \ref{ex:r1}. We have $p=5$ and $q=3$, and thus obtain an embedding of $\spec \A$ in $\Aff^8$. The exchange matrix $\tB$ has full $\ZZ$-rank. Identifying $M$ with the cokernel of $\tB$ and choosing as a basis the images of the third, fourth, and fifth standard basis vectors, we obtain an isomorphism of $M$ with $\ZZ^3$.
Under this identification, the grading matrix $D$ is given by
\[
	D^\tr=\left(\begin{array}{c c c c c}
-1&1&1&0&0\\
1&-1&0&1&0\\
1&1&0&0&1
	\end{array}\right)
\]
and the 
degrees of the variables $z_v$ are as follows:
\begin{align*}
	&\deg_H(x_{25})=(1,1,-1)\qquad &&	\deg_H(x_{13})=(-1,1,1)\\
	&\deg_H(x_{24})=(2,-1,0)\qquad &&	\deg_H(x_{35})=(-1,2,0)\\
        &\deg_H(x_{14})=(1,-1,1)\qquad &&	\deg_H(s_1)=(1,0,0)\\
	&\deg_H(s_2)=(0,1,0)\qquad && \deg_H(s_3)=(0,0,1).
\end{align*}
We observe that the map $\ZZ^3\to \ZZ$ obtained by summing the three coordinates recovers the positive $\ZZ$-grading of Example \ref{ex:r5}.
\end{ex}

\subsection{The Hilbert Scheme}\label{sec:hilbthm}
Assume that $\A$ is positively graded and consider the cluster embedding $\spec \A\subseteq \Aff^{p+q}$ introduced in \S\ref{sec:clusterembedding}, where $p$ (resp. $q$) is the number of cluster variables (resp. frozen variables) of $\A$. As before we let $\TT=(\KK^*)^{p+q}$ be the maximal torus of $ \Aff^{p+q}$, let $H\subseteq \TT$ be the maximal subgroup of $\TT$ whose elements send $\spec \A$ to itself in the cluster embedding and denote by $M$ the character group of $M$.
Let $\Hilb_{\A}^H$ denote the multigraded Hilbert scheme whose closed points parametrize $H$-invariant subschemes of $\Aff^{p+q}$ with the same $M$-graded Hilbert function as $\spec \A$. Equivalently, $\Hilb_{\A}^H$ parametrizes $M$-graded ideals $J$ of $\KK[\bz]$ such that $\KK[\bz]/J$ and $\KK[\bz]/I_\A$ have the same $M$-graded Hilbert function.
By \cite[Corollary 1.2]{multigraded} this is a projective scheme.
Clearly the cluster algebra $\A$ corresponds to a point of $\Hilb_{\A}^H$, which we denote by $[\A]$.

The multigraded Hilbert scheme comes equipped with a \emph{universal family} $\U\to\Hilb_{\A}^H$, where the fiber of $\U$ over a closed point $P\in \Hilb_{\A}^H$ is exactly the subscheme of $\Aff^{p+q}$ corresponding to $P$.
We note that the action of $\TT=(\KK^*)^{p+q}$ on $\Aff^{p+q}$ induces an action of $\TT$ on $\Hilb_{\A}^H$. 
We denote the $\TT$-orbit of $[\A]$ inside $\Hilb_\A^H$ by $\TT.[\A]$ and its closure by $\overline{\TT.[\A]}$.

We now relate $\A^\univ$ to the Hilbert scheme $\Hilb_{\A}^H$:
\begin{thm}\label{thm:hilb}
	Let $\A$ be a cluster algebra of finite cluster type, and let $\K$ be the cluster complex of $\A$. 
	\begin{enumerate}
		\item There is an action of $\TT$ on $\Aff^p=\spec \KK[\bt]$ so that $\spec \A^{\univ}$ is a $\TT$-invariant subvariety of $\Aff^{p+q}\times \Aff^p$.
		\item\label{part:h2}	The specialization of $\A^\univ$ at $\bt=0$ is the Stanley-Reisner ring $S_{\K*\PS(\W)}$.
		\item Assume that $\A$ is positively graded. Then there is a $\TT$-equivariant morphism \[\phi:\spec \KK[\bt] \to \Hilb_{\A}^H\] such that $\spec \A^{\univ}$ is the pullback of the universal family $\U$ along $\phi$. Moreover, $\phi$ is injective with injective differential.
		\item\label{part:h4} Suppose that $\A$ is positively graded and of full rank. Let $W$ be the unique torus invariant chart of $\overline{\TT.[\A]}\subseteq \Hilb_\A^H$ containing the torus fixed point corresponding to $I_{\K*\PS(\W)}$. 
			Then  $\phi$ is an immersion with image $W$. In particular, the family \[\spec \A^\univ \to \spec \KK[\bt]\] is canonically isomorphic to the restriction of 
	$\U\to\Hilb_{\A}^H$ to  $W$.
	\end{enumerate}
\end{thm}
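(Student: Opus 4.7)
My plan is to prove Theorem \ref{thm:hilb} in order, using as the main structural tool Theorem \ref{thm:basis} (extended cluster monomials form a basis) together with careful tracking of $\TT$-weights from Corollary \ref{cor:cv_degrees}. For part (1), I would define the $\TT$-action on $\Aff^p = \spec \KK[\bt]$ by assigning to each $t_i$ the $\TT$-weight that renders every universal exchange relation
\[
	x_k x_k' = \prod_i t_i^{a_i^+}\!\!\prod_{j:\, b_{jk}^{\univ}>0}\!\! x_j^{b_{jk}^{\univ}} + \prod_i t_i^{a_i^-}\!\!\prod_{j:\, b_{jk}^{\univ}<0}\!\! x_j^{-b_{jk}^{\univ}}
\]
$\TT$-homogeneous. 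The consistency of this assignment (that the weight of each $t_i$ is independent of which relation it appears in) reduces to the fact that $\bg$-vectors, from which the universal coefficient matrix $U_B$ is built, are compatible with mutation in the sense of Lemma \ref{lem:G_and_E}. For part (2), I would use that $\A^\univ$ is a free $\KK[\bt]$-module on extended cluster monomials of $\A$; the specialization at $\bt=0$ has as $\KK$-basis exactly the extended cluster monomials, which coincide with the monomials supported on faces of $\K * \PS(\W)$. Agreement of multiplicative structure then follows because setting $\bt=0$ in each universal exchange relation gives $x_k x_k'=0$, and, in finite cluster type, every non-edge of $\K$ is an exchange pair, so these relations generate all quadratic generators of $I_{\K * \PS(\W)}$ — which suffice since $\K$ is a flag complex.

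For part (3), flatness of $\spec \A^\univ \to \Aff^p$ follows from the $\KK[\bt]$-freeness obtained via Theorem \ref{thm:basis}. Because extended cluster monomials have fixed $M$-degrees (the $t_i$ being $M$-degree zero), the $M$-graded Hilbert function of every fiber equals that of $\A$, so the universal property of $\Hilb_\A^H$ yields the morphism $\phi$, with $\TT$-equivariance coming from part (1). Injectivity of $\phi$ on closed points follows because the coefficients $t_i$ can be read off from the structure constants expressing exchange monomials in the basis of extended cluster monomials. For injectivity of the differential at $0$, one composes $d\phi_0$ with the natural map from the tangent space of $\Hilb_\A^H$ at $[S_{\K*\PS(\W)}]$ to $T^1(S_{\K*\PS(\W)})^H$; then $\partial/\partial t_i$ maps to the first-order perturbation given by the $t_i$-terms of the exchange relation for $v_i$, and the description of $T^1$ in Theorem \ref{thm:t1} (together with distinct $M$-degrees, by the argument of part (1)) shows these classes are linearly independent.

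For part (4), under full rank and positive grading, the character group $M$ is torsion-free of rank $q$ (Corollary \ref{cor:fullZ}), and the positive grading provides a one-parameter subgroup of $\TT$ along which $\A$ degenerates to $S_{\K * \PS(\W)}$ (Remark \ref{rem:g-fan}). Thus $[S_{\K * \PS(\W)}]\in \overline{\TT\cdot[\A]}$, and by $\TT$-equivariance $\phi(\Aff^p)\subseteq \overline{\TT\cdot[\A]}$. Since $\Aff^p$ is an affine $\TT$-space with the origin as its unique fixed point mapping to $[S_{\K*\PS(\W)}]$, the image lands in the unique affine $\TT$-invariant chart $W$ of $\overline{\TT\cdot[\A]}$ containing this fixed point. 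To conclude that $\phi$ is an isomorphism onto $W$, I would combine injectivity of $\phi$ and its differential (from part (3)) with the fact that, by positivity, the $\TT$-weights on the tangent space $T_{[S_{\K*\PS(\W)}]}W$ are all strictly negative along the chosen positive grading and of the right dimension $p$ — hence $W$ is itself an affine $p$-space, and a $\TT$-equivariant closed immersion $\phi\colon \Aff^p \hookrightarrow W$ of affine $\TT$-spaces of equal dimension matching fixed points must be surjective.

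The main obstacle is part (4): showing not just that $\phi$ lands in the appropriate chart, but that it is an isomorphism onto it. The subtle point is controlling $W$ itself — that it is a smooth affine $\TT$-toric variety of dimension $p$ on which $\TT$ acts via a quotient with open orbit — since otherwise one cannot conclude surjectivity from dimension and injectivity alone. This is where the full-rank hypothesis is essential: without it, $W$ could fail to be smooth, and the image of $\phi$ could be a proper closed $\TT$-invariant subvariety. I expect the cleanest route is to describe $W$ explicitly via the $\TT$-weights on $T_{[S_{\K*\PS(\W)}]}\Hilb_\A^H$ landing in $T^1(S_{\K*\PS(\W)})^H$ (cf. Theorem \ref{thm:t1}), matching them one-to-one with the coordinates $t_i$ of $\Aff^p$ via $d\phi_0$.
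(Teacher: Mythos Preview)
Your overall strategy is close to the paper's, but there is a genuine error in your argument for part~(2). You claim that ``in finite cluster type, every non-edge of $\K$ is an exchange pair,'' and use this to conclude that the relations $x_k x_k'=0$ obtained by specializing the universal exchange relations at $\bt=0$ generate $I_{\K*\PS(\W)}$. This is false already in rank two: for type $G_2$ the cluster complex is an octagon, and the exchange pairs are exactly the pairs of vertices at distance~$2$, while the non-edges also include pairs at distance~$3$ and~$4$ (cf.\ the presentation in \S\ref{ex:g2}). The same failure occurs in types $B_2$, $C_2$, and in all types of rank $\geq 3$. Consequently, the ideal generated by the specialized exchange relations is strictly smaller than $I_{\K*\PS(\W)}$, and your argument does not identify the fiber.

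The correct route---which the paper takes---is to use part~(1) here rather than a direct generator count. Once you know $\spec \A^\univ$ is $\TT$-invariant in $\Aff^{p+q}\times\Aff^p$, the fiber over the $\TT$-fixed point $\bt=0$ is $\TT$-invariant, so its ideal $I(0)\subset\KK[\bz]$ is $\ZZ^{p+q}$-homogeneous and hence a \emph{monomial} ideal. Then your observation that $\KK[\bz]/I(0)$ has $\KK$-basis the extended cluster monomials (from Theorem~\ref{thm:basis}) forces $I(0)=I_{\K*\PS(\W)}$. Without the monomiality step, knowing the basis alone does not determine the ideal: a priori a non-cluster monomial could map to a nonzero combination of cluster monomials.

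Two smaller points. In part~(3) you establish injectivity of $d\phi$ only at $0$, but the statement requires it at every point; the paper's Lemma~\ref{lemma:differential} handles this uniformly by working at an arbitrary $P$ using the same primitive exchange relations \eqref{eqn:primitive}. In part~(4), your claim that $W$ is an affine $p$-space needs justification beyond looking at tangent weights---the paper instead first computes $\dim \TT.[\A]=(p+q)-\dim H=p$ (this is exactly where full rank enters, via $\dim H=q$), deduces that $\overline{\phi(\Aff^p)}=\overline{\TT.[\A]}$, and then applies the elementary Lemma~\ref{lemma:iso} on equivariant maps between affine toric varieties. Structurally, the paper also proceeds by first proving everything under the additional hypothesis of full $\ZZ$-rank (where equivariance is easiest to establish, Lemma~\ref{lemma:equivariant}) and then reducing the general case by adding frozen variables; your direct approach to part~(1) via assigning $\TT$-weights to the $t_i$ is plausible but the consistency check you gesture at would need to be made precise.
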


Before starting the proof proper, we state and prove a number of lemmas. Under the assumption of a positive grading on $\A$ (to ensure the existence of $\Hilb_\A^H$), we show in Lemmas \ref{lemma:morphism}, \ref{lemma:phi_injective}, and \ref{lemma:differential} that there is a natural injective morphism $\phi:\spec \KK[\bt]\to \Hilb_\A^H$ with injective differential such that $\spec \A^\univ$ is the pullback for the universal family $\U$.
In Lemma \ref{lemma:equivariant} we show that if $\A$ has full $\ZZ$-rank, we may additionally assume that this morphism is torus equivariant. The actual proof of the theorem begins after Lemma \ref{lemma:equivariant}. We first consider the positively graded full $\ZZ$-rank case, and then use it to deduce the other claims.

\begin{lemma}\label{lemma:morphism}
	Assume that $\A$ is positively graded. There is a natural morphism $\phi:\spec \KK[\bt]\to \Hilb_\A^H$ such that $\spec \A^\univ$ is the pullback for the universal family $\U$.
\end{lemma}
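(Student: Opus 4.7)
The plan is to invoke the universal property of the multigraded Hilbert scheme: a morphism $\spec\KK[\bt]\to\Hilb_\A^H$ corresponds to the data of a flat family over $\spec\KK[\bt]$ of $H$-invariant subschemes of $\Aff^{p+q}$ whose fibers all share the $M$-graded Hilbert function of $\spec\A$. I will produce such a family directly from the presentation \eqref{eqn:pres} of $\A^\univ$, so that the resulting morphism $\phi$ automatically has $\spec\A^\univ$ as the pullback of $\U$.

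From \eqref{eqn:pres} we have $\spec\A^\univ\subseteq\Aff^{p+q}\times\Aff^p$ projecting to $\Aff^p$, and by the discussion immediately following \eqref{eqn:pres}, the $M$-grading on $\A^\univ$ assigns degree $0$ to each $t_i$. Hence $I_{\A^\univ}$ is generated by polynomials in $\KK[\bt][\bz]$ that are $M$-homogeneous in the $\bz$-variables, and so every fiber of $\spec\A^\univ\to\Aff^p$ is cut out by an $M$-homogeneous ideal in $\KK[\bz]$ and is therefore $H$-invariant.

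For flatness and constancy of the Hilbert function, I would apply Theorem \ref{thm:basis} to $\A^\univ$ itself, viewed as a cluster algebra of finite cluster type with frozen variables $\W\cup\{t_1,\ldots,t_p\}$; the cluster type and cluster combinatorics are unchanged, since passing to universal coefficients only adds frozen variables. Thus the cluster monomials of $\A^\univ$ (which coincide with those of $\A$) form a basis of $\A^\univ$ over the polynomial ring $\KK[\W\cup\bt]$. Re-grouping, the products of cluster monomials of $\A$ with monomials in the $z_w$ for $w\in\W$ form a $\KK[\bt]$-basis of $\A^\univ$. In particular $\A^\univ$ is a free $\KK[\bt]$-module, and this basis descends to a basis of the fiber $\A^\univ\otimes_{\KK[\bt]}\kappa(P)$ over any closed point $P\in\Aff^p$. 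Applying Theorem \ref{thm:basis} to $\A$ shows that the same collection of products furnishes a $\KK$-basis of $\A$, so each fiber carries the same $M$-graded Hilbert function as $\A$ itself.

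Invoking the universal property of $\Hilb_\A^H$ now yields the desired morphism $\phi:\spec\KK[\bt]\to\Hilb_\A^H$, and $\spec\A^\univ$ is the pullback of $\U$ along $\phi$ by construction. The main subtle point is the applicability of Theorem \ref{thm:basis} to $\A^\univ$ together with the identification of its cluster monomial basis with the one for $\A$; both follow because $\A^\univ$ differs from $\A$ only by the addition of the frozen variables $t_1,\ldots,t_p$, which leaves the exchange graph, clusters, and cluster monomials intact.
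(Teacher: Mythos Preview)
Your proof is correct and follows essentially the same approach as the paper: apply Theorem~\ref{thm:basis} to $\A^\univ$ to see that it is free over $\KK[\bt]$ with basis given by the extended cluster monomials of $\A$, conclude that all fibers share the $M$-graded Hilbert function of $\A$, and then invoke the universal property of $\Hilb_\A^H$. You have simply spelled out a few details (such as the $H$-invariance of the fibers) that the paper leaves implicit.
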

\begin{proof}

	By Theorem \ref{thm:basis}, $\A^\univ$ is a free module over $\KK[\bt]$ with basis given by certain monomials in the cluster and frozen variables. It follows that for any $P\in\spec \KK[\bt]$, the fiber of $\spec \A^\univ$ at $P$ has the same $M$-graded Hilbert function as $M$. By the universal property of $\Hilb_\A^H$ (cf.~\cite[Theorem 1.1]{multigraded}), there is thus a unique morphism $\phi:\spec \KK[\bt]\to \Hilb_\A^H$ such that $\spec \A^\univ$ is the pullback for the universal family $\U$.
\end{proof}

\begin{lemma}\label{lemma:phi_injective}
Assume that $\A$ is positively graded. Then the morphism $\phi$ is injective.
\end{lemma}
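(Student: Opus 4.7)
The plan is to show that from the ideal $J := \phi(\bt^0) \subseteq \KK[\bz]$ defining a fiber of $\spec \A^\univ \to \spec \KK[\bt]$, one can uniquely reconstruct the point $\bt^0 \in \KK^p$.

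For each seed of $\A$ and each mutable direction $k$, the corresponding exchange relation in $\A^\univ$ has the form
\[
P_k = z_{x_k} z_{x_k'} - M_k^+ \cdot Y_k^+(\bt) - M_k^- \cdot Y_k^-(\bt) \in I_{\A^\univ},
\]
where $M_k^\pm$ are distinct extended cluster monomials in the cluster and frozen variables of $\A$, and $Y_k^\pm$ are specific monomials in $\bt$ whose exponents are given by the positive and negative parts of the $k$-th column of the mutated universal coefficient matrix. Assume $\phi(\bt^0) = \phi(\bt^1) = J$. Then $P_k|_{\bt^0}$ and $P_k|_{\bt^1}$ both lie in $J$, hence so does their difference
\[
(Y_k^+(\bt^0) - Y_k^+(\bt^1))\, M_k^+ + (Y_k^-(\bt^0) - Y_k^-(\bt^1))\, M_k^-.
\]
Since $M_k^+$ and $M_k^-$ are distinct extended cluster monomials, they are linearly independent in $\KK[\bz]/J$ by Theorem \ref{thm:basis}. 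This forces the equalities $Y_k^+(\bt^0) = Y_k^+(\bt^1)$ and $Y_k^-(\bt^0) = Y_k^-(\bt^1)$ for every seed of $\A$ and every mutable direction $k$.

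The remaining step is to deduce $\bt^0 = \bt^1$ from these monomial equalities. For each cluster variable $v \in \V$, the corresponding cluster variable $v^\tr$ of $\A(B^\tr)$ lies in some cluster of $\A(B^\tr)$; in the associated seed of $\A^\univ$, the row of the mutated universal coefficient matrix indexed by $v$ becomes a standard basis vector $e_k$, so $t_v$ appears with exponent exactly $1$ in either $Y_k^+$ or $Y_k^-$. Combining $Y$-values across appropriately chosen seeds (taking ratios to cancel contributions from other $t_{v'}$) then isolates each $t_v^0$.

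The main obstacle is this final combinatorial step: one must show that the collection of $\bt$-monomials $\{Y_k^\pm\}$ arising from exchange relations across all seeds actually separates points of $\Aff^p$, rather than only determining some proper quotient or torus orbit. This should follow from the richness of the $\bg$-fan of $\A(B^\tr)$ in the finite cluster type setting, where the fan is complete and its rays span $\ZZ^p$, producing enough independent monomial conditions to recover each coordinate of $\bt^0$.
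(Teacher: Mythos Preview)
Your overall strategy---specialize an exchange relation to both points, subtract, and use linear independence of extended cluster monomials (Theorem~\ref{thm:basis}) to extract the $\bt$-monomial coefficients---matches the paper's approach. However, there are two genuine gaps.

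First, you assert that $M_k^+$ and $M_k^-$ are distinct extended cluster monomials without justification. This is exactly where the positive grading hypothesis enters: in general the two monomials on the right-hand side of an exchange relation could both equal $1$ (when the mutable vertex is isolated), but a positive grading forces $x_k x_k'$ to have positive degree, ruling this out. You should make this explicit.

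Second, and more seriously, your final step is incomplete. Knowing $Y_k^\pm(\bt^0) = Y_k^\pm(\bt^1)$ for all seeds and all $k$ gives equalities of many $\bt$-monomials, but you have not shown these monomials separate points of $\Aff^p$. Your observation that $t_v$ appears with exponent~$1$ in some $Y_k^\pm$ does not suffice, since other $t_{v'}$ may appear in the same monomial; your suggestion of ``taking ratios to cancel contributions'' is too vague to be a proof, and it is not clear that such cancellation can be arranged systematically. The paper sidesteps this entirely by invoking \cite[Lemmas~12.7 and~12.8]{FZ_clustersIV}: for each universal coefficient $t_i$ there is a specific exchange relation (on the bipartite belt) of the form $x \cdot x' = t_i \cdot \alpha_1 + g \cdot \alpha_2$, where the first coefficient is \emph{exactly} $t_i$ and $g$ involves only the other $t_j$. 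Applying your subtraction argument to \emph{that} relation immediately yields $t_i(\bt^0) = t_i(\bt^1)$, with no further combinatorics needed. You were circling this idea but did not land on the precise structural input that makes it work.
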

\begin{proof}
Consider closed points $P\neq P'\in\Aff^p$, and suppose $\phi(P)=\phi(P')$. Let $1\leq i\ \leq p$ be such that $t_i(P)\neq t_i(P')$.
By \cite[Lemma 12.7 and Lemma 12.8]{FZ_clustersIV}, there is a pair of exchangeable cluster variables $x,x'$ such that in the universal cluster algebra,
	\begin{equation}\label{eqn:primitive}
x\cdot x'=t_i \cdot \alpha_1  + g\cdot \alpha_2
	\end{equation}
	where $g\in \langle t_j\ |\ j\neq i\rangle$, $\alpha_1$ is an extended cluster monomial, and $\alpha_2$ is a different monomial in the frozen variables. Indeed, the only way to have $\alpha_1=\alpha_2$ is if $\alpha_1=\alpha_2=1$. This is impossible since $\A$ is positively graded.

	Equation \eqref{eqn:primitive} translates into an element
	\begin{equation}\label{eqn:prim2}
	f_i:=z_x\cdot z_{x'}-t_i\cdot z_{\alpha_1}-g\cdot z_{\alpha_2}\in I_{\A^{\univ}}.
\end{equation}
Here $z_{\alpha_1}$ and $z_{\alpha_2}$ are the monomials in the $z_v$ mapping to $\alpha_1$ and $\alpha_2$.
The fact that $\phi(P)=\phi(P')$ means that $f_i(P')$ must be in the specialization $I(P)$ of $I_{\A^{\univ}}$ to $P$. Thus, we obtain that $f_i(P)-f_i(P')$ is in $I(P)$. This implies that in the specialization of $\A^{\univ}$ to $P$, a non-zero multiple of $\alpha_1$ can be expressed in terms of $\alpha_2$. This contradicts the fact that a basis of $\A^\univ$ is given by extended cluster monomials (Theorem \ref{thm:basis}).

We conclude that instead $\phi(P)\neq \phi(P')$. 
\end{proof}

\begin{lemma}\label{lemma:differential}
Assume that $\A$ is positively graded. Then the differential of $\phi$ is injective everywhere. 
\end{lemma}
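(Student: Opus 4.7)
The plan is to adapt the injectivity argument of Lemma \ref{lemma:phi_injective} to the infinitesimal level. Recall that the tangent space to $\Hilb_\A^H$ at $[J_P]$, with $J_P \subseteq R := \KK[\bz]$ the ideal of the fiber of $\spec\A^{\univ}$ over $P$, is naturally identified with $\Hom_R(J_P, R/J_P)_0$, the degree-preserving $R$-module homomorphisms (see \cite{multigraded}; note that since $H$ acts on $R$ through the $M$-grading, $H$-invariance is automatic in degree zero). Under this identification, the differential $d\phi_P$ will send a tangent vector $\eta = (\eta_1,\ldots,\eta_p) \in T_P(\spec \KK[\bt])$ to the homomorphism that takes $f \in J_P$ to $\sum_{i=1}^p \eta_i\, \partial_{t_i}\tilde f\big|_P$ modulo $J_P$, for any lift $\tilde f \in I_{\A^{\univ}}$ of $f$; this is well-defined by flatness of $\A^{\univ}$ over $\KK[\bt]$, which follows from Theorem \ref{thm:basis}.

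With this description in hand, I would then, for each $1 \leq i \leq p$, invoke a primitive relation $f_i \in I_{\A^{\univ}}$ of the form
\[
    f_i = z_x z_{x'} - t_i z_{\alpha_1} - g_i z_{\alpha_2},
\]
exactly as in equation \eqref{eqn:prim2}, where $g_i \in \langle t_j : j \neq i\rangle$, the monomial $z_{\alpha_1}$ maps to an extended cluster monomial of $\A$, and $z_{\alpha_2}$ is a distinct monomial in the frozen variables. Since $g_i$ has no $t_i$-dependence, $\partial_{t_i}f_i = -z_{\alpha_1}$, and so
\[
    d\phi_P(\eta)(f_i) \equiv -\eta_i z_{\alpha_1} - \Bigl(\sum_{j\neq i}\eta_j\,\partial_{t_j}g_i\big|_P\Bigr)z_{\alpha_2} \pmod{J_P}.
\]

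Now suppose $d\phi_P(\eta) = 0$. Then the expression above must vanish in $A_P := R/J_P$. By Theorem \ref{thm:basis} applied to $\A^{\univ}$, the extended cluster monomials of $\A$ form a $\KK[\bt]$-basis of $\A^{\univ}$, so their images form a $\KK$-basis of $A_P$. Since $\alpha_1$ and $\alpha_2$ are distinct extended cluster monomials (as recorded in the proof of Lemma \ref{lemma:phi_injective}, using that $\A$ is positively graded), they remain linearly independent in $A_P$, and comparing the coefficient of $\alpha_1$ forces $\eta_i = 0$. Letting $i$ range over $\{1,\ldots,p\}$ yields $\eta = 0$, as required. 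The most delicate step in this plan is pinning down the description of $d\phi_P$ together with the well-definedness issue mentioned above; once these are in place the rest of the argument is a direct infinitesimal analogue of Lemma \ref{lemma:phi_injective}.
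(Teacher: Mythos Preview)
Your argument is correct and follows essentially the same route as the paper: identify the tangent space of the Hilbert scheme with $\Hom(J_P,\KK[\bz]/J_P)_0$, evaluate the image of the basis tangent vectors on the primitive relations $f_i$ of \eqref{eqn:prim2}, and use Theorem~\ref{thm:basis} to separate the $z_{\alpha_1}$- and $z_{\alpha_2}$-contributions. One small remark: the assertion ``$g_i$ has no $t_i$-dependence'' is slightly stronger than the literal statement $g_i\in\langle t_j:j\neq i\rangle$ recorded in \eqref{eqn:primitive}, but it is indeed what the structure of universal coefficients gives (and what the paper uses implicitly); in any case your conclusion only needs that the coefficient of $z_{\alpha_1}$ in $d\phi_P(\eta)(f_i|_P)$ is $-\eta_i$, which holds regardless.
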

\begin{proof}
	Consider a closed point $P\in\spec \KK[\bt]$ with coordinates $(s_1,\ldots,s_p)$. The  corresponding maximal ideal $\mfm\subset \KK[\bt]$ is $\mfm=\langle t_i-s_i\rangle$.
	The point $\phi(P)$ corresponds to an ideal $I(P)\subset \KK[\bz]$. This is obtained by specializing the ideal $I_{\A^\univ}$ to $P$.

	We will use the identification of $T_{\phi(P)} \Hilb_\A^H$ with $\Hom(I(P),\KK[\bz]/I(P))_0$, see e.g.~\cite[Proposition 1.6]{multigraded}. Here we are taking degree $0$ elements with respect to the $M$-grading.
	Consider the tangent vector $\eta_i$ at $P$ in direction $t_i$. To describe the element $\psi_i\in \Hom(I(P),\KK[\bz]/I(P))_0$ corresponding to 	$d\phi(\eta_i)$, consider any $f\in I(P)$. 
	Then there exists $\widetilde f\in \widetilde I_{\A^\univ}$ such that $\widetilde f$ reduces modulo $\mfm$ to $f$. We may write 
	\[
\widetilde f=f+(t_i-s_i)f'+g
	\]
	for some $f'\in \KK[\bz]$ and $g\in\langle (t_i-s_i)^2,t_j-s_j\ |\ j\neq i\rangle$. Then $\psi_i(f)$ is the image of $f'$ in $\KK[\bz]/I(P)$.

	The polynomial $f_i$ from \eqref{eqn:prim2} belongs to $I_{\A^\univ}$. Let $f_i(P)$ be its reduction modulo $\mfm$, viewed as an element of $\KK[\bz]$. We see
\[
	f_i=f_i(P) -(t_i-s_i)z_{\alpha_1} - g'\cdot z_{\alpha_2}
\]
for some $g'\in \langle t_j-s_j\ |\ j\neq i\rangle$.
With notation as in the preceding paragraph, we thus have $\psi_i(f_i)\equiv z_{\alpha_1}$. Furthermore, for $j\neq i$ we have that 
$\psi_j(f_i)$ is a multiple of the image of $z_{\alpha_2}$ modulo $I(P)$ (and possibly zero). Since the extended cluster monomials form a basis
(Theorem \ref{thm:basis}), there is no way to express $\psi_i$ as a linear combination of the $\{\psi_j\}_{j\neq i}$. The claim of the lemma follows. 
\end{proof}

\begin{lemma}\label{lemma:equivariant}
	Assume that $\A$ is positively graded and has full $\ZZ$-rank.
	There is a morphism of tori $\rho:\spec \KK[\bt^{\pm 1}]\to \TT$ making the morphism $\phi:\spec \KK[\bt]\to \Hilb_\A^H$ into a torus equivariant morphism.
\end{lemma}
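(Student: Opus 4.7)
The plan is to construct $\rho$ by equipping $\A^\univ$ with an auxiliary $\ZZ^p$-grading in which each universal coefficient $t_i$ has degree $e_i \in \ZZ^p$. Such a grading will determine a compatible action of $(\KK^*)^p$ on $\Aff^{p+q}\times \Aff^p$ whose restriction to the base $\Aff^p$ is the standard dense-torus action; the induced weights on $\Aff^{p+q}$ will then supply the desired homomorphism $\rho:\spec\KK[\bt^{\pm 1}]\to\TT$. Because $I_{\A^\univ}$ is automatically homogeneous with respect to this grading, $\spec \A^\univ$ will be invariant under the combined action, and equivariance of $\phi$ will follow from the universal property of $\Hilb_\A^H$.

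Following the graded seed framework of \S\ref{sec:grading}, a $\ZZ^p$-valued grading on $\A^\univ$ corresponds to a matrix $D$ of size $(m+p)\times p$ satisfying $D^T\tB^\univ=0$, where $\tB^\univ=\bigl(\begin{smallmatrix}\tB\\U_B\end{smallmatrix}\bigr)$. Imposing that $t_i$ have degree $e_i$ amounts to taking $D=\bigl(\begin{smallmatrix}D'\\I_p\end{smallmatrix}\bigr)$, whereupon the grading condition collapses to $(D')^T\tB=-U_B$. Row by row this is the assertion that each row of $-U_B$ lie in the image of $\tB^T:\ZZ^m\to\ZZ^n$. The hypothesis that $\A$ has full $\ZZ$-rank is precisely that the rows of $\tB$ generate $\ZZ^n$, equivalently that $\tB^T$ is surjective; hence an integral solution $D'$ exists.

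Given $D$, Corollary \ref{cor:cv_degrees} extends the grading to every cluster variable of $\A^\univ$, assigning to each $v\in \V\cup \W$ an unambiguous degree $d_v\in \ZZ^p$ (independent of which cluster contains $v$). Define $\rho:(\KK^*)^p\to\TT$ on character lattices by $\rho^*:\ZZ^{p+q}\to\ZZ^p$, $e_v\mapsto d_v$. The quotient map $\KK[\bz,\bt]\twoheadrightarrow \A^\univ$ is then $\ZZ^p$-graded, so $I_{\A^\univ}$ is a $\ZZ^p$-graded ideal, and $\spec \A^\univ\subseteq \Aff^{p+q}\times\Aff^p$ is invariant under the combined $(\KK^*)^p$-action coming from $\rho$ on the $\Aff^{p+q}$-factor together with the standard action on the base. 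Pushing forward through $\phi$, for $\tau\in (\KK^*)^p$ the fibre of $\spec \A^\univ$ over $\tau\cdot P$ differs from that over $P$ precisely by the $\TT$-action of $\rho(\tau)$, giving $\phi(\tau\cdot P)=\rho(\tau)\cdot\phi(P)$.

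The main obstacle is the integrality of $D'$: without full $\ZZ$-rank one would only be able to solve $(D')^T\tB=-U_B$ over $\QQ$, and $\rho$ would become a morphism of diagonalizable groups rather than of tori. Once the integer $D'$ is secured, the remaining verifications — global homogeneity of $I_{\A^\univ}$ and the translation from $\ZZ^p$-grading to $(\KK^*)^p$-action — are standard bookkeeping within the graded seed formalism of \S\ref{sec:grading}.
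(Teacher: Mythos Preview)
Your proof is correct and follows essentially the same route as the paper: solve $(D')^T\tB=-U_B$ using full $\ZZ$-rank to obtain a $\ZZ^p$-grading on $\A^\univ$, read off the degrees $d_v$ of the cluster and frozen variables to define $\rho$, and deduce equivariance from homogeneity of $I_{\A^\univ}$. The paper uses the sign convention $\bigl(\begin{smallmatrix}D\\-I_p\end{smallmatrix}\bigr)$ rather than your $\bigl(\begin{smallmatrix}D'\\I_p\end{smallmatrix}\bigr)$, and verifies equivariance slightly differently---by checking that each one-parameter subgroup dual to $t_i$ acts on the exchange relations of $\A$ exactly as $t_i$ does in $\A^\univ$, then invoking density of $\widetilde\TT\subset\Aff^p$---but your direct argument via the graded quotient map $\KK[\bz,\bt]\twoheadrightarrow\A^\univ$ accomplishes the same thing more cleanly.
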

\begin{proof}
Let $(\tx,\tB)$ be a seed of $\A$. There is a corresponding seed of $\A^\univ$ with extended exchange matrix $\tB^\univ$; the first $m$ rows of $\tB^\univ$ agree with those of $\tB$, but there are additional rows corresponding to the universal coefficients $\bt$. In fact, there are exactly $p$ additional rows, since the number of universal coefficients agrees with the number of almost positive roots of the corresponding root system \cite[Lemma 12.8]{FZ_clustersIV}, and this in turn agrees with the number of cluster variables \cite[Lemma 11.1]{FZ_clustersIV}.

We now consider any $D\in\Mat_{m\times p}$ such that
\[(\tB^\univ)^\tr \cdot \left(\begin{array}{c}
		D\\
		-I_p
	\end{array}\right)=0.
\]
Note that $D$ necessarily exists since we are assuming that the rows of $\tB$ generate $\ZZ^n$.

Following \S\ref{sec:grading}, the matrix
\[D'=\left(\begin{array}{c}
		D\\
		-I_p
\end{array}\right)\]
defines a grading on $\A^\univ$. In particular, every cluster and frozen variable has a degree in $\ZZ^p$. We now assign these same degrees to the variables $z_v$ and $z_w$, forgetting about the universal coefficients. Note that the ideal $I_\A$ is \emph{not} homogeneous with respect to this grading.

Let $\widetilde \TT$ be the torus $\spec \KK[\bt^{\pm 1}]$. Using the above grading, we obtain a map of tori  $\rho:\widetilde \TT \to \TT$. We now consider the action of $\widetilde\TT$ on 
$[\A]\in\Hilb_\A^H$ under $\rho$. 
By construction, the one-parameter subgroup $\lambda$ of $\widetilde\TT$ dual to the $i$th universal coefficient $t_i$ acts under $\rho$ in the exchange relations for $\A$ exactly as $t_i$ does. More specifically, if $s$ is the canonical coordinate for $\lambda$, after acting by $\lambda$ (via $\rho$) on any exchange relation of $\A$ and rescaling the exchange relation uniformly by a power of $s$, the power of $s$ in each monomial agrees with the power of $t_i$ in the corresponding monomial of the exchange relation of $\A^\univ$.

Any fiber of $\spec \A^\univ$ over $P\in \widetilde\TT\subset \spec \KK[\bt]$ is determined by the specialization of the exchange relations to the point $P$. It follows that the restriction of $\phi$ to $\widetilde \TT$ is torus equivariant. It is a straightforward exercise to show that any morphism of varieties with torus action $X\to Y$ that is equivariant on a dense open subset is equivariant on all of $X$; the claim follows.
\end{proof}

\begin{proof}[Proof of Theorem \ref{thm:hilb}]
Assume first that $\A$ is positively graded.	By Lemma \ref{lemma:morphism} we have seen that there is a natural morphism $\phi:\spec \KK[\bt]\to\Hilb_\A^H$.
	Moreover, it is injective with injective differential by Lemma \ref{lemma:phi_injective} and Lemma \ref{lemma:differential}.

	Before proving the claims of the theorem in general, we continue with the assumption that $\A$ is positively graded and additionally assume that is has full $\ZZ$-rank. Then $\phi$ is torus equivariant by Lemma \ref{lemma:equivariant}.
 Since $\phi$ is equivariant and the torus is acting transitively on $\Aff^p$, $\phi(\Aff^p)$ is contained in the closure of the torus orbit $\TT.[\A]$. 

 Since $\phi$ is injective, we obtain that $\dim \overline{\phi(\Aff^p)} = \dim \spec \KK[\bt]$. As noted in the proof of Lemma \ref{lemma:equivariant}, $\dim \spec \KK[\bt]=p$. On the other hand, the dimension of the torus orbit $\TT.[\A]$ is 
\[
		\dim \TT-\dim \TT_{[\A]}=
(p+q)-\dim H=(p+q)-q=p.
	\]
	Indeed, the dimension of $H$ is equal to the rank of the cokernel of $\tB$ for an extended seed $\tB$ of $\A$; since $\tB$ has full rank this is $m-n=q$. We conclude that $\overline{\phi(\Aff^p)}$ is equal to the closure of the torus orbit $\TT.[\A]$.
	Since $\A$ is positively graded, $\Hilb_{\A}^H$ is projective, so $\overline{\phi(\Aff^p)}$ is a projective (potentially non-normal) toric variety, and $\phi$ is a toric morphism.

	It follows that the image of $\phi$ is an affine $\TT$-invariant chart of the orbit closure of $[\A]$. Since $\phi$ has injective differential at its torus fixed point by Lemma \ref{lemma:differential}, it follows by Lemma \ref{lemma:iso} below that $\phi$ is an isomorphism onto its image.

We may thus identify $\spec \A^\univ\to \Aff^p$ with the restriction of $\U\to \Hilb_\A^H$ to $W$. This latter family is naturally $\TT$-equivariant. Since $W$ is a $\TT$-invariant subvariety of the Hilbert scheme, we obtain a $\TT$-equivariant structure on the family 
\[\spec \A^\univ\to \Aff^p\]
as well. 

Continuing in this special case of full $\ZZ$-rank, we now show that $P=\phi(0)$ is the point of $\Hilb_\A^H$ corresponding to $I_{\K*\PS(\W)}$. Let $I(P)$ be the ideal corresponding to $P$. We notice that $P$ is a torus fixed point of $\Hilb_\A^H$, so $I(P)$ must be a monomial ideal. Again by Theorem \ref{thm:basis}, the monomials in $z_v$ from compatible cluster and frozen variables form a $\KK$-basis for $\KK[\bz]/I(P)$. It follows that $I(P)=I_{\K*\PS(\W)}$ as desired. 
We have thus proven all claims of the theorem  when $\A$ is positively graded of full $\ZZ$-rank.

We now drop the assumption that $\A$ is positively graded of full $\ZZ$-rank. We can certainly add frozen variables to $\A$ to obtain a new cluster algebra $\A'$ of full $\ZZ$-rank. By Lemma \ref{lemm:add_frozens}, we can assume that we have done this in a way so that $\A'$ is also positively graded. We have shown that the statements of Theorem \ref{thm:hilb} thus apply to $\A'$. In particular, the family 
\[\spec (\A')^\univ\to \Aff^p\]
is $\TT'$-equivariant, where $\TT'$ is the diagonal torus on $\Aff^{p+q'}$. Here, $q'-q$ is the number of frozen variables we added. 
Furthermore, the fiber over $0$ in this family is $\spec S_{\K*\PS(\W')}$. Here $\W'$ is the set of frozen variables for $\A'$.

We may obtain $\spec \A^\univ$ by intersecting $\spec (\A')^\univ\subseteq \Aff^{p+q'}$ with the $q'-q$ hyperplanes $z_v=1$ for those $v\in \W'\setminus\W$, that is, setting the additional frozen variables to $1$. The torus $\TT$ embeds as a subtorus of $\TT'$, and since it acts trivially on $z_v$ for $v\in \W'\setminus\W$, we obtain a $\TT$-action on $\spec \A^\univ$. With regards to this action, the family
$\spec \A^\univ\to \Aff^p$ is $\TT$-equivariant by construction. This implies that in the positively graded case, the morphism $\phi$ is $\TT$-equivariant.

Finally, we note that the fiber over $0$ of this family is obtained from the fiber over $0$ of $\spec (\A')^\univ\to \Aff^p$ by setting $z_v=1$ for those
$v\in \W'-\W$. This is clearly $\spec S_{\K*\PS(\W)}$ as desired.

It remains to consider claim \ref{part:h4} in the cases where $\A$ has full rank, but not full $\ZZ$-rank. Similar to the case of full $\ZZ$-rank, we obtain that the dimension of $\TT.[\A]$ is $p$, so again $\overline{\phi(\Aff^p)}$ is the closure of $\TT.\A$. Since by the above $\phi$ is $\TT$-equivariant, we see that it acts on $\Aff^p$ with a dense orbit. We may again appeal to Lemma \ref{lemma:iso} to conclude that $\phi$ is an isomorphism. This shows claim \ref{part:h4}. 
\end{proof}

\begin{lemma}\label{lemma:iso}
Let $W$ be an affine variety. Suppose an algebraic torus $\widetilde \TT$ acts on $\Aff^p$ and $W$, both with dense orbits.
	Let $f:\Aff^p\to W$ be a dominant $\widetilde \TT$-equivariant morphism, with the differential $df$ injective at $0$. Then $f$ is an isomorphism.
\end{lemma}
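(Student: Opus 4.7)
The plan is to exploit that both $\Aff^p$ and $W$ are (possibly non-normal) affine toric varieties for $\widetilde \TT$, and to use the injective differential at the distinguished point $0$---which I will assume is a fixed point, as is the case in the intended applications---to force $W$ to coincide with $\Aff^p$.

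First I would reduce to the case that $\widetilde \TT$ acts faithfully on $\Aff^p$. If $K_0\subseteq\widetilde \TT$ denotes the kernel of this action, then equivariance of $f$ and density of $f(\Aff^p)\subseteq W$ force $K_0$ to act trivially on $W$ as well, so we may replace $\widetilde \TT$ by $\widetilde \TT/K_0$. A diagonalizable torus acting faithfully on $\Aff^p$ with a dense orbit is necessarily isomorphic to $(\KK^*)^p$ and, after a suitable change of coordinates, acts by the standard diagonal action; the character lattice is $\widetilde M=\ZZ^p$ with $z_i$ of weight $e_i$, and $0$ is the unique fixed point. By equivariance $f(0)$ is then a $\widetilde\TT$-fixed point of $W$.

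Next I would verify that $\widetilde \TT$ also acts faithfully on $W$. The weights on $T_0\Aff^p$ are (up to sign) $e_1,\ldots,e_p$, which generate $\widetilde M$. By equivariance, the injective differential $df_0\colon T_0\Aff^p\hookrightarrow T_{f(0)}W$ shows that all these weights occur on $T_{f(0)}W$, so the kernel of $\widetilde \TT$ acting on $T_{f(0)}W$---and a fortiori on $W$---is trivial. Consequently $W$ is an irreducible affine toric variety for the faithful $\widetilde \TT$-action with a dense orbit, so $W=\spec\KK[S]$ for some finitely generated subsemigroup $S\subseteq\widetilde M=\ZZ^p$ generating $\ZZ^p$ as a group, with $f(0)$ corresponding to the maximal semigroup ideal $S\setminus\{0\}$.

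Now I would pin down $S$. Since $f$ is dominant, the comorphism $f^\sharp\colon\KK[S]\to\KK[z_1,\ldots,z_p]$ is injective and $\widetilde M$-graded. For each $s\in S$, $f^\sharp(\chi^s)$ lies in the weight-$s$ piece of $\KK[z_1,\ldots,z_p]$; this piece is nonzero precisely when $s\in\ZZ_{\geq 0}^p$, in which case it equals $\KK\cdot z^s$. Hence $S\subseteq\ZZ_{\geq 0}^p$ and $f^\sharp(\chi^s)=c_s\,z^s$ for some $c_s\in\KK^*$. The induced map of cotangent spaces $\mathfrak{m}_{f(0)}/\mathfrak{m}_{f(0)}^2\to\mathfrak{m}_0/\mathfrak{m}_0^2$ sends $[\chi^s]\mapsto c_s[z^s]$, and $[z^s]$ is nonzero precisely when $s\in\{e_1,\ldots,e_p\}$. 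Surjectivity of this cotangent map---equivalent to injectivity of $df_0$---therefore forces $e_1,\ldots,e_p\in S$; combined with $S\subseteq\ZZ_{\geq 0}^p$ and closure under addition, this gives $S=\ZZ_{\geq 0}^p$. Thus $W\cong\Aff^p$; the comorphism sends the coordinate $\chi^{e_i}$ to $c_{e_i}z_i$ with $c_{e_i}\neq 0$, and so $f^\sharp$ (and hence $f$) is an isomorphism. The main obstacle I anticipate is the careful bookkeeping in the first two paragraphs needed to reduce to a faithful standard $(\KK^*)^p$-action and to identify $W$ as an affine toric variety for the same torus; once the semigroup $S$ is set up, the remaining injective-differential argument is short and essentially forced by gradedness together with one piece of tangent-space data at the fixed point.
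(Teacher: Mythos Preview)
Your proposal is correct and follows essentially the same approach as the paper: identify $W$ as $\spec\KK[S]$ for an affine semigroup $S$, use dominance to embed $S\subseteq\ZZ_{\geq 0}^p$, and then use surjectivity of the cotangent map at $0$ to force $e_1,\ldots,e_p\in S$, whence $S=\ZZ_{\geq 0}^p$. The paper's proof is terser---it simply cites \cite{CLS} for the toric identification and skips your faithfulness reduction---but the key mechanism is identical.
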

\begin{proof}
	See \cite{CLS} for details on toric varieties.
	The variety $W$ may be constructed as $\spec \KK[S]$ for some affine semigroup $S$, and the morphism $f$ is induced by a map of semigroups $g:S\to \ZZ_{\geq 0}^p$. Since $f$ is dominant, $g$ must be injective, so we may identify $S$ with a submonoid of $\ZZ_{\geq 0}^p$.

	The differential being injective at $0$ is equivalent to the induced map of cotangent spaces $\mfm_{f(0)}/\mfm_{f(0)}^2\to \mfm_0/\mfm_0^2$ being surjective, where $\mfm_0$ is the maximal ideal of $0\in \Aff^p$, etc. Notice that $\mfm_0/\mfm_0^2$ has a basis given by the images of the characters corresponding to the standard basis of $\ZZ_{\geq 0}^p$. Since the map of cotangent spaces takes monomials to monomials, its surjectivity implies that $S$ must contain the standard basis of $\ZZ_{\geq 0}^p$. But then $S=\ZZ_{\geq 0}^p$ and the claim follows.
\end{proof}

\begin{rem}
	Instead of considering $\A^{\univ}$, we may consider the algebra
	\[\AOP:=\A^{\univ}\otimes_{\KK[\bt]} \KK[\bt^{\pm 1}]\]
	where we have inverted all the universal coefficients.
	Then $\spec \AOP$ fibers over $(\KK^*)^p$ instead of $\Aff^p$. When $\A$ is positively graded and has full rank,
	$\spec \AOP$ is the restriction of the universal family $\U$ to the torus orbit $\TT.[\A]$. In fact, one is able to prove this much simpler statement without making use of the machinery of \cite{GHKK} (in the form of Theorem \ref{thm:basis}). Indeed, using the full rank of $\A$ and an argument similar to that of Lemma \ref{lemma:equivariant}, the family $\spec \AOP\to (\KK^*)^p$ is equivariant with a transitive action on the base. As such, the map must be flat, and all fibers must be irreducible. It follows by dimension reasons that the fiber over $(1,\ldots,1)$ is $\spec \A$, and we thus obtain obtain a corresponding morphism $(\KK^*)^p\to \TT.[\A]$. The rest of the argument is similar to the proof of Theorem \ref{thm:basis}.

	In the proof of Theorem \ref{thm:hilb}, we used the existence of $\A^\univ$, along with some of its specific properties. However, we suspect that one should be able to show directly that the universal family over the orbit $\TT.[\A]$ satisfies the universal properties required of $\AOP$.
 Going a step further, we believe that one should be able to obtain a similar identification in the infinite cluster type case. Here, the torus acting would be infinite dimensional, since there are infinitely many cluster variables. This is a necessary feature of the infinite cluster type case, since we know from \cite{reading} that there are infinitely many universal coefficients in this case.

 In many infinite cluster type cases, $\A$ is nonetheless generated by a finite number of cluster variables. Instead of considering the cluster embedding, we could consider one coming from a finite number of cluster variables. Let $\TT'$ be the torus of the ambient space.	As noted above, we cannot recover $\AOP$ as the universal family over  $\TT'.[\A]$. It would be interesting to identify the subfamily of $\AOP$ obtained in this way.

 Extending from $\spec \AOP$  to $\spec {\A}^{\univ}$ presents significantly more difficulty in the infinite cluster type case, since Theorem \ref{thm:basis} will no longer be true in general. However, in good situations, $\A^{\univ}$ will still have a theta basis in the sense of \cite{GHKK}. We expect that the fiber over zero of the family $\spec \A^\univ \to \spec \KK[\bt]$ will no longer come directly from the cluster complex, but will instead be related to the scattering diagram for $\A$. 
\end{rem}

\begin{rem}\label{rem:canadd}
	Consider any cluster algebra $\A$ of finite cluster type. As noted in the proof of Theorem \ref{thm:hilb}, it is always possible to add additional frozen variables so that the new cluster algebra has full $\ZZ$-rank. Furthermore, by Lemma \ref{lemm:add_frozens}, it is always possible to do this in a way so that the new cluster algebra $\A'$ admits a positive grading.
\end{rem}

\begin{ex}\label{ex:hilb}
	It is straightforward to check that many classical examples of cluster algebras of finite cluster type are positively graded and of full $\ZZ$-rank:
	\begin{enumerate}
		\item The polynomial ring in $2(n+1)$ variables (type $A_{n}$) \cite[Example 6.1.2]{FWZ_chapter6};
		\item The homogeneous coordinate ring of the Grassmannian $\Gr(2,n+3)$ of $2$-planes in $\KK^{n+3}$ in its Pl\"ucker embedding (type $A_{n}$) \cite[Example 6.3.1]{FWZ_chapter6};
		\item The homogeneous coordinate rings of the Grassmannians $\Gr(3,6)$ (type $D_4$), $\Gr(3,7)$ (type $E_6$), and $\Gr(3,8)$ (type $E_8$) in their Pl\"ucker embeddings \cite{scott} \cite[Theorem 6.7.8]{FWZ_chapter6}.
	\end{enumerate}
A nice example of a cluster algebra of type $D_n$ is the homogeneous coordinate ring of the Schubert divisor of $\Gr(2,n+2)$ in its Pl\"ucker embedding (type $D_{n}$) \cite[Example 6.3.5]{FWZ_chapter6}. However, this example is not of full $\ZZ$-rank. Instead, we may consider
	\begin{enumerate}[resume]
		\item The cluster algebra of type $D_n$ described in \cite[Example 5.4.11]{FWZ_chapter4} with $n+2$ frozen variables.
	\end{enumerate}
	This cluster algebra does satisfy the hypotheses of Theorem \ref{thm:hilb}. The homogeneous coordinate ring of the Schubert divisor may be obtained by specializing two of the frozen variables to $1$.
	We may also consider non-simply laced examples:
	\begin{enumerate}[resume]
		\item The homogeneous coordinate ring of $\Gr(2,n+2)$ in its Pl\"ucker embedding with type $B_{n}$ cluster structure \cite[Proposition 12.11]{FZ_clustersII};
		\item The homogeneous coordinate ring of the Segre embedding $\PP^n\times\PP^n\subset \PP^{n^2+2n}$ with type $C_{n}$ cluster structure \cite[Example 6.3.4]{FWZ_chapter6}.
	\end{enumerate}
Both of these examples are also positively graded and of full $\ZZ$-rank.
\end{ex}

\begin{rem}
\label{rem:deficient}
	Assume that $\A$ is a positively graded cluster algebra of finite cluster type, but for some (or equivalently all) seeds, the extended exchange matrix $\tB$ is rank deficient. Then the conclusion of Theorem \ref{thm:hilb} that $\spec \A^\univ\to\spec \KK[\bt]$ may be identified with the universal family over a partial closure of $\TT.[\A]$ cannot be true. Indeed, since $\tB$ is rank deficient, the dimension of $H$ is strictly larger than $m-n=q$. We thus have 
	\[
		 \dim \TT.[\A]=p+q-\dim H<p.
	\]
	Since $\dim \spec \KK[\bt]=p$, the claim follows.
\end{rem}
We record the following algebraic consequence of Theorem \ref{thm:hilb}. 
\begin{cor}\label{cor:gorenstein}
Let $\A$ be a cluster algebra of finite cluster type. Then $\A$ is Gorenstein.
\end{cor}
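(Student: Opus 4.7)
By Lemma \ref{lemm:add_frozens} I may extend $\A$ to a cluster algebra $\A'\supseteq\A$ with additional frozen variables $\W'\setminus\W$ that is positively graded and of full $\ZZ$-rank. Since $\A\cong\A'/(z_w-1\colon w\in\W'\setminus\W)$, and every intermediate cluster algebra is a domain (so the $z_w-1$ form a regular sequence in $\A'$), and the quotient of a Gorenstein ring by a regular sequence is Gorenstein, it suffices to prove that $\A'$ is Gorenstein. I therefore replace $\A$ by $\A'$ and assume henceforth that $\A$ is positively graded of full $\ZZ$-rank.

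Theorem \ref{thm:hilb} provides a flat $\TT$-equivariant family $\spec\A^\univ\to\Aff^p$ (flatness coming from Theorem \ref{thm:basis}, which produces a free $\KK[\bt]$-basis of extended cluster monomials) whose fiber over $0$ is $\spec(S_\K\otimes\KK[z_w\colon w\in\W])$ by Theorem \ref{thm:hilb}\ref{part:h2}. Since $\K$ is a sphere, $S_\K$ is Gorenstein by Stanley's classification, and Gorensteinness is preserved under polynomial ring extension, so this fiber is Gorenstein. Moreover, the fiber at $(1,\ldots,1)\in\Aff^p$ coincides with $\A$: the extended cluster monomial basis of $\A^\univ$ descends to that of $\A$ upon the specialization $t_i\mapsto 1$. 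By openness of the Gorenstein locus for flat morphisms of finite type (EGA IV), the set $V\subseteq\spec\A^\univ$ of points at which the fiber through that point is Gorenstein is open, $\TT$-invariant, and contains the entire fiber over $0$.

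The final step is to exhibit a one-parameter subgroup $\lambda\colon\mathbb{G}_m\to\TT$ such that $\lim_{t\to 0}\lambda(t)\cdot x$ exists in $\spec\A^\univ$ and lies in the fiber over $0$ for every $x\in\spec\A^\univ$. Such a $\lambda$ is equivalent to a $\ZZ$-grading on $\A^\univ$ having strictly positive weights on every variable, i.e.\ a pair $(D,v)\in\ker(\tB^\univ)^\tr$ with $D\in\ZZ^m_{>0}$ and $v\in\ZZ^p_{>0}$. Taking $D$ to be the positive grading of $\A$ (so $\tB^\tr D=0$), the remaining condition becomes $v\in\ker U_B^\tr\cap\ZZ^p_{>0}$, that is, a strictly positive linear relation among the $\bg$-vectors of the cluster variables of $\A(B^\tr)$. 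Such a relation exists because these $\bg$-vectors form the rays of the $\bg$-fan of $\A(B^\tr)$, which is complete since $\A(B^\tr)$ is of finite cluster type, so they positively span $\RR^n$. Once this $\lambda$ is in hand, openness and $\TT$-invariance of $V$ force every $x_1$ in the fiber over $(1,\ldots,1)$ into $V$, so $\A$ is Gorenstein. The main obstacle is this construction of $\lambda$; the rest is a direct combination of openness of the Gorenstein locus with the Stanley-Reisner calculation for spheres.
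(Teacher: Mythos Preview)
Your proof is correct and follows the same overall strategy as the paper: reduce to the positively graded full-$\ZZ$-rank case, observe that the central fiber $S_{\K*\PS(\W)}$ is Gorenstein because $\K$ is a sphere, and use a torus action to propagate this to the other fibers.

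The two variations from the paper are worth noting. First, the paper tracks the Gorenstein locus of the \emph{total space} $\spec\A^\univ$ and then passes to $\A$ as a complete-intersection quotient, whereas you use the \emph{fiberwise} Gorenstein locus directly; over the regular base $\Aff^p$ these are equivalent. Second, the paper simply invokes Theorem~\ref{thm:hilb}\ref{part:h4}: since $\TT$ acts on $\Aff^p$ with dense orbit, every $\TT$-orbit closure in $\spec\A^\univ$ meets the central fiber, so the closed $\TT$-invariant non-Gorenstein locus is empty. You instead build an explicit one-parameter subgroup by exhibiting a strictly positive linear relation among the $\bg$-vectors of $\A(B^\tr)$, using completeness of the $\bg$-fan. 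This is more hands-on and avoids appealing to part~\ref{part:h4}, but that result is already in hand, so the paper's route is shorter.

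Two small remarks. Your condition ``$D\in\ZZ^m_{>0}$'' understates what is needed: you want every cluster variable (not just the initial ones) to have positive degree under the resulting grading on $\A^\univ$. This does follow once $D$ is taken to be the positive grading from Lemma~\ref{lemm:add_frozens}, since the extra $\bg$-vector entries in the frozen (universal-coefficient) directions are non-negative, but it deserves a word. Also, the pair $(D,v)$ lives in $\ZZ^{m+p}$ and specifies a grading on $\A^\univ$; identifying this with a cocharacter of $\TT=(\KK^*)^{p+q}$ is only literally correct after observing that the stabilizer of $\spec\A^\univ$ in $(\KK^*)^{2p+q}$ coincides with (the image of) $\TT$ under the full-$\ZZ$-rank hypothesis. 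For the argument itself this identification is unnecessary---any $\mathbb{G}_m$-action preserving the family and contracting to the central fiber suffices.
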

\noindent 
Under the convention that both frozen variables and their inverses are adjoined (see Remark \ref{rem:invert}), \cite[Corollary 1.21]{BFZ_clustersIII} shows that any acyclic cluster algebra is a complete intersection, in particular Gorenstein. This includes all cluster algebras of finite cluster type.
Following our convention from Remark \ref{rem:invert} on \emph{not} adjoining inverses of frozen variables, \cite[Corollary 1.21]{BFZ_clustersIII} implies Corollary \ref{cor:gorenstein} only in the special case that $\A$ has no frozen variables.
See also \cite{CM} for a related result for lower bound algebras (the convention of loc.~cit. is again to adjoin inverses of frozen variables), and \cite{faber} for a classification of the possible singularities of $\A$ when $\A$ has no frozen variables.

\begin{proof}[Proof of Corollary \ref{cor:gorenstein}]
We may add frozen variables to obtain a positively graded cluster algebra $\A'$ of full $\ZZ$-rank. If we can show that $\A'$ is Gorenstein, then it follows that $\A$ is also Gorenstein, since $\A$ is a complete intersection in $\A'$ (obtained by setting additional frozen variables equal to $1$). Hence, in the following we may assume that $\A$ is positively graded of full $\ZZ$-rank. 

The ring $S_{\K*\PS(\W)}$ is Gorenstein since $\K$ is a sphere, see \cite[\S II.5]{stanley}. By Theorem \ref{thm:hilb}\ref{part:h2} $\spec S_{\K*\PS(\W)}$ is a complete intersection in $\spec \A^\univ$, so it follows from the definition of Gorenstein that $\A^{\univ}$ is also Gorenstein along $\bt=0$. By Theorem \ref{thm:hilb}\ref{part:h4} every $\TT$-orbit closure of $\spec \A^\univ$ intersects the special fiber over $\bt=0$. Since the non-Gorenstein locus of $\spec \A^\univ$ is closed, $\TT$-invariant, and avoids the fiber over $\bt=0$, it follows that the non-Gorenstein locus of $\spec \A^\univ$ is empty. Thus, $\A^{\univ}$ is Gorenstein.

To conclude, we observe that since $\A$ is a complete intersection in $\A^\univ$ (obtained by setting $t_1=\ldots=t_p=1$), $\A$ inherits the Gorenstein property from $\A^\univ$.
\end{proof}

\begin{ex}[Continuation of Running Example \ref{ex:r6}]\label{ex:r7}
	Let $\A$ be the cluster algebra from Example \ref{ex:r1}. Since $\A$ has full $\ZZ$-rank, we obtain that $\A^\univ$ (cf. Example \ref{ex:r4}) is the coordinate ring of the universal family over an open subset of $\overline{\TT.[\A]}$. 
	Specializing to $\bt=0$, we obtain the Stanley-Reisner ring $S_\K$ for the cluster complex $\K$ of Example \ref{ex:r2}. The torus $\TT$ is acting on the parameters $t_i$ with the following weights:
	\begin{align*}
\deg_\TT(t_1)=(-1,0,1,1,0,0,0,-1)\\
\deg_\TT(t_2)=(0,-1,0,1,1,-1,0,0)\\
\deg_\TT(t_3)=(1,0,-1,0,1,0,-1,0)\\
\deg_\TT(t_4)=(1,1,0,-1,0,-1,0,0)\\
\deg_\TT(t_5)=(0,1,1,0,-1,0,-1,0).
	\end{align*}
	Here, we have ordered the cluster and frozen variables as $x_{25}$ ,$x_{13}$, $x_{24}$, $x_{35}$, $x_{14}$, $s_1$, $s_2$, $s_3$.
\end{ex}

\subsection{Connection to Gr\"obner Theory}\label{sec:grob}
Our Theorem \ref{thm:hilb} is connected to Gr\"obner theory for cluster algebras as we now explain. See \cite{gfan,grob} for an introduction to Gr\"obner theory.
Let $J\subset \KK[z_1,\ldots,z_s]$ be an ideal. The ideal $J$ induces a natural polyhedral decomposition of $\RR^s$ into a finite collection of cones: two elements $u,w\in \RR^s$ lie in the relative interior of the same cone if and only if the initial ideal of $J$ with respect to $u$ equals the initial ideal of $J$ of with respect to $w$. The \emph{Gr\"obner fan} of $J$ consists of the subset of these cones whose relative interiors intersect the positive orthant. When $J$ is homogeneous with respect to a positive grading, then the Gr\"obner fan covers all of $\RR^s$.

Let $J\subset \KK[z_1,\ldots,z_s]$ be an $M$-graded ideal with each graded piece a finite dimensional $\KK$-vector space. We may thus consider the Hilbert scheme $\Hilb_J$ parametrizing $M$-graded ideals with the same multigraded Hilbert function as $J$. In particular, $J$ gives a point $[J]$ of this Hilbert scheme. Let $\TT$ be the diagonal torus acting on $\spec \KK[\bz]= \Aff^s$ and let $N_\TT= \ZZ^s$ be its cocharacter lattice. The action of $\TT$ on $\spec \KK[\bz]$  induces an action on $\Hilb_J$. The setting of Theorem \ref{thm:hilb} is exactly the special case where $J=I_\A$ and $M$ is the grading induced by the stabilizer of $\TT$ on $\spec \A$.

Returning to the  general setting, consider the orbit closure $Y=\overline{\TT.[J]}\subset \Hilb_J$. This is a projective, potentially non-normal toric variety.
Let $\widetilde \TT$ be the image of $\TT$ in $\Aut(Y)$; we denote the character lattice of this torus by $N_{\widetilde \TT}$. The normalization of $Y$ comes with a natural $\widetilde \TT$-action as well and hence corresponds to a fan $\widetilde\Sigma$ in the $\RR$-vector space associated to $N_{\widetilde \TT}$ \cite[Corollary 3.1.8]{CLS}.

The map $\TT\to \widetilde{\TT}$ induces a map
\[\eta: N_{\TT} \to N_{\widetilde \TT}.\]
Let $\Sigma$ be the preimage in $\RR^s$ of $\widetilde{\Sigma}$ under the map of real vector spaces associated to $\eta$.

The following is well-known to experts:

\begin{prop}\label{prop:grob}
	The fan $\Sigma$ is the Gr\"obner fan of $J$.
\end{prop}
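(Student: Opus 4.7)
The strategy will be to use the classical bridge between Gr\"obner degenerations and torus limits on Hilbert schemes, and then to translate via the orbit-cone dictionary for the normal toric variety $\widetilde Y$ normalizing $Y$. Recall that by definition, two weight vectors $w_1,w_2\in\RR^s$ lie in the relative interior of the same cone of the Gr\"obner fan of $J$ iff $\mathrm{in}_{w_1}(J)=\mathrm{in}_{w_2}(J)$, and the lineality space of the Gr\"obner fan consists of weights with respect to which $J$ is homogeneous.

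The first step is to identify Gr\"obner degenerations of $J$ with limits on $\Hilb_J$ along one-parameter subgroups of $\TT$. For integral $w\in N_\TT$, the one-parameter subgroup $\lambda_w:\KK^*\to\TT$ gives a flat family over $\Aff^1$ whose general fibre is $J$ and whose special fibre is $\mathrm{in}_w(J)$; by the universal property of $\Hilb_J$ this yields a morphism $\Aff^1\to\Hilb_J$ sending $0$ to the point $[\mathrm{in}_w(J)]$ and $\KK^*$ into the $\TT$-orbit of $[J]$. In particular, $[\mathrm{in}_w(J)]=\lim_{t\to 0}\lambda_w(t)\cdot[J]$ lies in $Y$. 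Two weights produce the same initial ideal precisely when they yield the same limit point in $Y$. Integral $w$ are dense among rational $w$, and since $J$ has only finitely many distinct initial ideals the correspondence extends to all $w\in\RR^s$.

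Second, I will use that the $\TT$-action on $Y$ factors through $\widetilde\TT$, so $\lambda_w$ and $\lambda_{w'}$ give the same limit on $Y$ precisely when their images in $N_{\widetilde\TT}$ yield the same limit on $\widetilde Y$ (the normalization map is a $\widetilde\TT$-equivariant bijection on orbits and on distinguished points). For a normal toric variety with fan $\widetilde\Sigma$, it is a textbook fact that the limit $\lim_{t\to 0}\lambda_u(t)\cdot x_0$ (with $x_0$ the identity of the dense torus) is the distinguished point of the orbit indexed by the unique cone of $\widetilde\Sigma$ whose relative interior contains $u$. Hence two elements of $N_{\widetilde\TT}$ give the same limit iff they lie in the relative interior of a common cone of $\widetilde\Sigma$.

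Chaining these equivalences gives $\mathrm{in}_{w_1}(J)=\mathrm{in}_{w_2}(J)$ iff $\eta(w_1)$ and $\eta(w_2)$ are in the relative interior of the same cone of $\widetilde\Sigma$, iff $w_1,w_2$ are in the relative interior of the same cone of $\eta^{-1}(\widetilde\Sigma)=\Sigma$. The lineality space $\ker\eta_\RR$ consists of those $w$ for which $\lambda_w$ fixes $[J]\in\Hilb_J$, which is exactly the set of $w$ making $J$ homogeneous; this matches the lineality space of the Gr\"obner fan. Completeness of $\Sigma$ (needed for it to be the Gr\"obner fan) follows from projectivity of $Y$, which forces $\widetilde\Sigma$ to be complete. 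The main point requiring care is the first step, namely the identification of Gr\"obner degenerations with one-parameter subgroup limits on $\Hilb_J$, including the passage from integral to real weights; the remainder is a formal translation through the standard toric dictionary.
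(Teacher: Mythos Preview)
Your proof is correct and follows essentially the same approach as the paper: both identify Gr\"obner degenerations with one-parameter subgroup limits on $\Hilb_J$, then invoke the orbit-cone correspondence for the normalization of the orbit closure. The paper's version is terser, checking only that two integral weights lie in the interior of the same maximal cone of the Gr\"obner fan iff the corresponding limits on $\Hilb_J$ agree, whereas you spell out the passage through $\widetilde\TT$ and the normalization, handle all cones via relative interiors, and explicitly match lineality spaces and completeness; but the underlying argument is the same.
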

\begin{proof}
	Since each graded piece of $J$ is finite dimensional, $J$ is homogeneous with respect to a positive grading and the Gr\"obner fan covers all of $\RR^s$.
	By definition two weight vectors in $\ZZ^s$ lie in the relative interior of the same cone of the Gr\"obner fan of $J$ if and only if the initial ideals of $J$ with respect to these weight vectors is the same ideal. This is equivalent to requiring that the limits of $[J]$ in $\Hilb_J$ of the images under $\eta$ of the corresponding one-parameter subgroups are equal. 

	To conclude the proof, we claim that two one-parameter subgroups of $\widetilde\TT$ have the same limits in the normalization of $Y$ if and only if the  limits in $Y$ coincide. The proposition will then follow since two vectors in $N_{\widetilde \TT}$ lie in the relative interior of the same  cone of $\widetilde\Sigma$ if and only if the one-parameter subgroups in the normalization of $Y$ have the same limit. To show the claim, note that since $\Hilb_J$ has an affine $\TT$-invariant open cover, so does $Y$; this cover of $Y$ is also $\widetilde\TT$-invariant. We may thus reduce to the affine case. But by e.g.~\cite[Theorem 3.A.3(c)]{CLS} the normalization map induces a bijection between the limits of one-parameter subgroups.
\end{proof}

We now return to the special situation of Theorem \ref{thm:hilb}. Let $\deg_\TT$ be the map assigning to any cluster or frozen variable $v\in \V\cup \W$ the degree in $\ZZ^{p+q}$ of $z_v$. We extend this in the obvious way to a map on the extended cluster monomials of $\A$.
\begin{cor}\label{cor:grob}
	Let $\A$ be a cluster algebra of  finite cluster type, with ideal $I_\A$ as in \S \ref{sec:clusterembedding}. 
	\begin{enumerate}
		\item 	The Gr\"obner fan of $I_\A$ contains a maximal cone $C$ corresponding to the monomial ideal $I_{\K*\PS(\W)}$. 
\item The cone $C$ is dual to the cone of $\ZZ^{p+q}$ generated by all
	\[
		\deg_\TT(x\cdot x')-\deg_\TT\alpha_1\in \ZZ^{p+q}
	\]
where $x,x',\alpha_1$ appear in an exchange relation
\[
x\cdot x'=\alpha_1+\alpha_2
\]
such that $\alpha_1,\alpha_2$ are extended cluster monomials, and $\alpha_2$ only depends on the frozen variables.
		\item Assume that $\A$ is positively graded of full rank. Modulo the lineality space, the cone $C$ is a simplicial cone.
Moreover, if $\A$ has full $\ZZ$-rank, $C$ is smooth modulo lineality space.
	\end{enumerate}
\end{cor}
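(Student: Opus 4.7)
The plan is to combine Theorem \ref{thm:hilb}\ref{part:h4} with Proposition \ref{prop:grob}: the Gröbner cone $C$ will be realized as the preimage under $\eta : N_\TT \to N_{\widetilde \TT}$ of the cone $\widetilde C \subset N_{\widetilde \TT, \RR}$ parametrizing the affine toric chart $W \cong \Aff^p$ of $\overline{\TT.[\A]} \subset \Hilb_\A^H$, where $\widetilde \TT = \TT/H$ and $\eta$ is the induced quotient of cocharacter lattices with kernel $N_H$. For parts (1) and (2), Remark \ref{rem:canadd} lets us pass to a cluster algebra $\A'$ that is positively graded of full $\ZZ$-rank, and recover parts (1) and (2) for $\A$ by specializing the added frozens to $1$; we thus assume for parts (1) and (2) that $\A$ itself is positively graded of full $\ZZ$-rank.

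Under this assumption, Theorem \ref{thm:hilb}\ref{part:h4} supplies a $\TT$-equivariant isomorphism $\phi : \Aff^p \xrightarrow{\sim} W$ sending $0$ to the torus-fixed point $[I_{\K*\PS(\W)}]$. Since $W \cong \Aff^p$ is smooth of dimension $p$, $\widetilde C$ is a $p$-dimensional smooth cone in the $p$-dimensional lattice $N_{\widetilde \TT}$, whose $p$ extremal rays correspond to the coordinate functions $t_1, \ldots, t_p$ of $\Aff^p$. Proposition \ref{prop:grob} then identifies $C = \eta_\RR^{-1}(\widetilde C)$ as a maximal cone of the Gröbner fan of $I_\A$ whose associated torus-fixed initial ideal is $I_{\K*\PS(\W)}$, proving (1). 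For (2), the dual cone $\widetilde C^\vee$ is generated in the character lattice of $\widetilde \TT$ by the weights $\deg_{\widetilde \TT}(t_i)$, and these lift under the inclusion of character lattices to the $\TT$-weights $\deg_\TT(t_i) \in \ZZ^{p+q}$; hence $C^\vee$ is generated by $\{\deg_\TT(t_i)\}_{i=1}^p$.

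To identify $\deg_\TT(t_i)$ with the expression in the corollary's statement, I would invoke \cite[Lemmas 12.7--12.8]{FZ_clustersIV} (as in the proof of Lemma \ref{lemma:phi_injective}): each $t_i$ appears in a primitive exchange relation of $\A^\univ$ of the form $x \cdot x' = t_i \alpha_1 + g \alpha_2$ with $\alpha_1, \alpha_2$ extended cluster monomials, $\alpha_2$ a monomial in frozen variables, and $g \in \langle t_j : j \neq i\rangle$. Setting all $t_j = 1$ recovers the exchange relation $xx' = \alpha_1 + \alpha_2$ in $\A$, and $\TT$-homogeneity of the relation in $\A^\univ$ forces $\deg_\TT(t_i) = \deg_\TT(xx') - \deg_\TT(\alpha_1)$, completing (2). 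For (3), under the stated hypothesis that $\A$ is positively graded of full rank, Theorem \ref{thm:hilb}\ref{part:h4} again gives $W \cong \Aff^p$ smooth, so $\widetilde C$ is a smooth $p$-dimensional cone. Thus $C$ modulo its lineality space $\ker(\eta)_\RR = N_{H,\RR}$ is isomorphic as a convex cone to $\widetilde C$, hence simplicial with $p$ extremal rays. In the additional full $\ZZ$-rank case, $N_H$ is saturated in $N_\TT$ by Corollary \ref{cor:fullZ}, so $N_{\widetilde \TT}$ is torsion-free and $C$ modulo lineality coincides with the smooth cone $\widetilde C$ in this quotient lattice, hence is itself smooth.

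The main obstacle is to justify the reduction to the positively graded, full $\ZZ$-rank case for parts (1) and (2): I need to verify that adding frozen variables via Remark \ref{rem:canadd} and specializing them back to $1$ is compatible with the maximal Gröbner cone $C$ and with the degree formula of (2). This should follow from the functorial properties of the $\TT$-action on $\spec \A^\univ \to \Aff^p$ established in Theorem \ref{thm:hilb}(1), but requires careful tracking of how $\eta$ and the weights $\deg_\TT(t_i)$ transform under the specialization.
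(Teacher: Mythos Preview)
Your approach is essentially the same as the paper's: both combine Theorem~\ref{thm:hilb}\ref{part:h4} with Proposition~\ref{prop:grob}, first establish the result for $\A$ positively graded of full rank, identify the generators of $\widetilde C^\vee$ with the $\deg_\TT(t_i)$ via \cite[Lemmas 12.7--12.8]{FZ_clustersIV}, and then reduce the general case to this one by adding frozen variables.

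The gap you flag in your last paragraph is real but easily filled, and the paper does so by a direct weight-vector argument rather than by tracking functoriality of $\eta$. For (1): since $\A'$ is positively graded, the interior of $C'$ contains some $w'\in\ZZ_{\geq 0}^{p+q'}$. The initial term with respect to $w'$ of any element of $I_{\A'}$ lies in $I_{\K*\PS(\W')}$, which involves none of the extra frozen variables; hence projecting $w'$ to $w\in\ZZ_{\geq 0}^{p+q}$ (forgetting the new-frozen coordinates) yields a weight for which $\mathrm{in}_w(I_\A)\subseteq I_{\K*\PS(\W)}$, and equality follows. For (2): let $\sigma$ be the cone generated by the listed degrees. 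One checks directly that these lie in $C^\vee$, so $C\subseteq\sigma^\vee$. Conversely, given $w\in\sigma^\vee\cap\RR^{p+q}$, extend it to $w'\in\RR^{p+q'}$ by setting the new-frozen coordinates to zero; then $w'$ pairs non-negatively with each generator of $(C')^\vee$ (by the description of $(C')^\vee$ already established for $\A'$), so $w'\in C'$, and the projection argument from (1) gives $w\in C$. Thus $\sigma^\vee\subseteq C$, hence $C^\vee=\sigma$.
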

\begin{proof}
Our strategy will be to first prove all the claims of the corollary in the special case that $\A$ is positively graded and of full rank. We will then use this to deduce the first and second claims in the general case, that is, dropping the positively graded and full rank assumptions.

To begin, let us thus assume that $\A$ is positively graded and of full rank.
Let $\widetilde \Sigma$ be the fan describing the normalization of the closure of $\TT.[\A]$ in $\Hilb_\A^H$. By item \ref{part:h4}  of Theorem \ref{thm:hilb},
	there is a maximal cone $\widetilde {C}$ of $\widetilde \Sigma$ corresponding to $I_{\K*\PS(\W)}$. 
	Applying Proposition \ref{prop:grob},
	the preimage under $\eta$ of $\widetilde{C}$ in the Gr\"obner fan $\Sigma$ is the desired cone $C$. This shows the first claim (under the assumption of positive grading and full rank).

	Again by item \ref{part:h4} of Theorem \ref{thm:hilb}, we know that the cone $\widetilde {C}$ is smooth, since it corresponds to a smooth chart of $\overline{\TT.[\A]}$. In particular, the generators of the rays of $\widetilde{C}$ are linearly independent. It follows that the cone $C$, modulo lineality space, is simplicial. This shows the third claim in the full rank case.

	Let us now additionally assume that $\A$ has full $\ZZ$-rank. By Corollary \ref{cor:fullZ}, this implies that $H$ is a torus, or equivalently, that $M$ is torsion free. 
The torus $\widetilde \TT$ may be identified with $\TT/H$, so we have an exact sequence of character groups
\[
	0\to M_{\widetilde \TT} \to \ZZ^{p+q}\to M\to 0.
\]
Here $M_{\widetilde \TT}$ is the character lattice of $\widetilde \TT$, dual to $N_{\widetilde \TT}$, and $\ZZ^{p+q}$ is the character lattice of $\TT$. The map $M_{\widetilde \TT} \to \ZZ^{p+q}$ is dual to the map $\eta:N_{\TT}\to N_{\widetilde \TT}$.
 Since $M$ is torsion free, the above sequence splits, and we obtain a cosection of the map $M_{\widetilde \TT} \to \ZZ^{p+q}$. Dually, this yields 
a section of the map $\eta:N_{\TT}\to N_{\widetilde \TT}$. This implies that $C$ is smooth modulo lineality space, completing the proof of the third claim of the corollary.

Remaining under the assumption of positive grading and full rank, we now show the second claim of the corollary. Consider the cone $\widetilde C^\vee$ dual to $\widetilde C$. The generators of $\widetilde C^\vee$ are the degrees (in $M_{\widetilde \TT}$) of the universal coefficients $t_1,\ldots,t_p$. Since $M_{\widetilde \TT}$ is a sublattice of $\ZZ^{p+q}$, we will view these degrees as elements of the latter lattice. By \cite[Lemma 12.7 and Lemma 12.8]{FZ_clustersIV}, these degrees are exactly of the form 
$\deg_\TT(x\cdot x')-\deg_\TT \alpha_1$, subject to the conditions in the statement of the corollary. Since we obtain $C$ by taking the dual of the image of $\widetilde C^\vee$ in $\RR^{p+q}$, the second claim follows.

Having shown all the claims under the assumptions of a positive grading and full rank, we now return to the general setting of $\A$ being an arbitrary cluster algebra of finite cluster type. By Lemma \ref{lemm:add_frozens}, we can add frozen variables to obtain a positively graded full rank cluster algebra $\A'$. Since $\A'$ is positively graded, the Gr\"obner cone $C'$ for $\A'$ contains in its interior a weight $w'\in\ZZ_{\geq 0}^{p+q'}$. The initial term with respect to $w'$ of each element of $I_{\A'}$ is in $I_{\K*\PS(\W')}$, and does not involve any of the extra frozen variables. Projecting $w'$ to $w\in \ZZ_{\geq 0}^{p+q}$, it follows that the initial terms with respect to $w$ of each element of $I_\A$ is in $I_{\K*\PS(\W)}$. It follows that $I_{\K*\PS(\W)}$ is an initial ideal of $I_\A$, and the first claim of the corollary follows in full generality.

For the second claim of the corollary in the general case, it is straightforward to verify that the elements listed in the claim must lie in $C^\vee$, the dual cone of $C$. We now show that they generate $C^\vee$. To that end, let $\sigma$ be the cone generated by these elements. Let $w\in\RR^{p+q}\cap \sigma^\vee$. Extend $w$ to the unique element $w'\in\RR^{p+q'}$ by giving all coordinates corresponding to a new frozen variable value $0$. Then from the description of the Gr\"obner cone $C'$ for $\A'$, we see that $w'\in C'$. It follows by the discussion of the preceding paragraph that $w\in C$. Hence $\sigma^\vee \subseteq C$, and we conclude that $C^\vee\subseteq \sigma$. The second claim of the corollary now follows.
\end{proof}

\begin{ex}[Continuation of Running Example \ref{ex:r7}]\label{ex:r8}
	For the cluster algebra of Example \ref{ex:r1}, the  Gr\"obner cone $C$ corresponding to $I_{K*\PS(\W)}$ is the cone dual to the degrees of the $t_i$ from Example \ref{ex:r8}. 
One computes that $C$ is the cone whose lineality space is spanned by the rows of
\[\begin{pmatrix}
		-1&1&0&0&1&0&0&1\\
		0&2&-1&2&0&0&1&1\\
      1&1&1&1&1&1&1&1\end{pmatrix}\]
and has positive directions generated by the rows of
\[\begin{pmatrix}
      -1&1&0&0&1&0&0&0\\
      1&-1&1&0&0&0&0&0\\
      0&1&-1&1&0&0&0&0\\
      0&0&1&-1&1&0&0&0\\
      1&0&0&1&-1&0&0&0\end{pmatrix}.\]
We are again ordering variables as $x_{25}$ ,$x_{13}$, $x_{24}$, $x_{35}$, $x_{14}$, $s_1$, $s_2$, $s_3$.
We see immediately that this cone is in fact smooth modulo lineality space, as Corollary \ref{cor:grob} predicts.
\end{ex}

In general, if $\A$ has full rank but not full $\ZZ$-rank, the cone $C$ of Corollary \ref{cor:grob} may not be smooth, as the following example shows.
\begin{ex}[Pullback of $\Gr(2,6)$]
	Consider the cluster algebra $\A$ associated to the seed 
	\begin{align*}	
	\begin{array}{c c c c c c c c c c c c c}
		\multirow{3}{*}{$\tB^\tr=\Bigg($}& 0 & 1 & 0 & -2 & 2 & -2 & 0 & 0 & 0& \multirow{3}{*}{$\Bigg)$}\\
			&-1& 0 & 1 & 0 & 0 & 2 & -2 & 0 & 0\\
			&0 & -1 & 0 & 0 & 0 & 0 & 2 & -2 & 2\\
			\\
			\tx=(&x_{13},&x_{14},&x_{15},&y_{12},&y_{23},&y_{34},&y_{45},&y_{56},&y_{16}&)
		\end{array}.
		\end{align*}
Note that $\A$ has full rank, but not full $\ZZ$-rank.
		It is straightforward to check that $\A$ may be obtained from the coordinate ring of $\Gr(2,6)$ by replacing each frozen Pl\"ucker variable $x_{i(i+1)}$ with $y_{i(i+1)}^2$.

	Although the Gr\"obner cone $C$ for $I_\A$ is simplicial modulo lineality space, we claim it is not smooth. Indeed, this equivalent to seeing that the dual cone $C^\vee$ is not smooth. We may describe primitive generators for the rays of $C^\vee$ as in Corollary \ref{cor:grob}. This gives $9$ rays in $\ZZ^{15}$, which we may encode as the columns of a $15\times 9$ matrix. The first $9$ rows correspond to the cluster variables. Since omitting the frozen variables from $\A$ leads to a cluster algebra of non-full rank, it follows that the determinant of the top $9\times 9$ submatrix vanishes (cf. Remark \ref{rem:deficient}).
	On the other hand, since the variables $y_{ij}$ are always appearing in any exchange relation with a power divisible by two, it follows that any other maximal minor of this matrix is divisible by $2$. Hence $C^\vee$ is not smooth.
\end{ex}

We record another algebraic consequence of our results:
\begin{cor}\label{cor:t2}
Let $\A$ be a skew-symmetric cluster algebra of finite cluster type. Then $T^2(\A)=0$.
\end{cor}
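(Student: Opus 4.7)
The plan is to propagate the vanishing of $T^2$ from the Stanley-Reisner ring of the cluster complex to $\A$ via the family of universal coefficients, by combining upper semi-continuity of $T^2$ in flat families with $\TT$-equivariance. First, by Remark \ref{rem:canadd} I would add frozen variables to reduce to the case where $\A$ is positively graded and of full $\ZZ$-rank, replacing $\A$ by a larger skew-symmetric cluster algebra $\A'$ of finite cluster type from which $\A$ is recovered by setting the added frozens $\bs'$ (say, $r$ of them) to $1$. Let $\W'$ denote the resulting set of frozen variables of $\A'$.

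Next, I would consider $(\A')^{\univ}$ as a flat family $\pi\colon \spec (\A')^{\univ}\to \Aff^p\times \Aff^r$ over the universal coefficients $\bt$ and the added frozens $\bs'$, with flatness coming from Theorem \ref{thm:basis}. The fiber over $(0,0)$ would be $\spec S_{\K*\PS(\W)}$: Theorem \ref{thm:hilb}\ref{part:h2} identifies the fiber over $\bt=0$ with $\spec S_{\K*\PS(\W')}$, and further specializing $\bs'$ to $0$ collapses the join onto $\PS(\W)$. The fiber over $(1,1)$ would then be $\spec \A$. Theorem \ref{thm:unobstructed} applied to $\A'$ gives $T^2(S_\K)=0$, and Lemma \ref{lemma:tensor} (together with the vanishing of $T^i$ for polynomial rings) upgrades this to $T^2(S_{\K*\PS(\W)})=0$. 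Upper semi-continuity of $T^2$ in the flat family $\pi$ then provides a $\TT$-invariant Zariski open $U\subseteq \Aff^p\times \Aff^r$ containing $(0,0)$ on which $T^2$ of fibers vanishes.

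The remaining task is to show $U=\Aff^p\times \Aff^r$, which reduces to showing that every $\TT$-orbit on $\Aff^p\times \Aff^r$ has $(0,0)$ in its closure. Theorem \ref{thm:hilb}\ref{part:h4} (applied to $\A'$) makes $\Aff^p$ into a toric variety for $\widetilde{\TT}$ with $0$ as the unique fixed point, while the positive grading of $\A'$ provides strictly positive weights for all frozen variables, including the $\bs'$. Combining these, one would produce a one-parameter subgroup of $\TT$ acting with strictly positive weights on every coordinate of $\Aff^p\times \Aff^r$, forcing every $\TT$-orbit closure to contain $(0,0)$; the $\TT$-invariant open $U$ then fills $\Aff^p\times \Aff^r$, and taking the fiber over $(1,1)$ yields $T^2(\A)=0$. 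The main technical obstacle I expect is precisely this last step — producing a positivity cocharacter that works simultaneously for $\bt$ (whose $\TT$-weights come from the toric structure of Theorem \ref{thm:hilb}\ref{part:h4}) and for $\bs'$ (whose weights come from the positive grading of $\A'$); this amounts to showing that two natural positivity cones have nonempty intersection.
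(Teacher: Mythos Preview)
Your approach is correct and rests on the same core idea as the paper---degenerate $\spec\A$ to $\spec S_{\K*\PS(\W)}$ and invoke semi-continuity of $T^2$ together with Theorem \ref{thm:unobstructed}---but the paper's implementation is considerably more direct. The paper simply cites Corollary \ref{cor:grob}: since $I_{\K*\PS(\W)}$ is an initial ideal of $I_\A$, the standard one-parameter Gr\"obner degeneration gives a $\KK^*$-equivariant flat family over $\Aff^1$ with special fiber $\spec S_{\K*\PS(\W)}$ and general fiber $\spec\A$. Over $\Aff^1$ with a $\KK^*$-action, any invariant open neighborhood of $0$ is all of $\Aff^1$, so there is no positivity obstacle and no need to enlarge the frozen variables or pass through $(\A')^\univ$.

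Your obstacle is genuinely resolvable, though: the positive grading on $\A'$ gives a cocharacter $\nu$ of $H\subset\TT$ acting trivially on $\Aff^p$ but with strictly positive weights on every $z_v$ (in particular on $\bs'$), while Theorem \ref{thm:hilb}\ref{part:h4} supplies a cocharacter $\mu$ of $\TT$ contracting $\Aff^p$ to $0$; then $\mu+N\nu$ for $N\gg0$ has strictly positive weights on both factors of $\Aff^p\times\Aff^r$. But this combined cocharacter is precisely a strictly positive weight in the interior of the Gr\"obner cone $C'$ of Corollary \ref{cor:grob}, so you are rebuilding the one-parameter degeneration the paper uses in one line. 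The upshot: your argument works, but Corollary \ref{cor:grob} already applies to arbitrary $\A$ of finite cluster type and collapses the whole discussion.
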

\begin{proof}
	By Corollary \ref{cor:grob}, there is a $\KK^*$-equivariant family over $\Aff^1$ with special fiber $\spec(S_{\K*\PS(\W)})$ and all other fibers isomorphic to $\spec(\A)$. The claim now follows from semi-continuity of $T^2$ and Theorem \ref{thm:unobstructed}.
\end{proof}

\begin{rem}\label{rem:BMN}
In \cite{BMN}, Bossinger, Mohammadi, and N\'ajera Ch\'avez show that for the coordinate rings of $\Gr(2,n+3)$ and $\Gr(3,6)$, one may obtain the corresponding cluster algebras with universal coefficients via a construction from Gr\"obner theory. In light of Example \ref{ex:hilb}, our Theorem \ref{thm:hilb} is a generalization of this, as we now explain.

	Given a weighted homogeneous ideal $J$ of a polynomial ring $\KK[\bz]$ and a maximal cone $C$ in the Gr\"obner fan of $J$, the authors of \cite{BMN} construct a flat family over $\Aff^s$ whose generic fiber is $V(J)$ \cite[Theorem 3.14]{BMN}. The  cluster algebras with universal coefficients $\A^\univ$ are then realized for
	$\Gr(2,n+3)$ and $\Gr(3,6)$ by applying this construction to a well-chosen cone of the Gr\"obner fan for the ideal $I_\A$ of the appropriate cluster algebra $\A$ \cite[Theorem 1.3, Theorem 1.4]{BMN}. That this construction agrees with $\A^\univ$ is verified by an explicit combinatorial computation.

	A careful study of the flat family of \cite{BMN} reveals that it may be constructed as follows. With notation as in the beginning of this section,
	the choice of $C$ determines a torus fixed point $P$ of $\Hilb_J$. Consider the $\TT$-orbit closure in $\Hilb_J$ of $[J]$. There is a unique affine torus-invariant chart $W$ of this orbit closure that contains $P$.  Let $d$ be the number of connected components of the stabilizer $\TT_{[J]}$; this is equal to the number of connected components in a fiber of the map of tori $\TT\to \widetilde{\TT}$.

	The flat family of \cite{BMN} may be identified with the pullback of the universal family over $\Hilb_{J}$ to the Cox torsor over a degree-$d$ cover of the normalization of $W$.
	To check this, it suffices to show that the restriction of the family from \cite{BMN} to $(\KK^*)^s\subset \Aff^s$ has the desired form. But on this open set, the family of \cite{BMN} is relatively easy to describe; we leave it to the reader to check the details. 

	In the instances where $\A$ is a positively graded cluster algebra of full $\ZZ$-rank and finite cluster type, we showed in the course of the proof of Theorem \ref{thm:hilb} that the chart $W$ is just a copy of affine space $\Aff^p$. 	Hence, it agrees with the Cox torsor of its normalization.
	Furthermore, by Corollary \ref{cor:fullZ} it follows that $d=1$.
We  thus recover
\cite[Theorem 1.3, Theorem 1.4]{BMN}.

Suppose instead that $\A$ is only of full rank, but not full $\ZZ$-rank. 
In such cases, the construction of \cite{BMN} cannot produce the universal cluster algebra, since their family arises by pulling back along (among other things) a degree $d$ cover for some $d>1$. 
\end{rem}

\section{Universal Coefficients and Deformations}\label{sec:def}
\subsection{The Characteristic Map}
By Theorem \ref{thm:basis}, the family $\spec \A^{\univ}\to \Aff^p$ is flat, and by Theorem \ref{thm:hilb}, the fiber over $0$ is $\spec S_{\K*\PS(\W)}$. Thus, this family is a \emph{deformation} of $Y=\spec S_{\K*\PS(\W)}$.
Our next goal will be to relate $\A^{\univ}$ to the semiuniversal deformation of $Y$. Before doing this, we need to analyze the first order deformations induced by $\A^{\univ}$.

\begin{defn}[Property T1]\label{defn:T1}
	We say a matrix $\tB=(b_{ij})\in\Mat_{m\times n}(\ZZ)$ fulfills property T1 if the following holds:

	For every $1\leq j \leq n$ and every $w$ in the $\ZZ$-span of the columns of $\tB$ satisfying $w_j=0$ and 
	\begin{equation}\label{eqn:t1}
w_i\geq 
\begin{cases}
	1-\max(0,b_{ij})&	i \leq n,\ b_{ij}\neq 0\\
	-\max(0,b_{ij})&	\textrm{else}\\
\end{cases}
\end{equation}
for all $i\neq j$,
it follows that either $w=0$ or $w=-B_j:=(-b_{1j},\ldots,-b_{mj})$.

We say that a cluster algebra $\A$ fulfills property T1 if every extended exchange matrix fulfills property T1.
\end{defn}

\begin{rem}\label{rem:sinksource}
	Suppose that  $w=0$ satisfies \eqref{eqn:t1} for some fixed $j$. It follows that $b_{ij}$  is non-negative for $i\leq n$. Likewise,
	suppose that $w=-B_j$ satisfies \eqref{eqn:t1} for some fixed $j$. It follows that $b_{ij}$  is non-positive for $i\leq n$. In these situations, the 
vertex $j$ of the graph $\Gamma(B)$ is respectively a sink or a source.
\end{rem}
\begin{ex}[Continuation of Running Example \ref{ex:r8}]\label{ex:r9}
	We check the T1 property for the extended exchange matrix $\tB$ from Example \ref{ex:r1}.
Consider first $j=1$.	For $w$ to be in the $\ZZ$-span of the columns of $\tB$ and satisfy $w_1=0$, $w$ must be a multiple of the first column of $\tB$. 
Imposing \eqref{eqn:t1} for $j=1$ imposes the (in)equalities
	\begin{align*}
w_2\geq 1,\ w_3\geq -1, w_4\geq 0, w_5\geq -1.
	\end{align*}
The only way to satisfy this is by taking $w$ as the $-1$ times the first column of $\tB$.

Consider instead $j=2$. For $w$ to be in the $\ZZ$-span of the columns of $\tB$ and satisfy $w_2=0$, $w$ must be a multiple of the second column of $\tB$. 
	Imposing \eqref{eqn:t1} for $j=0$ imposes the (in)equalities
	\begin{align*}
w_1\geq 0,\ w_3\geq -1, w_4\geq 0, w_5\geq 0.
	\end{align*}
The only way to satisfy this is by taking $w=0$.

We thus see that property T1 is satisfied for $\tB$. One may similarly check that the four other extended exchange matrices for $\A$ also satisfy property T1. Hence, the cluster algebra $\A$ satisfies property T1.
\end{ex}

Let $\A$ be a cluster algebra of finite cluster type.
We say $\A$ has an \emph{isolated vertex} if for some (or equivalently every) extended exchange matrix $\tB=(b_{ij})$ of $\A$, there is an index $1\leq k \leq n$ such that $b_{ij}=0$ whenever $i=k$ or $j=k$. 
Note that if $\A$ is positively graded, it cannot have an isolated vertex.

Let $J=I_{\K*\PS(\W)}$. As in the proof of Lemma \ref{lemma:differential}, there is a natural map
$T_0\spec \KK[\bt] \to \Hom(J,\KK[\bz]/J)_0$. Indeed, using the notation in the proof of Lemma \ref{lemma:differential}, the ideal $J$ is just the ideal $I(0)$. Here the degree $0$ part of $\Hom(J,\KK[\bz]/J)$ is taken with respect to the $M$ grading.
Composing this with the surjection $\Hom(J,\KK[\bz]/J)\to T^1(\KK[\bz]/J)$, we obtain the \emph{characteristic map}
\[
T_0\spec \KK[\bt] \to T^1(\K*\PS(\W))^H.
\]
Here $T^1(\K*\PS(\W))^H$ denotes the $H$-invariant elements of $T^1$.

The main proposition of this subsection shows the importance of the T1 property:
\begin{prop}\label{prop:t1p}
Let $\A$ be a  cluster algebra of finite cluster type with no isolated vertices. 
Then the characteristic map $T_0\spec \KK[\bt] \to T^1(\K*\PS(\W))^H$ is an isomorphism if and only if $\A$ satisfies property T1.
\end{prop}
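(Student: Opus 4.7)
The plan is to prove both implications by translating the conditions defining nonzero $H$-invariant graded pieces of $T^1(\K*\PS(\W))$ into the T1 inequalities of Definition \ref{defn:T1}. Since the characteristic map is injective with a $p$-dimensional source (Lemma \ref{lemma:differential}), it suffices to show that property T1 is equivalent to $\dim T^1(\K*\PS(\W))^H = p$.

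First I would reduce as follows. By Lemma \ref{lemma:tensor}, $T^1(\K*\PS(\W))\cong T^1(\K)\otimes_\KK \KK[z_w : w\in\W]$, so every nonzero $\ZZ^{\V\cup\W}$-graded piece is one-dimensional. By Theorem \ref{thm:t1} its degree has the form $\ba-\bb$, with $\bb$ supported on some exchange pair $\{x_k, x_k'\}$ from a seed $(\bx,\tB)$, and $\ba\in\ZZ_{\geq 0}^{\V\cup\W}$ satisfying the support and $\Omega$-isolation conditions. Working in the seed's coordinates $\ZZ^m$ and using the exchange relation $x_k x_k'=z^{(B_k)^+}+z^{(B_k)^-}$, I would compute $\deg_H(\bb)=\deg_H((B_k)^+)$. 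Proposition \ref{prop:M} then identifies the $H$-invariance condition $\deg_H(\ba)=\deg_H(\bb)$ with $w:=\ba-(B_k)^+$ lying in the $\ZZ$-span of the columns of $\tB$; moreover, the support and $\Omega$-isolation conditions on $\ba$ translate exactly into $w_k=0$ together with the inequalities \eqref{eqn:t1}.

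Assuming T1, the only admissible $w$'s are $0$ and $-B_k$, yielding $\ba\in\{(B_k)^+,(B_k)^-\}$; the no-isolated-vertices assumption together with a direct check (cf.\ Remark \ref{rem:sinksource}) further restricts these to the cases where $k$ is a mutable sink or source of $\tB$, so the nonzero $H$-invariant graded pieces attached to the pair $\{x_k,x_k'\}$ from $(\bx,\tB)$ correspond to primitive exchange monomials $z^{(B_k)^\pm}$. By Lemma \ref{lemma:differential}, the image of the characteristic map consists of $p$ distinct graded pieces, one per $t_i$, whose degrees come from the primitive exchange monomials $\alpha_1$ in the relations $x\cdot x'=t_i\alpha_1+(\text{non-primitive term})$ of $\A^\univ$. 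Using the construction $\tB^\univ=\binom{\tB}{U_B}$ of Definition \ref{def:univ frozen}, the mutation formulas for $\bg$-vectors of Lemma \ref{lem:G_and_E}, and \cite[Lemmas 12.7 and 12.8]{FZ_clustersIV}, one identifies the set of primitive exchange monomials of $\A^\univ$ in bijection with the sink/source monomials $z^{(B_k)^\pm}$ over all seeds, giving $\dim T^1(\K*\PS(\W))^H=p$ and hence surjectivity.

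Conversely, if T1 fails at some $(\bx,\tB)$ and index $k$, there exists $w$ in the $\ZZ$-span of the columns of $\tB$ with $w_k=0$ satisfying \eqref{eqn:t1} but not in $\{0,-B_k\}$; the inequalities guarantee that $\ba:=(B_k)^++w$ lies in $\ZZ_{\geq 0}^m$ with the required support and $\Omega$-isolation conditions, producing a nonzero $H$-invariant graded piece of $T^1(\K*\PS(\W))$ not covered by the characteristic map. The main obstacle is the matching step in the first direction: precisely identifying the primitive exchange monomials of $\A^\univ$ with the sink/source monomials $z^{(B_k)^\pm}$ across all seeds (so that distinct admissible $\ba$'s biject with $\{t_1,\ldots,t_p\}$), which requires careful bookkeeping of the mutation behavior of the coefficient rows of $\tB^\univ$.
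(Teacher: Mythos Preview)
Your outline is essentially the paper's: the translation of $H$-invariance for the degrees $\ba-\bb$ into the conditions of Definition \ref{defn:T1} is exactly Lemma \ref{lemma:deg0}, and your argument for the failure direction matches the paper's. One correction: for injectivity of the characteristic map and the description of its image you should cite Lemma \ref{lemma:injective} rather than Lemma \ref{lemma:differential}; the latter concerns the Hilbert-scheme differential $d\phi$ and assumes a positive grading, which is not hypothesized here.

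The place where your argument is incomplete is precisely the one you flag, and the paper fills it differently than you propose. Rather than tracking the mutation of the rows $U_B$ of $\tB^\univ$ via Lemma \ref{lem:G_and_E}, the paper uses a structural reduction to the bipartite belt. Once T1 forces $w\in\{0,-B_k\}$, Remark \ref{rem:sinksource} makes $k$ a sink or a source of $\Gamma(B)$; Lemma \ref{lemma:bipartite} then shows one can mutate, avoiding $k$ and its neighbors, to a seed in which \emph{every} vertex of $\Gamma(B)$ is a sink or a source. On this bipartite seed, \cite[Lemmas 12.7 and 12.8]{FZ_clustersIV} attach to the pair $\{x_k,x_k'\}$ exactly one universal coefficient $t_i$ (two if $k$ is simultaneously a sink and a source), and this count matches the number of $H$-invariant $T^1$-pieces with $b=\{x_k,x_k'\}$ that T1 permits. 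This replaces your proposed bookkeeping of coefficient rows under mutation by a reduction to the setting where the universal coefficients are already parametrized by exchange pairs on the bipartite belt.
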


To prove the proposition, we will first consider several lemmas:

\begin{lemma}\label{lemma:injective}
Let $\A$ be a cluster algebra of finite cluster type with no isolated vertex.
\begin{enumerate}
	\item The characteristic map $T_0\spec \KK[\bt] \to T^1(\K*\PS(\W))^H$ is injective.

	\item The image of the characteristic map consists precisely of the graded pieces $T^1(\K*\PS(\W))_\bc$ for $\bc=\ba-\bb$, where $z^{\bb}=z_{x}\cdot z_{x'}$ for exchangeable cluster variables $x,x'$, and the exchange relation for $x,x'$ has the form
\[
x\cdot x'=\alpha_1+\alpha_2
\]
for extended cluster monomials $\alpha_1,\alpha_2$ such that $\ba$ is the $\ZZ^{\V\cup \W}$ degree of $\alpha_1$, and $\alpha_2$ only involves frozen variables.
\end{enumerate}
\end{lemma}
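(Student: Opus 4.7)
The plan is to use Lemma \ref{lemma:differential} as the main input and to exploit the explicit formulas there for $\psi_i := d\phi(\eta_i)$. The characteristic map factors as
\[
T_0 \spec \KK[\bt] \xrightarrow{d\phi} \Hom(J, S)_0 \twoheadrightarrow T^1(\K*\PS(\W))^H,
\]
where $J = I_{\K*\PS(\W)}$, $S = S_{\K*\PS(\W)}$, and the surjection quotients by the image of degree-zero derivations. Since $d\phi$ is already injective by Lemma \ref{lemma:differential}, the content of part (1) is controlling this quotient, and part (2) reduces to identifying the $\TT$-degrees of the $\psi_i$.

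For part (1), I would recall from the proof of Lemma \ref{lemma:differential} that $\psi_i$ sends $z_{x^{(i)}} z_{(x^{(i)})'} \mapsto z_{\alpha_1^{(i)}}$ and, for $j \neq i$, sends $z_{x^{(j)}} z_{(x^{(j)})'}$ to a scalar multiple of $z_{\alpha_2^{(j)}}$, where $x^{(i)}(x^{(i)})' = t_i \alpha_1^{(i)} + g^{(i)} \alpha_2^{(i)}$ is the primitive exchange relation of \cite[Lemma 12.7, 12.8]{FZ_clustersIV} associated to $t_i$. A degree-zero derivation $D : \KK[\bz] \to S$ satisfies $D(z_v z_w) = \bar z_v D(z_w) + \bar z_w D(z_v) \in \bar z_v S + \bar z_w S$. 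If $\sum a_i \psi_i$ were in the image of such derivations, then evaluating on $z_{x^{(i_0)}} z_{(x^{(i_0)})'}$ would produce $a_{i_0} z_{\alpha_1^{(i_0)}}$ plus a multiple of $z_{\alpha_2^{(i_0)}}$, and this element would need to lie in $\bar z_{x^{(i_0)}} S + \bar z_{(x^{(i_0)})'} S$. But $\alpha_1^{(i_0)}$ is an extended cluster monomial for the cluster obtained by mutating at $x^{(i_0)}$, so its support contains neither $x^{(i_0)}$ nor $(x^{(i_0)})'$; likewise $\alpha_2^{(i_0)}$ involves only frozen variables. Hence $z_{\alpha_1^{(i_0)}}$ and $z_{\alpha_2^{(i_0)}}$ are distinct standard monomials of $S$ outside $\bar z_{x^{(i_0)}} S + \bar z_{(x^{(i_0)})'} S$, forcing $a_{i_0} = 0$ for every $i_0$.

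For part (2), I would invoke the $\TT$-equivariance of $\phi$ from Theorem \ref{thm:hilb} to conclude that each $\psi_i$ is $\widetilde\TT$-homogeneous, and from the formulas above its $\ZZ^{\V\cup\W}$-degree is $\bc_i = \ba_i - \bb_i$ with $\bb_i = \deg_\TT(z_{x^{(i)}} z_{(x^{(i)})'})$ and $\ba_i = \deg_\TT(z_{\alpha_1^{(i)}})$. By \cite[Lemma 12.7, 12.8]{FZ_clustersIV}, the assignment $i \mapsto (x^{(i)}, (x^{(i)})', \alpha_1^{(i)}, \alpha_2^{(i)})$ is a bijection between universal coefficients and primitive exchange relations of the shape described in the statement, so the $\bc_i$ are pairwise distinct and exhaust the list in the statement. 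Combining Lemma \ref{lemma:tensor} with Theorem \ref{thm:t1}, applied to the cluster containing $x^{(i)}$ with $\Omega$ taken to be the cluster-variable support of $\alpha_1^{(i)}$ (in which $x^{(i)}$ is $\Omega$-isolated, since after mutation its neighbours in $\Gamma(B)$ are recorded by $\alpha_1^{(i)}$), each graded piece $T^1(\K*\PS(\W))_{\bc_i}$ is one-dimensional. Part (1) then forces the class of $\psi_i$ to span this piece, and so the image of the characteristic map is precisely $\bigoplus_i T^1(\K*\PS(\W))_{\bc_i}$.

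I expect the main technical obstacle to be verifying the ``$\Omega$-isolated'' condition of Theorem \ref{thm:t1} for the degrees arising from the primitive exchange relations, which requires unwinding the relationship between the exchange relation at $x^{(i)}$ and the combinatorial structure of the mutated cluster. The ``no isolated vertex'' hypothesis is precisely what rules out the degenerate exchange relation $x x' = 1 + 1$, which would otherwise produce $\alpha_1^{(i)} = \alpha_2^{(i)} = 1$ and break the monomial-distinctness step in part (1).
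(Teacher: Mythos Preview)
Your proposal is correct, and part (2) is essentially the paper's argument: both of you invoke the $\TT$-equivariance of the family (Theorem~\ref{thm:hilb}) to see that each $\psi_i$ is $\ZZ^{\V\cup\W}$-homogeneous, compute its degree from the primitive exchange relation, and then use Theorem~\ref{thm:t1} (together with Lemma~\ref{lemma:tensor}) to identify the relevant one-dimensional graded pieces of $T^1(\K*\PS(\W))$. The $\Omega$-isolated check you flag as the main obstacle is immediate once you note that in a \emph{primitive} exchange relation $\alpha_2$ involves only frozen variables, so every mutable neighbour of $k$ in $\Gamma(B)$ must appear in $\alpha_1$; no ``after mutation'' is needed.

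For part (1) you take a genuinely different route. The paper argues via the fine $\ZZ^{\V\cup\W}$-grading: it extracts the homogeneous component $(\psi_i)_{\ba_i-\bb_i}$, shows via Theorem~\ref{thm:t1} and Remark~\ref{rem:t1el} that this component is nonzero in $T^1$, and concludes linear independence because the degrees $\ba_i-\bb_i$ are pairwise distinct. Your argument instead avoids Theorem~\ref{thm:t1} entirely for this step: you use the Leibniz rule to observe that any derivation sends $z_xz_{x'}$ into $\bar z_xS+\bar z_{x'}S$, while $z_{\alpha_1}$ and $z_{\alpha_2}$ are distinct standard monomials supported away from $x,x'$. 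This is a clean and slightly more elementary way to get injectivity. Two small points of care: Lemma~\ref{lemma:differential} as stated assumes $\A$ is positively graded (it uses that hypothesis precisely to get $\alpha_1\neq\alpha_2$), so you should either quote its \emph{proof} under the ``no isolated vertex'' hypothesis as the paper does, or simply drop the citation since your derivation argument already yields injectivity in $T^1$ directly; similarly, write ``$\TT$-equivariance of the family $\spec\A^\univ\to\Aff^p$'' rather than ``of $\phi$'', since $\phi$ is only defined in the positively graded case.
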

\begin{proof}
	Even though $\A$ is not necessarily positively graded, we will use the  discussion and notation from \eqref{eqn:primitive} and the proof of Lemma \ref{lemma:differential} (specializing to $P=0$). 
	Using Theorem \ref{thm:hilb}, we now know that $I(0)=J=I_{\K*\PS(\W)}$. In particular, $\Hom(J,\KK[\bz]/J)$ has a ``fine'' grading by $\ZZ^{\V\cup \W}$. 
	In the proof of Lemma \ref{lemma:differential}, we saw that the image of
	the tangent vector $\eta_i:\spec \KK[t_i]/t_i^2 \to \spec \KK[\bt]$ is a particular element $\psi_i\in \Hom(J,\KK[\bz]/J)_0$ which sends $z_x\cdot z_{x'}$ to $\alpha_1$ for a certain exchangeable pair $\{x,x'\}$.
	 	 Let $\ba$ be the degree of the monomial in $\bz$ corresponding to $\alpha_1$, and $\bb$ the degree of $z_x\cdot z_{x'}$. Then the graded piece of $\psi_i$ of degree $\ba-\bb$ is the homomorphism sending $z_x\cdot z_{x'}$ to $\alpha_1$, and all other exchange pair generators of $J$ to $0$. Here we are using that $\alpha_1$ and $\alpha_2$  from \eqref{eqn:primitive} are not equal; this follows from the assumption that $\A$ has no isolated vertex.

	We claim that the image of $(\psi_i)_{\ba-\bb}$ in $T^1(\K*\PS(\W))$ is non-zero. Indeed, consider any seed $(\tx,\tB)$ with $x=x_k$ in the cluster $\tx$ and $x'=x_k'$ obtained via mutation of this seed at $k$. Let $\Omega$ be the set of cluster variables appearing in $\alpha_1$.  It follows from the exchange relation \eqref{eqn:primitive} that $k$ is $\Omega$-isolated. By Lemma \ref{lemma:tensor} and the fact that $T^1(\KK[z_w\mid w\in \W])=0$,
$T^1(\K*\PS(\W))\cong T^1(\K)\otimes \KK[z_w\ |\ w\in\W]$. By Theorem \ref{thm:t1} $T^1(\K*\PS(\W))$ thus has a non-zero element of degree $\ba-\bb$, and by Remark \ref{rem:t1el}, that element is exactly $(\psi_i)_{\ba-\bb}$.

We have thus seen that each of the image of each of the $\psi_i$ in $T^1(\K*\PS(\W))^H$ has a non-zero homogeneous component, and the degrees of these components are pairwise disjoint. It follows that the map $T_0\spec \KK[t] \to T^1(\K)^H$ is injective.

Moreover, since the family $\spec \A^\univ\to \spec \KK[\bt]$ is $\TT$-equivariant, each $\psi_i$ is necessarily homogeneous. It follows that $\psi_i=(\psi_i)_{\ba-\bb}$.
The description of the degrees of the image now follows from  \eqref{eqn:primitive} and the above discussion.
\end{proof}

Consider some degree $\bc=\ba-\bb$ for which $T^1(\K*\PS(\W))\neq 0$. The degree $\bc$ is constructed exactly as in Theorem \ref{thm:t1}, except that $\ba$ may also have support involving the frozen variables $\W$. We wish to understand what it means for this degree to be $H$-invariant. This is the same thing as requiring that it be of degree zero with respect to the $M$-grading. We write $\bc\equiv 0$ in the $M$-grading when this is the case.

\begin{lemma}\label{lemma:deg0}
	The degrees $\bc=\ba-\bb$ arising as in Theorem \ref{thm:t1} from the seed $(\tx,\tB)$ with $T^1(\K*\PS(\W))_\bc\neq 0$, $\bc\equiv 0$ in the $M$-grading, and $b=\{x_k,x_k'\}$ are in bijection with those $w\in\ZZ^m$ such that $w$ is in the $\ZZ$-span of the columns  of $\tB$, $w_k=0$, and $w$ satisfies \eqref{eqn:t1} for $j=k$ and all $i\neq j$.  The positive part $\ba$ is given by $\ba_{x_i}=w_i+\max(0,b_{ik})$.
\end{lemma}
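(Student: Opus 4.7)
The plan is to make the correspondence explicit via the assignment $\ba_{x_i} = w_i + \max(0, b_{ik})$ for $1 \leq i \leq m$, and to check that the three conditions defining the left-hand side (arising from Theorem \ref{thm:t1} with $b = \{x_k, x_k'\}$, $T^1 \neq 0$, and $H$-invariance) translate exactly to the three conditions on $w$. To parameterize the left-hand side, I would combine Lemma \ref{lemma:tensor} with Theorem \ref{thm:t1}: a degree $\bc = \ba - \bb$ with $T^1(\K * \PS(\W))_{\bc} \neq 0$ and $b = \{x_k, x_k'\}$ arises exactly from a subset $\Omega \subseteq \bx$ for which $x_k$ is $\Omega$-isolated, together with a non-negative vector $\ba \in \ZZ_{\geq 0}^{\V \cup \W}$ whose support on $\bx$ lies in $\Omega$ (with arbitrary non-negative support on $\W$). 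In particular $\ba_{x_k} = \ba_{x_k'} = 0$, so identifying $\bx \cup \W$ with $\{1, \dots, m\}$, we may regard the restriction of $\ba$ to these indices as a non-negative vector in $\ZZ^m$ vanishing at $k$.

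Next, I would define $w_i := \ba_{x_i} - \max(0, b_{ik})$ and check the combinatorial conditions on $w$. Automatically $w_k = 0$. Non-negativity of $\ba$ gives $w_i \geq -\max(0, b_{ik})$, while the condition that $x_k$ be $\Omega$-isolated says that every mutable neighbour $i$ of $k$ in $\Gamma(B)$---equivalently, every $i \leq n$ with $b_{ik} \neq 0$---lies in $\Omega$, i.e.\ $\ba_{x_i} \geq 1$, i.e.\ $w_i \geq 1 - \max(0, b_{ik})$. These are precisely the inequalities \eqref{eqn:t1}.

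The heart of the proof is translating the $H$-invariance condition $\bc \equiv 0$ in $M$. By Proposition \ref{prop:M}, the grading map $\phi \colon \ZZ^m \to M$ sending $e_i$ to $\deg_H(x_i)$ has kernel equal to the $\ZZ$-span of the columns of $\tB$. Homogeneity of the exchange relation at $x_k$ forces
\[ \deg_H(x_k) + \deg_H(x_k') = \sum_i \max(0, b_{ik})\, \deg_H(x_i) = \phi\Bigl(\sum_i \max(0, b_{ik})\, e_i\Bigr). \]
Therefore $\deg_H(\ba) = \deg_H(\bb)$ holds if and only if $\phi(w) = 0$, i.e.\ $w$ lies in the $\ZZ$-span of the columns of $\tB$.

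Finally, the inverse assignment $\ba_{x_i} := w_i + \max(0, b_{ik})$, together with $\Omega := \{x_i : i \leq n,\ \ba_{x_i} \geq 1\}$, reverses the construction, yielding the claimed bijection. The only delicate point is the third step, where the exchange relation must be invoked to compute $\deg_H(x_k')$ (an a priori unknown non-initial degree) and recover the column-span condition from $H$-invariance; the rest is bookkeeping.
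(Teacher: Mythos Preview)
Your proposal is correct and follows essentially the same approach as the paper's proof: both identify $M$ with the cokernel of $\tB$ via Proposition~\ref{prop:M}, use the exchange relation to compute $\deg_H(z_{x_k}z_{x_k'})$ as $\sum_i \max(0,b_{ik})\deg_H(x_i)$, and then translate the $\Omega$-isolation condition on $\ba$ into the inequalities~\eqref{eqn:t1} on $w_i = \ba_{x_i} - \max(0,b_{ik})$. The only cosmetic difference is that the paper states these steps slightly more tersely and does not spell out the inverse map explicitly.
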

	\begin{proof}

	Let $(\tx,\tB)$ be a seed giving rise to the degree $\bc$ as in Theorem \ref{thm:t1}, that is, $\bb$ has support $\{x_k,x_k'\}$ for some cluster variable $x_k$ of $\bx$, $x_k$ is $\Omega$-isolated for some subset $\Omega$ of the elements of $\bx$, and the support of $\ba$ is the union of $\Omega$ with a subset of $\W$. 
	Recall from \S\ref{sec:clusterembedding} that we can identify $M$ with the cokernel of $\tB$. 
	The $M$-degree of $z_{x_k}\cdot z_{x_k'}$ is the same as the degree of 
	\[
		\prod_{i=1}^m z_{x_i}^{\max(0,b_{ik})}.
	\]
On the other hand, a monomial
\[
	\prod_{i=1}^m z_{x_i}^{d_i}
\]
having $M$-degree zero is equivalent to requiring that $(d_1,\ldots,d_m)^\tr$ is in the $\ZZ$-span of the columns of $\tB$.
Putting this together, we conclude that $\bc=\ba-\bb$ has degree zero if and only if the element $w\in \ZZ^m$ with
\[
	w_i=\ba_{x_i}-\max(0,b_{ik})
\]
is in the $\ZZ$-linear column span of $\tB$.

Let $\Omega'$ consist of those $x_i$ such that $i\leq n$ and $b_{ik}\neq 0$. The condition that $x_k$ be $\Omega$-isolated is exactly the condition that $\Omega'\subseteq \Omega$. This translates to the condition that $\ba_{x_i}\geq 1$ for those $i$ such that $i\leq n$ and $b_{ik}\neq 0$.
Combining this with the above discussion, we arrive at the statement of the lemma.\end{proof}

\begin{lemma}\label{lemma:bipartite}
Let $\A$ be a cluster algebra of finite cluster type with extended exchange matrix $\tB$. Fix a vertex $j$ that is a sink or a source of $\Gamma(B)$. Then there exists a sequence of mutations avoiding $j$ and its neighbors leading to an exchange matrix $B'$ such that every vertex of $\Gamma(B')$ is a sink or a source.
\end{lemma}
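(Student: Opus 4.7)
Plan: The key structural observation is that a mutation at a sink or a source of the current orientation does not alter the underlying graph of $\Gamma(B)$. Indeed, the formula \eqref{eq:matrix mut} creates a new arrow between $i$ and $\ell$ only when there is a directed path $i\to k\to \ell$ through the mutated vertex $k$, which cannot happen at a sink or a source. Since the Cartan counterpart of $B$ has finite type, the underlying graph of $\Gamma(B)$ is a forest of Dynkin trees, and by Lemma \ref{lemma:joins} we may restrict attention to the connected component $T$ containing $j$. Throughout the argument we will only perform mutations at sinks or sources, so we remain within the set of orientations of the fixed tree $T$.

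A tree is bipartite, and its $2$-coloring determines two bipartite orientations, in which all arrows go from one color class to the other. Because $j$ is already a sink or a source in $B$, exactly one of these two bipartite orientations, call it $B^\star$, agrees with $B$ on every edge incident to $j$; in $B^\star$ each vertex is a sink or source, $j$ plays the same role as in $B$, and the arrows between $j$ and its neighbors are already correctly oriented. The goal thus reduces to transforming $B$ into $B^\star$ via a sequence of mutations at sinks or sources, all of them at vertices outside $\{j\}\cup N(j)$, where $N(j)$ denotes the set of neighbors of $j$.

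The strategy is induction on $|T|$. The cases $|T|\le 2$ are immediate. For larger $T$, one selects a leaf $\ell$ of $T$ lying outside $\{j\}\cup N(j)$; the small-diameter exceptions, where every leaf of $T$ lies in $\{j\}\cup N(j)$, fall into finitely many explicit Dynkin configurations handled by direct inspection. Let $p$ be the parent of $\ell$. Apply the inductive hypothesis to the subtree $T\setminus\{\ell\}$, with the same distinguished vertex $j$, to reach the bipartite orientation on $T\setminus\{\ell\}$, then perform a single mutation at $\ell$ if necessary to orient the edge $p$--$\ell$ as prescribed by $B^\star$.

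The main obstacle in this induction is that a mutation at $p$ required by the algorithm on $T\setminus\{\ell\}$ needs $p$ to be a sink or source in the \emph{full} tree $T$, whereas the inductive hypothesis only provides this after ignoring the edge $p$--$\ell$. The resolution is to interleave corrective mutations at $\ell$ with the inductive procedure: before each mutation at $p$ demanded by the induction, one first performs at most one mutation at $\ell$ to align the edge $p$--$\ell$ with $p$'s current role, making $p$ a genuine sink or source in $T$; this is always permissible because a leaf is automatically a sink or source. Conceptually, the same conclusion follows from the BGP reflection-functor viewpoint: any two orientations of a Dynkin tree are linked by a sequence of mutations at sinks and sources, and a careful traversal of $T$ rooted at $j$ produces such a sequence never meeting $\{j\}\cup N(j)$.
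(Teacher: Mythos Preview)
There is a genuine gap in your argument: the claim that ``the Cartan counterpart of $B$ has finite type, [so] the underlying graph of $\Gamma(B)$ is a forest of Dynkin trees'' is false under the hypotheses of the lemma. The assumption is only that $\A$ is of finite cluster type, i.e.\ that \emph{some} exchange matrix in the mutation class has Dynkin Cartan counterpart, not that the specific $B$ in the statement does. Concretely, in type $A_4$ with initial orientation $1\to 2\to 3\to 4$, mutating at $3$ produces the quiver with arrows $1\to 2$, $2\to 4$, $4\to 3$, $3\to 2$; here vertex $1$ is a source, yet $\Gamma(B)$ contains the $3$-cycle on $\{2,3,4\}$ and is certainly not a tree. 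Your entire strategy---only performing sink/source mutations to move among orientations of a fixed tree, the induction on $|T|$ via leaf removal, and the BGP reflection-functor remark---collapses once the underlying graph can contain cycles.

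The paper's proof avoids this trap by not assuming acyclicity of $\Gamma(B)$. It uses the sink/source hypothesis on $j$ together with \cite[Proposition 9.7]{FZ_clustersII} (finite type has no non-oriented full cycles) only to show that removing $j$ separates the graph into components each meeting $N(j)$ in a single vertex $k$. Within each component---which may well contain oriented cycles---the paper then appeals to the existence of bipartite seeds \cite[Proposition 11.1(1)]{FZ_clustersIV} and to \cite[Theorem 6.2]{denom} to reach such a seed by mutations avoiding $k$. These external results replace the explicit sink/source reflection argument you attempt, precisely because that argument is unavailable when the graph is not a tree.
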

\begin{proof}
It is enough to treat the case where $\A$ has no frozen variables and is connected. We may assume moreover that $j$ is a source, the sink case being analogous. Let $c$ be the number of neighbors of $j$.

We claim that if we remove $j$ from $\Gamma(B)$ we obtain a graph with $c$ connected components. 
Assume to the contrary that there are fewer connected components. Then there would be at least two neighbors of $j$, say $i$ and $k$, that belong to the same component. Consider a minimal subgraph in this component containing both $i$ and $k$. By adding back the vertex $j$ (and the corresponding edges), we obtain a non-oriented cycle as a full subgraph of $\Gamma(B)$.
However, this contradicts $\A$ being of finite cluster type by \cite[Proposition 9.7]{FZ_clustersII}.
Hence, $\Gamma(B)$ has exactly $c$ connected components after removing $j$.

To prove the lemma, it remains to check that while working on a component of $\Gamma(B)\setminus \{j\}$, we can perform mutations avoiding the unique neighbor $k$ of $j$ contained in that component in order to make this neighbor $k$ a sink.
This follows at once from \cite[Proposition 11.1 (1)]{FZ_clustersIV} and \cite[Theorem 6.2]{denom}.
\end{proof}

\begin{proof}[Proof of Proposition \ref{prop:t1p}]
	First, suppose that there exists a seed $(\tx,\tB)$, an index $1\leq j\leq n$, and $w\in \ZZ^n$ in the $\ZZ$-column span of $\tB$ satisfying $w_j=0$ and \eqref{eqn:t1} for all $i\neq j$ but such that $w\neq 0$ and $w\neq -B_j$. By Lemma \ref{lemma:deg0}, there is an $H$-invariant element of $T^1(\K*\PS(\W))$ which is the image of the homomorphism sending $z_{x_j}\cdot z_{x_j'}$ to 
	\[
		\prod_{i=1}^m z_{x_i}^{w_i+\max(0,b_{ij})}.
	\]
	However, since $w\neq 0$ and $w\neq -B_j$, this monomial is not one of the monomials appearing in the exchange relations for $\{x_k,x_k'\}$, and as such, this element of $T^1(\K*\PS(\W))$ cannot be in the image of the map in question.

	Conversely, let us suppose that for every seed $(\tx,\tB)$, $\tB$ satisfies property T1.
	By Lemma \ref{lemma:injective}, the map $T_0\spec \KK[\bt] \to T^1(\K*\PS(\W))^H$ being an isomorphism is equivalent to showing that $\dim T^1(\K*\PS(\W))^H \leq p$.
We will do this by showing that to every element in a basis of  $T^1(\K*\PS(\W))^H$, we may associate a distinct universal coefficient $t_i$.

To that end, consider any degree $\bc$ with $T^1(\K*\PS(\W))_\bc\neq 0$ and $\bc\equiv 0$ with respect to the $M$ grading. Suppose that this $T^1$ element comes from the seed $(\tx,\tB)$, with $b=\{x_k,x_k'\}$. This degree $\bc$ corresponds to $w\in \ZZ^n$ as in the statement of Lemma \ref{lemma:deg0}, with $\ba_{x_i}=w_i+\max(0,b_{ik})$. Since we are assuming that the T1 property is satisfied, 
either $w=0$ or $w=-B_j$. 

 By Remark \ref{rem:sinksource}, the vertex $k$ of $\Gamma(B)$ is a sink or a source.
By Lemma \ref{lemma:bipartite}, we may find a sequence of mutations in vertices distinct from $k$ and its neighbors that allow us to replace the seed $(\tx,\tB)$ with a new seed $(\tx',\tB')$ where every vertex of $\Gamma(B')$ is a sink or a source. We may thus assume from the start that without loss of generality,
$\Gamma(B)$ has every vertex a sink or a source. In the language of \cite{FZ_clustersIV}, this seed lies on the \emph{bipartite belt} of the cluster complex.

Thus, to the degree $\bc$, we may associate a universal coefficient coming from the pair $\{x_k,x_k'\}$ as described in \cite[Lemma 12.7 and Lemma 12.8]{FZ_clustersIV}. There is a single such coefficient, unless $k$ is both a sink and a source in $\Gamma(B)$, in which case there are two. Similarly, by the above discussion, the T1 property guarantees that for any pair $\{x_k,x_k'\}$, there is only one $H$-invariant element of $T^1$ of with $b=\{x_k,x_k'\}$, unless $k$ is both a sink and source of $\Gamma(B)$, in which case there are two. This proves the proposition.
\end{proof}

\begin{rem}\label{rem:add}
	Let $\A$ be a full rank cluster algebra, and suppose that one of its extended exchange matrices $\tB$ satisfies the T1 property. If we add additional frozen variables to $\A$, then the new extended exchange matrix $\tB'$ corresponding to $\tB$ will still satisfy the T1 property. Indeed, we can view $\tB'$ as having the same first $m$ rows as $\tB$, with $m'-m$ additional rows. Fix $1\leq j \leq n$. Consider any element $w$ of $\ZZ^{m'}$ in the $\ZZ$-linear column span of $\tB'$ satisfying \eqref{eqn:t1} for the matrix $\tB'$ to $\ZZ^m$. Projecting to $\ZZ^m$, we see that the first $m$ coordinates of $w$ are either all $0$, or all $-b_{ij}$.  Since $\tB$ has full rank, there is a unique way to obtain  the projection of $w$ as a linear combination of the columns of $\tB$; $w$ itself must be that same linear combination of the columns of $\tB'$. It follows that either $w=0$ or $w_i=-b_{ij}$ for all $i$.
Hence, $\tB'$ fulfills property T1.
\end{rem}

\begin{prop}\label{prop:addt1}
Let $\A$ be a cluster algebra of finite cluster type. Then we can always add additional frozen variables to $\A$ so that the new cluster algebra $\A'$ is positively graded and satisfies property T1.
\end{prop}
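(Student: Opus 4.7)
The plan is to proceed in two stages: first, add finitely many frozen variables so that property T1 holds in every seed of the enlarged cluster algebra; second, apply Lemma~\ref{lemm:add_frozens} to obtain a positive grading while preserving T1 via Remark~\ref{rem:add}.

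For the first stage, for each seed $v$ of $\A$ and each mutable index $k\in\{1,\ldots,n\}$ we add two new frozen variables whose rows in the extended exchange matrix at seed $v$ are, respectively, $+e_k$ and $-e_k$, where $e_k$ denotes the standard basis row vector in $\ZZ^n$. Since mutation of frozen rows is involutive, the initial rows of these frozen variables are well-defined by inverse-mutation of $\pm e_k$ along any mutation path from $v$ back to the initial seed. As $\A$ has finite cluster type, only finitely many frozen variables are added in total; call the resulting cluster algebra $\A_1$. The presence of $\pm e_k$ rows in the initial seed also guarantees that $\A_1$ has full rank.

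In any seed $v$ of $\A_1$, the extended exchange matrix $\tB_{1,v}$ contains the rows $+e_k$ and $-e_k$ for every $k$. For any $w=\sum_k c_k C_k$ in its column span satisfying $w_j=0$ and the T1 inequalities, the $+e_k$ row gives $c_k\geq -\delta_{k,j}$ and the $-e_k$ row gives $c_k\leq 0$, forcing $c_k=0$ for $k\neq j$ and $c_j\in\{-1,0\}$. Hence $w$ is either $0$ or $-B_j$ (the $j$-th column of $\tB_{1,v}$), establishing T1 for every seed of $\A_1$.

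For the second stage, apply Lemma~\ref{lemm:add_frozens} to $\A_1$ to obtain a cluster algebra $\A'$ admitting a positive grading. Since $\A_1$ has full rank and every extended exchange matrix of $\A_1$ satisfies T1, Remark~\ref{rem:add} applied seed-by-seed yields that every extended exchange matrix of $\A'$ also satisfies T1. The main obstacle is the inverse-mutation construction in the first stage: one must verify that the collection of added frozen variables yields a well-defined cluster algebra in which the $\pm e_k$ rows appear in every seed as claimed, which follows from the involutivity of mutation together with the fact that the extended exchange matrix at each seed is uniquely determined.
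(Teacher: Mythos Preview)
Your proof is correct and takes a genuinely different route from the paper's. The paper first arranges for $\A$ to be positively graded and of full rank, then for each seed where T1 fails it adds a single carefully chosen frozen row to eliminate one offending vector $w$ at a time, using positive grading to guarantee that only finitely many such $w$ exist per seed; Remark~\ref{rem:add} then propagates T1 as more rows are added, and a final application of Lemma~\ref{lemm:add_frozens} restores positive grading. Your approach is more direct: by inserting the rows $\pm e_k$ at every seed, you read off the coefficients $c_k$ of $w$ in the column basis immediately from the T1 inequalities at those frozen rows, forcing $c_k=0$ for $k\neq j$ and $c_j\in\{-1,0\}$ in one stroke. This sidesteps both the iterative elimination and the need to establish positive grading before addressing T1, at the cost of adding many more frozen variables ($2n$ per seed rather than one per violation).

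One point deserves a sharper justification. Your claim that the $\pm e_k$ rows are present in \emph{every} seed of $\A_1$, not merely at the representative vertices you chose, relies on the fact that a labeled cluster determines its labeled seed: two vertices of $\RTT^n$ carrying the same seed of $\A$ then carry the same seed of $\A_1$, so the representatives exhaust the seeds of $\A_1$. This is standard for cluster algebras of geometric type, but it is doing real work here, and your phrase ``the extended exchange matrix at each seed is uniquely determined'' should be read as invoking it. Note also that the initial rows obtained by inverse mutation genuinely depend on which representative vertex you pick for each seed; what is independent of that choice is only the conclusion that the resulting $\A_1$ satisfies T1.
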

\begin{proof}
	By adding frozen variables, we can without loss of generality assume that $\A$ has full rank and is positively graded, see Lemma \ref{lemm:add_frozens}.

	Let $\tB$ be an extended exchange matrix of $\A$ such that property T1 is not satisfied. Let $j$ be an index for which there exists an element $w$ in the $\ZZ$-linear column span of $\tB$ satisfying \eqref{eqn:t1} but such that $w\neq 0,-B_j$. We note that since $\A$ is positively graded, only finitely many such $w$ exist.

	We may write $w$ uniquely as a $\ZZ$-linear combination $w=\sum_i \lambda_i \cdot B_i$ of the columns of $\tB$, and by assumption, we must have $\lambda_k\neq 0$ for some $k$ with $k\neq j$. We add a new coefficient $x_{m+1}$ by adding a row to the matrix $\tB$ and setting 
	\[
		b_{(m+1)i}=\begin{cases}
			1 & i=k, \lambda_k<0\\
			-1 & i=k, \lambda_k>0\\
			0 & i\neq k
		\end{cases}.
	\]
	Taking $w'=\sum_i \lambda_i \cdot B_i'$ where $B_i'$ are the columns of this new matrix $\tB'$, we see that $w'$ does not satisfy \eqref{eqn:t1}.
It follows that there is one less obstruction to $\tB'$ satisfying the T1 property as there is for $\tB$. Thus, we can continue adding coefficients until we arrive at an exchange matrix $\tB'$ that does satisfy the T1 property. 

We may now do the same thing at all the other (finitely many) extended exchange matrices of $\A$. By Remark \ref{rem:add}, once we have added enough frozen variables so that one exchange matrix satisfies T1, adding further coefficients doesn't destroy this property. Finally, we may add more frozen variables if necessary so that $\A'$ again becomes positively graded (Lemma \ref{lemm:add_frozens}). Again, by Remark \ref{rem:add}, this will not destroy the T1 property.
\end{proof}

\begin{ex}
It is straightforward to show that any positively graded cluster algebra of type $A_1$ or $A_2$ will always satisfy the T1 property for every extended exchange matrix. This is no longer the case for type $A_3$, as the following example demonstrates.

Consider the matrix 
\[
\tB=	\left(\begin{array}{c c c}
		0 & -1 & 0\\
		1&0&-1\\
		0&1&0\\
		-1&0&1\\
		1&0&-2\\
		0&0&1
	\end{array}\right).
\]
The corresponding quiver is as follows (frozen vertices are in gray boxes):
\[
\xymatrix{
	\color{lightgray}{\fbox{4}}\ar[drr] & & \color{lightgray}{\fbox{6}}\ar[d]\\
1\ar[u] & 2\ar[l] & 3\ar[l] \ar@<-0.5ex>[ld] \ar@<0.5ex>[ld]\\
& \color{lightgray}{\fbox{5}}\ar[lu] & 
}.
\]
Taking $j=1$, the element $w=(0,0,0,0,-1,1)^\tr$ is in the column span of $\tB$ and satisfies \eqref{eqn:t1}, showing that the T1 property is not fulfilled for this matrix.
Furthermore, one may check that it is possible to endow the cluster algebra $\A(\tB)$ with a positive grading.
\end{ex}

\begin{rem}\label{rem:ex}
	Instead of looking at exchange matrices directly, one may verify property T1 by looking at all exchange monomials $x_k\cdot x_k'$ for a cluster algebra $\A$, as we now describe. Consider a homogeneous element of $T^1(\K*\PS(\W))^H$. By Theorem \ref{thm:t1}, its negative part corresponds to some exchange monomial $x_k\cdot x_k'$. 
	Its positive part corresponds to an extended cluster monomial $x^\alpha$. Again by Theorem \ref{thm:t1}, each cluster variable appearing in $x^\alpha$ must be compatible with both $x_k$ and $x_k'$. Furthermore, considering the exchange relation for $x_k,x_k'$, any cluster (but not frozen) variable appearing on the right hand side of \eqref{eq:exchange_rel} must be a factor of $x^\alpha$: this corresponds to $k$  being $\Omega$-isolated.
Here we are using the fact that for every exchange pair, there is a unique exchange relation \cite[Theorem 1.11]{FZ_clustersII}.
Finally, the $M$-degrees of $x^\alpha$ and $x_k\cdot x_k'$ must agree in order for this element to be $H$-invariant.

In light of Lemma \ref{lemma:deg0}, verifying the T1 property for every extended exchange matrix of $\A$ is equivalent to showing that for every exchange monomial $x_k\cdot x_k'$, the only extended  cluster monomials $x^\alpha$ satisfying the conditions of the previous paragraph are exactly one of the two monomials on the right hand side of the exchange relation \eqref{eq:exchange_rel}. See Example \ref{ex:t1} for a demonstration of this criterion.
\end{rem}

\begin{ex}\label{ex:t1}
	One may show that for each of the cluster algebras (1)--(6) listed in Example \ref{ex:hilb}, the T1 property holds. 
Furthermore, property T1 also holds for the $D_n$ case of the Schubert divisor of $\Gr(2,n+2)$.

	For the individual cases $\Gr(3,6)$, $\Gr(3,7)$, and $\Gr(3,8)$ we checked this using a computer calculation. 
	This involved generating all seeds via iterated mutation, then checking the T1 property for each extended exchange matrix. This amounts to checking that the lattice points in various polyhedra have a specific form and is straightforward to carry out with software such as \cite{polyhedra}.

	For the other families of examples, one may use the approach of Remark \ref{rem:ex}.
	We show how this works in the $D_n$ case of the Schubert divisor of $\Gr(2,n+2)$ \cite[Example 6.3.5]{FWZ_chapter6}. The other cases are similar.

Following the notation of \cite[\S12.4]{FZ_clustersII}, the cluster and frozen variables for the Schubert divisor are 
\begin{align*}
	x_{ab}=x_{\overline a \overline b} \qquad 1\leq a, b \leq 2n,\ a\neq b;\\
	x_{a \overline a}=x_{\overline a a} \qquad 1\leq a \leq n;\\
	\overline x_{a \overline a}=\overline x_{\overline a a}\qquad 1\leq a \leq n
\end{align*}
where $\overline a$ is the remainder of $a+n$ modulo $2n$. The frozen variables are exactly the $x_{ab}$ where $|a-b|\equiv 1$ modulo $2n$.
Taking $e_0,\ldots,e_n$ as the standard basis for $\ZZ^{n+1}$, we may define a grading via
\begin{align*}
	\deg_H x_{ab}=\deg x_{a\overline b}=\deg_H x _{\overline a b}=\deg_H x_{\overline a \overline b}=e_a+e_b \qquad &1\leq a, b \leq n,\ a\neq b;\\
	\deg_H x_{a \overline a}=e_0+e_a \qquad &1\leq a \leq n;\\
	\deg_H \overline x_{a \overline a}=-e_0+e_a \qquad &1\leq a \leq n.
\end{align*}
There are five types of exchange relations, which we consider in turn. Each of these is homogeneous with respect to the above grading.
We will view the numbers $1,\ldots,2n$ as the labels for the vertices of a $2n$-gon, arranged in counter-clockwise order. We will always work modulo $2n$.
\begin{enumerate}
	\item\label{case:one} For $a,b,c,d,\overline a$ in counterclockwise order,
		\[
			x_{ac}x_{bd}=x_{ab}x_{cd}+x_{ad}x_{bc}.
		\]
		Consider any corresponding $T^1$ element.
		Since any $x_{ad}$ is not frozen, any perturbation of $x_{ac}x_{bd}$ must involve it. The product of the remaining variables in the perturbation must have degree $e_b+e_c$. The monomials of this degree are $x_{bc}$, $x_{b\overline c}$,  $x_{b\overline b}\overline x_{c\overline c}$, and 
		$\overline x_{b\overline b}\overline x_{c\overline c}$. The latter two monomials are incompatible, and $x_{b\overline c}$ is incompatible with $x_{ad}$. Thus, the only possible perturbation is $x_{ad}x_{bc}$ as desired.

	\item For $a,b,c,\overline a$ in counterclockwise order,
		\[
			x_{ac}x_{a\overline b}=x_{ab}x_{a\overline c}+x_{a\overline a }\overline x_{a\overline a}x_{bc}.
		\]
		Consider any corresponding $T^1$ element.
		The perturbation must involve $x_{a\overline a }\overline x_{a\overline a}$; the product of the remaining variables in the perturbation must have degree $e_b+e_c$. The monomials of this degree were listed in case \ref{case:one} above. As noted, the latter two are incompatible, and $x_{b\overline c}$ is incompatible with $x_{a\overline a}$. Thus, the only possible perturbation is $x_{a\overline a }\overline x_{a\overline a}x_{bc}$ as desired.
	\item For $a,b,\overline a$ in counterclockwise order,
		\[
			x_{a\overline a}\overline x_{b\overline b}=x_{ab}+x_{a\overline b}.
		\]
		Consider any corresponding $T^1$ element.
		The perturbation must be $x_{ab}$,  $x_{a\overline b}$, or $\overline x_{a\overline a} x_{b\overline b}$, but the latter is incompatible.

	\item \label{case:four} For $a,b,c,\overline a$ in counterclockwise order,

		\[x_{a\overline a} x_{b\overline c}=x_{ab}x_{c\overline c}+x_{a\overline c}x_{b\overline b}
		\]
		Consider any corresponding $T^1$ element.
		The perturbation must involve both $x_{c\overline c}$ and $x_{b\overline b}$. The product of the remaining variables in the perturbation
		must have degree $e_a-e_0$. The only possibility is $\overline x_{a\overline a}$. But this is incompatible with $x_{c\overline c}$, so there can be no perturbation at all.
	\item For $a,b,c,\overline a$ in counterclockwise order,

		\[\overline x_{a\overline a} x_{b\overline c}=x_{ab}\overline x_{c\overline c}+x_{a\overline c}\overline x_{b\overline b}
		\]
		This is similar to case \ref{case:four}.
\end{enumerate}
We have thus verified for each exchange relation that any corresponding element of $T^1(\K*\PS(\W))^H$ must have the form described in Remark \ref{rem:ex}. It follows that the T1 property holds for this example.
\end{ex}

\subsection{Derivations}
To construct a \emph{canonical} $\TT$-equivariant family with prescribed characteristic map, we will need control over the embedded deformations induced by derivations.
We again consider a cluster algebra $\A$ of finite cluster type. Let $H$ be the quasitorus fixing $\spec \A$ in its cluster embedding, with character group $M$. We will use other notation as in \S\ref{sec:clusterembedding}. In particular, we consider the cluster complex $\K$ of $\A$.

Consider the Stanley-Reisner ideal $J=I_{\K*\PS(\W)}$.
For a cluster variable $v\in \V$ and a monomial $z^\alpha\in \KK[\bz]$, we say that the derivation
\[
\partial(v,\alpha):=z^\alpha\cdot \frac{\partial}{\partial z_v}\in \Der_{\KK}(\KK[\bz],\KK[\bz])
\]
is $J$-non-trivial if its image in $\Hom_{\KK[\bz]}(J,S_{\K*\PS(\W)})$ is non-zero.
More concretely, $\partial(v,\alpha)$
is $J$-non-trivial if
there exists a cluster variable $w$ such that 
\begin{equation}\label{eqn:T0}
	z_v\cdot z_w\in J\qquad\textrm{and}\qquad z_w\cdot z^\alpha\notin J.
\end{equation}

Let $\SG\subset \ZZ^{\V\cup \W}$ be the semigroup generated by the degrees of the non-trivial pieces of the image of the characteristic map \[T_0\spec \KK[\bt]\to T^1(\K*\PS(\W))^H.\]
Here we are taking degrees with respect to the $\TT$-action, and denote degree by $\deg_\TT$.
In an earlier version of this paper, we conjectured the following:
\begin{conj}[{\cite[cf. Conjecture 6.2.9]{oldversion}}]\label{conj:T0}
Let $\A$ be a cluster algebra of finite cluster type. Consider any cluster variable $v$ and monomial $z^\alpha$ such that
$\partial(v,z^\alpha)$ is $J$-non-trivial. Then
\begin{equation}
\deg_\TT z^\alpha-\deg_\TT z_v \notin \SG.
\end{equation}
In other words, the non-zero homogeneous pieces of the image of the map \[\Der_{\KK}(\KK[\bz],\KK[\bz]) \to \Hom_{\KK[\bz]}(J,S_{\K*\PS(\W)})\] all have degree outside of $\SG$.
\end{conj}
\noindent The first author and So prove conjecture \ref{conj:T0} in \cite{karolyn}. We will make use of this result in the following section.

\subsection{Exchange Minimal Deformations}
We continue with the notation of the previous section. We wish to identify a canonical deformation of \[Y=\spec S_{\K*\PS(\W)}\subseteq \Aff^{p+q}\] whose characteristic map has the same image as
the image of the characteristic map for the family $\spec \A^\univ\to \Aff^p$. 

To that end, we will introduce the notion of \emph{exchange minimality}. 
Recall that cluster variables $v$ and $w$ are exchangeable if there are two clusters for $\A$ linked by a mutation that interchanges $v$ and $w$.
It is possible to see which cluster variables $v$ and $w$ are exchangeable by only inspecting the cluster complex $\K$. Indeed,
$v$ and $w$ are exchangeable if and only if $\{v,w\}\notin \K$ and there exists a face $f\in \K$ with $f\cup \{v\}$ and $f\cup \{w\}$ being maximal faces of $\K$.
Let $\mfm$ be the homogeneous maximal ideal of $\KK[\bt]$.
Fix a homogeneous subspace $V$ of $T^1(\K*\PS(\W))^H$.
\begin{defn}[Exchange Minimality]\label{defn:exmin}
	Let $\pi:\cY\to \spec \KK[\bt]/\mfm^{k+1}$ be a $\TT$-equivariant $H$-invariant embedded deformation of $Y$. Let $\widetilde J\subseteq \KK[z_v]\otimes \KK[\bt]/\mfm^{k+1}$ be the ideal of $\cY$ in $\Aff^{p+q}\times \spec \KK[\bt]/\mfm^{k+1}$.
	\begin{itemize}
\item Suppose $k=1$. We say the deformation $\pi$ is $1$-exchange minimal (with respect to $V$) if  the characteristic map of $\pi$ is injective with image $V$.
\item Suppose $k>1$. We say the deformation $\pi$ is $k$-exchange minimal (with respect to $V$) if the reduction of $\pi$ modulo $\mfm^k$ is $(k-1)$-exchange minimal, and for every exchangeable pair $v,w$, $z_v\cdot z_w$ has a lift to $\widetilde J/\mfm^{k+1}$ with the fewest number of non-zero terms among all possible lifts in all $(k-1)$-exchange-minimal families.
\item We say that a $\TT$-equivariant $H$-invariant embedded deformation $\pi:\cY\to\Aff^p$ of $Y$ is exchange minimal (with respect to $V$) if for every $k>0$, its reduction modulo $\mfm^{k+1}$ is $k$-exchange minimal.
\end{itemize}
\end{defn}

We now state our main result of this section:
\begin{thm}\label{thm:versal}
Let $\A$ be a cluster algebra of finite cluster type with no isolated vertex. Let $V\subseteq T^1(\K*\PS(\W))^H$ be the image of the characteristic map of
the family $\spec \A^\univ \to \Aff^p$.
\begin{enumerate}
	\item\label{part:one} There is a unique (up to canonical embedded isomorphism) exchange-minimal deformation $\pi:\cY\to \Aff^p$ of $Y$. There is a canonical $\TT$-equivariant isomorphism  $\phi:\Aff^p\to \Aff^p$ under which the family $\spec \A^\univ \to \Aff^p$ is canonically identified with $\pi$.
	\item If additionally $\A$ satisfies property T1, the above family $\pi$ is semiuniversal. In particular, the family $\spec \A^\univ \to \Aff^p$ may be identified with a canonical $H$-invariant semiuniversal deformation of $Y$. 
	\end{enumerate}
\end{thm}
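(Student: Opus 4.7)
The plan is to first show that the family $\spec \A^\univ \to \Aff^p$ is itself an exchange-minimal $\TT$-equivariant $H$-invariant deformation of $Y = \spec S_{\K*\PS(\W)}$, then to show uniqueness by an inductive argument using property T0*, and finally to deduce semiuniversality in the T1 case from the resulting identification of the characteristic map.

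For existence in Part \ref{part:one}, I would first observe that $\spec \A^\univ \to \Aff^p$ is a $\TT$-equivariant $H$-invariant deformation of $Y$: flatness comes from Theorem \ref{thm:basis}, $\TT$-equivariance and the identification of the fiber over $0$ with $Y$ come from Theorem \ref{thm:hilb}. By Lemma \ref{lemma:injective}, its characteristic map is injective with image precisely $V$, so the family is $1$-exchange-minimal. The structural description \eqref{eqn:primitive} of universal exchange relations, namely $x \cdot x' = t_i \alpha_1 + g \alpha_2$ with $g$ a monomial in the $t_j$, shows that each lift of an exchangeable generator $z_v z_w$ has exactly two terms on the right-hand side, which realizes the minimum possible number of terms in any nontrivial embedded lift.

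For uniqueness, I proceed by induction on $k$. Suppose $\pi$ and $\pi'$ are two $\TT$-equivariant $H$-invariant $k$-exchange-minimal embedded deformations with the same characteristic map, and assume inductively that their reductions modulo $\mfm^k$ agree after a canonical $\TT$-equivariant change of coordinates. Any two $k$-exchange-minimal extensions of the common $(k{-}1)$-truncation differ, generator by generator, by an element of the homogeneous part of $\Hom_R(J, S_{\K*\PS(\W)})^H$ whose $\TT$-degree lies in $\SG$. The minimality constraint forces this difference to be supported on generators that are \emph{not} of the form $z_v z_w$ with $v,w$ exchangeable; but by the discussion preceding property T0*, any such class realizable by a $J$-non-trivial derivation $\partial(v,z^\alpha)$ with $\deg_\TT z^\alpha - \deg_\TT z_v \in \SG$ must, by property T0*, be exchangeable. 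This forces the difference to be expressible by a $\TT$-equivariant change of coordinates, producing the inductive isomorphism. The canonical identification $\phi$ then follows from Lemma \ref{lemma:differential} together with injectivity of the characteristic map: $\phi$ is determined at the origin by matching the images of the tangent vectors $\partial/\partial t_i$ in $T^1(\K*\PS(\W))^H$, and $\TT$-equivariance propagates this to a global isomorphism.

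For Part 2, under property T1, Proposition \ref{prop:t1p} upgrades the injection of Lemma \ref{lemma:injective} to an isomorphism $T_0 \Aff^p \xrightarrow{\sim} T^1(\K*\PS(\W))^H$. Since $\Aff^p$ is smooth of the correct dimension and the family $\spec \A^\univ \to \Aff^p$ is flat and globally defined, any Artinian $H$-invariant deformation $\pi': \cY' \to \spec B$ can be pulled back from $\spec \A^\univ$ by constructing the classifying map order by order: surjectivity of the characteristic map provides the first-order lift, and the existence of the global family $\spec \A^\univ$ over the smooth base $\Aff^p$ kills any obstructions at each stage. The unique determination of the differential follows from injectivity. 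Combined with Part \ref{part:one}, this gives the desired canonical $H$-invariant semiuniversal identification.

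The main obstacle is the inductive step of uniqueness: carefully verifying that the $\TT$-degree constraint \eqref{eqn:insg} captures \emph{exactly} the differences that can arise between two $k$-exchange-minimal lifts, and that property T0* then suffices to absorb all such differences by a $\TT$-equivariant change of embedded coordinates without disturbing the minimality at lower orders. This requires a careful bookkeeping argument tracking which homogeneous pieces of $\Hom_R(J,S_{\K*\PS(\W)})^H$ can contribute at stage $k$, and matching them against the pieces neutralized by T0*.
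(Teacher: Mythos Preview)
Your overall plan—first prove $\spec\A^\univ$ is exchange-minimal, then prove abstractly that any two exchange-minimal families agree—differs from the paper's, which does both simultaneously by showing that any $k$-exchange-minimal $\cY$ must coincide with $\spec\A^\univ$ modulo $\mfm^{k+1}$. The difference matters, because your uniqueness step has a genuine gap.

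The assertion ``minimality forces this difference to be supported on generators that are not of the form $z_v z_w$ with $v,w$ exchangeable'' is not justified: two $k$-exchange-minimal lifts could have the \emph{same} minimal number of terms on an exchangeable generator $z_v z_w$ but involve \emph{different} monomials, so their difference $\psi$ need not vanish there. The paper rules this out by always comparing against $\A^\univ$ itself. It uses that the lift of $z_v z_w$ in $\A^\univ$ consists of exactly the two monomials $t_i z_{\alpha_1}$ and $g z_{\alpha_2}$, and checks that any nonzero homogeneous $\psi_i$ with degree in $\SG$—whether lying in $T^1(\K*\PS(\W))^H$ or in $D_\SG$—would introduce a monomial distinct from both (for $T^1$ pieces this uses Theorem~\ref{thm:t1} together with $\ell\geq 2$; for derivation pieces it uses that the image necessarily involves $z_v$ or $z_w$, which never appear on the right-hand side of an exchange relation). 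Hence $\cY$ would have strictly more terms than $\A^\univ$ on that generator, contradicting $\ell$-exchange-minimality of $\cY$. Property T0* enters precisely to guarantee that every nonzero derivation class with degree in $\SG$ is visible on some exchangeable generator, so that this term-count comparison applies. The conclusion is that each $\psi_i=0$ and the embedded families literally agree—not merely up to a further change of coordinates as you write. Relatedly, your existence claim that $\A^\univ$ realizes the minimum number of terms is not established a priori; it falls out as a \emph{consequence} of this comparison argument.
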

\begin{proof}
	By Theorem \ref{thm:t1} and Remark \ref{rem:t1el}, there is a canonical $\KK$-vector space decomposition
\[\Hom(J,S_{\K*\PS(\W)})^H=T^1(\K*\PS(\W))^H\oplus D\]
where $D$ is the image of $\Der_\KK(\KK[\bz],S_{\K*\PS(\W)})^H$. Under this decomposition, any homogeneous $T^1$ element only affects a single minimal generator of $J$, and that generator is always coming from an exchangeable pair of cluster variables.
By Conjecture \ref{conj:T0}, any non-trivial homogeneous element of $D$ has degree outside of $\SG$.
Furthermore, each homogeneous piece of $\Hom(J,S_{\K*\PS(\W)})$ (with respect to the $\ZZ^{\V\cup \W}$-grading) is at most one-dimensional.

Let $\cY$ be any $k$-exchange-minimal deformation, and let $\cY'=\spec \A^{\univ}$. Consider their ideals $\widetilde{J}$ and $\widetilde{J}'$.
Since exchange-minimal deformations have injective characteristic maps with image $V$, and $\cY'$ also has injective characteristic map by Lemma \ref{lemma:differential}, $\cY$ and $\cY'$ modulo $\mfm^2$ are isomorphic as abstract deformations, differing by a unique linear automorphism $\phi$ of $\Aff^p$.
By applying the automorphism $\phi$, we may reduce to the case that the characteristic maps of $\cY$ and $\cY'$ are equal.
After this reduction, we claim that modulo $\mfm^{k+1}$, $\cY=\cY'$ as embedded deformations.

Suppose that we have shown that $\cY$ and $\cY'$ agree after reducing by $\mfm^{\ell}$ for some $1\leq \ell < k$. Then there are some $\bt^{\alpha_i}\in\mfm^\ell$ and homogeneous $\psi_i\in \Hom(J,S_{\K*\PS(\W)})^H$ such that the minimal $\TT$-equivariant lifts of any $z_v\cdot z_w\in J$ to $\widetilde J$ and $\widetilde{J}'$ differ modulo $\mfm^{\ell+1}$ by
\[\sum_i \bt^{\alpha_i}\psi_i(z_v\cdot z_w)\]
where $\alpha_i=(\alpha_{i1},\ldots,\alpha_{ip})\in \ZZ^p$ satisfies $\sum_j\alpha_{ij}=\ell$.
The $\TT$-equivariance imposes the condition that each $\psi_i$ has degree contained in $\SG$, and thus  belongs to $T^1(\K*\PS(\W))^H$ (and not $D$).

We claim that all $\psi_i$ must be zero.
In the case $\ell=1$, the $\TT$-equivariance imposes that any non-zero $\psi_i$ must belong to $T^1(\K*\PS(\W))^H$. But these $\psi_i$ must also be zero, since otherwise the characteristic maps would not agree.

We may thus assume that $\ell\geq 2$.
Consider first some $0\neq \psi \in T^1(\K*\PS(\W))$. Then $\psi(z_v\cdot z_w)\neq 0$ for exactly one pair exchangeable pair $\{v,w\}$, and adding $\bt^\alpha\cdot \psi(z_v\cdot z_w)$ to the exchange relation will introduce some non-zero multiple of a monomial $\bt^\alpha\cdot z^\ba$. Moreover, it follows from Theorem \ref{thm:t1} and the assumption $\ell\geq 2$ that $\bt^\alpha \cdot z^\ba$ cannot be one of the monomials appearing in the exchange relation of $\A^{\univ}$ for $v,w$. 
Taking any $\psi_i=\psi$ would thus violate the assumption that $\cY$ is $k$-exchange minimal.
We conclude that $\cY$ and $\cY'$ agree modulo $\mfm^{\ell+1}$, and thus by induction that $\cY=\cY'$ modulo $\mfm^{k+1}$.

It follows that $\cY'$ is exchange minimal, and any other exchange minimal deformation is canonically isomorphic to it.
The claims in the first statement of the theorem follow. The claims in the second statement follow from Proposition \ref{prop:t1p}, and the fact that a deformation over a smooth base whose characteristic map is an isomorphism is semiuniversal.
\end{proof}
\begin{ex}[Continuation of Running Example \ref{ex:r9}]\label{ex:r11}
	We consider the cluster algebra $\A$ from Example \ref{ex:r1} for the final time. We have verified that property T1 (Example \ref{ex:r9}) is satisfied. We may thus apply Theorem \ref{thm:versal} and reconstruct the universal cluster algebra $\A^\univ$ (Example \ref{ex:r4}) as the canonical exchange minimal $H$-invariant semiuniversal deformation of $\spec S_{\K*\PS(\W)}$. See \S\ref{app:run} for an explicit \emph{Macaulay2} computation doing this.
\end{ex}

\begin{rem}
\label{rem:equiv_map}
Assume that $\A$ is a positively graded cluster algebra of full rank and finite cluster type.
By Theorem \ref{thm:hilb}, it follows that the $\TT$-action on the family $\spec \A^{\univ}\to \Aff^p$ has a standard action on the base $\Aff^p$ (cf. Remark \ref{rem:standardaction}).
Assume further that property T1 holds. Then we don't need to concern ourselves with the notion of exchange minimality and
we obtain a slightly stronger result than Theorem \ref{thm:versal}: there is a canonical embedded isomorphism between the family  $\spec \A^{\univ}\to \Aff^p$ and \emph{any} embedded $\TT$-equivariant $H$-invariant semiuniversal deformation $\pi$ of $Y$. 

Indeed, arguing as in the proof of Theorem \ref{thm:versal}, the correction terms $\psi_i$ must have degrees $\bc\in\ZZ^{p+q}$ for which simultaneously the degree $\bc$-piece of $T^1(\K*\PS(\W))^H$ is non-zero, and $\bc$ is a sum of $\ell>1$ non-zero $T^1$ degrees. But since the base $\Aff^p$ has a standard action, degrees of the graded pieces of $T^1(\K*\PS(\W))^H$ are linearly independent, so this is impossible.
We note that it is always possible to arrive in this situation by adding additional frozen variables.
\end{rem}

We obtain the following Corollary of Theorem \ref{thm:versal}:
\begin{cor}\label{cor:t1}
Let $\A$ be a positively graded cluster algebra of full rank and finite cluster type. Assume further that $\A$ satisfies the T1 property. Then $T^1(\A)^H=0$, that is, $\spec \A$ has no $H$-invariant deformations.
\end{cor}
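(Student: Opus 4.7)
The plan is to apply Theorem \ref{thm:versal}(2) together with the remark immediately following it: under the hypotheses of Corollary \ref{cor:t1}, the family $\pi:\spec \A^{\univ}\to\Aff^p$ is a $\TT$-equivariant $H$-invariant semiuniversal deformation of $Y = \spec S_{\K*\PS(\W)}$. In particular, by Proposition \ref{prop:t1p} the characteristic map $T_0\Aff^p \to T^1(Y)^H$ is an isomorphism, so the combined map
\[
T\Aff^p \oplus \Der_\KK(R,\cY)^H \longrightarrow \Hom_R(I_{\cY/\Aff^p},\cY)^H
\]
from base-tangent and derivation data to the relative $\Hom$ sheaf is surjective at the origin $0\in\Aff^p$: indeed, its image modulo derivations already equals all of $T^1(Y)^H$ by the semiuniversal property.

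By coherence of the relevant sheaves on $\Aff^p$ and Nakayama's lemma, the surjectivity at $0$ extends to a Zariski-open neighborhood of $0$. Because the cokernel sheaf is $\TT$-equivariant, and every nonempty closed $\TT$-invariant subset of $\Aff^p$ must contain the unique torus-fixed point $0$, this surjectivity in fact holds on all of $\Aff^p$.

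Now evaluate at the point $(1,\ldots,1)\in\Aff^p$, whose fiber is $\spec\A$. By Theorem \ref{thm:hilb}\ref{part:h4} together with the full rank hypothesis, this point lies in the open $\TT/H$-orbit; hence $T_{(1,\ldots,1)}\Aff^p$ is entirely tangent to the orbit, and every such tangent vector is realized by a one-parameter subgroup of $\TT$ acting on the embedded $\spec\A \subset \Aff^{p+q}$. Consequently, the induced map $T_{(1,\ldots,1)}\Aff^p \to \Hom_R(I_\A,\A)^H$ factors through the image of $\Der_\KK(R,\A)^H$; combined with the global surjectivity established above, this yields
\[
	\Hom_R(I_\A,\A)^H \;=\; \mathrm{image}\bigl(\Der_\KK(R,\A)^H\bigr),
\]
that is, $T^1(\A)^H = 0$.

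The main obstacle is the second step: carefully setting up the relative $\Hom$ and $T^1$ sheaves for the affine $H$-equivariant family $\pi$, verifying their coherence, and then applying Nakayama's lemma to propagate surjectivity from the origin. Once this sheaf-theoretic machinery is in place, the $\TT$-equivariance and the explicit description of $T_{(1,\ldots,1)}\Aff^p$ as tangent-to-orbit make the conclusion immediate.
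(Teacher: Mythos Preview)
Your strategy is the same as the paper's: use that $\spec \A^\univ \to \Aff^p$ is $H$-semiuniversal for $Y$ at the origin (via Proposition~\ref{prop:t1p} and the remark after Theorem~\ref{thm:versal}), propagate versality to the point $\bt=(1,\ldots,1)$, and then observe that over the dense $\TT$-orbit every tangent direction comes from the $\TT$-action on $\Aff^{p+q}$ and hence lands in the image of $\Der_\KK(R,\A)^H$, forcing $T^1(\A)^H=0$. The only substantive difference is in the propagation step: the paper simply invokes Artin's openness of versality \cite{versalopen} together with the observation that the family is trivial on the open orbit, whereas you unpack this by hand via coherence of the relative $\Hom$/$\Der$ modules, Nakayama, and the $\TT$-equivariance argument that a nonempty closed $\TT$-invariant non-surjective locus would have to contain $0$.

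Your identification of the coherence step as the main obstacle is apt, and it is the one place your sketch is genuinely incomplete. The modules $\Hom_R(I_{\A^\univ},\A^\univ)$ and $\Der_\KK(R,\A^\univ)$ are finitely generated over $\A^\univ$, but $\pi$ is not proper, so their pushforwards to $\Aff^p$ are not automatically coherent. What saves you is taking $H$-invariants together with the positive grading: each $M$-degree piece of $\A^\univ$ is a finite free $\KK[\bt]$-module (by Theorem~\ref{thm:basis} and positivity), so the $H$-invariant $\Hom$ and $\Der$ are finitely generated over $\KK[\bt]$, and one must also check the base-change map $\Hom_R(I_{\A^\univ},\A^\univ)^H\otimes_{\KK[\bt]}\KK(P)\to\Hom_{R_P}(I_P,\A_P)^H$ is surjective (flatness of $\A^\univ$ over $\KK[\bt]$ is the key input). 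The paper's citation of \cite{versalopen} packages the same finiteness issues inside Artin's hypotheses; your route is more self-contained but requires spelling this out.
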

\begin{proof}
	By Theorem \ref{thm:versal}, the family $\spec \A^{\univ}\to\Aff^p$ is an $H$-invariant semiuniversal deformation for $\A$. By Theorem \ref{thm:hilb}\ref{part:h4}, the $\TT$-action on $\Aff^p$ has a dense orbit, so the family $\spec \A^{\univ}\to\Aff^p$ is isomorphic to the trivial family on a dense open set. Any of the corresponding fibers is just isomorphic to $\spec \A$.

	By openness of versality \cite{versalopen}, it follows that the family $\spec \A^{\univ}\to\Aff^p$ contains a semiuniversal family for the fiber $\spec \A$ over $\bt=1$. But since the family is isomorphic to the trivial family over $(\KK^*)^p\subseteq \Aff^p$, it follows that $T^1(\A)^H=0$.
\end{proof}

\bibliographystyle{alpha}
\bibliography{notes.bib}

\appendix
\section{Macaulay2 Code for Examples}\label{app:code}
\subsection{Example from \S\ref{ex:g2}}\label{code:g2}~ ~
\begin{footnotesize}
\begin{verbatim}
i1 : loadPackage "VersalDeformations"

o1 = VersalDeformations

o1 : Package

i2 : R=QQ[x_1,x_2,x_3,x_4,x_5,x_6,x_7,x_8,Degrees=>entries id_(ZZ^8)];

i3 : J=ideal {x_1*x_3,x_3*x_5,x_5*x_7,x_7*x_1,x_2*x_4,x_4*x_6,x_6*x_8,
         x_8*x_2,x_1*x_4,x_2*x_5,x_3*x_6, x_4*x_7,x_5*x_8,x_6*x_1,x_7*x_2,
         x_8*x_3,x_1*x_5,x_2*x_6,x_3*x_7,x_4*x_8}; --SR ideal for cluster complex

o3 : Ideal of R

i4 : L={{-1,1,-1,0,0,0,0,0},{0,0,-1,1,-1,0,0,0},
      {0,0,0,0,-1,1,-1,0},{-1,0,0,0,0,0,-1,1},
     {0,-1,3,-1,0,0,0,0},{0,0,0,-1,3,-1,0,0},
      {0,0,0,0,0,-1,3,-1},{3,-1,0,0,0,0,0,-1}}; -- degrees of characteristic map

i5 : t1=matrix {apply(L,i->CT^1(i,J))}; -- image of characteristic map

             20       8
o5 : Matrix R   <--- R

i6 : t2=CT^2(J); -- obstruction space 

             64       12
o6 : Matrix R   <--- R

i7 : (F,R,G,C)=versalDeformation(gens J,-t1,t2);

i8 : matrix entries transpose sum F -- generators of lifted ideal

o8 = | -t_2t_3^2t_4t_6t_7-x_2t_1+x_1x_3                                    |
     | -t_1t_3t_4^2t_7t_8-x_4t_2+x_3x_5                                    |
     | -t_1^2t_2t_4t_5t_8-x_6t_3+x_5x_7                                    |
     | -t_1t_2^2t_3t_5t_6-x_8t_4+x_1x_7                                    |
     | -t_3^3t_4^3t_6t_7^2t_8-x_3^3t_5+x_2x_4                              |
     | -t_1^3t_4^3t_5t_7t_8^2-x_5^3t_6+x_4x_6                              |
     | -t_1^3t_2^3t_5^2t_6t_8-x_7^3t_7+x_6x_8                              |
     | -t_2^3t_3^3t_5t_6^2t_7-x_1^3t_8+x_2x_8                              |
     | -x_5t_3^2t_4t_6t_7-x_3^2t_1t_5+x_1x_4                               |
     | -x_1t_3t_4^2t_7t_8-x_3^2t_2t_5+x_2x_5                               |
     | -x_7t_1t_4^2t_7t_8-x_5^2t_2t_6+x_3x_6                               |
     | -x_3t_1^2t_4t_5t_8-x_5^2t_3t_6+x_4x_7                               |
     | -x_1t_1^2t_2t_5t_8-x_7^2t_3t_7+x_5x_8                               |
     | -x_5t_1t_2^2t_5t_6-x_7^2t_4t_7+x_1x_6                               |
     | -x_3t_2^2t_3t_5t_6-x_1^2t_4t_8+x_2x_7                               |
     | -x_7t_2t_3^2t_6t_7-x_1^2t_1t_8+x_3x_8                               |
     | -x_3t_1t_2t_5-x_7t_3t_4t_7+x_1x_5                                   |
     | -3t_1t_2^2t_3t_4^2t_5t_6t_7t_8-x_4t_2^3t_5t_6-x_8t_4^3t_7t_8+x_2x_6 |
     | -x_5t_2t_3t_6-x_1t_1t_4t_8+x_3x_7                                   |
     | -3t_1^2t_2t_3^2t_4t_5t_6t_7t_8-x_6t_3^3t_6t_7-x_2t_1^3t_5t_8+x_4x_8 |

\end{verbatim}

\end{footnotesize}

\subsection{Running Example \ref{ex:r11} of type $A_2$ }\label{app:run}~~

\begin{footnotesize}
\begin{verbatim}

i1 : loadPackage "VersalDeformations"

o1 = VersalDeformations

o1 : Package

i2 : R=QQ[x_25,x_13,x_24,x_35,x_14,s_1,s_2,s_3,
         Degrees=>{{1,1,-1},{-1,1,1},{2,-1,0},{-1,2,0},{1,-1,1},{1,0,0},{0,1,0},{0,0,1}}];

i3 : J=ideal{x_24*x_35,x_35*x_14,x_14*x_25,x_25*x_13,x_13*x_24}; -- SR ideal

o3 : Ideal of R

i4 : (F,R,G,C)=versalDeformation(gens J,-CT^1({0,0,0},J),CT^2({0,0,0},J));

i5 : matrix entries transpose sum F -- generators of lifted ideal

o5 = | -s_1s_2t_2t_5-x_25s_3t_1+x_24x_35 |
     | -s_2s_3t_1t_3-x_13s_1t_2+x_35x_14 |
     | -s_1^2t_2t_4-x_24s_2t_3+x_25x_14  |
     | -s_2^2t_3t_5-x_35s_1t_4+x_25x_13  |
     | -s_1s_3t_1t_4-x_14s_2t_5+x_13x_24 |

\end{verbatim}

\end{footnotesize}

\end{document}